\tikzstyle{stop} = [rectangle, rounded corners, minimum width=3cm, minimum height=1cm,text centered,  text width=4.5cm, draw=black, fill=green!30]
\tikzstyle{no} = [rectangle, rounded corners, minimum width=3cm, minimum height=1cm,text centered,  text width=4.5cm, draw=black, fill=red!30]
\tikzstyle{kernel} = [rectangle, rounded corners, minimum width=3cm, minimum height=1cm,text centered,  text width=4.5cm, draw=black, fill=yellow!30]
\tikzstyle{io} = [trapezium, trapezium left angle=70, trapezium right angle=110, minimum width=1cm, minimum height=1cm, text width=10cm, text centered, draw=black]
\tikzstyle{process} = [rectangle, minimum width=3cm, minimum height=1cm, text centered, text width = 4.5cm, draw=black]
\tikzstyle{decision} = [diamond, draw=black, text width=6em, text badly centered, inner sep=1pt]
\tikzstyle{empty} = []
\tikzstyle{arrow} = [thick,->]
\newcommand\iprec{\mathrel{\ooalign{$\prec$\cr
 \,\raise0.85ex\hbox{\scriptsize$\circ$}\cr}}}
\renewcommand{\tilde}{\widetilde}
\newcommand{\R}{\mathbb{R}}
\newcommand{\N}{\mathbb{N}}
\newcommand{\Hil}{\mathcal{H}}
\newcommand{\eps}{\varepsilon}
\newcommand{\la}{\lambda}
\DeclareMathOperator{\lspan}{span}
\DeclareMathOperator{\tr}{tr}
\DeclareMathOperator{\sgn}{sgn}
\providecommand{\abs}[1]{\lvert#1\rvert}
\newcounter{Theorem}
\numberwithin{equation}{section}
\numberwithin{Theorem}{section}
\theoremstyle{plain} 
\newtheorem{thm}[Theorem]{Theorem}
\newtheorem{cor}[Theorem]{Corollary}
\newtheorem{lem}[Theorem]{Lemma}
\newtheorem{prop}[Theorem]{Proposition}
\theoremstyle{definition}
\newtheorem{defn}[Theorem]{Definition}
\newtheorem{problem}[Theorem]{Problem}
\theoremstyle{remark}
\newtheorem{remark}[Theorem]{Remark}
\begin{document}
\title[Diagonals of self-adjoint operators]{Diagonals of self-adjoint operators I: 
\\
compact operators}

\begin{abstract}
Given a self-adjoint operator $T$ on a separable infinite-dimensional Hilbert space we study the problem of characterizing the set $\mathcal D(T)$ of all possible diagonals of $T$. For compact operators $T$, we give a complete characterization of diagonals modulo the kernel of $T$. That is, we characterize $\mathcal D(T)$ for the class of operators sharing the same nonzero eigenvalues (with multiplicities) as $T$. Moreover, we determine $\mathcal D(T)$ for a fixed compact operator $T$, modulo the kernel problem for positive compact operators with finite-dimensional kernel. 


Our results generalize a characterization of diagonals of trace class positive operators by Arveson and Kadison \cite{ak} and diagonals of compact positive operators by Kaftal and Weiss \cite{kw} and Loreaux and  Weiss \cite{lw}.
The proof uses the technique of diagonal-to-diagonal results, which was pioneered in the earlier joint work of the authors with Siudeja \cite{unbound}.

\end{abstract}

\author{Marcin Bownik}
\author{John Jasper}

\address{Department of Mathematics, University of Oregon, Eugene, OR 97403--1222, USA}

\email{mbownik@uoregon.edu}

\address{Department of Mathematics \& Statistics, Air Force Institute of Technology, Wright Patterson AFB, OH 45433, USA}

\email{john.jasper@afit.edu}

\keywords{diagonals of self-adjoint operators, the Schur-Horn theorem, the Pythagorean theorem, the Carpenter theorem, spectral theory}


\subjclass{Primary:  
47B15, Secondary: 46C05}

\thanks{The first author was partially supported by NSF grant DMS-1956395. The views expressed in this article are those of the authors and do not reflect the official policy or position of the United States Air Force, Department of Defense, or the U.S. Government.}

\maketitle


\section{Introduction}

The classical Schur-Horn theorem \cite{horn, moa, schur} characterizes diagonals of hermitian matrices in terms of their eigenvalues. A sequence $(d_1,\ldots,d_N)$ is a diagonal of a hermitian matrix with eigenvalues $(\lambda_1,\ldots,\lambda_N)$ if and only if 
\begin{equation}\label{horn2}
(d_1,\ldots,d_N) \in \operatorname{conv} \{ (\lambda_{\sigma(1)},\ldots,\lambda_{\sigma(N)}): \sigma \in S_N\}.
\end{equation}
This characterization has attracted  significant interest and has been generalized in many remarkable ways. Some major milestones are the Kostant convexity theorem \cite{ko} and the convexity of moment mappings in symplectic geometry \cite{at, gs1, gs2}. 

An infinite-dimensional extension of the Schur-Horn theorem has been a subject of intensive study in recent years. In particular, we are interested in the following generalization of the Schur-Horn theorem to compact operators. Non-compact operators are studied in our subsequent paper.

\begin{problem}\label{DT}
Given a compact linear operator $T$ on a separable Hilbert space $\mathcal H$,  characterize the set of all diagonals 
\begin{equation}\label{dt}
\mathcal D(T) = \{ (\langle T e_i, e_i \rangle)_{k\in \N}: (e_i)_{i\in\N} \text { is an orthonormal basis of } \mathcal H \} \subset \ell^{\infty}(\N).
\end{equation}
\end{problem}

Arveson and Kadison \cite{ak} extended the Schur-Horn theorem to positive trace class operators. This was preceded by the work of Gohberg and Markus \cite{gm}. The Schur-Horn theorem has been extended to all compact positive operators by Kaftal and Weiss \cite{kw}. These results are stated in terms of majorization inequalities. For a survey on infinite Schur-Horn majorization theorems and their connections to operator ideals we refer to \cite{kw0, weiss}.

Kaftal and Weiss \cite{kw} showed that a nonincreasing sequence of positive numbers $d_1\ge d_2 \ge \ldots>0$ converging to $0$ is a diagonal of a positive compact operator with trivial kernel and positive eigenvalues $\lambda_1 \ge \lambda_2 \ge \ldots>0$, listed with multiplicity, if and only if
\begin{equation}\label{horn1}
\sum_{i=1}^{\infty}d_{i}= \sum_{i=1}^{\infty}\lambda_{i}
\qquad\text{and}\qquad
\sum_{i=1}^{n}d_{i}\leq \sum_{i=1}^{n}\lambda_{i}\quad\text{for all }n\in\N.
\end{equation}
Loreaux and Weiss \cite{lw} extended this result by showing necessary conditions and sufficient conditions on $\mathcal D(T)$ for positive compact operators $T$ with nontrivial kernel. When $\ker(T)$ is infinite dimensional, the necessary and sufficient conditions coincide yielding a complete characterization. However, when $\ker T$ is nontrivial and finite dimensional, the full characterization of $\mathcal D(T)$ remains elusive. This is known as the kernel problem, see Theorem \ref{kp}.

We should also mention that there has been extensive interest beyond positive compact operators. Approximate descriptions of $\mathcal D(T)$ were given by Neumann \cite{neu} and Antezana, Massey, Ruiz, Stojanoff \cite{amrs}.
In his famous work, Kadison \cite{k1, k2} characterized diagonals of orthogonal projections. The authors characterized the set $\mathcal D(T)$ for a class of locally invertible positive operators \cite{mbjj} and for self-adjoint operators with finite spectrum \cite{npt, mbjj5, jj}. M\"uller and Tomilov \cite{mt} showed a broad sufficiency result, in terms of a Blaschke type condition, for the existence of diagonals. Some of these instances were shown to be necessary by Loreaux and Weiss \cite{lw3}. Finally, several authors \cite{am, am2, am3, rara, dfhs, ks, mr, ravi}  studied an extension of the Schur-Horn problem in von Neumann algebras, which was originally proposed by Arveson and Kadison \cite{ak}.

Siudeja jointly with the authors \cite{unbound} proved an infinite-dimensional variant of the Schur-Horn theorem for unbounded self-adjoint operators with discrete spectrum. Previous results dealt only with bounded operators. More importantly for the purposes of this paper, a technique of diagonal-to-diagonal results was developed. A typical diagonal-to-diagonal result is the following extension of the result of Kaftal and Weiss \cite{kw} mentioned above. Given two positive nonincreasing sequences $(d_i)_{i\in \N}$ and $(\lambda_i)_{i\in \N}$, both converging to $0$, and satisfying \eqref{horn1}, if $(\lambda_i)_{i\in \N}$ is a diagonal of some (not necessarily bounded) symmetric operator $T$, then $(d_i)_{i\in \N}$ is also a diagonal of \(T\).

The goal of this paper is to characterize the set of diagonals $\mathcal D(T)$ of an arbitrary compact self-adjoint operator $T$, which extends earlier results for positive compact operators in \cite{kw, lw}. We show two types of characterization results of $\mathcal D(T)$. Theorem \ref{intropv2} shows a characterization of $\mathcal D(T)$ for the class of compact operators sharing the same nonzero eigenvalues (with multiplicities) as $T$. Hence, this result merely neglects the dimension of the kernel of $T$. 

Our second more precise result gives a description of $\mathcal D(T)$ for a fixed compact operator $T$, modulo the above mentioned kernel problem \cite{lw} for positive compact operators. Our description is given in a form of an algorithm that determines whether a numerical sequence is a diagonal of $T$, or not. The algorithm is always conclusive unless the whole problem is reduced to the kernel problem for a pair of positive compact operators into which the original operator $T$ is \textit{decoupled}, see Definition \ref{decouple}. The latter scenario happens precisely when the positive and negative \textit{excesses}, which are measures of tightness of the majorization inequalities, 
are both equal to zero, see Definition \ref{excessdef}.

To describe our results in detail, we need to set some notation.
Let \(I\) be a countably infinite set.
Let $c_0(I)$ be the collection of real-valued sequences converging to $0$ and indexed by the set \(I\). That is, the set \(\{i\in I: |d_{i}|>\eps\}\) is finite for every \(\eps>0\). Let \(c_{0}^{+}(I)\) be the set of nonnegative-valued sequences in \(c_{0}(I)\). We will write \(c_{0}\) and \(c_{0}^{+}\) when the indexing set of the sequence is obvious.

\begin{defn}
Let $\boldsymbol\lambda =(\lambda_i)_{i\in I}$ be a real-valued sequence. Define its {\it positive part} $\boldsymbol\lambda_+ =(\lambda^{+}_i)_{i\in I}$ by $\lambda^+_i=\max(\lambda_i,0)$. The {\it negative part} is defined as $\boldsymbol\lambda_-=(-\boldsymbol \lambda)_+$.
If $\boldsymbol\lambda \in c_0^+$, then define its {\it decreasing rearrangement} $\boldsymbol\lambda^{\downarrow} =(\lambda^{\downarrow}_i)_{i\in \N}$ by taking $\lambda^{\downarrow}_i$ to be the $i$th largest term of $\boldsymbol \lambda$. For the sake of brevity, we will denote the $i$th term of $(\boldsymbol\lambda_{+})^{\downarrow}$ by $\lambda_{i}^{+\downarrow}$, and similarly for $(\boldsymbol\lambda_{-})^{\downarrow}$.
\end{defn}

Our main result characterizing diagonals of compact self-adjoint operators, modulo the dimension of the kernel, takes the following form. The equivalence of (i) and (ii) is a known consequence of the Weyl-von Neumann-Berg theorem \cite[Proposition 39.10]{conway}.

\begin{thm}\label{intropv2}
Let $\boldsymbol\lambda, \boldsymbol d \in c_0$. Set
\[\sigma_{+} = \liminf_{n\to\infty} \sum_{i=1}^n (\lambda_i^{+ \downarrow} - d_i^{+ \downarrow})\quad\text{and}\quad \sigma_{-} = \liminf_{n\to\infty} \sum_{i=1}^n (\lambda_i^{- \downarrow} - d_i^{- \downarrow})\]
Let $T$ be a compact self-adjoint operator with eigenvalue list $\boldsymbol\lambda$.
Then the following are equivalent:
\begin{enumerate}
\item
$\boldsymbol d\in \mathcal D(T')$ for some operator $T'$ in the operator norm closure of unitary orbit of $T$,
\[ T' \in \overline
{\{ \text{$UTU^*$}: U \text{ is unitary}\}}^{||\cdot||},
\]
\item $\boldsymbol d$ is a diagonal of an operator $T'$ such that $T\oplus \boldsymbol 0$ and $T' \oplus \boldsymbol 0$
are unitarily equivalent, where $\boldsymbol 0$ denotes the zero operator on an infinite-dimensional Hilbert space,
\item
$\boldsymbol\lambda$ and $\boldsymbol d$ satisfy the following four conditions 
\begin{equation}\label{p1v2}
\sum_{i=1}^n \lambda_i^{+ \downarrow} \ge 
\sum_{i=1}^n d_i^{+ \downarrow}
\qquad\text{for all }k\in \N,
\end{equation}
\begin{equation}\label{p2v2}
\sum_{i=1}^n \lambda_i^{- \downarrow} \ge 
\sum_{i=1}^n d_i^{- \downarrow}
\qquad\text{for all }k\in \N,
\end{equation}
\begin{equation}\label{p3v2}
\boldsymbol d_+ \in \ell^1
\quad \implies\quad
\sigma_{-}\geq \sigma_{+}
\end{equation}
\begin{equation}\label{p4v2}
\boldsymbol d_- \in \ell^1 
\quad \implies\quad 
\sigma_{+}\geq \sigma_{-}
\end{equation}
\end{enumerate}
\end{thm}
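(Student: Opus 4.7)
The equivalence (i)$\iff$(ii) is cited from the Weyl--von Neumann--Berg theorem, so the plan is to establish (ii)$\iff$(iii). For necessity, I would apply the Kaftal--Weiss theorem to the positive and negative parts of $T'$ separately. For sufficiency, I would construct $T'$ as a direct sum $S_+\oplus(-S_-)$ on a splitting $\mathcal H = \mathcal H^+\oplus\mathcal H^-$, with each $S_\pm$ a positive compact operator obtained from the Kaftal--Weiss / Loreaux--Weiss characterization and the diagonal-to-diagonal technique of \cite{unbound}.

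\emph{Necessity.} Given $T'$ with eigenvalue list $\boldsymbol\lambda$ and orthonormal basis $(e_i)$ realizing $\boldsymbol d$, I would write $T'=T'_+-T'_-$ for the Jordan decomposition. Setting $a_i:=\langle T'_+e_i,e_i\rangle$ and $b_i:=\langle T'_-e_i,e_i\rangle$ gives nonnegative $c_0$ sequences with $d_i=a_i-b_i$, so $d_i^+\le a_i$ and $d_i^-\le b_i$ pointwise. Since pointwise domination in $c_0^+$ is preserved by partial sums of decreasing rearrangements, and since $(a_i),(b_i)$ are diagonals of the positive compact operators $T'_\pm$ whose nonzero eigenvalues form $\boldsymbol\lambda_+$ and $\boldsymbol\lambda_-$, Kaftal--Weiss yields (p1v2) and (p2v2). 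For (p3v2), assuming $\boldsymbol d_+\in\ell^1$, I combine the finite identity $\sum_{i\in F}d_i=\sum_{i\in F}a_i-\sum_{i\in F}b_i$ with the two majorization inequalities, taking $F$ to be the index set of the $n$ largest positive diagonal entries, and pass to the liminf to extract $\sigma_-\ge\sigma_+$. Condition (p4v2) then follows by replacing $T'$ with $-T'$.

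\emph{Sufficiency.} Assume (iii). I would partition the index set as $I=I^+\sqcup I^-$, placing every index with $d_i>0$ in $I^+$, every index with $d_i<0$ in $I^-$, and distributing the $d_i=0$ indices according to a scheme dictated by $\sigma_\pm$. Set $\mathcal H^\pm:=\ell^2(I^\pm)$, $\boldsymbol d^{(+)}:=(d_i^+)_{i\in I^+}$, and $\boldsymbol d^{(-)}:=(d_i^-)_{i\in I^-}$. The majorization hypotheses (p1v2), (p2v2) are exactly what is needed to realize $\boldsymbol d^{(\pm)}$ as a diagonal of a positive compact operator $S_\pm$ on $\mathcal H^\pm$ with nonzero eigenvalues drawn from $\boldsymbol\lambda_\pm$. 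Since (ii) allows arbitrary kernel dimension, I can arrange $\ker S_\pm$ to be infinite-dimensional whenever $\sigma_\pm>0$, placing the problem in the Loreaux--Weiss regime where a complete characterization exists and the open kernel problem is bypassed. Setting $T':=S_+\oplus(-S_-)$ then gives a compact self-adjoint operator with diagonal $\boldsymbol d$ and the correct nonzero eigenvalue list, verifying (ii) after a possible stabilization by a zero operator.

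\emph{Main obstacle.} The central difficulty is the allocation of the $d_i=0$ indices between $I^+$ and $I^-$ and the coupling of the two halves through $\sigma_\pm$. In the doubly tight case $\sigma_+=\sigma_-=0$, Kaftal--Weiss requires exact trace equality on each side, which would force the problem into the open kernel problem unless the slack has been distributed exactly right; conditions (p3v2), (p4v2) are precisely what rule out the only configurations where both summands would require the kernel problem. Making the case analysis explicit across all four regimes determined by finite-versus-infinite $\sigma_\pm$ and summable-versus-nonsummable $\boldsymbol d_\pm$, and invoking the diagonal-to-diagonal technique of \cite{unbound} to reroute majorization slack into the kernels, constitutes the main technical burden.
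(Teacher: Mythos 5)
Your outline breaks down in the sufficiency direction in a fundamental way. The proposed construction $T' = S_+ \oplus (-S_-)$ is a block-diagonal (decoupled) decomposition: the diagonal of $S_+$ collects the indices with $d_i\ge 0$ and the diagonal of $-S_-$ collects the indices with $d_i<0$. But then $S_+$ is a positive compact operator whose nonzero eigenvalues are exactly $\boldsymbol\lambda_+$ and whose diagonal is $(d_i)_{d_i>0}$ with extra zeros, and the Kaftal--Weiss / Loreaux--Weiss characterization (Theorem \ref{kp}, condition \eqref{kp3}) forces the trace identity $\sum\lambda_i^+ = \sum d_i^+$, i.e.\ $\sigma_+=0$; likewise for the negative side. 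Enlarging $\ker S_\pm$ to infinite dimension does not remove the trace identity. So the block construction can only produce diagonals with $\sigma_+=\sigma_-=0$, which is exactly the opposite regime from the one the theorem must cover (the $\sigma_++\sigma_->0$ case). Indeed, Proposition \ref{p211} and Theorem \ref{isplit} in the paper show that when a positive excess survives, the operator cannot decouple at $0$. The paper's sufficiency argument instead funnels through the chain of diagonal-to-diagonal results (Theorems \ref{1neg}, \ref{NoPosDiag}, \ref{posdiag}, \ref{step5}, \ref{step6}, \ref{step7}), whose whole point is to mix positive and negative eigenspaces via the $T$-transform lemmas (Lemma \ref{offdiag} together with Lemma \ref{loss} or Lemma \ref{noloss}); a block sum cannot simulate that mixing. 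Your own ``main obstacle'' paragraph senses the difficulty but misattributes it: the problem is not the allocation of the zero indices, it is that $T'$ cannot have the form $S_+\oplus(-S_-)$ at all.

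The necessity argument also has a gap at \eqref{p3v2}--\eqref{p4v2}. Writing $d_i=a_i-b_i$ with $a_i=\langle T'_+e_i,e_i\rangle$ and $b_i=\langle T'_-e_i,e_i\rangle$ and ``combining the finite identity with the two majorizations'' does yield $\sigma_+=\sigma_-$ cleanly when $T'$ is trace class. But when $\boldsymbol d_-\in\ell^1$ while $T'_-$ is a priori unknown to be trace class and both $\sum\lambda_i^+$ and $\sum d_i^+$ diverge, the Kaftal--Weiss constraints on $(a_i)$ and $(b_i)$ are essentially decoupled: nothing in the pointwise identity $d_i=a_i-b_i$ forces the needed cross-coupling. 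The content you are missing is that the eigenvectors of $T'_+$ and $T'_-$ live in the same Hilbert space and compete for the same coordinate frame, which is precisely what the paper's Lemma \ref{nel} (a Bessel-type estimate with eigenvector overlaps $|\langle e_i,f_j\rangle|^2$) quantifies, leading to Theorem \ref{cpttrace} and Corollary \ref{cpttracec}. Without some version of that argument you cannot rule out, say, a non-trace-class $T'_-$ paired with a finite $\sigma_+$ and summable $\boldsymbol d_-$.
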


The conditions \eqref{p1v2} and \eqref{p2v2} are well known majorization inequalities as in \eqref{horn1}.  However, \eqref{p3v2} and \eqref{p4v2} are surprising analogues of the trace condition, which have not been discovered previously. Indeed, for trace class operators the diagonal $\boldsymbol d \in \ell^1$, and hence positive and negative excesses are equal $\sigma_+=\sigma_-$. This implies the trace equality $\sum d_i = \sum \lambda_i$. On the other hand, if \(\boldsymbol{d}\notin\ell^{1}\) is a diagonal of a compact positive operator \(T\), then \eqref{p1v2} implies that $\sum \lambda_i \ge \sum d_i=\infty$. Hence, Theorem \ref{intropv2} recovers the characterization of diagonals of positive compact operators modulo the dimension of the kernel by Kaftal and Weiss \cite[Proposition 6.4]{kw}, which itself is a generalization of the trace class result of Arveson and Kadison \cite[Theorem 4.1]{ak}. However, the biggest novelty of conditions \eqref{p3v2} and \eqref{p4v2} is that they hold beyond the class of trace class operators, imposing a nontrivial inequality between excesses if either the positive or negative part of $\boldsymbol d$ is summable.

Theorem \ref{intropv2} is a consequence Theorem \ref{intropv3}, which shows necessary and sufficient conditions on $\mathcal D(T)$ for a compact self-adjoint operator $T$. The proof of the necessity of \eqref{p1v2}--\eqref{p4v2} is relatively straightforward and does not cause many difficulties. In contrast, the sufficiency proof is a complex and challenging collection of incremental results, which culminates only after a long series of results. We use the technique of diagonal-to-diagonal results, which was pioneered in the earlier joint work with Siudeja \cite{unbound}. 

The first step in this process shows an unexpected result about disappearing of a single negative eigenvalue under the dominance assumption $\lambda_i^{+\downarrow} \ge d_i^{+\downarrow}$ for all $i\in \N$, see Theorem \ref{1neg}. It is easy to see that a compact operator with eigenvalues $(-1,1,\frac12,\frac14, \frac18,\ldots)$ can have diagonal $(0,0, \frac12,\frac14, \frac18,\ldots)$. However, it is highly nontrivial that a strictly positive sequence $(\frac12, \frac14, \frac18,\ldots)$ is also a diagonal of this operator. In the second step we show a result about disappearing of infinitely many negative eigenvalues, see Theorem \ref{NoPosDiag}, which at the same time relaxes the dominance assumption to the majorization inequality \eqref{p1v2}. In particular, this implies that a compact operator with eigenvalues $( 1,\frac12,\frac14, \frac18,\ldots, -\frac12,-\frac14, -\frac18, \ldots)$ can have a strictly positive diagonal $(\frac12,\frac14, \frac18,\ldots)$. 

The third step is Theorem \ref{posdiag}, which shows a diagonal-to-diagonal result for nonzero initial diagonal $\boldsymbol \lambda$ and positive target diagonal $\boldsymbol d$ under the equal excess assumption $\sigma_+=\sigma_->0$. Notably, $\boldsymbol \lambda$ might have only finitely many positive terms. As an example, if $\boldsymbol \lambda=(1,1,-\frac12,-\frac14, -\frac18, \ldots)$ is a diagonal of a self-adjoint operator, then so is $\boldsymbol d=(\frac12,\frac14, \frac18,\ldots)$. In the fourth step we relax the inessential assumptions about the initial diagonal $\boldsymbol \lambda$ and the target diagonal $\boldsymbol d$ to establish a complete diagonal-to-diagonal result under the equal excess assumption $\sigma_+=\sigma_->0$, see Theorem \ref{step5}. Here we would like to note that it takes a considerable effort to relax the positivity assumption on the target diagonal $\boldsymbol d$ by using a technique of annihilation of excesses. Since we impose no assumptions on the cardinalities of the negative terms, zero terms, or positive terms of the diagonals $\boldsymbol d$ and $\boldsymbol \lambda$, the proof requires several technical combinatorial arguments covering all possible scenarios. 

The fifth step is Theorem \ref{step6}, which considers the one-sided non-summable case
\[
\sum_{\lambda_i<0} |\lambda_i| = \sum_{d_i<0} |d_i| = \infty \quad\text{and}\quad \sum_{\lambda_i>0} \lambda_i <\infty,
\]
when excesses are not equal $\sigma_->\sigma_+>0$. Note that by \eqref{p3v2} we automatically have $\sigma_- \ge\sigma_+$. The proof relies heavily on elaborate sequential results of Kaftal and Weiss \cite{kw} as well as an extension of their result to the diagonal-to-diagonal setting in the form of Proposition \ref{kwnonneg}. In the sixth step we show a diagonal-to-diagonal result in the two-sided non-summable case
\[
\sum_{\lambda_i<0} |\lambda_i| =  \sum_{\lambda_i>0} \lambda_i =\infty,
\]
see Theorem \ref{step7}. The seventh and final step combines previous sufficiency results and culminates in the proof our main result for compact operators, Theorem \ref{intropv2}. 
In the final section we discuss the kernel problem for positive compact operators and present an algorithm for determining whether a numerical sequence is a diagonal of a given compact self-adjoint operator, or not.

\section{Preliminary results on majorization}

In this section we show preliminary results for numerical sequences such as the equivalence of Riemann and Lebesgue majorization in Proposition \ref{LR}.
We start by introducing notions of majorization, concatenation, and excess of a pair of sequences.

\begin{defn}\label{rmajdef}
Let \(I,J\) be countable infinite sets, and let \(\boldsymbol\lambda = (\lambda_{i})_{i\in I}\) and \(\boldsymbol{d}=(d_{i})_{i\in J}\) be sequences in \(c_{0}^{+}\). We say that \(\boldsymbol\lambda\) {\it majorizes} \(\boldsymbol d\) and write \(\boldsymbol d \prec \boldsymbol \lambda\) if
\begin{equation}\label{rmajdef1}\sum_{i=1}^{n}d_{i}^{\downarrow}\leq \sum_{i=1}^{n}\lambda_{i}^{\downarrow}\quad\text{for all }n\in\N.\end{equation}
If, in addition, we have
\[\liminf_{n\to\infty}\sum_{i=1}^{n}(\lambda_{i}^{\downarrow}-d_{i}^{\downarrow})=0\]
then we say that \(\boldsymbol\lambda\) {\it strongly majorizes} \(\boldsymbol d\), and we write \(\boldsymbol d\preccurlyeq\boldsymbol\lambda\).
\end{defn}

\begin{defn} Given two sequences \(\boldsymbol\lambda = (\lambda_{j})_{j\in J}\) and \(\boldsymbol{d}=(d_{i})_{i\in I}\), the \textit{concatenation} of \(\boldsymbol{\lambda}\) and \(\boldsymbol{d}\), denoted \(\boldsymbol\lambda\oplus\boldsymbol{d}\), is the sequence \((c_{k})_{k\in I\sqcup J}\) where \(I\sqcup J\) is the disjoint union of \(I\) and \(J\), and \[c_{k} = \begin{cases} \lambda_{k} & k\in J,\\ d_{k} & k\in I.\end{cases}\]
Note, if \(I\cap J=\varnothing\), then \(I\sqcup J:=I\cup J\), if \(I\cap J\neq\varnothing\), then \(I\sqcup J= (I\times\{1\})\cup (J\times\{2\})\). In the latter case \(k\in I\) is interpreted to mean \((k,1)\in I\times\{1\}\), and similarly for \(k\in J\).
\end{defn}

\begin{defn}\label{excessdef} Given two sequences \((\lambda_{i})_{i\in I}\) and \((d_{i})_{i\in J}\) in \(c_{0}\) the quantities
\[\sigma_{+} = \liminf_{k\to\infty}\sum_{i=1}^{k}(\lambda^{+\downarrow}_{i}-d^{+\downarrow}_{i})\quad\text{and}\quad \sigma_{-} = \liminf_{k\to\infty}\sum_{i=1}^{k}(\lambda^{-\downarrow}_{i}-d^{-\downarrow}_{i}),\]
are called the \textit{positive excess} and \textit{negative excess}, respectively.
\end{defn}

For sequences in $c_{0}$, which are not necessarily positive, we give an alternative definition of majorization which avoids decreasing rearrangements of sequences. Later, we will show the equivalence of these two definitions for sequences in $c_0^+$, see Proposition \ref{LR}.

\begin{defn}\label{deltadef}
Given two sequences $\boldsymbol{d}\in c_{0}(I)$ and $\boldsymbol\lambda\in c_{0}(J)$, for $\alpha\in\R\setminus\{0\}$ we define the function
\begin{equation}\label{delta}\delta(\alpha,\boldsymbol\lambda,\boldsymbol{d}) = \begin{cases} \displaystyle\sum_{\lambda_{i}\geq\alpha}(\lambda_{i}-\alpha)-\sum_{d_{i}\geq\alpha}(d_{i}-\alpha) & \alpha>0,\\
\displaystyle\sum_{\lambda_{i}\leq\alpha}(\alpha-\lambda_{i})-\sum_{d_{i}\leq\alpha}(\alpha-d_{i}) & \alpha<0.
\end{cases}
\end{equation}
If $\delta(\alpha,\boldsymbol\lambda,\boldsymbol{d})\geq 0$ for all $\alpha\neq 0$ then we say that $\boldsymbol\lambda$ {\it majorizes} $\boldsymbol{d}$.
\end{defn}

We can extend the definition of the function \(\delta\) in \eqref{delta} to sequences for which only the positive or negative part is in $c_0$. That is, if $\boldsymbol{d}_+$, $\boldsymbol\lambda_+\in c_{0}$, then the function $\delta (\alpha,\boldsymbol\lambda,\boldsymbol{d})$ is defined only for $\alpha>0$, similarly, for the negative parts. Indeed, the function \(\delta\) is even well-defined for \(\alpha>0\) in the case that just one of \(\boldsymbol{\lambda}_{+}\in c_{0}\) or \(\boldsymbol{d}_{+}\in c_{0}\). In this case the function may take values \(\pm\infty\).

\begin{lem}\label{dclem}  If $\mathbf c\in c_{0}^{+}(I)$, then
\[g(\alpha) = \sum_{i\colon c_{i}\geq\alpha}(c_{i}-\alpha).\]
is piecewise linear, continuous, and decreasing on $(0,\infty)$, and
\begin{equation}\label{dclem0}\lim_{\alpha\searrow0}g(\alpha) = \sum_{i\in I}c_{i}.\end{equation}
Moreover, if $\boldsymbol{d},\boldsymbol\lambda\in c_{0}(I)$, then $\delta(\alpha,\boldsymbol\lambda,\boldsymbol{d})$ is piecewise linear and continuous on $\R\setminus\{0\}$.
\end{lem}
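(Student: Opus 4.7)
The plan is to establish the claims about $g$ first, since the statement about $\delta$ reduces to them via the identity $\delta(\alpha,\boldsymbol\lambda,\boldsymbol d) = g_{\boldsymbol\lambda_+}(\alpha) - g_{\boldsymbol d_+}(\alpha)$ for $\alpha>0$ (and an analogous identity involving the negative parts for $\alpha<0$).

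For the claims about $g$, I would fix $\alpha>0$ and note that because $\mathbf c\in c_0^+$ the index set $I_\alpha:=\{i\in I: c_i\geq \alpha\}$ is finite, so $g(\alpha)=\sum_{i\in I_\alpha}c_i - |I_\alpha|\,\alpha$ is a finite expression. The set $I_\alpha$ is locally constant on every open subinterval of $(0,\infty)$ disjoint from the (countable) value set $V:=\{c_i:i\in I\}$, and on such a subinterval $g$ is therefore affine with slope $-|I_\alpha|\le 0$. This simultaneously delivers piecewise linearity and the non-increasing property. Continuity at a breakpoint $\alpha_0\in V$ is a one-line check: the only indices that leave $I_\alpha$ as $\alpha$ crosses $\alpha_0$ from below are those with $c_i=\alpha_0$, and each such term contributes $c_i-\alpha=\alpha_0-\alpha\to 0$, so the left and right limits agree.

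For the limit \eqref{dclem0} I would split into cases. If $\sum_i c_i<\infty$, rewrite $g(\alpha)=\sum_{i\in I}\max(c_i-\alpha,0)$ as a sum of nonnegative summands each bounded by $c_i$ and each increasing monotonically to $c_i$ as $\alpha\searrow 0$; the monotone convergence theorem yields $g(\alpha)\to \sum_i c_i$. If $\sum_i c_i=\infty$, then given $M>0$ choose a finite set $F\subset I$ with $\sum_{i\in F}c_i>2M$, and take $\alpha$ small enough so that $F\subset I_\alpha$ and $|F|\alpha<M$; then $g(\alpha)\geq \sum_{i\in F}(c_i-\alpha)>M$, proving $g(\alpha)\to\infty$.

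Finally, for $\delta$ on $(0,\infty)$, observe that the condition $\lambda_i\geq\alpha$ with $\alpha>0$ automatically forces $\lambda_i>0$, so $\sum_{\lambda_i\geq\alpha}(\lambda_i-\alpha)=g_{\boldsymbol\lambda_+}(\alpha)$, where $\boldsymbol\lambda_+\in c_0^+$; the same applies to $\boldsymbol d$. Hence $\delta(\alpha,\boldsymbol\lambda,\boldsymbol d)$ is a difference of two piecewise linear continuous functions and inherits both properties. On $(-\infty,0)$ the identical argument applies after replacing $\boldsymbol\lambda,\boldsymbol d$ by $\boldsymbol\lambda_-,\boldsymbol d_-$ and $\alpha$ by $-\alpha$. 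There is no substantive obstacle; the only care required is the bookkeeping at breakpoints (handled by the matching-to-zero observation above) and the dichotomy in the limit computation at $0^+$.
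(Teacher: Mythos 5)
Your proof is correct and follows essentially the same route as the paper: identify the knots of $g$, compute the slope $-\#|I_\alpha|$ away from knots, verify continuity at the knots, compute the limit as $\alpha\searrow 0$ by a summable/non-summable dichotomy, and finally reduce the claim about $\delta$ to the claim about $g$ via the identities $\delta(\alpha,\boldsymbol\lambda,\boldsymbol d)=g_{\boldsymbol\lambda_+}(\alpha)-g_{\boldsymbol d_+}(\alpha)$ for $\alpha>0$ and the analogue in the negative parts for $\alpha<0$. The only cosmetic differences are that you invoke the monotone convergence theorem where the paper uses a direct two-sided estimate, and that the finite set $F$ in your non-summable case should be chosen inside $\{i:c_i>0\}$ so that $F\subset I_\alpha$ for all sufficiently small $\alpha>0$; neither affects correctness.
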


\begin{proof} Note that $g$ is piecewise linear with knots at $\alpha=c_{i}$ for each $i\in I$. Indeed, if $\alpha\neq c_{i}$ for all $i\in I$, then
\[g'(\alpha) = -\#|\{i\in I\colon c_{i}\geq\alpha\}|.\]
Checking that $g$ is continuous at the knots is straightforward. Thus, we have shown that $g$ is a decreasing continuous function on $(0,\infty)$.

That $g$ is decreasing implies that the limit $\lim_{\alpha\searrow0}g(\alpha)$ exists, though it is possibly infinite. Let $\beta\geq\alpha>0$ and we have
\[g(\alpha) = \sum_{i\colon c_{i}\geq\alpha}(c_{i}-\alpha) \geq \sum_{i\colon c_{i}\geq\beta}(c_{i}-\alpha),\]
which implies
\[\lim_{\alpha\searrow0}g(\alpha) \geq \sum_{i\colon c_{i}\geq\beta}c_{i}.\]
Since this holds for all $\beta$ we have
\begin{equation}\label{dclem1}\lim_{\alpha\searrow0}g(\alpha) \geq \sum_{i\in I}c_{i}.\end{equation}
If $\mathbf c$ is not summable, then \eqref{dclem0} follows from \eqref{dclem1}. Otherwise, we have
\[\lim_{\alpha\searrow0}g(\alpha)=  \lim_{\alpha\searrow0}\sum_{i\colon c_{i}\geq\alpha}(c_{i}-\alpha)\leq \lim_{\alpha\searrow0}\sum_{i\colon c_{i}\geq\alpha}c_{i}=\sum_{i\in I}c_{i},\]
which, together with \eqref{dclem1}, shows \eqref{dclem0}.
\end{proof}

Checking majorization as in Definition \ref{deltadef} seems to require checking an uncountable number of inequalities. This is in contrast to \eqref{rmajdef1}, which requires only a countable number. The next proposition shows that one need only check at most $\#|I|$ inequalities.

\begin{prop}\label{del} Let $\boldsymbol{d}\in c_{0}(I)$ and $\boldsymbol\lambda\in c_{0}(J)$. The following are equivalent:
\begin{enumerate}
\item $\delta(\alpha,\boldsymbol\lambda,\boldsymbol{d})\geq 0$ for all $\alpha\in\R\setminus\{0\}$,
\item $\delta(\lambda_{j},\boldsymbol\lambda,\boldsymbol{d})\geq 0$ for all $j\in J$ such that $\lambda_{j}\neq 0$.
\end{enumerate}
\end{prop}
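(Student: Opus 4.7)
The forward direction (i) $\Rightarrow$ (ii) is immediate: simply specialize $\alpha$ in (i) to any value $\alpha = \lambda_j \neq 0$. The substance lies in (ii) $\Rightarrow$ (i).

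My plan is to exploit the piecewise-linear and continuous structure of $\delta(\cdot, \boldsymbol\lambda, \boldsymbol d)$ established in Lemma \ref{dclem}. By the symmetry $\delta(-\alpha, -\boldsymbol\lambda, -\boldsymbol d) = \delta(\alpha, \boldsymbol\lambda, \boldsymbol d)$, which interchanges the two branches of \eqref{delta}, it suffices to verify $\delta(\alpha, \boldsymbol\lambda, \boldsymbol d) \geq 0$ for $\alpha > 0$. The central observation is that $g_c(\alpha) := \sum_{c_i \geq \alpha}(c_i - \alpha)$ is \emph{convex} on $(0,\infty)$, since its one-sided derivative $-|\{i : c_i \geq \alpha\}|$ is nondecreasing in $\alpha$. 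Moreover, $g_\lambda$ is affine on every open subinterval of $(0,\infty)$ that contains no positive value of $\boldsymbol\lambda$. Consequently, on each such subinterval, $\delta = g_\lambda - g_d$ is an affine function minus a convex function, hence concave.

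Enumerate the positive values of $\boldsymbol\lambda$; since $\boldsymbol\lambda \in c_0$, they form an at-most countable set that can accumulate only at $0$. On each closed interval $[\lambda_{j_1}, \lambda_{j_2}]$ bounded by two consecutive positive values of $\boldsymbol\lambda$, the concave function $\delta$ attains its minimum at an endpoint, which is nonnegative by (ii); hence $\delta \geq 0$ throughout. For the unbounded tail $[\lambda_{\max}, \infty)$, concavity combined with the boundary value $\lim_{\alpha \to \infty} \delta(\alpha) = 0$ (which follows from Lemma \ref{dclem} applied to $g_\lambda$ and $g_d$) yields $\delta \geq 0$. The left tail $(0, \lambda_{\min}]$ is treated similarly using the limit $\lim_{\alpha \to 0^+} \delta(\alpha) = \sum_i \lambda_i^+ - \sum_i d_i^+$ from \eqref{dclem0}. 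The negative half-line then follows by the symmetry noted above.

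The main obstacle I anticipate is the behavior at $\alpha \to 0^\pm$. When the positive entries of $\boldsymbol\lambda$ accumulate at $0$, continuity of $\delta$ together with (ii) applied along an accumulating sequence of $\lambda_j$'s yields $\delta \geq 0$ all the way down to $0^+$ by passing to the limit. When only finitely many $\lambda_j$ are positive, however, hypothesis (ii) controls $\delta$ only down to the smallest positive value $\lambda_{\min}$, and the proof must carefully combine the inequality $\delta(\lambda_{\min}) \geq 0$ with the asymptotic contribution of $g_d$ near the origin in order to extract nonnegativity of $\delta$ on $(0, \lambda_{\min}]$. The analogous care will be needed on the negative side.
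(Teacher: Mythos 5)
Your plan matches the paper's own argument: establish that $\delta = g_\lambda - g_d$ is concave on each interval between consecutive distinct positive values of $\boldsymbol\lambda$ (since $g_\lambda$ is affine there and $g_d$ is convex), then use (ii) to pin down nonnegativity at the knots. For the right tail the argument does close: on $(\lambda_{\max},\infty)$ one has $g_\lambda\equiv 0$, so $\delta=-g_d\leq 0$, and $\delta(\lambda_{\max})\geq 0$ forces $g_d(\lambda_{\max})=0$, i.e.\ $d_i\leq\lambda_{\max}$ for all $i$, whence $\delta\equiv 0$ there.

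You are right to flag the left tail $(0,\lambda_{\min}]$ as the critical point, but the resolution you anticipate cannot succeed: the implication (ii)~$\Rightarrow$~(i) is actually false in the regime where $\boldsymbol\lambda$ has only finitely many strictly positive entries. Take $\boldsymbol\lambda=(1,0,0,\ldots)$ and $\boldsymbol d=(\tfrac12,\tfrac12,\tfrac12,0,0,\ldots)$ in $c_0(\N)$. The only nonzero $\lambda_j$ is $\lambda_1=1$, and $\delta(1,\boldsymbol\lambda,\boldsymbol d)=0\geq 0$, so (ii) holds; but for $\alpha\in(0,\tfrac12]$ one has $\delta(\alpha,\boldsymbol\lambda,\boldsymbol d)=(1-\alpha)-3(\tfrac12-\alpha)=2\alpha-\tfrac12$, which is negative for $\alpha<\tfrac14$, so (i) fails. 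The ``asymptotic contribution of $g_d$'' near $0$ works \emph{against} you, since $\lim_{\alpha\searrow 0}\delta=\sum\lambda_j^{+}-\sum d_i^{+}=-\tfrac12$. Concavity plus (ii) controls $\delta$ only between $\lambda_{\min}$ and $\infty$; on $(0,\lambda_{\min})$ the hypothesis is silent, and there is no way to extract the needed lower bound. A condition such as $\liminf_{\alpha\searrow 0}\delta(\alpha,\boldsymbol\lambda,\boldsymbol d)\geq 0$ (and its mirror at $0^{-}$) must be added whenever the nonzero $\lambda_j$ fail to accumulate at $0$ from both sides. For what it is worth, the paper's proof is equally terse about the tails, and where Proposition~\ref{del} is actually invoked (in the proof of Theorem~\ref{cptschurv2}) the missing endpoint inequality $\delta(0^{-})\geq 0$ is supplied separately by Theorem~\ref{ext} via $\mu_M=0$; so the downstream uses are unaffected, but a self-contained proof along these lines must take the endpoint conditions as an explicit additional hypothesis rather than hope to derive them.
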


\begin{proof} It is clear that (i) implies (ii). Assume (ii) holds and let $\delta(\alpha)=\delta(\alpha,\boldsymbol\lambda,\boldsymbol{d})$. Let $(\mu_{i})_{i=1}^{M-1}$ denote the distinct strictly positive values of $\boldsymbol\lambda$ in strictly decreasing order. For $\alpha\in(\mu_{r+1},\mu_{r})$ and $\alpha\neq d_{i}$ we have
\begin{equation}\label{der}\begin{split}\delta'(\alpha) & = -\#|\{j\in J\colon \lambda_{j}\geq\alpha\}| + \#|\{i\in I\colon d_{i}\geq\alpha\}|\\
 & = -\#|\{j\in J\colon \lambda_{j}\geq\mu_{r}\}| + \#|\{i\in I\colon d_{i}\geq\alpha\}|.
\end{split}\end{equation}
The function $\delta$ is piecewise linear and continuous on $(0,\infty)$. From \eqref{der} we see that $\delta$ is concave down on $(\mu_{r+1},\mu_{r})$. This implies that for $\alpha\in[\mu_{r+1},\mu_{r}]$ we have
\[\delta(\alpha)\geq\min\{\delta(\mu_{r}),\delta(\mu_{r+1})\},\]
and by (ii) we have $\delta(\mu_{r})\geq0$ for all $r$. This shows that $\delta(\alpha)\geq 0$ for all $\alpha>0$. A similar argument shows that $\delta(\alpha)\geq0$ for all $\alpha<0$.
\end{proof}

The following result shows that majorization of sequences in \(c_{0}^{+}\) can be expressed in two equivalent ways by Definition \ref{rmajdef} or Definition \ref{deltadef}. The relationship between (i) and (ii) in Proposition \ref{LR} is analogous to that between Riemann and Lebesgue integration. Hence, we often call (ii) {\it Lebesgue majorization}. The quantity that appears on the right-hand side of \eqref{rtelt} is the positive excess given in Definition \ref{excessdef}, and \eqref{rtelt} shows that the excess can be expressed in a Lebesgue form, that is, in terms of the function \(\delta(\alpha,\boldsymbol{\lambda},\boldsymbol{d})\).

\begin{prop}\label{LR}
 Let $\boldsymbol d=(d_{i})_{i=1}^{\infty}, \boldsymbol\lambda=(\lambda_{i})_{i=1}^{\infty}$ be sequences in $c_0^+$. Then, the following two conditions are equivalent:
\begin{enumerate}
\item \(\boldsymbol d\prec \boldsymbol \lambda\),
\item $\delta(\alpha,\boldsymbol\lambda,\boldsymbol d)\geq 0$ for all $\alpha> 0$.
\end{enumerate}
In this case 
\begin{equation}\label{rtelt}\liminf_{\alpha\searrow 0} \delta(\alpha,\boldsymbol\lambda,\boldsymbol d) = \liminf_{k\to\infty}\sum_{i=1}^{k}(\lambda^\downarrow_{i}-d^\downarrow_{i}).\end{equation}
\end{prop}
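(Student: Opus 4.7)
My plan is to establish both the equivalence (i) $\iff$ (ii) and the identity \eqref{rtelt} using a single tool: the min-plus representation of partial sums,
\[\sum_{i=1}^n c_i^\downarrow = \inf_{\alpha > 0}\bigl(S_c(\alpha) + n\alpha\bigr), \qquad c \in c_0^+,\ n\in\N,\]
where $S_c(\alpha) := \sum_i (c_i-\alpha)^+$ coincides with the function $g$ of Lemma \ref{dclem}. This identity follows from that lemma: $f_n(\alpha) := S_c(\alpha) + n\alpha$ is piecewise linear with slope $n - \#\{c_i \ge \alpha\}$, hence convex; its minimum is attained on the interval $(c_{n+1}^\downarrow, c_n^\downarrow]$ (or as $\alpha\searrow 0$ in the degenerate case), where direct evaluation yields $\sum_{i=1}^n c_i^\downarrow$.

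The equivalence is then immediate. For (ii) $\Rightarrow$ (i), the pointwise inequality $S_\lambda \ge S_d$ on $(0,\infty)$ forces $S_\lambda + n\alpha \ge S_d + n\alpha$, and taking infima yields (i). For (i) $\Rightarrow$ (ii), I fix $\alpha > 0$ and let $n := \#\{d_i \ge \alpha\}$. The case $n = 0$ is trivial since $S_d(\alpha) = 0 \le S_\lambda(\alpha)$; for $n \ge 1$, the value $\alpha$ lies in the argmin interval $(d_{n+1}^\downarrow, d_n^\downarrow]$ for $f_n$ built from $\boldsymbol d$, so
\[S_d(\alpha) + n\alpha = \sum_{i=1}^n d_i^\downarrow \le \sum_{i=1}^n \lambda_i^\downarrow \le S_\lambda(\alpha) + n\alpha,\]
whence $\delta(\alpha) = S_\lambda(\alpha) - S_d(\alpha) \ge 0$.

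For the identity \eqref{rtelt}, the same idea gives the two-sided sandwich
\[r_q \le \delta(\alpha) \le r_p, \qquad r_k := \sum_{i=1}^k(\lambda_i^\downarrow - d_i^\downarrow) \ge 0,\]
where $p := \#\{\lambda_i \ge \alpha\}$ and $q := \#\{d_i \ge \alpha\}$. The lower bound uses $S_d(\alpha) = \sum_{i=1}^q d_i^\downarrow - q\alpha$ exactly together with the min-plus inequality $S_\lambda(\alpha) + q\alpha \ge \sum_{i=1}^q \lambda_i^\downarrow$, and symmetrically for the upper bound. Under the generic assumption that $\boldsymbol\lambda$ and $\boldsymbol d$ both have infinitely many positive terms (the degenerate cases reduce via Lemma \ref{dclem} to $\liminf$s equal to $\sum\lambda_i - \sum d_i$), $p(\alpha), q(\alpha) \to \infty$ as $\alpha \searrow 0$, so the lower bound gives $\liminf_{\alpha\searrow 0}\delta(\alpha) \ge \liminf_{k\to\infty} r_k$. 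For the reverse inequality, I choose $M_k \to \infty$ with $r_{M_k} \to \liminf_k r_k$ and set $\alpha_k := \lambda_{M_k}^\downarrow > 0$; the equality $S_\lambda(\alpha_k) = \sum_{i=1}^{M_k}\lambda_i^\downarrow - M_k\alpha_k$ (any $\lambda_i^\downarrow$ equal to $\alpha_k$ contributes zero) combined with the min-plus inequality for $\boldsymbol d$ yields the sharp bound $\delta(\alpha_k) \le r_{M_k}$, and since $\alpha_k \searrow 0$ we conclude $\liminf_{\alpha\searrow 0}\delta(\alpha) \le \liminf_k r_k$.

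The main obstacle is the upper-bound half of \eqref{rtelt}: the coarse estimate $\delta(\alpha) \le r_{p(\alpha)}$ alone is insufficient, because $p(\alpha)$ can skip integer values when $\boldsymbol\lambda$ has multiplicities, so a naive subsequence argument along $\alpha \searrow 0$ need not detect $\liminf_k r_k$. The remedy is to work directly from the minimizing subsequence $M_k$ and choose $\alpha_k = \lambda_{M_k}^\downarrow$, so that ties force $S_\lambda(\alpha_k)$ to realize the truncated sum at the correct index $M_k$ regardless of multiplicity.
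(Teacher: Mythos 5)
Your proof is correct and rests on a genuinely different organizing idea than the paper's, namely the min-plus (Legendre-type) duality
\[\sum_{i=1}^n c_i^\downarrow = \inf_{\alpha>0}\bigl(S_c(\alpha)+n\alpha\bigr),\]
whereas the paper works directly in the ``Riemann'' picture. The underlying inequalities are of course the same content: your lower bound $\delta(\alpha)\ge r_{q(\alpha)}$ is the paper's estimate $\delta(\alpha)\ge\sum_{i=1}^{n_\alpha}(\lambda_i-d_i)$, which they obtain by a two-case comparison of $m_\alpha$ and $n_\alpha$ and you obtain in one line from the inf-representation. Where the approaches diverge most is the upper half of \eqref{rtelt}. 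The paper introduces the auxiliary index $\ell_k$ (the longest run of indices $\le k$ on which $d_i\ge\lambda_i$), proves $\delta(\lambda_{\ell_k})\le\sum_{i=1}^k(\lambda_i-d_i)$ by an internal case analysis, and then must split again on whether $\ell_k$ is bounded or tends to infinity; the bounded case requires yet another computation at carefully chosen knots $\lambda_{k(j)}$. You bypass all of that: picking $\alpha_k=\lambda_{M_k}^\downarrow$ along a subsequence achieving $\liminf r_k$, and observing that ties at $\alpha_k$ contribute nothing to $S_\lambda$, immediately yields $\delta(\alpha_k)\le r_{M_k}$. You also correctly flagged why the coarse bound $\delta(\alpha)\le r_{p(\alpha)}$ by itself is not enough (the set $\{p(\alpha)\}$ can skip integers at multiplicities, so it need not see the liminf-realizing indices) and why your choice of $\alpha_k$ repairs this. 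The net effect is a more modular argument: one duality lemma, applied four times, in place of the paper's hands-on case analyses; the cost is that you must first establish and trust the inf-representation (which you did, via convexity of $S_c(\alpha)+n\alpha$ and evaluation at $\alpha=c_n^\downarrow$), and you must separately dispose of the degenerate cases where one of $\boldsymbol\lambda,\boldsymbol d$ has finitely many positive terms — which you correctly reduce to Lemma \ref{dclem}, noting that majorization forces at least one of the sums $\sum\lambda_i,\sum d_i$ to be finite so no $\infty-\infty$ ambiguity arises. One tiny wording quibble: your $\alpha_k$ need not be monotone, so ``$\alpha_k\searrow 0$'' should read ``$\alpha_k\to 0$''; this does not affect the conclusion since $\liminf_{\alpha\searrow 0}\delta(\alpha)\le\liminf_k\delta(\alpha_k)$ for any positive null sequence $(\alpha_k)$.
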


\begin{proof} Without loss of generality we can assume that $\boldsymbol\lambda$ and $\boldsymbol d$ are nonincreasing. Indeed, (i) and (ii) hold for $\boldsymbol\lambda$ and $\boldsymbol d$ if and only if they hold for their decreasing rearrangements $\boldsymbol\lambda^\downarrow$ and $\boldsymbol d^\downarrow$. 

Set $\delta(\alpha)=\delta(\alpha,\boldsymbol\lambda,\boldsymbol{d})$. For each $\alpha\in(0,\infty)$ set $m_{\alpha} = \#|\{i\colon \lambda_{i}\geq\alpha\}|$ and $n_{\alpha} = \#|\{i\colon d_{i}\geq \alpha\}|$. With this notation we have
\[\delta(\alpha) = \sum_{i=1}^{m_{\alpha}}\lambda_{i} - \sum_{i=1}^{n_{\alpha}}d_{i} + \alpha(n_{\alpha}-m_{\alpha}).\]
Fix $\alpha\in(0,\infty)$. There are two cases to consider.

\textbf{Case 1.} Assume $m_{\alpha}\geq n_{\alpha}$. In this case we have
\begin{align*}
\delta(\alpha) & = \sum_{i=1}^{n_{\alpha}}\lambda_{i} - \sum_{i=1}^{n_{\alpha}}d_{i} + \sum_{i=n_{\alpha}+1}^{m_{\alpha}}\lambda_{i} + \alpha(n_{\alpha}-m_{\alpha})\\
 & \geq \sum_{i=1}^{n_{\alpha}}\lambda_{i} - \sum_{i=1}^{n_{\alpha}}d_{i} + \sum_{i=n_{\alpha}+1}^{m_{\alpha}}\alpha + \alpha(n_{\alpha}-m_{\alpha}) = \sum_{i=1}^{n_{\alpha}}(\lambda_{i}-d_{i})
\end{align*}

\textbf{Case 2.} Assume $m_{\alpha}<n_{\alpha}$. In this case
\begin{align*}
\delta(\alpha) & = \sum_{i=1}^{n_{\alpha}}\lambda_{i} - \sum_{i=1}^{n_{\alpha}}d_{i} - \sum_{i=m_{\alpha}+1}^{n_{\alpha}}\lambda_{i} + \alpha(n_{\alpha}-m_{\alpha})\\
 & > \sum_{i=1}^{n_{\alpha}}\lambda_{i} - \sum_{i=1}^{n_{\alpha}}d_{i} - \sum_{i=n_{\alpha}+1}^{m_{\alpha}}\alpha + \alpha(n_{\alpha}-m_{\alpha}) = \sum_{i=1}^{n_{\alpha}}(\lambda_{i}-d_{i})
\end{align*}

In either case, for every $\alpha\in(0,\infty)$ we have
\begin{equation}\label{ril}\delta(\alpha)\geq \sum_{i=1}^{n_{\alpha}}(\lambda_{i}-d_{i}).\end{equation}
It is clear from \eqref{ril} that (i) implies (ii).

Next, for each $k\in\N$ define
\[\ell_{k}=\max\{i\leq k\colon d_{i}\geq\lambda_{i}\text{ for }i=m,m+1,\ldots,k\}.\] 

We need to consider two possibilities.

\noindent\textbf{Case 1:} $d_k \ge \lambda_k$. Let $k'\in\N$ be the largest index such that $d_{k'} \ge \lambda_k$. We have
\[
\begin{aligned}
\delta(\lambda_{\ell_{k}}) = \delta(\lambda_k)  =  \sum_{i=1}^k (\lambda_i - \lambda_k) 
- \sum_{i=1}^{k'} (d_i - \lambda_k) \le  
\sum_{i=1}^k (\lambda_i - \lambda_k) 
- \sum_{i=1}^{k} (d_i - \lambda_k) =\sum_{i=1}^k (\lambda_i - d_i).
\end{aligned}
\]
{\bf Case 2:} $d_k< \lambda_k$. From the definition of $\ell_{k}$ we have
\[
d_i < \lambda_i \qquad\text{for }i=\ell_{k}+1,\ell_{k}+2,\ldots,k.
\]
Then,
\[
\sum_{i=1}^k (\lambda_i - d_i) = \sum_{i=1}^{\ell_{k}} (\lambda_i-d_i) + \sum_{i=\ell_{k}+1}^k (\lambda_i - d_i) > \sum_{i=1}^{\ell_{k}} (\lambda_i-d_i).
\]
By definition $d_{\ell_{k}}\geq \lambda_{\ell_{k}}$ and $\ell_{\ell_{k}}=\ell_{k}$. By Case 1 we have $\sum_{i=1}^{\ell_{k}}(\lambda_{i}-d_{i})\geq \delta(\lambda_{\ell_{k}})$. Thus, for all $k\in\N$ we have
\begin{equation}\label{lir}\delta(\lambda_{\ell_{k}})\leq \sum_{i=1}^{k}(\lambda_{i}-d_{i}).\end{equation}
From \eqref{lir} we see that (ii) implies (i).

Finally, we wish to prove \eqref{rtelt}. Note that if either $\boldsymbol\lambda$ or $\boldsymbol{d}$ is summable, then from Lemma \ref{dclem} we have
\[\lim_{\alpha\searrow0}\delta(\alpha) = \lim_{\alpha\searrow0}\sum_{i\colon \lambda_{i}\geq\alpha}(\lambda_{i}-\alpha) - \lim_{\alpha\searrow0}\sum_{i\colon d_{i}\geq\alpha}(d_{i}-\alpha) = \sum_{i=1}^{\infty}\lambda_{i}- \sum_{i=1}^{\infty}d_{i} =  \sum_{i=1}^{\infty}(\lambda_{i}-d_{i}) .\]
Thus, we may assume both $\boldsymbol\lambda$ and $\boldsymbol{d}$ are nonsummable.

Since $\boldsymbol{d}$ is nonsummable, we see that $n_{\alpha}\to\infty$ as $\alpha\searrow0$. Using \eqref{ril} yields
\begin{equation}\label{lgr}\liminf_{\alpha\searrow0}\delta(\alpha)\geq\liminf_{\alpha\searrow0}\sum_{i=1}^{n_{\alpha}}(\lambda_{i}-d_{i})\geq \liminf_{k\to\infty}\sum_{i=1}^{k}(\lambda_{i}-d_{i}).\end{equation}

Next, we would like to use \eqref{lir} to demonstrate the reverse inequality to \eqref{lgr}. However, it may not be true that $\ell_{k}\to\infty$ as $k\to\infty$. Thus, we must consider these two cases.

\textbf{Case 1:} There exists $M>0$ such that $\ell_{k}<M$ for all $k\in\N$. In this case $d_{i}<\lambda_{i}$ for all $i\geq M$. For each $j>M$ fix $k=k(j)>j$ such that $\lambda_{k+1}<\lambda_{k}$. Since $d_{k}<\lambda_{k}$, we see that $k':=\#|\{i\colon d_{i}\geq\lambda_{k}\}|<k$ and $d_{i}<\lambda_{k}$ for $i>k'$. Using this we have
\begin{align*}\delta(\lambda_{k}) & = \sum_{\lambda_{i}\geq\lambda_{k}}(\lambda_{i}-\lambda_{k}) - \sum_{d_{i}\geq\lambda_{k}}(d_{i}-\lambda_{k}) = \sum_{i=1}^{k}\lambda_{i} - \sum_{i=1}^{k'}d_{i} - \lambda_{k}(k - k')\\
 & = \sum_{i=1}^{k}(\lambda_{i}-d_{i})+\sum_{i=k'+1}^{k}d_{i} - \lambda_{k}(k - k') = \sum_{i=1}^{k}(\lambda_{i}-d_{i})+\sum_{i=k'+1}^{k}(d_{i} - \lambda_{k}) \leq \sum_{i=1}^{k}(\lambda_{i}-d_{i}).
\end{align*}
Since $\sum_{i=1}^{N}(\lambda_{i}-d_{i})$ is increasing for $N>M$, and $k(j)\to\infty$ as $j\to\infty$ we have
\[\liminf_{\alpha\searrow0}\delta(\alpha)\leq \liminf_{j\to\infty}\delta(\lambda_{k(j)})\leq \liminf_{j\to\infty}\sum_{i=1}^{k(j)}(\lambda_{i}-d_{i}) = \liminf_{k\to\infty}\sum_{i=1}^{k}(\lambda_{i}-d_{i}).\]

\textbf{Case 2:} $\ell_{k}\to\infty$ as $k\to\infty$. From \eqref{lir} we have
\[\liminf_{\alpha\searrow0}\delta(\alpha)\leq \liminf_{k\to\infty}\delta(\lambda_{\ell_{k}})\leq \liminf_{k\to\infty}\sum_{i=1}^{k}(\lambda_{i}-d_{i}).\]
In either case, we have shown the reverse inequality to \eqref{lgr}, and thus we have demonstrated \eqref{rtelt}.
\end{proof}

The following result shows that a stronger identity on the excess holds when the sequence $\boldsymbol \lambda$ is summable.

\begin{prop}\label{LRS} Let \(\boldsymbol d = (d_{i})_{i\in I}\) and \(\boldsymbol\lambda = (\lambda_{i})_{i\in J}\) be nonnegative sequences. If \(\boldsymbol\lambda\) is summable and \(\delta(\alpha,\boldsymbol\lambda,\boldsymbol d)\geq 0\) for all \(\alpha>0\), then \(\boldsymbol{d}\) is summable, and
\[\liminf_{\alpha\searrow 0}\delta(\alpha,\boldsymbol\lambda,\boldsymbol d) = \sum_{j\in J}\lambda_{j} - \sum_{i\in I}d_{i}.\]
\end{prop}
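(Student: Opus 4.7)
The plan is to use Lemma \ref{dclem} twice, once to deduce summability of $\boldsymbol d$ from that of $\boldsymbol\lambda$, and once to evaluate the limit of $\delta(\alpha,\boldsymbol\lambda,\boldsymbol d)$ as $\alpha\searrow 0$. The bulk of the work is just bookkeeping since, under the summability hypothesis, both ``halves'' of $\delta$ have proper limits rather than merely liminfs, so the identity collapses out cleanly.

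First I would show that $\boldsymbol d\in\ell^1$. For each $\alpha>0$, the hypothesis $\delta(\alpha,\boldsymbol\lambda,\boldsymbol d)\ge 0$ reads
\[
\sum_{d_i\ge\alpha}(d_i-\alpha)\ \le\ \sum_{\lambda_j\ge\alpha}(\lambda_j-\alpha)\ \le\ \sum_{j\in J}\lambda_j,
\]
where the last inequality uses $\lambda_j-\alpha\le\lambda_j$ together with nonnegativity. Letting $\alpha\searrow 0$ and applying \eqref{dclem0} of Lemma \ref{dclem} to the left-hand side yields
\[
\sum_{i\in I} d_i\ =\ \lim_{\alpha\searrow 0}\sum_{d_i\ge\alpha}(d_i-\alpha)\ \le\ \sum_{j\in J}\lambda_j<\infty,
\]
so $\boldsymbol d$ is summable.

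Next I would evaluate $\lim_{\alpha\searrow 0}\delta(\alpha,\boldsymbol\lambda,\boldsymbol d)$. By Lemma \ref{dclem} applied separately to $\boldsymbol\lambda$ and $\boldsymbol d$,
\[
\lim_{\alpha\searrow 0}\sum_{\lambda_j\ge\alpha}(\lambda_j-\alpha)=\sum_{j\in J}\lambda_j,\qquad \lim_{\alpha\searrow 0}\sum_{d_i\ge\alpha}(d_i-\alpha)=\sum_{i\in I}d_i,
\]
and both limits are now known to be finite. Hence the difference has a genuine limit,
\[
\lim_{\alpha\searrow 0}\delta(\alpha,\boldsymbol\lambda,\boldsymbol d)\ =\ \sum_{j\in J}\lambda_j-\sum_{i\in I}d_i,
\]
and since the limit exists, it coincides with $\liminf_{\alpha\searrow 0}\delta(\alpha,\boldsymbol\lambda,\boldsymbol d)$, completing the proof.

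There is no real obstacle here; the only subtle point is recognizing that the majorization inequality, when combined with the trivial bound $\lambda_j-\alpha\le\lambda_j$, converts summability of $\boldsymbol\lambda$ into summability of $\boldsymbol d$. Once that is in hand, Lemma \ref{dclem} handles both terms independently, which is exactly the improvement over the general liminf identity \eqref{rtelt} in Proposition \ref{LR}.
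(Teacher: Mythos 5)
Your argument is correct in substance, and it is in fact a more self-contained route than the paper's: the paper first invokes Proposition \ref{LR} to convert the hypothesis into Riemann majorization $\boldsymbol d\prec\boldsymbol\lambda$, deduces summability of $\boldsymbol d$ from that, and then reads off the identity from \eqref{rtelt}; you instead work directly with the Lebesgue-side quantities and Lemma \ref{dclem}, which is the same computation Proposition \ref{LR} performs internally in the summable case, just without the detour through decreasing rearrangements. That is a reasonable simplification.

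There is, however, one small gap you should patch. Lemma \ref{dclem} requires its sequence to lie in $c_{0}^{+}$, but the statement of Proposition \ref{LRS} only assumes $\boldsymbol d$ is nonnegative, so before you may invoke \eqref{dclem0} on the $\boldsymbol d$-half of $\delta$ you need to first establish $\boldsymbol d\in c_{0}$. The paper handles this explicitly as its opening step: if $\boldsymbol d\notin c_{0}$ then $\sum_{d_{i}\ge\alpha}(d_{i}-\alpha)=\infty$ for some $\alpha>0$, making $\delta(\alpha,\boldsymbol\lambda,\boldsymbol d)=-\infty$ and contradicting the hypothesis. Your own first displayed inequality already contains everything needed for this, since $\sum_{d_{i}\ge\alpha}(d_{i}-\alpha)\le\sum_{j\in J}\lambda_{j}<\infty$ for every $\alpha>0$ forces each set $\{i:d_{i}\ge\alpha\}$ to be finite (if $\{i:d_{i}\ge\beta\}$ were infinite, take any $\alpha<\beta$ and the sum over $i$ with $d_{i}\ge\alpha$ would diverge because each such term is at least $\beta-\alpha>0$). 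You should state this deduction before appealing to Lemma \ref{dclem}; once that is in place the rest of the proof is fine.
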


\begin{proof} Without loss of generality, we may assume both \(I\) and \(J\) are countably infinite by appending infinitely many zeros to the sequences. If \(\boldsymbol{d}\notin c_{0}\), then we see from \eqref{delta} that \(\delta(\alpha,\boldsymbol{\lambda},\boldsymbol{d})=-\infty\) for some \(\alpha>0\). Therefore, \(\boldsymbol{d}\in c_{0}\). By Proposition \ref{LR} we see that \(\boldsymbol{d}\prec\boldsymbol\lambda\), and hence \(\boldsymbol d\) is summable. The desired equality follows from \eqref{rtelt}.
\end{proof}

\section{Necessary conditions for one-sided compact operators}

In this section we show necessary conditions on diagonals of operators whose positive (or negative) part is compact. When the excess is zero, we show that the operator must decouple into two simpler operators, see Proposition \ref{p211}. Moreover, we show several estimates on the excess that play the role of trace conditions when the operator is not necessarily trace class. We shall use the following result \cite[Theorem 3.1]{npt}.

\begin{thm}\label{ext} Let $E$ be a self-adjoint operator on a Hilbert space $\Hil$
\[\sigma(E)\subset \{\mu_{1},\ldots,\mu_{m-1}\}\cup[\mu_{m},\infty),\]
where $\mu_{1}<\ldots<\mu_{m}$, $m\geq 2$. Let $(e_{i})_{i\in I}$ be an orthonormal basis for $\Hil$ and set $d_{i} = \langle Ee_{i},e_{i}\rangle$. Then, for any $r=2,\ldots,m$
\[\sum_{d_{i}\leq\mu_{r}}(\mu_{r}-d_{i})\leq \sum_{j=1}^{r-1}(\mu_{r}-\mu_{j})m_{E}(\mu_{j}),\]
where $m_{E}(\mu) = \dim\ker(E-\mu \mathbf I)$.
\end{thm}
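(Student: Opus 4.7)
The plan is to rephrase the inequality in terms of the translated self-adjoint operator $F = \mu_r \mathbf{I} - E$ and exploit the pointwise bound $\langle F_+ e_i, e_i \rangle \geq \max(\langle F e_i, e_i\rangle, 0)$, which holds for any self-adjoint $F$ and any unit vector $e_i$ because $F_+ \geq F$ and $F_+ \geq 0$. This reduces the problem to computing the trace of $F_+$ and showing it equals the right-hand side of the claim.

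First I would analyze the spectrum of $F$. The hypothesis $\sigma(E) \subset \{\mu_1, \ldots, \mu_{m-1}\}\cup [\mu_m,\infty)$ translates to $\sigma(F) \subset \{\mu_r - \mu_1, \ldots, \mu_r - \mu_{m-1}\} \cup (-\infty, \mu_r - \mu_m]$. Since $r \leq m$ and $\mu_1 < \cdots < \mu_m$, the values $\mu_r - \mu_j$ are strictly positive precisely for $j < r$, and the rest of $\sigma(F)$ lies in $(-\infty, 0]$. Using the spectral decomposition $\Hil = \bigoplus_{j=1}^{m-1}\ker(E-\mu_j\mathbf{I}) \oplus \Hil_m$, where $E|_{\Hil_m}$ has spectrum in $[\mu_m,\infty)$, one sees that $F_+$ is supported on $\bigoplus_{j=1}^{r-1}\ker(E-\mu_j\mathbf{I})$ and acts there as multiplication by $\mu_r - \mu_j$. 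If some $m_E(\mu_j)$ with $j<r$ is infinite, the right-hand side is $+\infty$ and the bound is vacuous; otherwise $F_+$ is a positive finite-rank operator with
$$\tr F_+ = \sum_{j=1}^{r-1}(\mu_r - \mu_j)\, m_E(\mu_j),$$
which matches the right-hand side.

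To conclude, I apply the pointwise inequality with $\langle F e_i, e_i\rangle = \mu_r - d_i$ and sum over $i$:
$$\sum_{d_i \leq \mu_r}(\mu_r - d_i) \;=\; \sum_{i\in I} (\mu_r - d_i)_+ \;\leq\; \sum_{i\in I} \langle F_+ e_i, e_i\rangle \;=\; \tr F_+.$$
The last equality uses that the trace of a positive operator equals the sum of its diagonal entries in any orthonormal basis, which is unambiguous here since $F_+$ is finite-rank. Combining with the formula for $\tr F_+$ yields the claimed estimate. There is essentially no hard step: the main point to be careful about is the case analysis on whether $\mu_j$ lies in $\sigma(E)$ (if not, $m_E(\mu_j)=0$ and the term drops out harmlessly) and the separation of the infinite-multiplicity case, so that the identification of $F_+$ with a trace-class operator is rigorous.
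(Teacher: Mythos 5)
Your proof is correct, and I note that the present paper does not actually prove this statement; it is imported as \cite[Theorem~3.1]{npt} and used as a black box, so there is no in-paper argument to compare against. Your approach---pass to $F = \mu_r \mathbf{I} - E$, read off from the spectral hypothesis that $F_+ = \sum_{j<r}(\mu_r - \mu_j) P_{j}$ where $P_{j}$ is the orthogonal projection onto $\ker(E - \mu_j \mathbf{I})$, and combine the operator inequalities $F_+ \geq F$ and $F_+ \geq 0$ with basis-independence of $\sum_i \langle F_+ e_i, e_i\rangle = \tr F_+$---is clean and complete. Two points you handle implicitly but correctly: the infinite-multiplicity case, where the right-hand side is $+\infty$ and the bound is vacuous; and the case where $E$ is unbounded above (which the spectral hypothesis permits), where $F_+$ nonetheless remains bounded since the positive part of $\sigma(F)$ is a finite set bounded above by $\mu_r - \mu_1$, so the quadratic-form inequalities $\langle F_+ e_i, e_i\rangle \geq (\mu_r - d_i)_+$ make sense on every basis vector for which $d_i$ is defined. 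No gaps.
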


Using Proposition \ref{del} and Theorem \ref{ext} we prove a generalization of Schur's Theorem, which shows that majorization is a necessary condition on diagonals of operators whose negative (or positive) part is compact.

\begin{thm}\label{cptschurv2} Let \(E\) be a self-adjoint operator on a Hilbert space \(\Hil\) with compact negative part, and let \((e_{i})_{i\in I}\) be an orthonormal basis for \(\Hil\). Set \(\boldsymbol{d} = (\langle Ee_{i},e_{i}\rangle)_{i\in I}\), and let \(\boldsymbol\lambda\) be the sequence of strictly negative eigenvalues of \(E\), counted with multiplicity. Then \(\delta(\alpha,\boldsymbol\lambda,\boldsymbol{d})\geq 0\) for all \(\alpha<0\).\end{thm}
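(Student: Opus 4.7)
The plan is to reduce the claim, via a careful framing of the spectrum of $E$ below $\alpha$, to a direct application of Theorem \ref{ext} with top threshold $\mu_m=\alpha$.

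First I would set up the data. Since $E_{-}$ is compact and positive, its diagonal entries $\langle E_{-}e_i,e_i\rangle$ tend to $0$; combined with the identity $d_i=\langle E_+e_i,e_i\rangle-\langle E_-e_i,e_i\rangle\geq -\langle E_-e_i,e_i\rangle$, this forces $\boldsymbol d_{-}\in c_0^+$. Likewise $\boldsymbol\lambda\in c_0^-$, since the strictly negative eigenvalues of a compact self-adjoint operator form a null sequence. Consequently, for each fixed $\alpha<0$, only finitely many $\lambda_i$'s and only finitely many $d_i$'s satisfy the condition $\cdot\le\alpha$, so both sums appearing in $\delta(\alpha,\boldsymbol\lambda,\boldsymbol d)$ are finite and the quantity is well-defined.

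Fix $\alpha<0$ and let $\nu_1<\nu_2<\cdots<\nu_k$ be the distinct eigenvalues of $E$ lying in $(-\infty,\alpha)$; there are only finitely many of these, as $0$ is the only possible accumulation point of the negative spectrum. If $k\ge 1$, I would set $\mu_j=\nu_j$ for $j=1,\ldots,k$ and $\mu_{k+1}=\alpha$; if $k=0$, I would choose any $\mu_1<\alpha$ with $\mu_1\notin\sigma(E)$ (so $m_E(\mu_1)=0$) and set $\mu_2=\alpha$. In both cases
\[
\sigma(E)\subset\{\mu_1,\ldots,\mu_{m-1}\}\cup[\mu_m,\infty),\qquad m\ge 2,
\]
so Theorem \ref{ext} applies with $r=m$ and yields
\[
\sum_{d_i\le\alpha}(\alpha-d_i)\;\le\;\sum_{j=1}^{m-1}(\alpha-\mu_j)\,m_E(\mu_j).
\]

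To conclude, I would identify the right-hand side with $\sum_{\lambda_i\le\alpha}(\alpha-\lambda_i)$. In the $k\ge 1$ case, each $\mu_j$ ($j<m$) is a strictly negative eigenvalue of $E$ appearing in the list $\boldsymbol\lambda$ exactly $m_E(\mu_j)$ times, so the sum collapses to $\sum_{\lambda_i<\alpha}(\alpha-\lambda_i)$; in the $k=0$ case the single term vanishes, as does the corresponding $\lambda$-sum. Either way, adding in the trivially zero contributions from $\lambda_i=\alpha$, the right-hand side equals $\sum_{\lambda_i\le\alpha}(\alpha-\lambda_i)$, and substitution into \eqref{delta} gives $\delta(\alpha,\boldsymbol\lambda,\boldsymbol d)\ge 0$, as required. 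The only wrinkle I anticipate is the bookkeeping around these edge cases, in particular the need to introduce a phantom $\mu_1$ to meet the $m\ge 2$ hypothesis of Theorem \ref{ext} when no eigenvalue lies strictly below $\alpha$, and the verification that $\alpha$ itself being an eigenvalue does not corrupt either side of the inequality. Conceptually the argument is otherwise a direct translation of Theorem \ref{ext} into the Lebesgue-majorization language of $\delta$.
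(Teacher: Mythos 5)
Your proof is correct, and it takes a genuinely different route from the paper's. The paper first applies Theorem \ref{ext} at the eigenvalue thresholds $\mu_r$ of $E$ (the natural knot points of $\delta$), obtaining $\delta(\lambda_j,\boldsymbol\lambda,\boldsymbol d)\geq 0$ at every strictly negative eigenvalue, and then invokes Proposition \ref{del} — the piecewise-linear concavity argument — to interpolate to all $\alpha<0$. You instead apply Theorem \ref{ext} once for each fixed $\alpha<0$, taking the finitely many distinct eigenvalues of $E$ strictly below $\alpha$ as $\mu_1,\ldots,\mu_{m-1}$ and setting $\mu_m=\alpha$ (with the phantom $\mu_1$ when $k=0$), which gives $\delta(\alpha,\boldsymbol\lambda,\boldsymbol d)\geq 0$ directly with no interpolation step. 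The identification of $\sum_{j=1}^{m-1}(\alpha-\mu_j)m_E(\mu_j)$ with $\sum_{\lambda_i\le\alpha}(\alpha-\lambda_i)$ is correct (terms with $\lambda_i=\alpha$ contribute zero, and phantom $\mu_1$ contributes zero via $m_E(\mu_1)=0$), and the observation that $\sigma(E)\cap(-\infty,\alpha)$ is finite follows from the compactness of $E_{-}$. What your approach buys is independence from Proposition \ref{del}; what it costs is the per-$\alpha$ spectral bookkeeping and the $k=0$ edge case, whereas the paper, having established Proposition \ref{del} for other purposes, only needs to check $\delta$ at the countably many eigenvalues once and then interpolate. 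Both arguments are valid.
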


\begin{proof} Let $(\mu_{i})_{i=1}^{M-1}$ ($M\in\N\cup\{\infty\}$) denote the distinct strictly negative eigenvalues of $E$ in strictly increasing order, and set $\mu_{M}=0$. For \(\alpha\in\R\) let $\delta(\alpha)=\delta(\alpha,\boldsymbol\lambda,\boldsymbol{d})$, and let \(m_{E}(\alpha) = \operatorname{dim}\operatorname{ker}(E-\alpha\mathbf{I})\). By Theorem \ref{ext} for all $r<M+1$ we have
\begin{align*}
\delta(\mu_{r}) & = \sum_{\lambda_{i}\leq\mu_{r}}(\mu_{r}-\lambda_{i})-\sum_{d_{i}\leq\mu_{r}}(\mu_{r}-d_{i}) = \sum_{j=1}^{r-1}(\mu_{r}-\mu_{j})m_{E}(\mu_{j}) - \sum_{d_{i}\leq\mu_{r}}(\mu_{r}-d_{i})\geq0.
\end{align*}
This shows that $\delta(\lambda_{j})\geq0$ for all $j$ such that $\lambda_{j}<0$. Applying Proposition \ref{del} to \(\boldsymbol\lambda\) and \((d_{i})_{d_{i}<0}\) we see that \(\delta(\alpha)\geq 0\) for all \(\alpha<0\).
\end{proof}

\begin{cor}\label{cptschur} Let $E$ be a self-adjoint compact operator for a Hilbert space $\Hil$, and let $(e_{i})_{i\in I}$ be an orthonormal basis for $\Hil$. Set $\boldsymbol{d}=(\langle Ee_{i},e_{i}\rangle)_{i\in I}$ and let $\boldsymbol\lambda\in c_{0}(I)$ be the eigenvalue sequence of $E$. Then, $\delta(\alpha,\boldsymbol\lambda,\boldsymbol d)\geq0$ for all $\alpha\neq 0$.
\end{cor}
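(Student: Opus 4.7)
The strategy is to apply Theorem \ref{cptschurv2} twice: once to \(E\) itself to handle the case \(\alpha<0\), and once to \(-E\) to handle the case \(\alpha>0\). Both operators are compact self-adjoint, so in particular both have compact negative part, which is the hypothesis needed for Theorem \ref{cptschurv2}.

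For \(\alpha<0\), I would first observe that \(\delta(\alpha,\boldsymbol\lambda,\boldsymbol d)\) depends only on the strictly negative eigenvalues of \(E\). Indeed, by Definition \ref{deltadef}, for \(\alpha<0\) the sum \(\sum_{\lambda_i\le\alpha}(\alpha-\lambda_i)\) only collects indices with \(\lambda_i\le\alpha<0\), so nonnegative eigenvalues of \(E\) contribute nothing. Consequently, if \(\boldsymbol\lambda^{\mathrm{neg}}\) denotes the subsequence of strictly negative eigenvalues of \(E\) counted with multiplicity, then
\[
\delta(\alpha,\boldsymbol\lambda,\boldsymbol d)=\delta(\alpha,\boldsymbol\lambda^{\mathrm{neg}},\boldsymbol d).
\]
Applying Theorem \ref{cptschurv2} directly to \(E\) gives \(\delta(\alpha,\boldsymbol\lambda^{\mathrm{neg}},\boldsymbol d)\ge 0\) for all \(\alpha<0\), which yields the desired inequality.

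For \(\alpha>0\), the plan is to reduce to the previous case by replacing \(E\) with \(-E\). The operator \(-E\) is compact self-adjoint with eigenvalue sequence \(-\boldsymbol\lambda\) and diagonal \(-\boldsymbol d\) in the orthonormal basis \((e_i)_{i\in I}\); its strictly negative eigenvalues correspond precisely to the strictly positive eigenvalues of \(E\). The key computation is the substitution \(\alpha\mapsto-\alpha\): for \(\alpha>0\),
\[
\delta(-\alpha,-\boldsymbol\lambda,-\boldsymbol d)=\sum_{-\lambda_i\le -\alpha}\bigl(-\alpha-(-\lambda_i)\bigr)-\sum_{-d_i\le -\alpha}\bigl(-\alpha-(-d_i)\bigr)=\sum_{\lambda_i\ge\alpha}(\lambda_i-\alpha)-\sum_{d_i\ge\alpha}(d_i-\alpha)=\delta(\alpha,\boldsymbol\lambda,\boldsymbol d).
\]
Applying the case \(\alpha<0\) (already established) to \(-E\) gives \(\delta(-\alpha,-\boldsymbol\lambda,-\boldsymbol d)\ge 0\), hence \(\delta(\alpha,\boldsymbol\lambda,\boldsymbol d)\ge 0\).

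There is no real obstacle here; the proof is essentially bookkeeping, and the only thing that requires a moment of care is verifying that the indexing set in Theorem \ref{cptschurv2} (strictly negative eigenvalues only) matches what \(\delta(\alpha,\boldsymbol\lambda,\cdot)\) actually ``sees'' for \(\alpha<0\), together with the symmetry \(E\leftrightarrow -E\) that swaps positive and negative spectra.
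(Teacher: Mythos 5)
Your proof is correct and follows exactly the same approach as the paper's: apply Theorem \ref{cptschurv2} to $E$ to handle $\alpha<0$, and to $-E$ (after the sign substitution $\alpha\mapsto-\alpha$) to handle $\alpha>0$. The paper states this in two sentences; you have simply written out the bookkeeping — in particular the observation that for $\alpha<0$ the function $\delta(\alpha,\boldsymbol\lambda,\boldsymbol d)$ only sees strictly negative eigenvalues, which is what reconciles the full eigenvalue list of the corollary with the negative-eigenvalue list in Theorem \ref{cptschurv2} — in full detail.
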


\begin{proof} By Theorem \ref{cptschurv2} we see that \(\delta(\alpha,\boldsymbol\lambda,\boldsymbol d)\geq 0\) for all \(\alpha<0\). The same argument applied to $-E$ yields $\delta(\alpha,\boldsymbol\lambda,\boldsymbol d)\geq 0$ for all $\alpha>0$.
\end{proof}

The following is an elementary lemma that will be used to show that the operator decouples when the excess is zero.

\begin{lem}\label{spread} Let $E$ be a self-adjoint operator, and let $\{e_{1},e_{2}\}$ be an orthonormal set. Assume that $\langle Ee_{1},e_{1}\rangle\geq \langle Ee_{2},e_{2}\rangle$. If $\langle Ee_{1},e_{2}\rangle\neq 0$ then there is an orthonormal set $\{f_{1},f_{2}\}$ such that $\lspan\{e_{1},e_{2}\}=\lspan\{f_{1},f_{2}\}$ and 
\[\langle Ef_{1},f_{1}\rangle >\langle Ee_{1},e_{1}\rangle\geq \langle Ee_{2},e_{2}\rangle > \langle Ef_{2},f_{2}\rangle\]
\end{lem}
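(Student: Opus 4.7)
The plan is to reduce everything to a $2\times 2$ problem by compressing $E$ to the two-dimensional subspace $V = \lspan\{e_1, e_2\}$. For any orthonormal basis $\{f_1, f_2\}$ of $V$ we have $\langle E f_i, f_i\rangle = \langle M f_i, f_i\rangle$, where $M$ denotes the compression of $E$ to $V$; hence it suffices to diagonalize $M$ on this 2D subspace and take $f_1, f_2$ to be its orthonormal eigenvectors.

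Concretely, set $a = \langle Ee_1, e_1\rangle$ and $b = \langle Ee_2, e_2\rangle$, so by hypothesis $a \geq b$, and $c = \langle Ee_1, e_2\rangle$, so by hypothesis $|c| > 0$. In the ordered basis $(e_1, e_2)$, self-adjointness of $E$ yields the Hermitian matrix representation
\[
M = \begin{pmatrix} a & \overline{c} \\ c & b \end{pmatrix}.
\]
Its eigenvalues are given by the standard formula
\[
\mu_{\pm} = \frac{a+b}{2} \pm \sqrt{\left(\frac{a-b}{2}\right)^2 + |c|^2}.
\]

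Because $|c| > 0$, the square root strictly exceeds $(a-b)/2 \geq 0$. This immediately gives $\mu_+ > a$, and since $\mu_+ + \mu_- = a + b$, also $\mu_- < b$. Since $M$ is Hermitian with distinct eigenvalues $\mu_+ \neq \mu_-$, I can pick orthonormal eigenvectors $f_1, f_2 \in V$ for $\mu_+, \mu_-$ respectively; these automatically satisfy $\lspan\{f_1, f_2\} = V = \lspan\{e_1, e_2\}$ and
\[
\langle Ef_1, f_1\rangle = \mu_+ > a \geq b > \mu_- = \langle Ef_2, f_2\rangle,
\]
which is exactly the conclusion of the lemma.

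I anticipate no real obstacle here; the argument is a routine $2\times 2$ spectral calculation. The only delicate point is verifying the strict inequalities, which hinge on the elementary observation that $\sqrt{((a-b)/2)^2 + |c|^2} > (a-b)/2$ whenever $|c| > 0$.
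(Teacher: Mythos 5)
Your proof is correct, but it takes a genuinely different route from the paper. You diagonalize the $2\times 2$ Hermitian compression of $E$ to $V=\lspan\{e_1,e_2\}$ and take $f_1,f_2$ to be orthonormal eigenvectors; the explicit eigenvalue formula $\mu_{\pm}=\frac{a+b}{2}\pm\sqrt{((a-b)/2)^2+|c|^2}$ immediately yields $\mu_+>a\geq b>\mu_-$ whenever $|c|>0$. The paper instead uses a perturbative rotation argument: it defines $f_1(\theta)=\cos\theta\,e_1-\alpha\sin\theta\,e_2$, computes $g(\theta)=\langle Ef_1(\theta),f_1(\theta)\rangle$, and observes that $g'(0)=-2|\langle Ee_1,e_2\rangle|\neq 0$, so a small rotation in the right direction increases $\langle Ef_1,f_1\rangle$ above $\langle Ee_1,e_1\rangle$, with the companion inequality for $f_2$ following from conservation of trace on the 2D subspace. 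Your spectral approach is cleaner and actually identifies the extremal basis (the eigenbasis maximizes the spread of the diagonal entries), whereas the paper's rotation calculus is deliberately non-explicit and slots naturally alongside the paper's workhorse Lemma~\ref{offdiag}, which is proved by the same one-parameter rotation device. One very small cosmetic note: if you set $M_{ij}=\langle Me_j,e_i\rangle$ with $c=\langle Ee_1,e_2\rangle$, then $M_{21}=c$ and $M_{12}=\overline{c}$, which is the matrix you wrote; just be explicit about the convention so the reader does not pause over the placement of the conjugate. None of this affects the eigenvalues, which depend only on $a,b,|c|$.
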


\begin{proof} Set $\alpha = \langle Ee_{1},e_{2}\rangle/\abs{\langle Ee_{1},e_{2}\rangle}$. For each $\theta\in\R$ set
\[f_{i}(\theta) = \begin{cases} \cos\theta e_{1} - \alpha\sin\theta e_{2} & i=1\\ \sin\theta e_{1} + \alpha\cos\theta e_{2} & i=2,\end{cases}\]
and define the function 
\begin{align*}
g(\theta) &  = \langle Ef_{1}(\theta),f_{1}(\theta)\rangle = \cos^{2}\theta\langle Ee_{1},e_{1}\rangle + \sin^{2}\theta\langle Ee_{2},e_{2}\rangle - 2\sin\theta\cos\theta\operatorname{Re}(\overline{\alpha}\langle Ee_{1},e_{2}\rangle)\\
 & = \cos^{2}\theta\langle Ee_{1},e_{1}\rangle + \sin^{2}\theta\langle Ee_{2},e_{2}\rangle - 2\sin\theta\cos\theta\abs{\langle Ee_{1},e_{2}\rangle}.
\end{align*}
Differentiating we have
\begin{align*}
g'(\theta) & = -2\cos\theta\sin\theta\langle Ee_{1},e_{1}\rangle + 2\cos\theta\sin\theta\langle Ee_{2},e_{2}\rangle - 2(\cos^{2}\theta-\sin^{2}\theta)|\langle Ee_{1},e_{2}\rangle|,
\end{align*}
and $g'(0) = -2|\langle Ee_{1},e_{2}\rangle|$. Thus, for some $\theta_{0}<0$ we have 
\[\langle Ef_{1}(\theta_{0}),f_{1}(\theta_{0})\rangle=g(\theta_{0})>g(0)=\langle Ee_{1},e_{1}\rangle.\]
Setting $f_{1}=f_{1}(\theta_{0})$ and $f_{2}=f_{2}(\theta_{0})$ gives the desired result, since
\[\langle Ef_{1},f_{1}\rangle +\langle Ef_{2},f_{2}\rangle = \langle Ee_{1},e_{1}\rangle + \langle Ee_{2},e_{2}\rangle.
\qedhere
\]
\end{proof}

\begin{prop}\label{p211}
Let $E$ be a self-adjoint operator on $\mathcal H$ with the eigenvalue list (with multiplicity) $\boldsymbol \lambda$, which is possibly an empty list. Let $\boldsymbol d$ be a diagonal of $E$ with respect to some orthonormal basis $(e_i)_{i\in I}$. Assume that either:
\begin{itemize}
\item the positive part $E_{+}$ is compact and 
$\displaystyle
\liminf_{\alpha\searrow 0}\delta(\alpha,\boldsymbol\lambda,\boldsymbol d) =0
$,
or 
\item the negative part $E_-$ is compact and
$\displaystyle
\liminf_{\alpha\nearrow 0}\delta(\alpha,\boldsymbol\lambda,\boldsymbol d) =0$.
\end{itemize}
Then,
the operator $E$ decouples at the point $0$. That is, $E$ is block diagonal with respect to subspaces  
\begin{equation}\label{p210}
\mathcal H_0=\overline{\lspan }\{ e_i: d_i < 0 \}\qquad\text{and}\qquad \mathcal H_1=\overline{\lspan} \{ e_i: d_i \ge 0 \},
\end{equation}
and the spectra of each block satisfy
\begin{equation}\label{p212}
\sigma(E|_{\mathcal H_0}) \subset (-\infty,0]\qquad\text{and}\qquad\sigma(E|_{\mathcal H_1}) \subset [0,\infty).
\end{equation}
\end{prop}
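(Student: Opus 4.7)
By symmetry --- replacing the argument below with its mirror on the $\alpha>0$ side (using Corollary \ref{cptschur}) and interchanging the roles of the two blocks together with $\ge 0,\le 0$ --- it suffices to treat the case $E_-$ compact with $\liminf_{\alpha\nearrow 0}\delta(\alpha,\boldsymbol\lambda,\boldsymbol d)=0$. The plan has two stages: first establish the block decomposition $E=A\oplus B$ along $\mathcal H_1\oplus\mathcal H_0$, then show $A\ge 0$ and $B\le 0$ separately.

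\textbf{Decoupling.} Suppose for contradiction that $\langle Ee_i,e_j\rangle\neq 0$ for some indices with $d_i\ge 0$ and $d_j<0$. The ordered pair $(e_i,e_j)$ satisfies the hypothesis $\langle Ee_i,e_i\rangle\ge\langle Ee_j,e_j\rangle$ of Lemma \ref{spread}, which then produces an orthonormal pair $f_1,f_2$ spanning the same plane with $d_i':=\langle Ef_1,f_1\rangle>d_i\ge 0$ and $d_j':=\langle Ef_2,f_2\rangle<d_j<0$. Let $\boldsymbol d'$ denote the resulting new diagonal. For $\alpha\in[d_j,0)$ the $i$th index contributes nothing in either basis (both $d_i,d_i'$ lie in $[0,\infty)$), while the $j$th contribution in the $\boldsymbol d$-sum increases by $d_j-d_j'>0$, so
\[
\delta(\alpha,\boldsymbol\lambda,\boldsymbol d')=\delta(\alpha,\boldsymbol\lambda,\boldsymbol d)-(d_j-d_j')\qquad\text{for }\alpha\in[d_j,0).
\]
Taking $\liminf_{\alpha\nearrow 0}$ yields $-(d_j-d_j')<0$, contradicting Theorem \ref{cptschurv2} applied to the replacement basis. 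Hence $\langle Ee_i,e_j\rangle=0$ whenever $d_i\ge 0>d_j$, so $E=A\oplus B$ with $A=E|_{\mathcal H_1}$ and $B=E|_{\mathcal H_0}$; compactness of $E_-$ passes to both $A_-$ and $B_-$ since the relevant spectral projections commute with $E$.

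\textbf{Spectrum of $A$.} Writing $\boldsymbol\lambda=\boldsymbol\lambda_A\sqcup\boldsymbol\lambda_B$ and $\boldsymbol d=\boldsymbol d_A\sqcup\boldsymbol d_B$, and observing that $\boldsymbol d_A\ge 0$ contributes nothing to $\delta(\alpha,\cdot,\cdot)$ for $\alpha<0$, I split
\[
\delta(\alpha,\boldsymbol\lambda,\boldsymbol d)=\delta_A(\alpha)+\delta_B(\alpha),\qquad\delta_A(\alpha):=\sum_{\lambda_A\le\alpha}(\alpha-\lambda_A),\qquad\delta_B(\alpha):=\delta(\alpha,\boldsymbol\lambda_B,\boldsymbol d_B).
\]
Both summands are nonnegative ($\delta_A$ obviously, $\delta_B$ by Theorem \ref{cptschurv2} applied to $B$), so the hypothesis forces $\liminf_{\alpha\nearrow 0}\delta_A=\liminf_{\alpha\nearrow 0}\delta_B=0$. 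Applying Lemma \ref{dclem} to $(-\lambda)_{\lambda\in\boldsymbol\lambda_A,\lambda<0}\in c_0^+$ gives $\lim_{\alpha\nearrow 0}\delta_A(\alpha)=\sum_{\lambda_A<0}|\lambda_A|=0$, so $A$ has no negative eigenvalues; combined with compactness of $A_-$ this yields $A\ge 0$.

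\textbf{Spectrum of $B$ --- the main obstacle.} Set $\boldsymbol\mu=(-\lambda_B)_{\lambda_B<0}$ (the eigenvalue list of $B_-$) and $\boldsymbol\eta=-\boldsymbol d_B$, both in $c_0^+$; the substitution $\beta=-\alpha$ identifies $\delta_B(\alpha)$ with $\delta(\beta,\boldsymbol\mu,\boldsymbol\eta)$, so Proposition \ref{LR} and its excess formula \eqref{rtelt} give the strong majorization $\boldsymbol\eta\preccurlyeq\boldsymbol\mu$ with $\liminf_{n\to\infty}\sum_{i=1}^n(\mu_i^\downarrow-\eta_i^\downarrow)=0$. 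Introduce the auxiliary sequences $b_i=\langle B_-e_i,e_i\rangle$ and $a_i=\langle B_+e_i,e_i\rangle$ for $i$ with $d_i<0$: both are nonnegative, $\boldsymbol b\in c_0^+$ by compactness of $B_-$, and the identity $\eta_i=b_i-a_i\le b_i$ passes to rearrangements as $\eta_i^\downarrow\le b_i^\downarrow$ by a standard level-set cardinality argument. Corollary \ref{cptschur} and Proposition \ref{LR} applied to the positive compact operator $B_-$ give the Schur majorization $\boldsymbol b\prec\boldsymbol\mu$. I then decompose
\[
\sum_{i=1}^n(\mu_i^\downarrow-\eta_i^\downarrow)=\sum_{i=1}^n(\mu_i^\downarrow-b_i^\downarrow)+\sum_{i=1}^n(b_i^\downarrow-\eta_i^\downarrow).
\]
The first partial sum on the right is nonnegative for every $n$ (by $\boldsymbol b\prec\boldsymbol\mu$), while the second is a nondecreasing sequence of nonnegatives (since $b_i^\downarrow\ge\eta_i^\downarrow$). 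Since both are nonnegative with $\liminf=0$ for their sum, each has $\liminf$ zero; monotonicity of the second then forces it to vanish identically, giving $b_i^\downarrow=\eta_i^\downarrow$ for all $i$. Combining this rearrangement equality with the pointwise bound $\eta_i\le b_i$ via the inclusion $\{i\colon\eta_i\ge t\}\subseteq\{i\colon b_i\ge t\}$ (which now has equal cardinalities, hence equal sets) yields $\eta_i=b_i$ pointwise, so $a_i=0$ for every $i$. Since $B_+\ge 0$ has zero diagonal in an orthonormal basis of $\mathcal H_0$, $B_+=0$ and $B=-B_-\le 0$. The delicate point is this rearrangement-pinching: the hypothesis only compares $\boldsymbol\eta$ with $\boldsymbol\mu$ directly, and only by interposing the diagonal $\boldsymbol b$ of $B_-$ --- together with the Schur majorization from Corollary \ref{cptschur} --- can one localize the tightness onto the difference $b_i-\eta_i=a_i$ and force $B_+=0$.
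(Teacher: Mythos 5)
Your proof is correct, and the final step takes a genuinely different route from the paper. Your decoupling stage is essentially the paper's Lemma~\ref{decomp} (two-dimensional rotation via Lemma~\ref{spread} plus the Schur-type inequality of Theorem~\ref{cptschurv2}), just written on the mirror side. Where you diverge is in deriving the spectral conclusions \eqref{p212}. The paper first establishes the block decomposition and then invokes Lemma~\ref{nel} (or Lemma~\ref{nelt} in the trace-class case), which requires expanding the diagonal in an orthonormal eigenbasis of $E_+$, tracking the coefficients $a_j=\sum_i|\langle e_i,f_j\rangle|^2$ and the projection $P$ onto the complement of the eigenvector span, and deducing $a_j=1$ and $\langle EPe_i,e_i\rangle=0$ via the excess estimate \eqref{nel1}. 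You instead stay at the level of sequences: once $E$ decouples, you apply Corollary~\ref{cptschur} to the compact positive block $B_-$ to get the Schur majorization $\boldsymbol b\prec\boldsymbol\mu$, combine this with the pointwise bound $\eta_i\le b_i$ coming from $B=B_+-B_-$, and pinch the three-term decomposition $\sum(\mu_i^\downarrow-\eta_i^\downarrow)=\sum(\mu_i^\downarrow-b_i^\downarrow)+\sum(b_i^\downarrow-\eta_i^\downarrow)$ against the vanishing excess; monotonicity forces the second sum (the diagonal of $B_+$ after rearrangement) to vanish identically, whence $B_+=0$. The treatment of the other block $A$ via $\lim_{\alpha\nearrow 0}\delta_A(\alpha)=\sum_{\lambda_A<0}|\lambda_A|=0$ is also direct. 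Your argument avoids Lemma~\ref{nel}/\ref{nelt} entirely, which is a welcome simplification for this particular proposition, though those lemmas are still needed elsewhere in the paper (Theorem~\ref{cpttrace}). Two small points of exposition worth tightening: the claim ``each has $\liminf$ zero'' is true but you only need $\liminf g_n=0$ for the monotone partial sum; and the level-set argument deducing $\eta_i=b_i$ from $\eta^\downarrow=b^\downarrow$ and $\eta\le b$ relies on the sublevel sets being finite, which holds because $\boldsymbol b,\boldsymbol\eta\in c_0^+$ --- this should be stated rather than implicit.
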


To prove Proposition \ref{p211} we need to show several lemmas.

\begin{lem}\label{decomp} 
Under the same assumptions as in Proposition \ref{p211}, 
if there exist $j,k\in I$ such that
\[\langle Ee_{k},e_{k}\rangle<0\leq\langle Ee_{j},e_{j}\rangle,\]
then
\[\langle Ee_{j},e_{k}\rangle = 0.\]
\end{lem}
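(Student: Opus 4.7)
The plan is a proof by contradiction: if $\langle Ee_j, e_k\rangle \neq 0$, then Lemma \ref{spread} lets us rotate within $\lspan\{e_j, e_k\}$ to a new basis that strictly lowers $\delta$ on the negative side, violating the necessary condition from Theorem \ref{cptschurv2}. I will handle the case in which $E_-$ is compact and $\liminf_{\alpha \nearrow 0}\delta(\alpha, \boldsymbol\lambda, \boldsymbol d) = 0$; the other case is identical after applying the same argument to $-E$.

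Since $d_j \ge 0 > d_k$, Lemma \ref{spread} applied to $(e_1, e_2) = (e_j, e_k)$ produces an orthonormal pair $\{f_1, f_2\}$ spanning the same two-dimensional subspace as $\{e_j, e_k\}$ and satisfying
$$\langle E f_1, f_1\rangle > d_j \ge 0 > d_k > \langle E f_2, f_2\rangle.$$
Form the new orthonormal basis of $\Hil$ obtained from $(e_i)_{i\in I}$ by replacing $e_j$ with $f_1$ and $e_k$ with $f_2$, and let $\boldsymbol d' = (d'_i)_{i\in I}$ denote the resulting diagonal of $E$. Then $\boldsymbol d'$ agrees with $\boldsymbol d$ off the two indices $j, k$, and $d'_j > d_j \ge 0$ while $d'_k < d_k < 0$.

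Next I will compare $\delta(\alpha, \boldsymbol\lambda, \boldsymbol d')$ with $\delta(\alpha, \boldsymbol\lambda, \boldsymbol d)$ for $\alpha < 0$. The sequence $\boldsymbol\lambda$ is unchanged, so only the $\boldsymbol d$-sum contributes to the difference. Both $d_j$ and $d'_j$ are nonnegative, so neither lies $\le \alpha$ and they contribute nothing. For every $\alpha$ with $d_k \le \alpha < 0$ (which holds throughout a one-sided punctured neighborhood of $0$ since $d_k<0$), both $d_k$ and $d'_k$ are $\le \alpha$, contributing $\alpha - d_k$ and $\alpha - d'_k$ respectively. Hence on this interval
$$\delta(\alpha, \boldsymbol\lambda, \boldsymbol d') \;=\; \delta(\alpha, \boldsymbol\lambda, \boldsymbol d) \;-\; (d_k - d'_k).$$
Taking $\liminf_{\alpha \nearrow 0}$ yields $\liminf_{\alpha \nearrow 0}\delta(\alpha, \boldsymbol\lambda, \boldsymbol d') = -(d_k - d'_k) < 0$, contradicting Theorem \ref{cptschurv2} applied to $E$ (whose negative part remains compact) and the new basis.

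The only delicate point is the bookkeeping for how the $\boldsymbol d$-sum in $\delta$ changes under the rotation, and this is forced by the sign pattern: $d'_j$ stays nonnegative so the rotation gain $d'_j > d_j$ is invisible on $\{\alpha < 0\}$, while the rotation loss $d'_k < d_k$ strictly deepens the contribution to $\delta$ near $0$. The hypothesis $\liminf_{\alpha \nearrow 0}\delta(\alpha, \boldsymbol\lambda, \boldsymbol d) = 0$ is essential, as it leaves no positive buffer to absorb the strict drop $d_k - d'_k > 0$.
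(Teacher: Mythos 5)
Your proof is correct and follows essentially the same strategy as the paper's: assume $\langle Ee_j,e_k\rangle\neq 0$, apply Lemma \ref{spread} to produce a new orthonormal basis with a strictly perturbed diagonal, and show this forces $\delta(\alpha,\boldsymbol\lambda,\cdot)<0$ for some $\alpha$, contradicting Theorem \ref{cptschurv2}. The only meaningful difference is tactical: you work in the case $E_-$ compact with $\liminf_{\alpha\nearrow0}\delta=0$ and derive the contradiction purely from the negative side of $\delta$, exploiting that $d_k<0$ strictly so the interval $[d_k,0)$ is nonempty; the paper works in the symmetric case $E_+$ compact and sets up a dichotomy between the positive and negative sides, which is slightly more awkward since the interval $(0,d_j)$ is empty when $d_j=0$ and, with only $E_+$ assumed compact, Theorem \ref{cptschurv2} does not actually control $\delta$ on the negative side. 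Your one-sided computation on the side where the excess vanishes is the cleaner route; in the symmetric case one must take a tiny bit of care when $d_j=0$, where the drop is $d'_j-\alpha\to d'_j$ rather than the constant $d'_j-d_j$, but the conclusion $\liminf_{\alpha\searrow0}\delta(\alpha,\boldsymbol\lambda,\boldsymbol d')<0$ still holds.
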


\begin{proof} By symmetry it suffices to consider the case when $E_+$ is compact.
Assume toward a contradiction that $\langle Ee_{j},e_{k}\rangle\neq 0$. By Lemma \ref{spread} there is an orthonormal set $\{f_{j},f_{k}\}$ with $\lspan\{f_{j},f_{k}\}=\lspan\{e_{j},e_{k}\}$ such that
\[\langle Ef_{k},f_{k}\rangle< d_{k}<0\leq d_{j}<\langle Ef_{j},f_{j}\rangle.\]
Setting $f_{i}=e_{i}$ for all $i\in I\setminus\{j,k\}$ gives an orthonormal basis $(f_{i})_{i\in I}$. Setting $\mathbf c=(\langle Ef_{i},f_{i}\rangle)_{i\in I}$, for $\alpha\in(d_{k},0)$ we have
\[\delta(\alpha,\boldsymbol\lambda,\mathbf c) = \sum_{\lambda_{i}\leq\alpha}(\alpha-\lambda_{i}) - \sum_{c_{i}\leq \alpha}(\alpha-c_{i}) = \delta(\alpha,\boldsymbol\lambda,\boldsymbol{d}) + c_{k}-d_{k},\]
and for $\alpha\in(0,d_{j})$ we have
\[\delta(\alpha,\boldsymbol\lambda,\mathbf c) = \delta(\alpha,\boldsymbol\lambda,\boldsymbol{d}) + d_{j}-c_{j}.
\]
Since the positive excess is zero, either there is some $\alpha\in(d_{k},0)$ such that $\delta(\alpha,\boldsymbol\lambda,\boldsymbol{d})<d_{k}-c_{k}$ or there is some $\alpha\in(0,d_{j})$ such that $\delta(\alpha,\boldsymbol\lambda,\boldsymbol{d})<c_{j}-d_{j}$. In either case we have $\delta(\alpha,\boldsymbol\lambda,\mathbf c)<0$ for some $\alpha$. From Corollary \ref{cptschur} we see that this is a contradiction and conclude that $\langle Ee_{j},e_{k}\rangle=0$.
\end{proof}

\begin{lem}\label{nel}
Let $E$ be a self-adjoint operator on $\mathcal H$ such that its positive part is a compact non-trace class operator with positive eigenvalues $\lambda_1 \ge \lambda_2\geq \ldots>0$, listed with multiplicity. Let $(f_j)_{j\in\N}$ be the corresponding orthonormal sequence of eigenvectors, that is, $Ef_j = \lambda_j f_j$, $j\in \N$.
Let  $(e_{i})_{i\in \N}$ be an orthonormal sequence in $\Hil$ and let $d_{i} = \langle Ee_{i},e_{i}\rangle$, $i\in\N$. 
Then,
\begin{equation}\label{nel1}
\liminf_{M\to\infty}\sum_{i=1}^{M}(\lambda_{i} - d_{i}) 
\geq \sum_{j=1}^{\infty}
\lambda_{j}\bigg( 1-\sum_{i\in \N} |\langle e_i, f_j \rangle|^2 \bigg) - \sum_{i\in \N} \langle EP e_i,e_i \rangle,
\end{equation}
where $P$ is an orthogonal projection of $\mathcal H$ onto $(\operatorname{span} \{f_j: j\in \N \})^\perp$.
\end{lem}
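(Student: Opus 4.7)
\emph{Proof plan.}
The plan is to decompose $E$ via its positive part $E_+$ and to apply Ky Fan's maximum principle twice: once to $E_+$ itself, to see that the partial sums are non-negative, and once to a truncated variant $E_+^{(N)}$, to control the tail. Let $Q$ denote the orthogonal projection onto $\overline{\lspan}\{f_j\colon j\in\N\}$, so that $E_+=EQ$ while $P=I-Q$ is the spectral projection of $E$ corresponding to $(-\infty,0]$; in particular $\langle EPe_i,e_i\rangle\le 0$ for every $i$. Writing $a_{ij}=|\langle e_i,f_j\rangle|^{2}$, $A_{j,M}=\sum_{i=1}^{M}a_{ij}$, and $A_j^\infty=\sum_{i\in\N}a_{ij}$, the spectral expansion of $E_+$ yields $d_i=\sum_j\lambda_ja_{ij}+\langle EPe_i,e_i\rangle$, hence
\[
\sum_{i=1}^M(\lambda_i-d_i)=A_M+B_M,\qquad A_M:=\sum_{i=1}^M\lambda_i-\sum_j\lambda_jA_{j,M},\qquad B_M:=-\sum_{i=1}^M\langle EPe_i,e_i\rangle.
\]

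Ky Fan's maximum principle applied to $E_+$ with the orthonormal set $\{e_1,\ldots,e_M\}$ gives $A_M\ge 0$, and $B_M$ is non-decreasing with $B_M\nearrow B_\infty:=-\sum_{i\in\N}\langle EPe_i,e_i\rangle\in[0,\infty]$. If $B_\infty=+\infty$, then $\sum_{i=1}^M(\lambda_i-d_i)\ge B_M\to+\infty$ and the right-hand side of \eqref{nel1} is likewise $+\infty$, so the inequality holds trivially. Otherwise $B_M$ converges, so $\liminf_M(A_M+B_M)=\liminf_M A_M+B_\infty$, and \eqref{nel1} reduces to the claim
\[
\liminf_{M\to\infty}A_M\ge\sum_{j=1}^\infty\lambda_j(1-A_j^\infty).
\]

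To prove this reduced claim I would fix $N\in\N$ and $M\ge N$, and split $\sum_j\lambda_jA_{j,M}$ into head ($j\le N$) and tail ($j>N$). For the head, $A_{j,M}\le A_j^\infty$ because partial sums of non-negative terms increase in $M$. For the tail, a second application of Ky Fan to the positive compact operator $E_+^{(N)}:=\sum_{j>N}\lambda_j\langle\cdot,f_j\rangle f_j$, whose top $M$ eigenvalues are $\lambda_{N+1},\ldots,\lambda_{N+M}$, yields
\[
\sum_{j>N}\lambda_jA_{j,M}=\sum_{i=1}^M\langle E_+^{(N)}e_i,e_i\rangle\le\sum_{j=N+1}^{N+M}\lambda_j.
\]
Combining these bounds and a short rearrangement give
\[
A_M\ge\sum_{j=1}^N\lambda_j(1-A_j^\infty)-\sum_{j=M+1}^{N+M}\lambda_j\qquad\text{for all }M\ge N.
\]
Since $\lambda_j\searrow 0$, the error term is at most $N\lambda_{M+1}\to 0$ as $M\to\infty$ for fixed $N$; hence $\liminf_M A_M\ge\sum_{j=1}^N\lambda_j(1-A_j^\infty)$ for every $N\in\N$, and letting $N\to\infty$ completes the argument.

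The main obstacle is that one cannot simply pass to a pointwise limit in $M$ inside $\sum_j\lambda_jA_{j,M}$: this quantity diverges precisely when $\sum_i\langle E_+e_i,e_i\rangle=\infty$, which is a genuine possibility under the stated hypotheses. The second application of Ky Fan to the truncated operator $E_+^{(N)}$ is the essential device—it absorbs the divergent tail into the harmless finite error $N\lambda_{M+1}$, at the cost of having to send $N\to\infty$ only at the very end.
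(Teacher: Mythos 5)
Your proof is correct and takes a genuinely different route from the paper's. Both arguments begin from the same decomposition $d_i=\sum_j\lambda_j|\langle e_i,f_j\rangle|^2+\langle EPe_i,e_i\rangle$, but then diverge. The paper bounds $\sum_j\lambda_ja_j^{(M)}$ by introducing a counting index $K=K_M$ (the largest $K$ with $\sum_{j\le K}a_j\le M$), working through a chain of rearrangement estimates to arrive at $\sum_j\lambda_ja_j^{(M)}\le\sum_{j\le M}\lambda_ja_j+\sum_{j\le M}\lambda_M(1-a_j)+\lambda_{K+1}$, and then invoking monotone convergence; this route crucially exploits the non--trace-class hypothesis to guarantee $\sum_j a_j=\infty$, hence $K_M\to\infty$ and $\lambda_{K_M+1}\to0$. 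You instead split the partial sum as $A_M+B_M$, dispatch $B_M$ as a monotone term, and handle $A_M$ by applying Ky Fan's maximum principle twice --- once to $E_+$ to get $A_M\geq 0$, and once to the truncation $E_+^{(N)}$ to absorb the divergent tail of $\sum_j\lambda_jA_{j,M}$ into the controlled error $\sum_{j=M+1}^{N+M}\lambda_j\le N\lambda_{M+1}$. A notable byproduct of your approach is that it never uses the non--trace-class hypothesis at all, so the inequality holds for any compact positive part (and in the trace-class case the paper instead proves the stronger identity of Lemma \ref{nelt}); the paper's method, by contrast, does not readily extend because $K_M$ need not be well-defined or diverge when $\sum_ja_j<\infty$.
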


Note that the last sum in \eqref{nel1} is well-defined, finite or possibly $-\infty$, since the operator $EP$ is self-adjoint and negative semi-definite. Hence, the right hand side of \eqref{nel1} is well-defined as well.

\begin{proof}
We may assume that the left hand side of \eqref{nel1} is finite. Otherwise the conclusion \eqref{nel1} is clear.
For each \(j\in \N \) and \(M\in\N\) we define 
\[a_{j}^{(M)} = \sum_{i=1}^{M}|\langle e_{i},f_{j}\rangle|^{2}\quad\text{and}\quad a_{j} = \lim_{M\to\infty}a_{j}^{(M)} =\sum_{i\in \N}|\langle e_{i},f_{j}\rangle|^{2}.\]
Note that with this notation we have
\begin{equation}\label{cpttrace1} \sum_{i=1}^{M}d_{i} = \sum_{i=1}^{M}\sum_{j\in \N} \lambda_{j}|\langle e_{i},f_{j}\rangle|^{2} 
+  \sum_{i=1}^{M} \langle EP e_i,e_i \rangle
 = \sum_{j\in \N}\lambda_{j}a_{j}^{(M)}
+ \sum_{i=1}^{M} \langle EP e_i,e_i \rangle
.\end{equation}
For \(M\in\N\) we have
\begin{equation}\label{cpttrace2}
0\le a_j^{(M)} \le a_j \le 1,
\end{equation}
and
\begin{equation}\label{cpttrace5}
\sum_{j\in \N}a_{j}^{(M)} = \sum_{i= 1}^{ M} \sum_{j\in \N} |\langle e_{i},f_{j}\rangle|^{2}\leq  \sum_{i=1}^{M}1 = M.
\end{equation}
From \eqref{cpttrace1}, \eqref{cpttrace2}, and the assumption that \((\lambda_{i})_{i\in\N}\) is nonincreasing, we have
\begin{equation}\label{cpttrace3}
\sum_{i=1}^{M}d_{i} \le 
\sum_{j\in \N}\lambda_{j}a_{j}^{(M)}
\le 
\sum_{j\in \N}\lambda_{j}a_{j}
\le 
\lambda_{1} \sum_{j\in \N}a_{j}.
\end{equation}
Since the left hand side of \eqref{nel1} is finite and $\sum_{j\in\N} \lambda_j=\infty$, we have \[
\limsup_{M\to \infty} \sum_{i=1}^{M}d_{i} =\infty.
\]
Hence, $\sum_{j\in\N} a_j = \infty$.
For each $M\in\N$, let $K=K_{M}$ be the largest integer such that
\[\sum_{j=1}^{K}a_{j}\leq M.\]
Note that this implies
\[0\leq M-\sum_{j=1}^{K}a_{j}<a_{K+1}\leq 1.\]
Using the fact that $\lambda_{1}\geq\lambda_{2}\geq \ldots>0$, \eqref{cpttrace2}, and \eqref{cpttrace5} we have
\begin{equation}\begin{split}\label{cpttrace4}
\sum_{j\in \N}\lambda_{j}a_{j}^{(M)} & \leq \sum_{j=1}^{K}\lambda_{j}a_{j} + \sum_{j=1}^{K}\lambda_{j}(a_{j}^{(M)}-a_{j}) + \lambda_{K+1}\sum_{j=K+1}^{\infty}a_{j}^{(M)}
\\
 & \leq \sum_{j=1}^{K}\lambda_{j}a_{j} + \lambda_{K+1}\sum_{j=1}^{K}(a_{j}^{(M)}-a_{j}) + \lambda_{K+1}\left(M-\sum_{j=1}^{K}a_{j}^{(M)}\right)
 \\
 & = \sum_{j=1}^{K}\lambda_{j}a_{j} + \lambda_{K+1}\left(M-\sum_{j=1}^{K}a_{j}\right) \leq \sum_{j=1}^{K}\lambda_{j}a_{j} + \lambda_{K+1}.
 \end{split}
 \end{equation}
 The last term in \eqref{cpttrace4} is bounded by
 \begin{equation}
 \begin{split}\label{cpttrace8}
 &\sum_{j=1}^{M}\lambda_{j}a_{j} + \lambda_{M}\sum_{i=M+1}^{K}a_{j} + \lambda_{K+1} = \sum_{j=1}^{M}\lambda_{j}a_{j} + \lambda_{M}\left(\sum_{i=1}^{K}a_{j} - \sum_{i=1}^{M}a_{j}\right) + \lambda_{K+1}\\
 & \leq \sum_{j=1}^{M}\lambda_{j}a_{j} + \lambda_{M}\left(M - \sum_{j=1}^{M}a_{j}\right) + \lambda_{K+1} = \sum_{j=1}^{M}\lambda_{j}a_{j} + \sum_{j=1}^{M}\lambda_{M}(1-a_{j}) + \lambda_{K+1}.
\end{split}\end{equation}
Combining \eqref{cpttrace1}, \eqref{cpttrace4}, and \eqref{cpttrace8} yields
\begin{equation}\label{cpttrace6}
\sum_{i=1}^{M}(\lambda_{i}-d_i)  \geq 
\sum_{j=1}^{M}(\lambda_{j}-\lambda_{M})(1-a_{j})  - \lambda_{K+1} - \sum_{i=1}^{M} \langle EP e_i,e_i \rangle.
\end{equation}
By the monotone convergence theorem we have
\begin{equation}\label{cpttrace7}
 \liminf_{M\to\infty}\sum_{i=1}^{M}(\lambda_{i} - d_{i}) \geq \sum_{j=1}^{\infty}\lambda_{j}(1-a_{j}) 
 - \sum_{i=1}^{M} \langle EP e_i,e_i \rangle.
 \qedhere\end{equation}
\end{proof}

Next, we have a trace class version of Lemma \ref{nel}. Note that here the sequence \((e_{i})_{i\in I}\) is assumed to be a basis.

\begin{lem}\label{nelt}
Let $E$ be a self-adjoint operator on $\mathcal H$ such that its positive part is a compact trace class operator with positive eigenvalues $(\lambda_{j})_{j=1}^{M}$, where \(M\in\N\cup\{0,\infty\}\). Let $(f_j)_{j=1}^{M}$ be the corresponding orthonormal sequence of eigenvectors, that is, $Ef_j = \lambda_j f_j$.
Let  $(e_{i})_{i\in I}$ be an orthonormal basis in $\Hil$ and let $d_{i} = \langle Ee_{i},e_{i}\rangle$, $i\in I$. 
Then,
\begin{equation}\label{nelt1}
\sum_{i=1}^{M}\lambda_{i} - \sum_{i:d_{i}>0}d_{i} 
=\sum_{j=1}^{M}
\lambda_{j}\bigg( 1-\sum_{i:d_{i}>0} |\langle e_i, f_j \rangle|^2 \bigg) - \sum_{i:d_{i}>0} \langle EP e_i,e_i \rangle,
\end{equation}
where $P$ is an orthogonal projection of $\mathcal H$ onto $(\overline{\lspan}(f_j)_{j=1}^{M})^\perp$.
\end{lem}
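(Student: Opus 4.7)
My plan is to decompose each diagonal entry $d_i$ along the spectral split $\mathcal H = \overline{\lspan}(f_j)_{j=1}^{M}\oplus (\overline{\lspan}(f_j)_{j=1}^{M})^{\perp}$ induced by $Q:=I-P$ and $P$. Since the positive part $E_+$ of $E$ is supported on $\overline{\lspan}(f_j)_{j=1}^M$ and acts there via the spectral expansion, we have $EQ=E_+$ with $E_+x=\sum_{j=1}^M\lambda_j\langle x,f_j\rangle f_j$, while $EP$ is negative semidefinite. Consequently, for every $i\in I$,
\[d_i=\langle EQ e_i,e_i\rangle+\langle EP e_i,e_i\rangle=\sum_{j=1}^{M}\lambda_j\,|\langle e_i,f_j\rangle|^2+\langle EP e_i,e_i\rangle,\]
and the two summands have opposite signs. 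I will then sum this identity over $\{i:d_i>0\}$, swap the resulting nonnegative double sum, and finally apply Parseval's identity $\sum_{i\in I}|\langle e_i,f_j\rangle|^2=\|f_j\|^2=1$, which is available precisely because $(e_i)_{i\in I}$ is assumed to be an orthonormal basis.

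Before rearranging, I need to verify absolute convergence. When $d_i>0$, the identity above forces
\[|\langle EP e_i,e_i\rangle|=\langle EQ e_i,e_i\rangle-d_i\le \langle EQ e_i,e_i\rangle,\]
and hence, using that $E_+$ is trace class,
\[\sum_{i:d_i>0}|\langle EP e_i,e_i\rangle|\le \sum_{i\in I}\langle EQ e_i,e_i\rangle=\tr(E_+)=\sum_{j=1}^M\lambda_j<\infty.\]
The same bound shows $\sum_{i:d_i>0}d_i<\infty$, so every sum appearing in \eqref{nelt1} is well-defined and finite.

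With summability in hand, summing the pointwise identity over $\{i:d_i>0\}$ and using Tonelli on the nonnegative double sum gives
\[\sum_{i:d_i>0}d_i=\sum_{j=1}^{M}\lambda_j\sum_{i:d_i>0}|\langle e_i,f_j\rangle|^2+\sum_{i:d_i>0}\langle EP e_i,e_i\rangle.\]
Parseval applied to each $f_j$ gives $\sum_{j=1}^M\lambda_j=\sum_{j=1}^M\lambda_j\sum_{i\in I}|\langle e_i,f_j\rangle|^2$. Subtracting the former display from the latter and factoring $\lambda_j$ out of the resulting difference yields \eqref{nelt1}.

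There is no serious obstacle here; the argument is a direct bookkeeping computation. The main care required is the absolute convergence check that legitimizes the Fubini/Tonelli interchange and the subtraction. The qualitative difference compared with Lemma \ref{nel} is that the basis assumption together with the trace class hypothesis replaces the delicate $\liminf$-truncation argument of Lemma \ref{nel} by an exact Parseval identity, turning the inequality into an equality.
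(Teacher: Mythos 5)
Your proof is correct and follows essentially the same route as the paper: decompose $d_i=\sum_j\lambda_j|\langle e_i,f_j\rangle|^2+\langle EPe_i,e_i\rangle$, use negative semidefiniteness of $EP$ and $\tr(E_+)<\infty$ to establish summability over $\{i:d_i>0\}$, then sum and subtract from $\sum_j\lambda_j$ using Parseval. You spell out the absolute-convergence/Tonelli justification a bit more explicitly than the paper does, but the argument is the same.
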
 

\begin{proof} First, note that
\begin{equation}\label{nelt2}
d_{i} = \sum_{j=1}^{M}\lambda_{j}|\langle e_{i},f_{j}\rangle|^{2} + \langle EPe_{i},e_{i}\rangle.
\end{equation}
Since \(EP\) is a negative semi-definite operator, we have
\[d_{i}\leq \sum_{j=1}^{M}\lambda_{j}|\langle e_{i},f_{j}\rangle|^{2},\]
and hence
\[\sum_{i:d_{i}>0} d_{i} \leq \sum_{i:d_{i}>0}\sum_{j=1}^{M}\lambda_{j}|\langle e_{i},f_{j}\rangle|^{2} = \sum_{j=1}^{M}\lambda_{j}\sum_{i:d_{i}>0}|\langle e_{i},f_{j}\rangle|^{2} \leq \sum_{j=1}^{M}\lambda_{j}\|f_{j}\|^{2} = \sum_{j=1}^{M}\lambda_{j}<\infty.\]
The conclusion follows by summing \eqref{nelt2} over \(\{i:d_{i}>0\}\), and subtracting both sides from \(\sum_{j=1}^{M}\lambda_{j}\).
\end{proof}

We are now ready to prove Proposition \ref{p211}.

\begin{proof}[Proof of Proposition \ref{p211}] 
By symmetry it suffices to consider the case when $E_+$ is compact.
By Lemma \ref{decomp}, we deduce that $\mathcal H_0$ and $\mathcal H_1$ are invariant subspaces of the operator $E$. 

Suppose that $(\lambda_{j})_{j\in J_{0}}$ is the sequence of positive eigenvalues of $E$, where \(J_{0}=\{j\in\N : j<M+1\}\) for some \(M\in\N\cup\{0,\infty\}\), and listed in nonincreasing order.  Let $(f_j)_{j\in J_{0}}$ be the corresponding orthonormal sequence of eigenvectors, that is, $Ef_j = \lambda_j f_j$, $j\in J_{0}$. Let $P$ be an orthogonal projection of $\mathcal H$ onto $(\operatorname{span} \{f_j: j\in J_{0} \})^\perp$.

Let $I_0=\{i\in\N : i<N+1\}$ where $N = \#|\{i\in I: d_i > 0\}|$, and reindex the orthonormal basis $(e_i)_{i\in I}$ such that $I_{0} \subset I$ and $ \langle Ee_{i},e_{i}\rangle=d_{i}^{+\downarrow}$ for all $i\in I_{0}$, where $d_{i}^{+\downarrow}$ is the $i$th largest  positive term of $\boldsymbol d$. 
Thus, $(e_{i})_{i\in I_{0}}$ is an orthonormal sequence in $\Hil_1$.

If $E_{+}$ is not trace class, then \(J_{0}=\N\). Since the excess if zero, we see that \(I_{0}=\N\).
By Proposition \ref{LR} the left-hand side of \eqref{nel1} is zero. By Lemma \ref{nel} we have
\begin{align}
\label{dd1}
\sum_{i\in I_{0}} |\langle e_i,f_j \rangle|^2 & =1 \qquad\text{for all }j\in J_{0},
\\
\label{dd2}
\langle E P e_i, e_i \rangle &=0 \qquad\text{for all }i \in I_{0}.
\end{align}
In the case the $E_{+}$ is trace class, both \eqref{dd1} and \eqref{dd2} follow from Lemma \ref{nelt} and Proposition \ref{LRS}.

By \eqref{dd1}, 
\begin{equation}\label{dd3}
\overline{\lspan}\{f_j: j\in J_{0}\} \subset \overline{\lspan} \{e_i:i\in I_{0}\} \subset \mathcal H_1.
\end{equation}
Hence, $\sigma(E|_{\mathcal H_0}) \subset (-\infty,0]$. 
Take any $i\in I$ such that $d_i=0$. 
By  \eqref{dd3},  we have $Pe_i=e_i$. Hence, 
\begin{equation}\label{dd4}
\langle E P e_i, e_i \rangle =0 \qquad\text{for all }i \in I, \ d_i=0.
\end{equation}
By \eqref{dd2}, \eqref{dd4}, and the fact that $EP \le 0$, we have $EPe_i=0$ for all $i\in I$, $d_i \ge 0$. 
Consequently, $E|_{\mathcal H_1} = E(\mathbf I -P)|_{\mathcal H_1} \ge 0$. This implies that $\sigma(E|_{\mathcal H_1}) \subset [0,\infty)$.
\end{proof}

Using Lemma \ref{nel} we arrive at the following inequality, which is a generalization of the trace condition on diagonals of trace class operators to compact operators where only the negative eigenvalues are summable. In particular, note that when $E$ is trace class, Corollary \ref{cpttracec}, together with its symmetric variant, gives the usual equality between the sum of the diagonal entries and the sum of the eigenvalues.

\begin{thm}\label{cpttrace} Let $E$ be a self-adjoint operator on $\mathcal H$ such that its positive part $E_{+}$ is a compact operator with the eigenvalue list (with multiplicity) $\boldsymbol \lambda$. Let $\boldsymbol d\in c_{0}$ be a diagonal of $E$ such that
\[\sum_{d_{i}<0}|d_{i}|<\infty,
\]
If the positive excess $\sigma_{+}=\liminf_{\alpha\searrow 0}\delta(\alpha,\boldsymbol\lambda,\boldsymbol{d})<\infty$, then the negative part of $E$ is trace class. Moreover, 
\begin{equation}\label{cpttrace0}
\tr(E_-) \le \sum_{d_{i}<0}|d_{i}| + \sigma_{+},
\end{equation}
with the equality when  \(\sum_{\lambda_{i}>0}\lambda_{i} < \infty\).
\end{thm}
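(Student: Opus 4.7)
The plan is to exploit the decomposition $E=E_+-E_-$ together with the pointwise identity
\[
\langle E_- e_i,e_i\rangle \;=\; \sum_j \lambda_j\,|\langle e_i,f_j\rangle|^2 \;-\; d_i,
\]
valid for any orthonormal basis $(e_i)_{i\in I}$ realizing the diagonal $\boldsymbol d$. Here $(f_j)$ are the orthonormal eigenvectors of the compact positive operator $E_+$ corresponding to the positive eigenvalues $\lambda_j$, and $P$ denotes the orthogonal projection onto $(\overline{\lspan}\{f_j\})^\perp$, so that $EP=-E_-$. I would partition $I=I_+\sqcup(I\setminus I_+)$ with $I_+=\{i:d_i>0\}$, and set $a_j=\sum_{i\in I_+}|\langle e_i,f_j\rangle|^2\in[0,1]$. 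Summing the identity over $i\in I\setminus I_+$ (where $d_i\leq 0$) and invoking Parseval's identity $\sum_{i\in I}|\langle e_i,f_j\rangle|^2=1$ yields the key formula
\[
\sum_{i\notin I_+}\langle E_- e_i,e_i\rangle \;=\; \sum_j \lambda_j\,(1-a_j) \;+\; \sum_{d_i<0}|d_i|,
\]
which already accounts for the $\sum_{d_i<0}|d_i|$ contribution in the target bound.

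The remaining task is to control $\sum_{i\in I_+}\langle E_- e_i,e_i\rangle$ by $\sigma_+-\sum_j\lambda_j(1-a_j)$, and I would do this by case-splitting on whether $E_+$ is trace class. If $E_+$ is trace class, I apply Lemma \ref{nelt} directly to the basis $(e_i)_{i\in I}$, obtaining
\[
\sum_{i\in I_+}\langle E_- e_i,e_i\rangle \;=\; \Bigl(\sum_j\lambda_j-\sum_{d_i>0}d_i\Bigr) - \sum_j\lambda_j(1-a_j),
\]
whose nonnegative left side forces $\boldsymbol d_+$ to be summable; Lemma \ref{dclem} then identifies the parenthesized quantity as $\sigma_+$, producing the bound with equality. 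If $E_+$ is not trace class, then $\sigma_+<\infty$ together with $\sum_j\lambda_j=\infty$ forces $\boldsymbol d_+$ to be non-summable, hence $I_+$ is infinite; reindexing $I_+$ as $\N$ in decreasing order of $d_i$ and applying Lemma \ref{nel} to the orthonormal sequence $(e_i)_{i\in I_+}$ yields
\[
L:=\liminf_{M\to\infty} \sum_{i=1}^M\bigl(\lambda_i-d_i^{+\downarrow}\bigr) \;\geq\; \sum_j\lambda_j(1-a_j) + \sum_{i\in I_+}\langle E_- e_i,e_i\rangle.
\]

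To upgrade $L$ to $\sigma_+$ I would invoke the pointwise estimate \eqref{ril} from the proof of Proposition \ref{LR}, applied to the nonnegative decreasing rearrangements $\boldsymbol\lambda_+$ and $\boldsymbol d_+$; crucially, this estimate holds with no majorization hypothesis. Taking $\liminf$ as $\alpha\searrow 0$, and using that $(n_\alpha)$ ranges over an unbounded subset of $\N$, produces $\sigma_+\geq L$. Combining this with the complement formula gives $\tr(E_-)\leq \sigma_+ +\sum_{d_i<0}|d_i|$, and in particular $E_-$ is trace class. The principal obstacle is the bookkeeping required to keep every series in $[0,\infty]$ and to ensure that in the non-trace-class chain each quantity is actually finite, which is a consequence of $\sigma_+<\infty$ and the nonnegativity of the two terms on the right-hand side of Lemma \ref{nel}; the equality claim in the trace-class case drops out automatically, since Lemma \ref{nelt} supplies an equality where Lemma \ref{nel} gives only an inequality.
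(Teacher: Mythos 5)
Your proof is correct and follows essentially the same route as the paper's: split the orthonormal basis at $I_+ = \{i : d_i > 0\}$, exploit the identity $\langle E_- e_i,e_i\rangle = \sum_j\lambda_j|\langle e_i,f_j\rangle|^2 - d_i$ together with Parseval and Tonelli, and bound the $I_+$-part of $\tr(E_-)$ via Lemma \ref{nel} in the non-trace-class case and Lemma \ref{nelt} in the trace-class case. The two variations are legitimate streamlinings rather than a different method: you run the trace-class case through the same complement-formula bookkeeping via Lemma \ref{nelt} instead of the paper's separate argument that first shows $\boldsymbol d\in\ell^1$ and that $E$ is trace class, and you correctly note that only the one-sided pointwise estimate $\delta(\alpha)\ge\sum_{i=1}^{n_\alpha}(\lambda_i^\downarrow-d_i^\downarrow)$ from \eqref{ril} is needed to pass from $L$ to $\sigma_+$, so one can bypass the majorization step (the paper's appeal to Corollary \ref{cptschur} and the full equality in Proposition \ref{LR}). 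Neither change alters the substance; the cancellation of $\sum_j\lambda_j(1-a_j)$ and the finiteness bookkeeping you flag both go through exactly as in the paper.
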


\begin{proof} Suppose \(\sum_{\lambda_{i}>0}\lambda_{i} < \infty\). Corollary \ref{cptschur} implies that \(\sum_{d_{i}>0}d_{i}<\infty\), and hence \(\boldsymbol{d}\in\ell^1\). 
Our assumption that \(\sum_{\lambda_{i}>0}\lambda_{i}<\infty\) implies that \(E_{+}\) is trace-class. Let \((\tilde{d}_{i})\) denote the diagonal of \(E_{-}\) with respect to the same basis as \(E\) has diagonal \(\boldsymbol{d}\). Since \(E+E_{-}=E_{+}\), we see that 
\[\sum(d_{i} + \tilde{d}_{i}) = \tr(E_{+}) = \sum_{\lambda_{i}>0}\lambda_{i}<\infty.\]
However, since \(\boldsymbol{d}\) is summable, this implies that \((\tilde{d}_{i})\) is summable. Since \(E_{-}\) is a positive operator, this implies that \(E_{-}\) is trace-class. Therefore, $E$ is trace class and we have $\sum\lambda_{i} = \sum d_{i}$. Proposition \ref{LR} and rearranging gives
\[
\sigma_{+} = \sum_{\lambda_{i}>0}\lambda_{i} - \sum_{d_{i}>0}d_{i}  = \sum_{d_{i}<0}d_{i}-\sum_{\lambda_{i}<0}\lambda_{i} = 
\tr(E_-) - \sum_{d_{i}<0}|d_{i}| ,
\]
which implies the desired conclusion \eqref{cpttrace0} with equality.

Thus, we may assume \(\sum_{\lambda_{i}>0}\lambda_{i}=\infty\). 
Corollary \ref{cptschur} implies  \(\sum_{d_{i}>0}d_{i}=\infty\).
The set \(\{i  : \lambda_{i}>0\}\) is infinite, and hence we can rearrange the positive eigenvalues in nonincreasing order such that \((\boldsymbol\lambda_{+})^{\downarrow} = (\lambda_{i})_{i\in\N}\). Similarly, we reindex the diagonal \(\boldsymbol{d}\) by the set \(\N\cup I_{1}\) such that \((\boldsymbol{d}_{+})^{\downarrow} = (d_{i})_{i\in\N}\). Note that \(d_{i}\leq 0\) for all \(i\in I_{1}.\)

Let $(f_{i})_{i\in\N}$ be an orthonormal sequence of eigenvectors $Ef_{i} = \lambda_{i}f_i$ for all $i\in \N$. Let $(e_{i})_{i\in \N\cup I_{1}}$ be an orthonormal basis such that $d_{i} = \langle E e_{i},e_{i}\rangle$ for all \(i\in\N\cup I_{1}\). 
For each \(j\in \N \) we define 
\[a_{j} =\sum_{i\in \N}|\langle e_{i},f_{j}\rangle|^{2}.\]
Let $P$ be the orthogonal projection of $\mathcal H$ onto $(\operatorname{span} \{f_j: j\in \N \})^\perp  $. By Proposition \ref{LR} and Lemma \ref{nel} we have
\begin{equation}\label{cpttrace9}\sigma_{+} = \liminf_{M\to\infty}\sum_{i=1}^{M}(\lambda_{i} - d_{i}) \geq \sum_{j=1}^{\infty}\lambda_{j}(1-a_{j}) -\sum_{i\in \N} \langle EP e_i,e_i \rangle .\end{equation}
Hence, the assumption that \(\sigma_{+}<\infty\) and the fact that $E_-=-EP\ge 0$ yield
\[
 \sum_{j\in\N}\lambda_{j}(1-a_{j})<\infty\quad\text{and}\quad\sum_{i\in \N} |\langle EP e_i,e_i \rangle|<\infty.\]
Since $(e_{i})_{i\in \N\cup I_{1}}$ is an orthonormal basis, by Fubini's Theorem we have
\[
\sum_{j\in \N}\lambda_{j}(1-a_{j}) = \sum_{j\in \N} \lambda_j \sum_{i\in I_0} |\langle e_{i},f_{j}\rangle|^{2}
 = 
\sum_{i\in I_1}
\sum_{j\in \N}
\lambda_{j}|\langle e_{i},f_{j}\rangle|^2
.\]
By the assumption that \(\sum_{i\in I_{1}}|d_{i}| = \sum_{d_{i}<0}|d_{i}|<\infty\) the following series converge
\[
\sum_{i\in I_{1}}d_{i} = \sum_{i\in I_{1}}\left(\sum_{j\in\N}\lambda_{j}|\langle e_{i},f_{j}\rangle|^2 + \langle EP e_i,e_i \rangle
 \right). \]
This implies that the series $\sum_{i\in I_1} \langle EP e_i,e_i \rangle$ converges as well and
 \begin{equation}\label{cpttrace10}
 \sum_{i\in I_1}
| \langle EP e_i,e_i \rangle | = \sum_{i\in I_{1}} |d_{i}| + \sum_{j\in \N}\lambda_{j}(1-a_{j}).
\end{equation}
Combining \eqref{cpttrace9} and \eqref{cpttrace10} yields
\[
\tr(E_-) = \sum_{i\in \N \cup I_1} |\langle EP e_i,e_i \rangle| \le \sum_{i \in I_1}|d_{i}| + \delta^+.
\]
Therefore, the operator $E_-=-EP$ is trace class and \eqref{cpttrace0} holds.
\end{proof}

\begin{cor}\label{cpttracec} Let $\boldsymbol\lambda,\boldsymbol{d}\in c_{0}$. Assume \(E\) is a compact self-adjoint operator with eigenvalue list \(\boldsymbol\lambda\) and diagonal \(\boldsymbol{d}\). If
\[\sum_{d_{i}<0}|d_{i}|<\infty,
\]
then
\begin{equation}\label{cpttrace00}
\sigma_{+}=\liminf_{\alpha\searrow 0}\delta(\alpha,\boldsymbol\lambda,\boldsymbol{d})\geq \sum_{\lambda_{i}<0}|\lambda_{i}|-\sum_{d_{i}<0}|d_{i}| = \sigma_{-}.
\end{equation}
\end{cor}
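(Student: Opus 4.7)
The plan is to derive \eqref{cpttrace00} by assembling three earlier results: the equivalence of Riemann and Lebesgue majorization (Proposition \ref{LR}), its summable refinement (Proposition \ref{LRS}), and the trace inequality in Theorem \ref{cpttrace}. No genuinely new argument is required; the work is in matching the correct hypothesis to the correct statement, separately for the positive and negative parts of $\boldsymbol\lambda$ and $\boldsymbol d$.

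For the leftmost equality, I would apply Corollary \ref{cptschur} to the compact operator $E$ to obtain $\delta(\alpha,\boldsymbol\lambda,\boldsymbol d)\geq 0$ for all $\alpha\neq 0$. Specializing to $\alpha>0$ is exactly $\delta(\alpha,\boldsymbol\lambda_+,\boldsymbol d_+)\geq 0$, so by Proposition \ref{LR} we have $\boldsymbol d_+\prec\boldsymbol\lambda_+$ and formula \eqref{rtelt} identifies $\sigma_+=\liminf_k\sum_{i=1}^k(\lambda_i^{+\downarrow}-d_i^{+\downarrow})$ with $\liminf_{\alpha\searrow 0}\delta(\alpha,\boldsymbol\lambda_+,\boldsymbol d_+)=\liminf_{\alpha\searrow 0}\delta(\alpha,\boldsymbol\lambda,\boldsymbol d)$, the last equality because for $\alpha>0$ the terms in \eqref{delta} involving negative entries vanish.

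For the middle inequality, if $\sigma_+=\infty$ there is nothing to prove, so I would assume $\sigma_+<\infty$. Then the hypotheses of Theorem \ref{cpttrace} are satisfied ($E_+$ is compact and $\sum_{d_i<0}|d_i|<\infty$), so $E_-$ is trace class and
\[
\sum_{\lambda_i<0}|\lambda_i|=\tr(E_-)\leq \sum_{d_i<0}|d_i|+\sigma_+,
\]
which rearranges to the desired middle inequality.

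For the rightmost equality, I would apply Corollary \ref{cptschur} to $-E$ to obtain $\delta(\alpha,\boldsymbol\lambda_-,\boldsymbol d_-)\geq 0$ for all $\alpha>0$, i.e.\ $\boldsymbol d_-\prec\boldsymbol\lambda_-$. When $\boldsymbol\lambda_-\in\ell^1$, Proposition \ref{LRS} gives
\[
\sigma_-=\liminf_{\alpha\searrow 0}\delta(\alpha,\boldsymbol\lambda_-,\boldsymbol d_-)=\sum_{\lambda_i<0}|\lambda_i|-\sum_{d_i<0}|d_i|.
\]
When $\boldsymbol\lambda_-\notin\ell^1$, the contrapositive of Theorem \ref{cpttrace} gives $\sigma_+=\infty$, and since $\boldsymbol d_-\in\ell^1$ while $\sum\lambda_i^{-\downarrow}=\infty$, both $\sigma_-$ and $\sum_{\lambda_i<0}|\lambda_i|-\sum_{d_i<0}|d_i|$ equal $+\infty$, so the whole chain \eqref{cpttrace00} collapses to $\infty\geq\infty$. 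There is no real obstacle here beyond careful bookkeeping of positive and negative parts; the substantive analytic work has already been done in Theorem \ref{cpttrace}.
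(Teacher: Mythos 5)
Your proof is correct and takes essentially the same approach as the paper: the key step is Theorem \ref{cpttrace}, and the $\sigma_+=\infty$ case is dispatched separately. The paper's own proof is a two-sentence version of the same argument, leaving the identifications of the outer equalities (via Corollary \ref{cptschur}, Proposition \ref{LR}, and Proposition \ref{LRS}) implicit; you have simply spelled them out.
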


\begin{proof}
Since the desired conclusion is obvious in the case that \(\sigma_{+}=\infty\), we may assume \(\sigma_{+}<\infty\). In this case Theorem \ref{cpttrace} yields the required conclusion \eqref{cpttrace00}.
\end{proof}

We finish this section the following cardinality inequalities which are implied  by decoupling.

\begin{lem}\label{card}
Let $E$ be a self-adjoint operator on $\mathcal H$ with the eigenvalue list (with multiplicity) $\boldsymbol \lambda$, which is possibly an empty list. Let $\boldsymbol d$ be a diagonal of $E$ with respect to some orthonormal basis $(e_i)_{i\in I}$. Assume that the conclusion of Proposition \ref{p211} holds, that is, $E$ decouples at $0$. If the positive part $E_{+}$ is compact, then
\begin{equation}\label{card1}
\#|\{i:\lambda_{i}=0\}|\geq \#|\{i:d_{i}=0\}|
\text{\, and \,}
\#|\{i:\lambda_{i} \ge 0\}|\geq \#|\{i:d_{i} \ge0\}|.
\end{equation}
If the negative part $E_-$ is compact, then
\begin{equation}\label{card2}
\#|\{i:\lambda_{i}=0\}|\geq \#|\{i:d_{i}=0\}|
\text{\, and \,}
\#|\{i:\lambda_{i} \le 0\}|\geq \#|\{i:d_{i} \le0\}|.
\end{equation}
\end{lem}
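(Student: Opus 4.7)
The plan is to exploit the block decoupling $E = E|_{\mathcal H_0} \oplus E|_{\mathcal H_1}$ furnished by Proposition \ref{p211}, under which $\sigma(E|_{\mathcal H_0}) \subset (-\infty,0]$, $\sigma(E|_{\mathcal H_1}) \subset [0,\infty)$, and the orthonormal basis splits as $(e_i)_{d_i<0}$ for $\mathcal H_0$ and $(e_i)_{d_i\ge 0}$ for $\mathcal H_1$. Consequently the eigenvalue list $\boldsymbol\lambda$ of $E$ is, up to reordering, the concatenation (with multiplicity) of the eigenvalue lists of the two blocks, and both cardinality inequalities will be reduced to counting how many entries each block contributes to the relevant part of $\boldsymbol\lambda$.

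First I would handle the kernel inequality $\#|\{i:\lambda_i=0\}|\ge \#|\{i:d_i=0\}|$, which is common to both cases and uses only $E|_{\mathcal H_1}\ge 0$. If $d_i=0$, then $e_i\in\mathcal H_1$ by definition, and
\[
0=\langle Ee_i,e_i\rangle = \langle E|_{\mathcal H_1} e_i,e_i\rangle = \bigl\|(E|_{\mathcal H_1})^{1/2}e_i\bigr\|^2,
\]
so $Ee_i=0$. Thus $\{e_i:d_i=0\}$ is an orthonormal subset of $\ker E$, which yields $\#|\{i:d_i=0\}|\le \dim\ker E = \#|\{j:\lambda_j=0\}|$.

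Next, for the one-sided inequality in the case $E_+$ compact, I would observe that $E|_{\mathcal H_0}\le 0$ forces $E_+ = 0\oplus E|_{\mathcal H_1}$, so $E|_{\mathcal H_1}$ is itself a compact positive operator on $\mathcal H_1$. Its spectral theorem supplies an orthonormal eigenbasis of $\mathcal H_1$, contributing $\dim\mathcal H_1 = \#|\{i:d_i\ge 0\}|$ nonnegative entries to $\boldsymbol\lambda$, which delivers $\#|\{j:\lambda_j\ge 0\}|\ge \#|\{i:d_i\ge 0\}|$. The case $E_-$ compact is handled symmetrically: $E|_{\mathcal H_0}$ is compact on $\mathcal H_0$ with nonpositive spectrum, contributing $\dim\mathcal H_0 = \#|\{i:d_i<0\}|$ eigenvalues $\le 0$ to $\boldsymbol\lambda$, and these are independent of the $\#|\{i:d_i=0\}|$ zero eigenvalues produced in the previous step, since the two eigenvector families sit in the orthogonal blocks $\mathcal H_0$ and $\mathcal H_1$. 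Summing the two disjoint contributions gives $\#|\{j:\lambda_j\le 0\}|\ge \#|\{i:d_i\le 0\}|$. I expect the only real subtlety to be this disjointness bookkeeping, which however is automatic from the block decomposition and therefore does not constitute a genuine obstacle.
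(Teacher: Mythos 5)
Your proposal is correct and follows essentially the same route as the paper: both rest on the block decomposition $E = E|_{\mathcal H_0}\oplus E|_{\mathcal H_1}$ from Proposition~\ref{p211}, obtain the kernel inequality by noting $e_i\in\ker E|_{\mathcal H_1}\subset\ker E$ whenever $d_i=0$, and then count the $\dim\mathcal H_1$ (resp.\ $\dim\mathcal H_0$) eigenvalues of the compact block $E|_{\mathcal H_1}$ (resp.\ $E|_{\mathcal H_0}$). The only small divergence is that the paper dispatches \eqref{card2} by appealing to symmetry (replace $E$ by $-E$, implicitly reshuffling the $d_i=0$ vectors into the other block, which is licit because they lie in $\ker E$), whereas you argue it directly with the original decoupling by adding the $\dim\mathcal H_0$ nonpositive eigenvalues from $E|_{\mathcal H_0}$ to the $\#|\{i:d_i=0\}|$ zero eigenvalues already located in $\mathcal H_1$; the orthogonality of the blocks makes the bookkeeping transparent, which is a minor gain in explicitness over the paper's terser phrasing.
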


\begin{proof}
Suppose that the positive part $E_{+}$ is compact. Define the spaces $\mathcal H_0$ and $\mathcal H_1$ as in \eqref{p210}. 
If $d_i=0$ for some $i$, then $\langle E|_{\mathcal H_1} e_i, e_i \rangle =0$. Since $ E|_{\mathcal H_1}$ is positive,  the vector $e_i \in \ker E|_{\mathcal H_1} \subset \ker E$. Hence,  the first inequality in \eqref{card1} follows.
By \eqref{p212}, the eigenvalue list of the positive compact operator $E|_{\mathcal H_1}$ includes all positive eigenvalues of $E$ and some of zero eigenvalues of $E$ (with multiplicity).
Hence,
\[
\#|\{i:\lambda_{i} \ge 0\}| \ge \dim \mathcal H_1 = \#|\{i:d_{i} \ge0\}|.
\]
The proof of \eqref{card2} follows by symmetry.
\end{proof}

\section{Preliminary diagonal-to-diagonal results} \label{S4}

The goal of this section is to prove preliminary diagonal-to-diagonal results. We will use several of such results shown in the joint work of the authors with Siudeja \cite{unbound}.

The following result is a symmetric variant of \cite[Theorem 3.6]{unbound} for nonincreasing sequences. It follows by applying \cite[Theorem 3.6]{unbound} to the sequences $(-\lambda_{i})$ and $(-d_{i})$.

\begin{prop}\label{posSH} Let $\boldsymbol\lambda=(\lambda_{i})_{i=1}^{\infty}$ and $\boldsymbol d = (d_{i})_{i=1}^{\infty}$ be nonincreasing sequences and define
\[\delta_{n} = \sum_{i=1}^{n}(\lambda_{i}-d_{i}).\]
If $\boldsymbol\lambda$ is a diagonal of a self-adjoint operator $E$, $\delta_{n}\geq 0$ for all $n\in\N$, and
\[\liminf_{n\to\infty}\delta_{n} = 0,\]
then $\boldsymbol d$ is also a diagonal of $E$.
\end{prop}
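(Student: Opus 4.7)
The plan is to reduce to the already-established Theorem 3.6 of \cite{unbound} by a simple sign flip. The statement of \cite[Theorem 3.6]{unbound} as alluded to in the excerpt is the analogous diagonal-to-diagonal result for nondecreasing sequences with reversed majorization (i.e. the target sequence lies \emph{above} the initial one in the partial-sum sense). Our job is to verify that negating both sequences converts the hypotheses here into the hypotheses there, and that ``diagonal of $E$'' is preserved (up to sign) under replacing $E$ by $-E$.

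First I would observe the trivial operator-theoretic fact: if $\boldsymbol{\lambda}=(\lambda_i)_{i=1}^\infty$ is a diagonal of a self-adjoint operator $E$ with respect to an orthonormal basis $(e_i)$, then $-\boldsymbol{\lambda}=(-\lambda_i)_{i=1}^\infty$ is a diagonal of the self-adjoint operator $-E$ with respect to the same basis, since $\langle (-E)e_i,e_i\rangle = -\langle Ee_i,e_i\rangle = -\lambda_i$. The converse, applied in the other direction, will finish the argument.

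Next I would set $\boldsymbol{\mu}=-\boldsymbol{\lambda}$ and $\boldsymbol{c}=-\boldsymbol{d}$. Since $\boldsymbol{\lambda}$ and $\boldsymbol{d}$ are nonincreasing, $\boldsymbol{\mu}$ and $\boldsymbol{c}$ are nondecreasing, which matches the hypothesis format of \cite[Theorem~3.6]{unbound}. Furthermore, for each $n\in\N$,
\[
\sum_{i=1}^n (c_i - \mu_i) = \sum_{i=1}^n \bigl((-d_i) - (-\lambda_i)\bigr) = \sum_{i=1}^n (\lambda_i - d_i) = \delta_n,
\]
so the standing assumptions $\delta_n \ge 0$ and $\liminf_{n\to\infty}\delta_n = 0$ translate verbatim into the assumptions of \cite[Theorem~3.6]{unbound} for the pair $(\boldsymbol{\mu},\boldsymbol{c})$.

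Finally, I would apply \cite[Theorem~3.6]{unbound} with initial diagonal $\boldsymbol{\mu}$, target sequence $\boldsymbol{c}$, and operator $-E$, to conclude that $\boldsymbol{c}=-\boldsymbol{d}$ is a diagonal of $-E$. Negating once more, $\boldsymbol{d}$ is then a diagonal of $E$, completing the proof. There is no real obstacle here; the only point that deserves a moment of care is confirming that the orientation of the majorization inequality and the monotonicity of the sequences in \cite[Theorem~3.6]{unbound} are indeed reversed exactly by the sign flip, so that the quoted result does apply without further hypotheses.
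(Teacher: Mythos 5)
Your proof is correct and matches the paper's approach exactly: the paper dispatches Proposition~\ref{posSH} with the one-line remark that it follows from \cite[Theorem 3.6]{unbound} applied to $(-\lambda_i)$ and $(-d_i)$, and you have simply spelled out the sign-flip bookkeeping (applying the cited theorem to $-E$ and negating back) that the paper leaves implicit.
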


The following three lemmas are \cite[Lemma 2.7]{unbound}, \cite[Lemma 3.1]{unbound}, and \cite[Lemma 3.2]{unbound}, respectively.

\begin{lem}\label{offdiag} Let $E$ be a symmetric operator on $\mathcal D \subset \mathcal H$. Assume that real numbers $d_1$, $d_2$, $\tilde d_1$, $\tilde d_2$ satisfy
\begin{equation}\label{oft}
d_{1},d_{2}\in[\tilde{d}_{1},\tilde{d}_{2}],\qquad \tilde{d}_{1}\neq \tilde{d}_{2}, \qquad \text{and}\qquad \tilde{d}_{1}+\tilde{d}_{2}=d_{1}+d_{2}.
\end{equation}
 If there exists an orthonormal set $\{f_{1},f_{2}\} \subset \mathcal D$ such that $\langle Ef_{i},f_{i}\rangle = \tilde{d}_{i}$ for $i=1,2$, then there exists
\[
\frac{\tilde{d}_{2}-d_{1}}{\tilde{d}_{2}-\tilde{d}_{1}}
\le \alpha \le 1
\]
and $\theta\in[0,2\pi)$ such that $\langle Ee_{i},e_{i}\rangle = d_{i}$ for $i=1,2$, where
\begin{equation}\label{offdiag0}e_{1}=\sqrt{\alpha}f_{1} + \sqrt{1-\alpha}\,e^{i\theta}f_{2}\qquad\text{and}\qquad e_{2}=\sqrt{1-\alpha}f_{1}-\sqrt{\alpha}\,e^{i\theta}f_{2}.\end{equation}
Moreover, if $\mathcal H$ is a real Hilbert space, then $e^{i\theta} = \pm 1$. If the inequalities in \eqref{oft} are strict, then $\alpha<1$.
\end{lem}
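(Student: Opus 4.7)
The plan is to reduce the lemma to a single scalar equation via a standard two-by-two unitary rotation, and then apply the intermediate value theorem. I would first verify by direct calculation that the vectors
\[
e_1 = \sqrt{\alpha}\, f_1 + \sqrt{1-\alpha}\, e^{i\theta} f_2, \qquad e_2 = \sqrt{1-\alpha}\, f_1 - \sqrt{\alpha}\, e^{i\theta} f_2
\]
form an orthonormal pair for every $\alpha\in[0,1]$ and $\theta\in[0,2\pi)$, and expand using the symmetry of $E$ and the abbreviation $c:=\langle Ef_1,f_2\rangle$ to obtain
\[
\langle Ee_1,e_1\rangle = \alpha\tilde d_1+(1-\alpha)\tilde d_2 + 2\sqrt{\alpha(1-\alpha)}\operatorname{Re}(e^{-i\theta}c).
\]
A parallel calculation shows $\langle Ee_1,e_1\rangle+\langle Ee_2,e_2\rangle = \tilde d_1+\tilde d_2 = d_1+d_2$, so the cross term in $\langle Ee_2,e_2\rangle$ comes with the opposite sign. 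Hence it suffices to arrange $\langle Ee_1,e_1\rangle = d_1$; the identity $\langle Ee_2,e_2\rangle = d_2$ will then be automatic.

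Next I would choose $\theta$ so that $e^{-i\theta}c=|c|$ (taking $\theta=0$ if $c=0$), which makes $\operatorname{Re}(e^{-i\theta}c)=|c|\ge 0$. Setting
\[
\alpha_0:=\frac{\tilde d_2-d_1}{\tilde d_2-\tilde d_1}\in[0,1],
\]
the identity $\alpha_0\tilde d_1+(1-\alpha_0)\tilde d_2=d_1$ lets me rewrite the target equation $\langle Ee_1,e_1\rangle=d_1$ as
\[
h(\alpha):=(\tilde d_2-\tilde d_1)(\alpha-\alpha_0)-2\sqrt{\alpha(1-\alpha)}\,|c|=0.
\]
Since $h$ is continuous with $h(\alpha_0)=-2\sqrt{\alpha_0(1-\alpha_0)}\,|c|\le 0$ and $h(1)=(\tilde d_2-\tilde d_1)(1-\alpha_0)\ge 0$, the intermediate value theorem yields $\alpha\in[\alpha_0,1]$ with $h(\alpha)=0$, which is exactly the lower bound $\alpha\ge(\tilde d_2-d_1)/(\tilde d_2-\tilde d_1)$ asserted by the lemma.

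The real Hilbert space statement is immediate from the construction: when $c\in\R$ the requirement $e^{-i\theta}c=|c|$ is met by $e^{i\theta}\in\{+1,-1\}$, with $e^{i\theta}=1$ chosen in the trivial case $c=0$. For the strictness assertion, if $\tilde d_1<d_1<\tilde d_2$ strictly, then at $\alpha=1$ one has $e_1=f_1$ and $\langle Ee_1,e_1\rangle=\tilde d_1\neq d_1$, so $\alpha=1$ cannot solve the equation and therefore $\alpha<1$. I do not anticipate a serious obstacle here; the only mild point worth noting is the degenerate case $c=0$, in which the equation forces $\alpha=\alpha_0$ (still lying in $[\alpha_0,1]$) and the rotation reduces to a trivial change of basis within $\operatorname{span}\{f_1,f_2\}$.
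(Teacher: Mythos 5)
Your proof is correct. Note that the present paper does not actually prove this lemma; it is cited from the earlier work \cite{unbound} (as Lemma 2.7 there), so there is no in-paper argument to compare against. Your approach — verifying orthonormality of the rotated pair, computing the cross term $2\sqrt{\alpha(1-\alpha)}\operatorname{Re}(e^{-i\theta}\langle Ef_1,f_2\rangle)$, aligning the phase so the cross term is $2\sqrt{\alpha(1-\alpha)}\,\lvert c\rvert$, using the conservation identity $\langle Ee_1,e_1\rangle+\langle Ee_2,e_2\rangle=\tilde d_1+\tilde d_2$ to reduce to a single scalar equation, and then applying the intermediate value theorem to $h(\alpha)=(\tilde d_2-\tilde d_1)(\alpha-\alpha_0)-2\sqrt{\alpha(1-\alpha)}\lvert c\rvert$ on $[\alpha_0,1]$ — is the natural route and is essentially the proof from \cite{unbound}. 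The endpoint evaluations $h(\alpha_0)\le 0$, $h(1)\ge 0$, the real-scalar reduction $e^{i\theta}=\pm 1$, and the strict-inequality argument ($h(1)>0$ forces a root in $[\alpha_0,1)$) are all handled correctly.
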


\begin{lem}\label{loss} Let $(f_{i})_{i\in\N}$ be an orthonormal set, and let $(\alpha_{i})_{i\in\N}$ be a sequence in $[0,1]$. Set $\tilde{e}_{1}=f_{1}$ and inductively define for $i\in\N$,
\begin{equation}\label{los}
e_{i} =\sqrt{\alpha_{i}}\,\tilde{e}_{i} + \sqrt{1-\alpha_{i}}f_{i+1}\qquad\text{and}\qquad\tilde{e}_{i+1} =\sqrt{1-\alpha_{i}}\tilde{e}_{i} - \sqrt{\alpha_{i}}f_{i+1}.
\end{equation}
If for each $n\in \N$
\begin{equation}\label{loss0}
\prod_{i=n}^{\infty}(1-\alpha_{i})=0,\end{equation}
then $(e_{i})_{i\in\N}$ is an orthonormal basis for $\overline{\lspan}\{f_{i}: i\in\N\}$. In particular,
if $\alpha_{i}<1$ for all $i$ and $\sum_{i=1}^{\infty}\frac{\alpha_{i}}{1-\alpha_{i}}=\infty$, then $(e_{i})_{i\in\N}$ is an orthonormal basis for $\overline{\lspan}\{f_{i}: i\in\N\}$.
\end{lem}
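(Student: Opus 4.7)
The plan is to verify orthonormality of the joint collection $\{e_1,\ldots,e_n,\tilde e_{n+1}\}$ by induction on $n$, then exploit the recursion \eqref{los} to compute $\langle\tilde e_{n+1},f_k\rangle$ explicitly and show it vanishes as $n\to\infty$, and finally deduce the basis property from the orthogonal expansion of each $f_k$ in this intermediate orthonormal set.

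First, observe that \eqref{los} is the action of a symmetric orthogonal $2\times 2$ matrix (with determinant $-1$) on the pair $(\tilde e_i,f_{i+1})$, producing $(e_i,\tilde e_{i+1})$. I would prove by induction on $n\ge 0$ the following three invariants: (a) $\{e_1,\ldots,e_n,\tilde e_{n+1}\}$ is orthonormal, (b) $\tilde e_{n+1}\perp f_k$ for all $k\ge n+2$, and (c) $\lspan\{e_1,\ldots,e_n,\tilde e_{n+1}\}=\lspan\{f_1,\ldots,f_{n+1}\}$. The base $n=0$ is $\tilde e_1=f_1$. In the inductive step, the transformation preserves norms and orthogonality within the $2$-plane $\lspan\{\tilde e_n,f_{n+1}\}$, giving $e_n\perp\tilde e_{n+1}$ and the span equality in (c); orthogonality of $e_n$ and $\tilde e_{n+1}$ to the previous $e_k$ (with $k<n$) holds because those $e_k$ lie in $\lspan\{f_1,\ldots,f_n\}$, which is orthogonal to both $f_{n+1}$ and $\tilde e_n$ by (b) at the previous level; finally (b) at level $n$ follows from $\tilde e_{n+1}\in\lspan\{f_1,\ldots,f_{n+1}\}$. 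In particular, $(e_i)_{i\in\N}$ is orthonormal and each $e_i\in\lspan\{f_1,\ldots,f_{i+1}\}$.

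Next, since $\tilde e_{n+1}=\sqrt{1-\alpha_n}\,\tilde e_n-\sqrt{\alpha_n}\,f_{n+1}$ and $\tilde e_n\perp f_{n+1}$, iteration yields the telescoping identity
\[
\langle \tilde e_{n+1},f_k\rangle=\langle\tilde e_k,f_k\rangle\prod_{j=k}^{n}\sqrt{1-\alpha_j}\qquad\text{for all }n\ge k.
\]
The hypothesis \eqref{loss0} forces $\prod_{j=k}^{n}(1-\alpha_j)\to 0$ as $n\to\infty$, so $\langle\tilde e_{n+1},f_k\rangle\to 0$. Combined with invariant (c), the orthogonal expansion of $f_k$ in the orthonormal set $\{e_1,\ldots,e_n,\tilde e_{n+1}\}$ gives
\[
\Big\|f_k-\sum_{i=1}^{n}\langle f_k,e_i\rangle e_i\Big\|=\abs{\langle f_k,\tilde e_{n+1}\rangle}\longrightarrow 0,
\]
so $f_k\in\overline{\lspan}\{e_i:i\in\N\}$. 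The reverse inclusion is immediate from $e_i\in\lspan\{f_1,\ldots,f_{i+1}\}$, so $(e_i)_{i\in\N}$ is an orthonormal basis for $\overline{\lspan}\{f_i:i\in\N\}$.

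For the final clause, with $\alpha_i\in[0,1)$ it suffices to show $\sum_i\alpha_i=\infty$, since then $-\log(1-\alpha_i)\ge\alpha_i$ yields $\prod_i(1-\alpha_i)=0$ and the same holds for every tail. If $\alpha_i\ge 1/2$ infinitely often this is immediate; otherwise $\alpha_i<1/2$ eventually and $\alpha_i/(1-\alpha_i)\le 2\alpha_i$, so the hypothesis $\sum_i\alpha_i/(1-\alpha_i)=\infty$ forces $\sum_i\alpha_i=\infty$. The main technical obstacle is purely bookkeeping in the first step, namely propagating all three induction invariants simultaneously; the quantitative heart of the argument is the clean telescoping identity, which reduces the basis property to the vanishing tail product in \eqref{loss0}.
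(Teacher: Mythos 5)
Your argument is correct, and the mechanism you use is the right one. Note that this paper does not actually prove Lemma \ref{loss}: it is imported verbatim from \cite[Lemma 3.1]{unbound}, so there is no in-paper proof to compare against; the closest relative in the text is the proof of the companion Lemma \ref{noloss}, where the same recursion is analyzed by computing $\|\tilde{e}_{i}-\tilde{e}_{i+n}\|^{2}=2\bigl(1-\bigl(\prod_{j=i}^{i+n-1}(1-\alpha_{j})\bigr)^{1/2}\bigr)$ and extracting a limit vector $e_{\infty}$. Your route is the natural mirror image of that computation: instead of tracking where $\tilde{e}_{n}$ goes, you show the residual vector becomes asymptotically orthogonal to every fixed $f_{k}$ via the telescoping identity $\langle\tilde{e}_{n+1},f_{k}\rangle=\langle\tilde{e}_{k},f_{k}\rangle\prod_{j=k}^{n}\sqrt{1-\alpha_{j}}$, and then hypothesis \eqref{loss0} (applied to the tail starting at $k$) kills this quantity, so expanding $f_{k}$ in the orthonormal set $\{e_{1},\ldots,e_{n},\tilde{e}_{n+1}\}$ and letting $n\to\infty$ gives $f_{k}\in\overline{\lspan}\{e_{i}\}$; together with $e_{i}\in\lspan\{f_{1},\ldots,f_{i+1}\}$ this yields the basis property. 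Your reduction of the ``in particular'' clause to $\sum\alpha_{i}=\infty$ (splitting into the cases $\alpha_{i}\geq 1/2$ infinitely often versus $\alpha_{i}<1/2$ eventually, where $\alpha_{i}/(1-\alpha_{i})\leq 2\alpha_{i}$) is also fine.

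Two justifications are misstated, though both are repairable from within your own induction, so I would not call them gaps. First, in the inductive step you claim that $\lspan\{f_{1},\ldots,f_{n}\}$ is orthogonal to $\tilde{e}_{n}$; this is false, since by your invariant (c) the vector $\tilde{e}_{n}$ lies \emph{in} that span. What you need, and what you already have, is: $e_{k}\perp f_{n+1}$ for $k<n$ because $e_{k}\in\lspan\{f_{1},\ldots,f_{n}\}$ and the $f_{i}$ are orthonormal, and $e_{k}\perp\tilde{e}_{n}$ by invariant (a) at the previous level; since $e_{n},\tilde{e}_{n+1}\in\lspan\{\tilde{e}_{n},f_{n+1}\}$, orthogonality to the earlier $e_{k}$ follows. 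Second, in the telescoping identity the orthogonality actually being used is $f_{n+1}\perp f_{k}$ for $k\leq n$ (so that the term $-\sqrt{\alpha_{n}}\,f_{n+1}$ contributes nothing to $\langle\tilde{e}_{n+1},f_{k}\rangle$), not $\tilde{e}_{n}\perp f_{n+1}$. With these two phrases corrected, the proof is complete.
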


\begin{lem}\label{loglem} If $(t_{n})$ is a positive nonincreasing sequence with limit zero, then
\[\sum_{n=1}^{\infty}\frac{t_{n}-t_{n+1}}{t_{n+1}}=\infty.\]
\end{lem}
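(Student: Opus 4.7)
\medskip

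The plan is to exploit a telescoping identity after taking logarithms. Write
\[
\frac{t_{n}-t_{n+1}}{t_{n+1}} = \frac{t_{n}}{t_{n+1}} - 1 = r_{n} - 1,
\]
where $r_{n} := t_{n}/t_{n+1} \ge 1$ (since $(t_{n})$ is positive and nonincreasing). The elementary inequality $\log x \le x - 1$ for $x > 0$ then gives $r_{n} - 1 \ge \log r_{n}$, so it suffices to show that $\sum_{n=1}^{\infty} \log r_{n} = \infty$.

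This is immediate from telescoping: for every $N \in \mathbb N$,
\[
\sum_{n=1}^{N} \log r_{n} = \sum_{n=1}^{N} \bigl( \log t_{n} - \log t_{n+1} \bigr) = \log t_{1} - \log t_{N+1}.
\]
Since $t_{N+1} \searrow 0$, we have $\log t_{N+1} \to -\infty$, hence the partial sums tend to $+\infty$. Combining the two bounds,
\[
\sum_{n=1}^{N} \frac{t_{n}-t_{n+1}}{t_{n+1}} \ge \log t_{1} - \log t_{N+1} \longrightarrow \infty,
\]
which completes the proof.

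An equivalent route, if one prefers avoiding the scalar inequality $\log x \le x-1$, is a direct integral comparison: on the interval $[t_{n+1}, t_{n}]$ one has $1/t \le 1/t_{n+1}$, and therefore
\[
\log \frac{t_{n}}{t_{n+1}} = \int_{t_{n+1}}^{t_{n}} \frac{dt}{t} \le \frac{t_{n}-t_{n+1}}{t_{n+1}},
\]
after which the same telescoping finishes the argument. There is no serious obstacle; the only thing to notice is that positivity of $(t_{n})$ is needed so that the ratios $r_{n}$ (and their logarithms) are well-defined, and that $t_{n} \to 0$ is exactly what forces the telescoped quantity $-\log t_{N+1}$ to diverge.
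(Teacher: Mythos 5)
Your proof is correct; the paper does not reprove this lemma but imports it as \cite[Lemma 3.2]{unbound}, where the argument is the same logarithm-plus-telescoping comparison you give. Both the $\log x \le x-1$ route and the integral comparison variant are valid and standard.
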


The following elementary lemma enable us to reduce the proof of diagonal-to-diagonal results by passing to partitions of the diagonal sequences.

\begin{lem}\label{subdiag} Let $(\lambda_{i})_{i\in I}$ and $(d_{i})_{i\in J}$ be two sequences of real numbers. Suppose that: 
\begin{enumerate}
\item there is a set $K$ and partitions $(I_{k})_{k\in K}$ and $(J_{k})_{k\in K}$ of $I$ and $J$, respectively,
\item for every $k\in K$, if $E_k$ is any self-adjoint operator with diagonal $(\lambda_{i})_{i\in I_k}$, then $(d_{i})_{i\in J_k}$ is also a diagonal of $E_k$,
\end{enumerate}
Then, if $E$ is any self-adjoint operator with diagonal $(\lambda_{i})_{i\in I}$, then $(d_{i})_{i\in J}$ is also a diagonal of $E$.
\end{lem}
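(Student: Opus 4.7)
The plan is to exploit the block structure on $E$ induced by the partition $(I_k)_{k\in K}$ via the given basis. Fix a self-adjoint operator $E$ on a Hilbert space $\mathcal H$ and an orthonormal basis $(e_i)_{i\in I}$ of $\mathcal H$ with $\langle E e_i, e_i\rangle = \lambda_i$ for every $i\in I$. For each $k\in K$, I set $\mathcal H_k = \overline{\lspan}\{e_i : i\in I_k\}$ and let $P_k$ denote the orthogonal projection onto $\mathcal H_k$. Since $(I_k)_{k\in K}$ partitions $I$, the subspaces $\mathcal H_k$ are pairwise orthogonal and their Hilbert direct sum is all of $\mathcal H$.

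The key observation is that although $E$ need not leave $\mathcal H_k$ invariant, the compression $E_k := P_k E P_k$, viewed as an operator on $\mathcal H_k$, is self-adjoint and satisfies $\langle E_k e_i, e_i\rangle = \langle E e_i, e_i\rangle = \lambda_i$ for every $i\in I_k$ (using $P_k e_i = e_i$). Hence $(\lambda_i)_{i\in I_k}$ is a diagonal of $E_k$ with respect to the orthonormal basis $(e_i)_{i\in I_k}$ of $\mathcal H_k$, and invoking assumption (ii) for this operator produces an orthonormal basis $(f_j)_{j\in J_k}$ of $\mathcal H_k$ with $\langle E_k f_j, f_j\rangle = d_j$ for every $j\in J_k$.

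The last step is to assemble these bases into one orthonormal basis of $\mathcal H$. For each $f_j \in \mathcal H_k$ the identity $P_k f_j = f_j$ gives $\langle E f_j, f_j\rangle = \langle P_k E P_k f_j, f_j\rangle = \langle E_k f_j, f_j\rangle = d_j$. Because the subspaces $\mathcal H_k$ are mutually orthogonal and their closed linear span is $\mathcal H$, the concatenated family $(f_j)_{j\in J}$, where $J$ is the disjoint union of the $J_k$, is an orthonormal basis of $\mathcal H$ along which $E$ has diagonal $(d_j)_{j\in J}$, as required. I do not anticipate significant obstacles: the argument is essentially a formal bookkeeping observation about diagonals restricting to compressions on spanned subspaces. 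The only subtle point to keep in mind is that the compression $E_k$ bears no relationship to $E$ beyond recording its diagonal entries on $\mathcal H_k$, which is precisely why assumption (ii) is phrased as a universal statement over all self-adjoint operators with the prescribed diagonal.
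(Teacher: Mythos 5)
Your proof is correct and matches the paper's argument: both compress $E$ to the subspaces $\mathcal H_k$ spanned by the given basis vectors, invoke hypothesis (ii) on each compression, and reassemble the resulting bases. The compression $P_k E P_k|_{\mathcal H_k}$ you use is exactly the paper's $T_k^* E T_k$, so the two proofs are identical in substance.
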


\begin{proof}
Let $E$ be a self-adjoint operator on $\mathcal H$. Let $(f_{i})_{i\in I}$ be an o.n.~basis of $\mathcal H$ with respect to which $E$ has diagonal $(\lambda_{i})_{i\in I}$. For each $k\in K$ consider the space $\Hil_{k} = \overline{\lspan}\{ f_i : i \in I_k\}$. Let $E_k$ be a compression of $E$ to $\mathcal H_k$. That is, $E_k = (T_k)^* E T_k$, where $T_k: \mathcal H_k \to \mathcal H$ denotes the natural embedding of $\mathcal H_k$ into $\Hil$. By (ii) there exists an o.n.~basis $(e_{i})_{i\in J_k}$ of $\mathcal H_k$ with respect to  which $E_k$ has diagonal $(d_{i})_{i\in J_{k}}$. Then, $E$ has diagonal $(d_{i})_{i\in J}$ with respect to o.n.~basis $(e_{i})_{i\in J}$ of $\mathcal H$.
\end{proof}

\begin{remark} Observe that if $I_{k} = J_{k}$ for some $k\in K$ and $ \lambda_{i}=d_i$ for all $i\in I_{k}$, then condition (ii) in Lemma \ref{subdiag} is automatically satisfied.
\end{remark}

Our next goal is to prove diagonal-to-diagonal result in the case where $\boldsymbol \lambda$ has exactly one negative term, $\boldsymbol d$ is strictly positive, and we have {\it dominant majorization} $\lambda_{i}\geq d_{i}$ for all $i\in\N$. In the following result will apply Lemma \ref{loss} to this setup. However, we need a technical assumption, which will be later removed for Theorem \ref{1neg}.

\begin{lem}\label{tbound} Let $(\lambda_{i})_{i\in\N}$ and $\boldsymbol d=(d_{i})_{i\in\N}$ be positive sequences with $\lambda_{i}\geq d_{i}$ for all $i\in\N$. Assume
\[\lambda_{-1} := \sum_{i=1}^{\infty}(\lambda_{i}-d_{i})\in (0,\infty),\]
and define the sequence
\[t_{n} = \sum_{i=n}^{\infty}(\lambda_{i} - d_{i}).\]
If there is some $c>0$ such that $\lambda_{n}\leq ct_{n+1}$ for all $n\in\N$ and there is a self-adjoint operator $E$ with diagonal $\boldsymbol \lambda:=(-\lambda_{-1},\lambda_{1},\lambda_{2},\ldots)$, then $\boldsymbol d$ is also a diagonal of $E$.
\end{lem}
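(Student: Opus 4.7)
The plan is to use Lemma \ref{offdiag} iteratively to ``spread'' the single negative diagonal entry $-\lambda_{-1}$ across the positive entries $\lambda_n$, at each step replacing $\lambda_n$ by the target $d_n$ on the diagonal while leaving a smaller negative residue $-t_{n+1}$ attached to an auxiliary vector $\tilde e_{n+1}$. The hypothesis $\lambda_n \leq c t_{n+1}$ will then make it possible to invoke Lemma \ref{loss} (via Lemma \ref{loglem}) to conclude that the newly constructed vectors span all of $\mathcal H$.

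Concretely, I would fix an orthonormal basis $(g_0, g_1, g_2, \ldots)$ of $\mathcal H$ realizing the diagonal $(-\lambda_{-1}, \lambda_1, \lambda_2, \ldots)$ and set $\tilde e_1 := g_0$. Inductively, at step $n \geq 1$, the vector $\tilde e_n$ will lie in $\lspan\{g_0, \ldots, g_{n-1}\}$ and satisfy $\langle E \tilde e_n, \tilde e_n\rangle = -t_n$, hence be orthogonal to $g_n$. Since $(-t_n) + \lambda_n = d_n + (-t_{n+1})$ and $d_n, -t_{n+1} \in [-t_n, \lambda_n]$ (using $0 < d_n \leq \lambda_n$ and $0 \leq t_{n+1} \leq t_n$), Lemma \ref{offdiag} applied to $\{\tilde e_n, g_n\}$ with targets $d_n$ and $-t_{n+1}$ produces $\theta_n \in [0, 2\pi)$ and
\[
\alpha_n \;\geq\; \frac{\lambda_n - d_n}{\lambda_n + t_n} \;=\; \frac{t_n - t_{n+1}}{\lambda_n + t_n}
\]
together with
\[
e_n := \sqrt{\alpha_n}\,\tilde e_n + \sqrt{1-\alpha_n}\,e^{i\theta_n} g_n, \qquad \tilde e_{n+1} := \sqrt{1-\alpha_n}\,\tilde e_n - \sqrt{\alpha_n}\,e^{i\theta_n} g_n,
\]
satisfying $\langle E e_n, e_n\rangle = d_n$ and $\langle E \tilde e_{n+1}, \tilde e_{n+1}\rangle = -t_{n+1}$. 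In the degenerate case $d_n = \lambda_n$ I would instead take $\alpha_n = 0$, $e_n = g_n$, $\tilde e_{n+1} = \tilde e_n$; then the displayed lower bound on $\alpha_n$ still holds trivially, and all diagonal identities are preserved since $t_n = t_{n+1}$ in that situation.

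After absorbing each phase $e^{i\theta_n}$ into $g_n$ (which changes neither the diagonal of $E$ nor the spanned subspaces), the recursion matches that of Lemma \ref{loss} with $f_{n+1} := g_n$, so it remains only to verify $\sum_n \alpha_n/(1-\alpha_n) = \infty$. Monotonicity of $x/(1-x)$ on $[0,1)$ combined with the bound on $\alpha_n$ gives
\[
\frac{\alpha_n}{1-\alpha_n} \;\geq\; \frac{t_n - t_{n+1}}{\lambda_n + t_{n+1}} \;\geq\; \frac{1}{c+1} \cdot \frac{t_n - t_{n+1}}{t_{n+1}},
\]
the last step using $\lambda_n \leq c t_{n+1}$. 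This same hypothesis forces $t_{n+1} > 0$ for every $n$, so $(t_n)$ is positive, nonincreasing, and tends to zero; Lemma \ref{loglem} then yields the divergence of the tail series, whence $\sum_n \alpha_n/(1-\alpha_n) = \infty$. Lemma \ref{loss} concludes that $(e_n)_{n \in \N}$ is an orthonormal basis of $\overline{\lspan}\{g_i : i \geq 0\} = \mathcal H$, and $\boldsymbol d$ is the diagonal of $E$ in this basis.

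I expect the main obstacle to be cosmetic rather than conceptual: carefully threading the phases $e^{i\theta_n}$ through the induction so that Lemma \ref{loss}'s phaseless recursion applies, and treating the sporadic indices with $d_n = \lambda_n$ (where Lemma \ref{offdiag} does not force $\alpha_n < 1$) uniformly with the generic ones. The substantive content is the choice of the auxiliary diagonal values $-t_n$ for the $\tilde e_n$, which makes the steps telescope, together with the observation that the lower bound on $\alpha_n$ supplied by Lemma \ref{offdiag} is tailor-made to meet Lemma \ref{loglem} precisely under the growth hypothesis $\lambda_n \leq c t_{n+1}$.
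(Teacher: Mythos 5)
Your proof is correct and uses the same core machinery as the paper: auxiliary targets $-t_n$ for the $\tilde e_n$, the lower bound $\alpha_n \geq (\lambda_n - d_n)/(\lambda_n + t_n)$ from Lemma \ref{offdiag}, the identity $\tilde\alpha_n/(1-\tilde\alpha_n) = (t_n - t_{n+1})/(\lambda_n + t_{n+1})$, the growth hypothesis $\lambda_n \leq c t_{n+1}$, Lemma \ref{loglem}, and Lemma \ref{loss}. The one point where your route differs is the treatment of indices with $d_n = \lambda_n$: the paper first establishes the result under the extra assumption $\lambda_i > d_i$ for all $i$, and then handles the general case by restricting to the subsequence of indices where the inequality is strict and invoking Lemma \ref{subdiag}. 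You instead fold those degenerate indices into a single sweep by setting $\alpha_n = 0$ there, which is valid because Lemma \ref{loss} accepts any $\alpha_i \in [0,1]$ with $\alpha_i < 1$ and the zero terms contribute nothing to $\sum \alpha_n/(1-\alpha_n)$, so the divergence estimate via Lemma \ref{loglem} (applied to the full sequence $(t_n)$, which is positive because $t_{n+1} \geq \lambda_n/c > 0$) goes through unchanged. This shaves off the paper's reindexing/Lemma~\ref{subdiag} step at no real cost; the only thing to check is that $\alpha_n < 1$ still holds at every index, which it does --- for degenerate $n$ you set $\alpha_n = 0$, and for $\lambda_n > d_n$ the inequalities in Lemma~\ref{offdiag} are strict since $-t_n < -t_{n+1} < 0 < d_n < \lambda_n$.
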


\begin{proof} First, we will assume that $\lambda_{i}>d_{i}$ for all $i\in\N$. For each $n\in\N$ set
\[\tilde{\alpha}_{n} = \frac{\lambda_{n} - d_{n}}{\lambda_{n} + t_{n}}.\]
Since
\begin{equation}\label{tbound1}-t_{n}<-t_{n+1}<0< d_{n}<\lambda_{n} \qquad\text{for all }n\in\N,\end{equation}
we see that $\tilde{\alpha}_{n}<1$ for all $n\in\N$. Next, we calculate
\[\frac{\tilde{\alpha}_{n}}{1-\tilde{\alpha}_{n}} = \frac{\lambda_{n} - d_{n}}{d_{n}+t_{n}} = \frac{t_{n}-t_{n+1}}{\lambda_{n} + t_{n+1}} \geq \frac{t_{n}-t_{n+1}}{ct_{n+1}+t_{n+1}} = \frac{1}{(1+c)}\frac{t_{n}-t_{n+1}}{t_{n+1}}\]
By Lemma \ref{loglem}, the we have $\sum_{n=1}^{\infty}\frac{t_{n}-t_{n+1}}{t_{n+1}} = \infty$, and hence $\sum_{n=1}^{\infty}\frac{\tilde{\alpha}_{n}}{1-\tilde{\alpha}_{n}}=\infty$.

Let $(f_{i})_{i=1}^{\infty}$ be an orthonormal basis such that 
\[\langle Ef_{i},f_{i}\rangle = \begin{cases} -\lambda_{-1} & i=1\\ \lambda_{i-1} & i\geq 2.\end{cases}\]

From \eqref{tbound1} we see that we can apply Lemma \ref{offdiag} to find $\theta_{2}\in[0,2\pi)$ and $\alpha_{1}\in[\tilde{\alpha}_{1},1)$ so that the vectors
\[e_{1}=\sqrt{\alpha_{1}}f_{1} + \sqrt{1-\alpha_{1}}e^{i\theta_{2}}f_{2}\qquad\text{and}\qquad \tilde{e}_{2}=\sqrt{1-\alpha_{1}}f_{1}-\sqrt{\alpha_{1}}e^{i\theta_{2}}f_{2}\]
form an orthonormal basis for $\lspan\{f_{1},f_{2}\}$ and $\langle Ee_{1},e_{1}\rangle = d_{1}$ and $\langle E\tilde{e}_{2},\tilde{e}_{2}\rangle = -t_{2}$.

Next, we will show that for each $n\in\N$ we have a othonormal basis $\{e_{1},e_{2},\ldots,e_{n-1},\tilde{e}_{n}\}$ for $\lspan\{f_{1},\ldots,f_{n}\}$ such that
\[\langle Ee_{j},e_{j}\rangle = d_{j}\quad \text{for } j\leq n-1\qquad\text{and}\qquad \langle E\tilde{e}_{n},\tilde{e}_{n}\rangle = -t_{n}.\]
Indeed, assume we have such an othonormal basis for some $n\geq 2$.  Again, from \eqref{tbound1} and Lemma \ref{offdiag} we see that there is some $\theta_{n+1}\in[0,2\pi)$ and $\alpha_{n}\in[\tilde{\alpha}_{n},1)$ such that the vectors
\[e_{n}=\sqrt{\alpha_{n}}\tilde{e}_{n} + \sqrt{1-\alpha_{n}}e^{i\theta_{n+1}}f_{n+1}\qquad\text{and}\qquad \tilde{e}_{n+1}=\sqrt{1-\alpha_{n}}\tilde{e}_{n}-\sqrt{\alpha_{n}}e^{i\theta_{n+1}}f_{n+1}\]
are an orthonormal basis for $\lspan\{\tilde{e}_{n},f_{n+1}\}$ and $\langle Ee_{n},e_{n}\rangle = d_{n}$ and $\langle E\tilde{e}_{n+1},\tilde{e}_{n+1}\rangle = -t_{n+1}$. This completes the inductive step, and we have the desired orthonormal basis for each $n\in\N$.

Note that the sequence $(e_{i})_{i=1}^{\infty}$ given by the above procedure is also obtained by applying Lemma \ref{loss} to $(e^{i\theta_{n}}f_{n})_{n=1}^{\infty}$ with $(\alpha_{n})_{n=1}^{\infty}$ and $(\theta_{n})_{n=2}^{\infty}$ as defined above, and $\theta_{1} = 0$. Since $\tilde{\alpha}_{n} \leq \alpha_{n}<1$ for all $n\in\N$, we have
\[\sum_{n=1}^{\infty}\frac{\alpha_{n}}{1-\alpha_{n}}\geq \sum_{n=1}^{\infty}\frac{\tilde{\alpha}_{n}}{1-\tilde{\alpha}_{n}} = \infty.\]
By Lemma \ref{loss}, since $(f_{i})_{i=1}^{\infty}$ is an orthonormal basis, the sequence $(e_{i})_{i=1}^{\infty}$ is also an orthonormal basis. This completes the proof under the assumption that $\lambda_{i}> d_{i}$ for all $i\in\N$.

Since $\lambda_{n}>0$ for all $n\in\N$, we have $t_{n+1}\geq \lambda_{n}/c>0$ for all $n\in\N$, that is, the inequality $\lambda_{n}\geq d_{n}$ is strict for infinitely many $n\in\N$. Let $n_{k}$ denote the $k$th integer $n$ such that $\lambda_{n}>d_{n}$. Set $\tilde{d}_{k} = d_{n_{k}}$ and $\tilde{\lambda}_{k} = \lambda_{n_{k}}$ for each $k\in\N$. Note that $\tilde{\lambda}_{k}>\tilde{d}_{k}$ for all $k\in\N$ and
\[\sum_{k=j+1}^{\infty}(\tilde{\lambda}_{k}-\tilde{d}_{k}) = \sum_{i=n_{j+1}}^{\infty}(\lambda_{i}-d_{i}) = t_{n_{j}+1}\geq \lambda_{n_{j}}/c = \tilde{\lambda}_{j}/c.\]
Set $\tilde{\lambda}_{-1} = \lambda_{-1}$, $J_{1} = \{i\in\N : \lambda_{i}>d_{i}\} = \{n_{1},n_{2},\ldots\}$, $I_{1} = \{-1\}\cup J_{1}$, and $I_{2} = J_{2} = \{i\in\N : \lambda_{i} = d_{i}\}$. By the above argument, if $E_{1}$ is any self-adjoint operator with diagonal $(\tilde{\lambda}_{k})_{k\in\N\cup\{-1\}} = (\sgn(i)\lambda_{i})_{i\in I_{1}}$, then $(\tilde{d}_{k})_{k\in\N} = (d_{i})_{i\in J_{1}}$ is also a diagonal of $E_{1}$.  Since $(\sgn(i)\lambda_{i})_{i\in\N\cup\{-1\}}$ is a diagonal of $E$, by Lemma \ref{subdiag} $(d_{i})_{i\in\N}$ is also a diagonal of $E$.
\end{proof}

The following result enables us to rearrange sequences satisfying dominant majorization into nonincreasing order.

\begin{lem}\label{dra} Let $(\lambda_{i})_{i\in\N}$ and $\boldsymbol d = (d_{i})_{i\in\N}$ be positive sequences in $c_{0}$ such that $\lambda_{i}\geq d_{i}$ for all $i\in\N$. If
\[\sigma:=\sum_{i=1}^{\infty}(\lambda_{i}-d_{i})<\infty,\]
then $\lambda_{i}^{\downarrow}\geq d_{i}^{\downarrow}$ for all $i\in\N$, and
\[\sum_{i=1}^{\infty}(\lambda_{i}^{\downarrow}-d_{i}^{\downarrow}) = \sigma.\]
\end{lem}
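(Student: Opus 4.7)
The plan is to interpret both conclusions through the distribution functions $N_c(t):=\#|\{i:c_i>t\}|$ for $c\in\{\boldsymbol\lambda,\boldsymbol d\}$. The hypothesis $\lambda_i\geq d_i$ gives the inclusion $\{i:d_i>t\}\subseteq\{i:\lambda_i>t\}$, hence $N_d(t)\leq N_\lambda(t)$ for every $t\geq 0$. Since $c_n^\downarrow=\sup\{t\geq 0:N_c(t)\geq n\}$, the pointwise inequality $N_d\leq N_\lambda$ immediately yields $\lambda_n^\downarrow\geq d_n^\downarrow$ for every $n\in\N$, which is the first conclusion; taking partial sums it also gives $\boldsymbol d\prec\boldsymbol\lambda$ in the sense of Definition \ref{rmajdef}.

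For the sum identity, I would use the layer-cake representation $g_c(\alpha):=\sum_{c_i\geq\alpha}(c_i-\alpha)=\int_\alpha^\infty N_c(t)\,dt$, which is immediate from $c_i-\alpha=\int_\alpha^{c_i}\,dt$ together with Tonelli. Subtracting the two versions of $g_c$ yields
\[\delta(\alpha,\boldsymbol\lambda,\boldsymbol d)=\int_\alpha^\infty\bigl(N_\lambda(t)-N_d(t)\bigr)\,dt.\]
The integrand is nonnegative by the first step, so the monotone convergence theorem applied as $\alpha\searrow 0$ gives
\[\lim_{\alpha\searrow 0}\delta(\alpha,\boldsymbol\lambda,\boldsymbol d)=\int_0^\infty\bigl(N_\lambda(t)-N_d(t)\bigr)\,dt=\sum_{i=1}^\infty(\lambda_i-d_i)=\sigma,\]
where the middle equality is once more Tonelli applied to $\lambda_i-d_i=\int_0^\infty(\mathbf{1}_{\lambda_i>t}-\mathbf{1}_{d_i>t})\,dt$.

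To close the loop, I would invoke Proposition \ref{LR} (applicable because $\boldsymbol d\prec\boldsymbol\lambda$) to obtain
\[\liminf_{\alpha\searrow 0}\delta(\alpha,\boldsymbol\lambda,\boldsymbol d)=\liminf_{k\to\infty}\sum_{i=1}^k(\lambda_i^\downarrow-d_i^\downarrow),\]
and observe that, by the first step, the partial sums on the right are nondecreasing, so both liminfs are genuine limits. Combining with the identification $\lim_{\alpha\searrow 0}\delta(\alpha,\boldsymbol\lambda,\boldsymbol d)=\sigma$ from the previous paragraph yields $\sum_{i=1}^\infty(\lambda_i^\downarrow-d_i^\downarrow)=\sigma$. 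No step presents a real obstacle: the three ingredients — the pointwise comparison $N_d\leq N_\lambda$, the Tonelli-based rewriting of $\delta$ as an integral of distribution functions, and Proposition \ref{LR} — are all routine, and the argument is essentially bookkeeping.
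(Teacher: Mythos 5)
Your proof is correct, but it takes a genuinely different route from the paper. The paper argues entirely by hand with explicit permutations: after establishing $\lambda_N^\downarrow\geq d_N^\downarrow$ by a counting argument (for each $N$ the distinct indices $\Pi_d(1),\ldots,\Pi_d(N)$ already supply $N$ values of $\lambda$ that dominate $d_N^\downarrow$), it sandwiches $\sum_{i=1}^N(\lambda_i^\downarrow-d_i^\downarrow)$ between the partial sums $\sum_{i=1}^N(\lambda_{\Pi_d(i)}-d_{\Pi_d(i)})$ and $\sum_{i=1}^N(\lambda_{\Pi_\lambda(i)}-d_{\Pi_\lambda(i)})$, both rearrangements of the same absolutely convergent nonnegative series $\sigma$, and lets $N\to\infty$. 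Your version replaces the sandwich with the Lebesgue picture: you rewrite $\delta(\alpha,\boldsymbol\lambda,\boldsymbol d)$ as $\int_\alpha^\infty(N_\lambda-N_d)$, pass to the limit by monotone convergence to identify $\lim_{\alpha\searrow0}\delta$ with $\sigma$ via Tonelli, then appeal to Proposition \ref{LR} to translate that limit back to $\sum(\lambda_i^\downarrow-d_i^\downarrow)$. Both approaches establish the first conclusion in essentially the same way (your distribution-function phrasing is the same counting argument), but the second half is different in character: the paper's is elementary and self-contained, while yours leans on Proposition \ref{LR}, which is a more substantial result than the lemma at hand. That is not circular — Proposition \ref{LR} precedes Lemma \ref{dra} and makes no use of it — but it imports more machinery than the lemma strictly needs. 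A small streamlining worth noting: since $\delta(\alpha,\cdot,\cdot)$ depends only on the multisets of values, your layer-cake identity $\lim_{\alpha\searrow0}\delta(\alpha,\boldsymbol\lambda,\boldsymbol d)=\sum(\lambda_i-d_i)$ also applies verbatim to the nonincreasing sequences $\boldsymbol\lambda^\downarrow,\boldsymbol d^\downarrow$ (using $\lambda_i^\downarrow\geq d_i^\downarrow$ from step one), which immediately gives $\sum(\lambda_i-d_i)=\sum(\lambda_i^\downarrow-d_i^\downarrow)$ without invoking Proposition \ref{LR} at all. Either way, the argument you gave is sound, including the key observation that the nonnegativity of $\lambda_i^\downarrow-d_i^\downarrow$ upgrades the $\liminf$ in Proposition \ref{LR} to a genuine limit.
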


\begin{proof} Let $\Pi_{d}$ and $\Pi_{\lambda}$ be permutations of $\N$ so that $(d_{\Pi_{d}(i)})$ and $(\lambda_{\Pi_{\lambda}(i)})$ are nonincreasing.
For each $N\in\N$ and $i\leq N$ we have
\[d_{\Pi_{d}(N)}\leq d_{\Pi_{d}(i)}\leq \lambda_{\Pi_{d}(i)}.\]
That is $d_{\Pi_{d}(N)}\leq \lambda_{i}$ for at least $N$ distinct numbers $i$. Hence $d_{\Pi_{d}(N)}$ is less than the $N$th largest term of $(\lambda_{i})$, that is,
\[d_{N}^{\downarrow} = d_{\Pi_{d}(N)}\leq \lambda_{\Pi_{\lambda}(N)} = \lambda_{N}^{\downarrow}.\]

Since the sum in the definition of $\sigma$ contains only positive terms, we can rearrange the terms without effecting the sum, hence
\[\sigma = \sum_{i=1}^{\infty}(\lambda_{\Pi_{d}(i)}-d_{\Pi_{d}(i)}) = \sum_{i=1}^{\infty}(\lambda_{\Pi_{\lambda}(i)}-d_{\Pi_{\lambda}(i)}).\]
Note that for each $N\in\N$ we have
\[\sum_{i=1}^{N}(\lambda_{\Pi_{d}(i)}-d_{\Pi_{d}(i)})\leq \sum_{i=1}^{N}(\lambda_{\Pi_{\lambda}(i)}-d_{\Pi_{d}(i)})\leq \sum_{i=1}^{N}(\lambda_{\Pi_{\lambda}(i)}-d_{\Pi_{\lambda}(i)}).\]
Letting $N\to\infty$ we see that
\[\sigma = \sum_{i=1}^{\infty}(\lambda_{\Pi_{\lambda}(i)}-d_{\Pi_{d}(i)}) = \sum_{i=1}^{\infty}(\lambda_{i}^{\downarrow}-d_{i}^{\downarrow}) .\]
\end{proof}

The following is the main result of this section. 

\begin{thm}\label{1neg} Let $(\lambda_{i})_{i\in\N}$ and $\boldsymbol d=(d_{i})_{i\in\N}$ be positive sequences in $c_{0}$ such that $\lambda_{i}\geq d_{i}$ for all $i\in\N$ and 
\[\lambda_{-1} := \sum_{i=1}^{\infty}(\lambda_{i}-d_{i})\in(0,\infty).\]
If there is a self-adjoint operator $E$ with diagonal $\boldsymbol \lambda:=(-\lambda_{-1},\lambda_{1},\lambda_{2},\ldots)$, then $\boldsymbol d$ is also a diagonal of $E$.
\end{thm}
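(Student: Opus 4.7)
The plan is to reduce Theorem~\ref{1neg} to Lemma~\ref{tbound} by eliminating the technical growth assumption $\lambda_n \le c\, t_{n+1}$ via a partitioning argument. First, by Lemma~\ref{dra} I may rearrange both $(\lambda_i)$ and $(d_i)$ to be nonincreasing while preserving $\lambda_i \ge d_i$ and $\sum_i(\lambda_i-d_i)=\lambda_{-1}$. If $\sup_n \lambda_n/t_{n+1} < \infty$ with $t_n = \sum_{i \ge n}(\lambda_i-d_i)$, Lemma~\ref{tbound} finishes the proof. A further application of Lemma~\ref{subdiag} separating off the trivial indices $\{i : \lambda_i = d_i\}$ reduces me to the case where $\lambda_i > d_i$ for every $i$, so that $t_n > 0$ for all $n$.

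In the remaining case, where $\lambda_n/t_{n+1}$ is unbounded, I will partition $\N$ into finite consecutive blocks $B_k = \{n_{k-1}+1,\ldots,n_k\}$ (with $n_0=0$) chosen greedily so that the block mass $\mu_k := \sum_{i \in B_k}(\lambda_i-d_i)$ is comparable to the leading entry of the block, say $\lambda_{n_{k-1}+1} \le 2\mu_k$; at points where this is obstructed by a very small tail I first absorb the obstructing single term via a direct 2-dimensional rotation between $f_{-1}$ and $f_{n_{k-1}+1}$ using Lemma~\ref{offdiag}. Within each block, starting from a ``carry'' vector of diagonal entry $-t_{n_{k-1}+1}$ (equal to $-\lambda_{-1}$ for $k=1$) together with the basis vectors $(f_i)_{i\in B_k}$ of diagonal entries $(\lambda_i)_{i \in B_k}$, I iteratively apply Lemma~\ref{offdiag} exactly $|B_k|$ times to produce vectors $(e_i)_{i\in B_k}$ with diagonal entries $(d_i)_{i\in B_k}$ and a new carry vector of diagonal entry $-t_{n_k+1}$ feeding into block $B_{k+1}$. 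The hypotheses of Lemma~\ref{offdiag} are verified at each step using dominant majorization and the identity $\lambda_i - d_i = t_i - t_{i+1}$, and since each block is finite no convergence issue arises within a block.

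The main obstacle will be to show that the orthonormal sequence $(e_i)_{i\in\N}$ obtained by concatenating the blocks is an orthonormal basis of the ambient Hilbert space, and not merely a proper orthonormal set. This reduces to proving that the carry vectors $\tilde e_{n_k}$ lose their component along $f_{-1}$ as $k \to \infty$, a block-level analogue of the criterion $\prod_n(1-\alpha_n)=0$ of Lemma~\ref{loss}. The greedy construction, through the comparability $\lambda_{n_{k-1}+1} \le 2\mu_k$, should supply uniform lower bounds on the aggregate block rotation parameter, yielding divergence of the corresponding series and hence completeness of the constructed basis. Once this is in place, the basis $(e_i)_{i\in\N}$ realizes $\boldsymbol d$ as a diagonal of $E$, completing the proof.
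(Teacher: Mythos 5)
Your proposal diverges genuinely from the paper's route. The paper reduces to Lemma \ref{tbound} by first \emph{replacing} $(\lambda_{i})$ with a modified nonincreasing sequence $(\tilde\lambda_{i})$ via Proposition \ref{posSH}: mass $\alpha$ is subtracted from one $\lambda_{n_{0}}$ and redistributed over the tail in dyadic blocks, which simultaneously (a) preserves $\tilde\lambda_{i}\ge d_{i}$ and $\sum(\tilde\lambda_{i}-d_{i})=\lambda_{-1}$ and (b) forces the tails $t_{n+1}$ to decay no faster than $\lambda_{n}$, so that $\tilde\lambda_{n}\le C t_{n+1}$. You instead keep $\boldsymbol\lambda$ fixed and try to control completeness by a block-level version of Lemma \ref{loss}.

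There are two concrete gaps in your argument. First, the greedy block partition with $\mu_{k}\ge\lambda_{n_{k-1}+1}/2$ need not exist: the total available mass $\lambda_{-1}$ can be strictly less than $\lambda_{1}/2$. Take $\lambda_{n}=2\cdot 2^{-n}$ and $\lambda_{n}-d_{n}=4^{-n}$; then $\lambda_{-1}=1/3<\lambda_{1}/2=1/2$, so even the first block cannot be filled, while $\lambda_{n}/t_{n+1}\to\infty$ so you are genuinely in the ``unbounded'' case. The proposed patch of ``a direct $2$-dimensional rotation between $f_{-1}$ and $f_{n_{k-1}+1}$'' is not well defined for $k\ge 2$ (by that point $f_{-1}$ has been rotated into $e_{1},\ldots,e_{n_{k-1}}$ and the carry vector), and even at $k=1$ such a rotation only changes $-\lambda_{-1}$ and $\lambda_{1}$, which does not restore the block-mass inequality.

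Second, even where the greedy partition exists, the leading-entry comparability $\lambda_{n_{k-1}+1}\le 2\mu_{k}$ does \emph{not} give a uniform lower bound on the block rotation. From Lemma \ref{offdiag} the step parameters satisfy $\alpha_{j}\ge\tilde\alpha_{j}=(\lambda_{j}-d_{j})/(\lambda_{j}+t_{j})$, and the block total obeys only
\[
\sum_{j\in B_{k}}\tilde\alpha_{j}\;\ge\;\frac{\mu_{k}}{\lambda_{n_{k-1}+1}+t_{n_{k-1}+1}},
\]
which is bounded below by a positive constant only if $t_{n_{k-1}+1}=O(\lambda_{n_{k-1}+1})$. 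But the whole obstruction in this theorem is precisely the regime $\lambda_{n}\gg t_{n}$, and the block construction does nothing to remove it. You flag this step yourself (``should supply uniform lower bounds''), and that is exactly where the argument breaks. This is not a detail that can be papered over: without some preliminary modification of $\boldsymbol\lambda$ in the spirit of Proposition \ref{posSH} (or an equivalent device), the naive carry-vector iteration really can fail to produce a basis.
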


\begin{proof} 
By Lemma \ref{dra} we can assume $(\lambda_{i})_{i\in\N}$ and $\boldsymbol d$ are in nonincreasing order.

Since $\lambda_{-1}>0$ we see that there is some $n\in\N$ such that $\lambda_{n}>d_{n}$. Assume that for every $i\in\N$ either $\lambda_{i}=d_{i}$ or $\lambda_{i}=\lambda_{i+1}$. Since $\lambda_{n}>d_{n}$ it must be the case that $\lambda_{n}=\lambda_{n+1}$. Let $k$ be the largest integer so that $\lambda_{n}=\lambda_{n+k}$. Since $\lambda_{n+k}>\lambda_{n+k+1}$, we must have $\lambda_{n+k}=d_{n+k}$. But then $d_{n}<\lambda_{n}=\lambda_{n+k}=d_{n+k}$. This contradicts the assumption that $\boldsymbol d$ is nonincreasing. Thus, there must be some index $n$ such that both $\lambda_{n}>d_{n}$ and $\lambda_{n}>\lambda_{n+1}$. Let $n_{0}$ be the smallest such index. 

Fix $n_{1}>n_{0}$ such that $\lambda_{n_{1}+1}<1/2$. Now for each $k\geq 2$ fix $n_{k}\in\N$ such that $n_{k}>2n_{k-1}-n_{k-2}$ and $\lambda_{n_{k}+1}<2^{-k}$. Define the sequence $(\gamma_{i})_{i=n_{0}+1}^{\infty}$ by setting $\gamma_{i} = 2^{-k}(n_{k}-n_{k-1})^{-1}$ for $i=n_{k-1}+1,\ldots,n_{k}$. The choice of $n_{k}$ so that $n_{k}\geq 2n_{k-1}-n_{k-2}$ implies that $(\gamma_{i})$ nonincreasing sequence, and the sequence is clearly positive. Moreover,
\begin{equation}\label{1neg.2}\sum_{i=n_{0}+1}^{\infty}\gamma_{i} = \sum_{k=1}^{\infty}\sum_{i=n_{k-1}+1}^{n_{k}}\frac{1}{2^{k}(n_{k}-n_{k-1})} = \sum_{k=1}^{\infty}\frac{1}{2^{k}} = 1.\end{equation}

For some $\alpha>0$ define the sequence
\[\tilde{\lambda}_{i} = \begin{cases} \lambda_{i} & i\leq n_{0}-1\\ \lambda_{n_{0}}-\alpha & i=n_{0}\\ \lambda_{i} + \alpha\gamma_{i} & i\geq n_{0}+1.\end{cases}\]
Since $\lambda_{n_{0}}>\lambda_{n_{0}+1}$, we can choose $\alpha>0$ small enough that $\tilde{\lambda}_{n_{0}}\geq \tilde{\lambda}_{n_{0}+1}$. Since $(\gamma_{i})$ is nonincreasing, we see that $(\tilde{\lambda}_{i})_{i=1}^{\infty}$ is also nonincreasing.

Next, we calculate
\[\sum_{i=1}^{N}(\lambda_{i} - \tilde{\lambda}_{i}) = \begin{cases} 0 & N\leq n_{0}-1\\ \alpha - \alpha\displaystyle{\sum_{i=n_{0}+1}^{N}\gamma_{i}} & N\geq n_{0}.\end{cases}\]
From \eqref{1neg.2} and Proposition \ref{posSH} we see that if $E_{1}$ is any self-adjoint operator with diagonal $(\lambda_{i})_{i\in\N}$, then $(\tilde{\lambda}_{i})_{i\in\N}$ is also a diagonal of $E_{1}$. Set $I_{1} = J_{1} = \N$ and $I_{2} = J_{2} = \{-1\}$. Since $(\sgn(i)\lambda_{i})_{i\in I_{1}\cup I_{2}}$ is a diagonal of $E$, by Lemma \ref{subdiag} the sequence $(-\lambda_{-1},\tilde{\lambda}_{1},\tilde{\lambda}_{2},\ldots)$ is also a diagonal of $E$.

 Note that $\tilde{\lambda}_{i}\geq \lambda_{i}\geq d_{i}$ for all $i\in\N\setminus\{n_{0}\}$, and by choosing $\alpha<\lambda_{n_{0}}-d_{n_{0}}$ we also have $\tilde{\lambda}_{n_{0}} >d_{n_{0}}$. For $n\in\N$ such that $n_{k-1}+1\leq n\leq n_{k}$
\[t_{n+1}: = \sum_{i=n+1}^{\infty}(\tilde{\lambda}_{i}-d_{i})  = \sum_{i=n+1}^{\infty}(\lambda_{i} - d_{i}) + \alpha\sum_{i=n+1}^{\infty}\gamma_{i} \geq \alpha\sum_{i=n_{k}+1}^{\infty}\gamma_{i} = \alpha\sum_{j=k}^{\infty}\frac{1}{2^{j+1}} = \frac{\alpha}{2^{k}}.\]
For the same $n$ we have
\[\tilde{\lambda}_{n} = \lambda_{n} + \frac{\alpha}{2^{k}(n_{k}-n_{k-1})}\leq \lambda_{n_{k-1}+1} + \frac{\alpha}{2^{k}(n_{k}-n_{k-1})}< \frac{1}{2^{k-1}} + \frac{\alpha}{2^{k}(n_{k}-n_{k-1})}.\]
From these estimates we deduce
\[\frac{\tilde{\lambda}_{i}}{t_{n+1}}\leq \frac{\frac{1}{2^{k-1}} + \frac{\alpha}{2^{k}(n_{k}-n_{k-1})}}{\frac{\alpha}{2^{k}}} = \frac{2}{\alpha} + \frac{1}{n_{k}-n_{k-1}}\leq \frac{2}{\alpha}+1=:C_{1}\]
This shows that $\tilde{\lambda}_{n}\leq C_{1}t_{n+1}$ for all $n\geq n_{0}+1$, and by possibly making $C_{1}$ larger, we have this inequality for all $n\in\N$. Finally, we can apply Lemma \ref{tbound} to see that $(d_{i})_{i=1}^{\infty}$ is a diagonal of $E$.\end{proof}

\section{Diagonal-to-diagonal results with disappearing negative terms} \label{S5}

The goal of this section is to generalize Theorem \ref{1neg} to the case when $\boldsymbol \lambda$ contains arbitrary number of negative terms and infinite number of positive terms, and the dominant majorization assumption is dropped. That is, we show a diagonal-to-diagonal result where all negative terms in $\boldsymbol \lambda$ disappear and produce a positive diagonal $\boldsymbol d$.

We start with elementary lemma which will be used at a crucial point in the proof of Lemma \ref{infneglem}.

\begin{lem}\label{1convmove} Let $(\lambda_{i})_{i=1}^{N}$ be a nonincreasing sequence and let $0\leq\Delta\leq \lambda_{1}-\lambda_{N}$. Define the sequence $(\tilde{\lambda}_{i})_{i=1}^{N}$ by
\[\tilde{\lambda}_{i} = \begin{cases} \lambda_{1}-\Delta & i=1\\ \lambda_{i} & i=2,\ldots,N-1\\ \lambda_{N}+\Delta & i=N.\end{cases}\]
Then,
\begin{equation}\label{1convmove0}
\sum_{i=1}^{k}(\lambda_{i} - \tilde{\lambda}_{i}^{\downarrow})\geq 0\quad\text{for }k=1,2,\ldots,N.
\end{equation}
\end{lem}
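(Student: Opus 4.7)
The plan is to reduce the claim to an ordinary majorization statement and then prove that by a simple exchange argument. I will use the standard identity
\[
\sum_{i=1}^{k}\tilde\lambda_{i}^{\downarrow} = \max\left\{\sum_{i\in T}\tilde\lambda_{i} : T\subseteq\{1,\ldots,N\},\ |T|=k\right\},
\]
which lets me take the max over $k$-subsets rather than tracking the rearrangement directly. With this in hand, it suffices to show that for every $k$-subset $T$ of $\{1,\ldots,N\}$,
\[
\sum_{i\in T}\tilde\lambda_{i}\le \sum_{i=1}^{k}\lambda_{i},
\]
and then taking the maximum over $T$ yields \eqref{1convmove0}.

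First, I would split into four cases according to whether $T$ contains $1$ and/or $N$, and compute the discrepancy $\epsilon_{T}:=\sum_{i\in T}\tilde\lambda_{i}-\sum_{i\in T}\lambda_{i}$: it equals $0$ if both or neither of $1,N$ lies in $T$, equals $-\Delta$ if only $1\in T$, and equals $+\Delta$ if only $N\in T$. In the three cases where $\epsilon_{T}\le 0$, the inequality $\sum_{i\in T}\tilde\lambda_{i}\le\sum_{i\in T}\lambda_{i}\le\sum_{i=1}^{k}\lambda_{i}$ is immediate from the fact that $(\lambda_{i})$ is nonincreasing.

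The only nontrivial case is $1\notin T$ and $N\in T$, and the key idea is a swap: set $T'=(T\setminus\{N\})\cup\{1\}$, another $k$-subset. Since $\sum_{i\in T'}\lambda_{i}-\sum_{i\in T}\lambda_{i}=\lambda_{1}-\lambda_{N}$, the hypothesis $\Delta\le\lambda_{1}-\lambda_{N}$ yields
\[
\sum_{i\in T}\tilde\lambda_{i}=\sum_{i\in T}\lambda_{i}+\Delta \le \sum_{i\in T}\lambda_{i}+(\lambda_{1}-\lambda_{N}) = \sum_{i\in T'}\lambda_{i} \le \sum_{i=1}^{k}\lambda_{i},
\]
where the final inequality again uses monotonicity of $\lambda$. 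The main (mild) obstacle is just keeping the bookkeeping of this swap straight across the four cases; everything else is a routine verification. Note also that the case $k=N$ automatically comes out with equality, since the transfer preserves the total sum.
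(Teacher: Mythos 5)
Your proof is correct, but it takes a genuinely different (and more self-contained) route than the paper. The paper's proof is a one-line reduction: since $(\lambda_1-\Delta,\lambda_N+\Delta)\preccurlyeq(\lambda_1,\lambda_N)$ as a two-term sequence, the full sequence $(\tilde\lambda_i)_{i=1}^N$ is strongly majorized by $(\lambda_i)_{i=1}^N$; this invokes the standard fact that a ``Robin Hood'' transfer between two coordinates produces a majorized sequence. You instead prove the majorization inequalities from scratch, using the variational identity $\sum_{i=1}^k\tilde\lambda_i^\downarrow=\max_{|T|=k}\sum_{i\in T}\tilde\lambda_i$ and then bounding $\sum_{i\in T}\tilde\lambda_i$ for each $k$-subset $T$ by a four-way case analysis on whether $1$ and/or $N$ lie in $T$, with the only nontrivial case handled by the swap $T\mapsto(T\setminus\{N\})\cup\{1\}$. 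Your argument is longer but more elementary: it does not presuppose the transfer-principle for majorization and exposes exactly where the hypothesis $\Delta\le\lambda_1-\lambda_N$ enters (making the swap gain at least the discrepancy $\Delta$). The paper's version buys brevity by leaning on a known lemma; yours buys transparency and self-containment at the cost of bookkeeping. Both correctly yield equality at $k=N$ since the only $N$-subset contains both endpoints.
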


\begin{proof}
Since $(\lambda_1-\Delta,\lambda_N+\Delta) \preccurlyeq (\lambda_1,\lambda_N )$ we have $(\tilde{\lambda}_{i})_{i=1}^{N} \preccurlyeq (\lambda_{i})_{i=1}^{N}$. Hence, \eqref{1convmove0} follows.
\end{proof}

Lemma \ref{infneglem} is a version of Theorem \ref{1neg} where \(\boldsymbol\lambda\) has infinitely many negative terms, but at the cost that we must also assume that the inequalities in dominant majorization are all strict. In Theorem \ref{infneg} we show that this additional assumption can be removed.

\begin{lem}\label{infneglem} Let $(\lambda_{i})_{i\in\N}$ and $\boldsymbol d=(d_{i})_{i\in\N}$ be positive sequences in $c_{0}$ such that $\lambda_{i}> d_{i}$ for all $i\in\N$ and 
\begin{equation}\label{inl0}\sigma := \sum_{i=1}^{\infty}(\lambda_{i}-d_{i})\in(0,\infty).\end{equation}
Let $(\lambda_{i})_{i\in-\N}$ be a positive sequence with
\[\sum_{i\in-\N}\lambda_{i} = \sigma.\]
If there is a self-adjoint operator $E$ with diagonal $\boldsymbol\lambda: = (\sgn(i)\lambda_{i})_{i\in\N\cup-\N}$, then $\boldsymbol d$ is also a diagonal of $E$.
\end{lem}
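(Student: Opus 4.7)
The plan is to reduce Lemma \ref{infneglem} to countably many independent applications of Theorem \ref{1neg} by means of Lemma \ref{subdiag}. I would partition the index set $\N\cup-\N$ into pieces $K_k = \{-k\}\cup I_k$, one for each negative index $-k$, where $(I_k)_{k\in\N}$ is a partition of $\N$, and arrange the $I_k$'s so that each restricted sub-problem exactly fits the hypotheses of Theorem \ref{1neg} (with sum of differences equal to $\lambda_{-k}$). First, by Lemma \ref{dra}, I may assume both $(\lambda_i)_{i\in\N}$ and $(d_i)_{i\in\N}$ are in nonincreasing order.

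The heart of the proof is the construction of an auxiliary nonincreasing sequence $(\tilde\lambda_i)_{i\in\N}\in c_0^+$ and a strictly increasing sequence of indices $0=n_0<n_1<n_2<\cdots$ satisfying $\tilde\lambda_i > d_i$ for all $i$, together with the exact partial-sum condition
\[
\sum_{i=n_{k-1}+1}^{n_k}(\tilde\lambda_i - d_i) = \lambda_{-k}\qquad\text{for every }k\in\N,
\]
and the majorization $\sum_{i=1}^n(\lambda_i - \tilde\lambda_i)\geq 0$ with $\liminf=0$ so that Proposition \ref{posSH} applies. The construction works by mass transfer: setting $S_k = \sum_{j=1}^k\lambda_{-j}$, choose $n_k$ to be the least index where the raw cumulative excess $\sum_{i=1}^{n}(\lambda_i-d_i)$ meets or exceeds $S_k$; the residual $r_k = \sum_{i=1}^{n_k}(\lambda_i-d_i)-S_k$ lies in $[0,\lambda_{n_k}-d_{n_k})$. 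To force equality of the partial sum at $n_k$, redistribute $r_k$ from position $n_k$ to later positions using a nonincreasing weight sequence with total mass $r_k$ supported on $\{n_k+1,\ldots,n_{k+1}\}$, in the spirit of the $\gamma$-redistribution used inside the proof of Theorem \ref{1neg}. Lemma \ref{1convmove} furnishes the majorization bookkeeping at each boundary, the strict dominance $\lambda_i>d_i$ ensures $\tilde\lambda_{n_k}$ remains above $d_{n_k}$, and $r_k\to 0$ (since $\lambda_{n_k}\to 0$) provides the $\liminf=0$ condition for Proposition \ref{posSH}.

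With $(\tilde\lambda_i)$ in hand, assembly proceeds in two invocations of Lemma \ref{subdiag}. First, with the coarse partition $\N\cup-\N=\N\sqcup -\N$, I extract a compression of $E$ carrying the positive sub-diagonal $(\lambda_i)_{i\in\N}$; Proposition \ref{posSH} replaces this by $(\tilde\lambda_i)_{i\in\N}$, so that the sequence $(-\lambda_{-k})_{k\in\N}\cup(\tilde\lambda_i)_{i\in\N}$ is a diagonal of $E$. Second, with the finer partition $\N\cup -\N=\bigsqcup_k (\{-k\}\cup\{n_{k-1}+1,\ldots,n_k\})$, the $k$-th sub-problem has input diagonal $(-\lambda_{-k},\tilde\lambda_{n_{k-1}+1},\ldots,\tilde\lambda_{n_k})$ and target $(d_{n_{k-1}+1},\ldots,d_{n_k})$; the strict dominance and sum identity are exactly the hypotheses of Theorem \ref{1neg} (extended by appending matching zero pairs to attain an infinite sequence indexed by $\N$, or alternatively appealing to classical finite Schur--Horn), so the target is a diagonal of each block. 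Lemma \ref{subdiag} then assembles $\boldsymbol d$ as a diagonal of $E$.

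The main obstacle will be the second paragraph: constructing $(\tilde\lambda_i)$ satisfying all four requirements simultaneously. Monotonicity is the most delicate constraint, because the local reduction of $\tilde\lambda_{n_k}$ by $r_k$ may be larger than the gap $\lambda_{n_k}-\lambda_{n_k+1}$ between consecutive terms, so the correction cannot be concentrated at $n_k$ but must be spread smoothly across the next block. Balancing this spread against the requirements that $\tilde\lambda$ remains strictly above $d$, that partial sums are exactly $S_k$ at each $n_k$, and that $\liminf\sum_{i=1}^n(\lambda_i-\tilde\lambda_i)=0$, will require the careful block-by-block inductive bookkeeping afforded by Lemma \ref{1convmove}.
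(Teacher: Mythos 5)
Your high-level strategy matches the paper's: both reduce to block-by-block applications of Theorem~\ref{1neg} via Lemma~\ref{subdiag}, after first normalizing with Lemma~\ref{dra} and transferring from $\boldsymbol\lambda$ to an auxiliary sequence $\tilde{\boldsymbol\lambda}$ via Proposition~\ref{posSH}. However, there is a genuine gap in the choice of partition that the auxiliary tricks you mention do not repair.

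Your finer partition uses blocks $K_k = \{-k\} \cup \{n_{k-1}+1,\ldots,n_k\}$ on the input side paired with $\{n_{k-1}+1,\ldots,n_k\}$ on the target side; the input block has $n_k - n_{k-1} + 1$ indices but the target has only $n_k - n_{k-1}$. In Lemma~\ref{subdiag}, the compression $E_k$ of $E$ to the span of the corresponding basis vectors acts on a space of dimension $n_k - n_{k-1} + 1$, so \emph{any} diagonal of $E_k$ must have exactly $n_k - n_{k-1} + 1$ entries. Theorem~\ref{1neg} does not apply because it requires infinite positive sequences $(\lambda_i)_{i\in\N}$ and $(d_i)_{i\in\N}$, and the finite Schur--Horn theorem does not apply because the lengths of eigenvalue and diagonal sequences must agree. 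The fix of ``appending matching zero pairs'' pads the operator to $E_k \oplus \mathbf{0}$ on a larger Hilbert space and hence no longer produces a diagonal of the original compression $E_k$; after assembly you would only obtain $\boldsymbol d \oplus \boldsymbol 0$ (with infinitely many extra zeros) as a diagonal of $E \oplus \mathbf{0}$, not $\boldsymbol d$ as a diagonal of $E$.

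The paper sidesteps this by making each block $\Lambda_k \subset \N$ \emph{infinite} (chosen so that all $\Lambda_k$ are disjoint, interleaved, and partition $\N$), so the $\lambda$-block $\Lambda_k \cup \{-k-1\}$ and the $d$-block $\Lambda_k$ are both countably infinite and Theorem~\ref{1neg} applies verbatim. This is why the paper's construction of the sets $I_k$ and the sequences $\boldsymbol\lambda^{(k)}$ is more involved than your consecutive-block scheme: it needs an extra degree of freedom (an infinite tail set $J_{n+1}$ pushed to later stages) at each step, together with a single $T$-transform controlled by Lemma~\ref{1convmove} to force the exact block sum $\lambda_{-k-1}$. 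Your mass-transfer idea for achieving exact block sums would need to be redone with infinite blocks to become correct, at which point the bookkeeping essentially reproduces the paper's argument.
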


\begin{proof}
By Lemma \ref{dra} we can assume $(\lambda_{i})_{i\in\N}$ and $\boldsymbol d$ are in nonincreasing order.

For each $k\in\N\cup\{0\}$ we will inductively construct a set $I_{k}\subset \N$ and a sequence $\boldsymbol{\lambda}^{(k)} = (\lambda_{i}^{(k)})_{i\in\N}$ with the following properties:
\begin{equation}
\label{inl1} I_{k+1}\subset I_{k}
\end{equation}
\begin{equation}
\label{inl2} \#|I_{k}| = \#|I_{k}\setminus I_{k+1}| = \infty
\end{equation}
\begin{equation}
\label{inl3}
\{1,2,\ldots,k\}\subset \N\setminus I_{k}
\end{equation}
\begin{equation}
\label{inl4} \lambda_{i}^{(k)} = \lambda_{i}^{(k+1)}\quad\text{for all }i\in \N\setminus I_{k}
\end{equation}
\begin{equation}
\label{inl5}\lambda_{i}^{(k)}\geq d_{i}\quad\text{for all }i\in\N
\end{equation}
\begin{equation}
\label{inl6}\sum_{i\in \N}(\lambda_{i}^{(k)} - d_{i}) = \sigma.
\end{equation}
\begin{equation}
\label{inl7}\sum_{i\in I_{k}\setminus I_{k+1}}(\lambda_{i}^{(k+1)} - d_{i}) = \lambda_{-k-1}.
\end{equation}
\begin{equation}
\label{inl8}\delta(\alpha,\boldsymbol{\lambda}^{(k)},\boldsymbol\lambda^{(k+1)})\geq 0\quad \text{for all }\alpha>0
\end{equation}
First, set $I_{0} = \N$ and $\lambda_{i}^{(0)} = \lambda_{i}$ for all $i\in\N$.
Now, assume that for some $n\in\N\cup\{0\}$ and for all $k\in\{0,\ldots,n\}$ we have constructed the sets $I_{k}$ and the sequences $(\lambda_{i}^{(k)})_{i\in\N}$ satisfying \eqref{inl1}--\eqref{inl8}. By \eqref{inl4} and \eqref{inl7}  we have
\[
\sum_{i\in I_{k}\setminus I_{k+1}}(\lambda_{i}^{(n)} - d_{i}) = \lambda_{-k-1} \qquad\text{for } k=0,\ldots,n-1.
\]
Summing the above equations yields
\[
\sum_{i\in I_0 \setminus I_{n}}(\lambda_{i}^{(n)} - d_{i}) = \sum_{k=0}^{n-1} \lambda_{-k-1}.
\]
Combining this with \eqref{inl6} for $k=n$ yields
\begin{equation}\label{inl9}
\sum_{i\in I_{n}}(\lambda_{i}^{(n)} - d_{i}) =  \sigma - \sum_{k=0}^{n-1} \lambda_{-k-1}
= \sum_{k=n+1}^\infty \lambda_{-k} 
>\lambda_{-n-1}.\end{equation}

Define $m_{n+1} = \min I_{n}$, and note that by \eqref{inl3} we see that $m_{n+1}\geq n+1$. By \eqref{inl9} there is a smallest number $r_{n+1}$ so that
\[\sum_{i\in I_{n},\ i\leq r_{n+1}}(\lambda_{i}^{(n)}-d_{i}) \geq \lambda_{-n-1}.\]
By the minimality of $r_{n+1}$ we have
\[\sum_{i\in I_{n},\ i< r_{n+1}}(\lambda_{i}^{(n)} - d_{i})<\lambda_{-n-1}.\]
Hence we can find a set $J_{n+1}\subset \{i\in I_{n} : i>r_{n+1}\}$ such that $\#|J_{n+1}| = \#|I_{n}\setminus J_{n+1}| = \infty$, and
\[\sum_{i\in J_{n+1}}(\lambda_{i}^{(n)} - d_{i}) + \sum_{i\in I_{n},\ i< r_{n+1}}(\lambda_{i}^{(n)} - d_{i})<\lambda_{-n-1}.\]
Set $I_{n+1} = I_{n}\setminus\big(J_{n+1}\cup \{i\in I_{n} : i\leq r_{n+1}\}\big)$. Hence, we have a set $I_{n+1}$ satisfying \eqref{inl1}--\eqref{inl3} and a number $r_{n+1}$ such that
\begin{equation}\label{inl10}\sum_{i\in I_{n}\setminus I_{n+1}}(\lambda_{i}^{(n)} - d_{i})\geq \lambda_{-n-1}>\sum_{i\in (I_{n}\setminus I_{n+1})\setminus\{r_{n+1}\}}(\lambda_{i}^{(n)} - d_{i}).\end{equation}

Set
\[\Delta = \sum_{i\in I_{n}\setminus I_{n+1}}(\lambda_{i}^{(n)} - d_{i}) - \lambda_{-n-1}.\]
Fix $s_{n+1}\in I_{n+1}$ such that $d_{r_{n+1}}\geq\lambda_{s_{n+1}}^{(n)}$. Define the sequence $\boldsymbol\lambda^{(n+1)}:=(\lambda_{i}^{(n+1)})_{i\in\N}$ by
\[\lambda_{i}^{(n+1)} = \begin{cases} \lambda_{i}^{(n)} & i \neq r_{n+1},s_{n+1},\\ \lambda_{r_{n+1}}^{(n)} - \Delta & i=r_{n+1},\\ \lambda_{s_{n+1}}^{(n)} + \Delta & i=s_{n+1}.\end{cases}.\]
Since $r_{n+1},s_{n+1}\in I_{n}$, we see that \eqref{inl4} holds for $k=n$. From \eqref{inl1} we have
\begin{align*}
\lambda_{r_{n+1}}^{(n)} - \Delta &  = \lambda_{r_{n+1}}^{(n)} - \sum_{i\in I_{n}\setminus I_{n+1}}(\lambda_{i}^{(n)} - d_{i}) + \lambda_{-n-1}\\
 & = d_{r_{n+1}} + \lambda_{-n-1} - \sum_{i\in (I_{n}\setminus I_{n+1})\setminus\{r_{n+1}\}}(\lambda_{i}^{(n)} - d_{i})>d_{r_{n+1}}
\end{align*}
From this we deduce that \eqref{inl5} holds for $k=n+1$. Both \eqref{inl6} and \eqref{inl7} clearly hold. Finally, \eqref{inl8} holds by Lemma \ref{1convmove} and the choice of $s_{n+1}$.

By induction we have a collection of sets $(I_{k})_{k=0}^{\infty}$ and sequences $(\boldsymbol\lambda^{(k)})_{k=0}^{\infty}$ satisfying \eqref{inl1}--\eqref{inl8}. For each $k\in\N\cup\{0\}$ define the set $\Lambda_{k} = I_{k}\setminus I_{k+1}$. By \eqref{inl1} and \eqref{inl3} we see that the sets $(\Lambda_{k})_{k=0}^{\infty}$ are disjoint and 
\[\bigcup_{k=0}^{n}\Lambda_{k} = \N\setminus I_{n+1}\supset\{1,\ldots,n+1\}.\]
Hence $(\Lambda_{k})_{k=0}^{\infty}$ is a partition of $\N$.

Define the sequence $\tilde{\boldsymbol{\lambda}}:=(\tilde{\lambda}_{i})_{i\in\N}$ by setting $\tilde{\lambda}_{i} = \lambda_{i}^{(n+1)}$ for $i\in\Lambda_{n}$. Now, \eqref{inl7} can be rewritten as
\begin{equation}\label{inl70}
\sum_{i\in\Lambda_{k}}(\tilde{\lambda}_{i} - d_{i}) = \lambda_{-k-1}.
\end{equation}
Summing over $k$ we obtain
\begin{equation}\label{inl11}\sum_{i\in\N}(\tilde{\lambda}_{i} - d_{i}) = \sigma.\end{equation}
Since $\boldsymbol d\in c_{0}$ this shows that $(\tilde{\lambda}_{i})$ is also in $c_{0}$.

Fix $\alpha>0$. The set $\{i : \tilde{\lambda}_{i}\geq \alpha\}$ finite, and hence there is some $n\in\N\cup\{0\}$ such that
\[\sum_{\tilde{\lambda}_{i}\geq \alpha}(\tilde{\lambda}_{i} - \alpha) = \sum_{\tilde{\lambda}_{i}\geq \alpha}(\lambda_{i}^{(n)} - \alpha)\leq \sum_{\lambda_{i}^{(n)}\geq \alpha}(\lambda_{i}^{(n)} - \alpha)\]
Hence, using \eqref{inl8} we have
\[\delta(\alpha,\boldsymbol\lambda,\tilde{\boldsymbol{\lambda}})\geq \delta(\alpha,\boldsymbol\lambda,\boldsymbol{\lambda}^{(n)}) \ge 0.\]
By Proposition \ref{LR} we have
\[\delta_{k}: = \sum_{i=1}^{k}(\lambda_{i}-\tilde{\lambda}_{i}^{\downarrow})\geq 0\quad\text{for all } k\in\N.\]
From \eqref{inl0} and \eqref{inl11} we see that $\delta_{k}\to 0$ as $n\to\infty$. By Proposition \ref{posSH}, if $E_{1}$ is any self-adjoint operator with diagonal $(\lambda_{i})_{i\in\N}$, then $\tilde{\boldsymbol\lambda}$ is also a diagonal of $E_{1}$. Since $(\lambda_{i})_{i\in\N}\oplus(-\lambda_{i})_{i\in-\N}$ is a diagonal of $E$, by Lemma \ref{subdiag} the sequence $(\tilde{\lambda}_{i})_{i\in\N}\oplus(-\lambda_{i})_{i\in-\N}$ is also a diagonal of $E$.

For each $k\in\N\cup\{0\}$, set $\tilde{\boldsymbol{\lambda}}^{(k)} = (\tilde{\lambda}_{i})_{i\in\Lambda_{k}}\oplus(-\lambda_{i})_{i\in\{-k-1\}}$. Using \eqref{inl5}, \eqref{inl70}, and Theorem \ref{1neg} we see that if $E_{k}$ is an operator with diagonal $\tilde{\boldsymbol{\lambda}}^{(k)}$, then $(d_{i})_{i\in\Lambda_{k}}$ is also a diagonal of $E_{k}$. We have already deduced that
\[(\tilde{\lambda}_{i})_{i\in\N}\oplus(-\lambda_{i})_{i\in-\N} = \bigoplus_{k=0}^{\infty}\tilde{\boldsymbol{\lambda}}^{(k)}\]
is a diagonal of $E$. By Proposition \ref{subdiag}
\[\boldsymbol d = \bigoplus_{k=0}^{\infty}(d_{i})_{i\in\Lambda_{k}}\]
is also a diagonal of $E$.
\end{proof}

Theorem \ref{infneg} is a generalization of Theorem \ref{1neg} to the case when \(\boldsymbol\lambda\) has infinitely many negative terms. Note that the assumption that $\sigma>0$ implies that positive terms in $\boldsymbol \lambda$ and $\boldsymbol d$ cannot entirely coincide.

\begin{thm}\label{infneg} Let $(\lambda_{i})_{i\in\N}$ and $\boldsymbol d=(d_{i})_{i\in\N}$ be positive sequences in $c_{0}$ such that $\lambda_{i}\geq d_{i}$ for all $i\in\N$ and 
\[\sigma := \sum_{i=1}^{\infty}(\lambda_{i}-d_{i})\in(0,\infty).\]
Let $(\lambda_{i})_{i\in-\N}$ be a positive sequence with
\[\sum_{i\in-\N}\lambda_{i} = \sigma.\]
If there is a self-adjoint operator $E$ with diagonal $\boldsymbol\lambda: = (\sgn(i)\lambda_{i})_{i\in\N\cup-\N}$, then $\boldsymbol d$ is also a diagonal of $E$.
\end{thm}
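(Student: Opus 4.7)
The plan is to reduce Theorem~\ref{infneg} to Lemma~\ref{infneglem} by way of Lemma~\ref{subdiag}. Partition \(\N\) into
\[I_1 = \{i\in\N : \lambda_i > d_i\}\quad\text{and}\quad I_2 = \{i\in\N : \lambda_i = d_i\}.\]
Since \(\sigma > 0\), the set \(I_1\) is nonempty, and on \(I_2\) the source and target sequences already coincide, so no basis change is needed there.

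The easy subcase is when \(I_1\) is infinite. I plan to apply Lemma~\ref{subdiag} with the partitions \((I_2,I_2)\) and \((I_1\cup(-\N),\,I_1)\) of \(\N\cup(-\N)\) and \(\N\), respectively. On the first piece condition (ii) is trivial. On the second piece, after reindexing \(I_1\) by \(\N\), the sequences \((\lambda_i)_{i\in I_1}\) and \((d_i)_{i\in I_1}\) are positive, satisfy the strict inequality \(\lambda_i>d_i\), sum to excess \(\sigma\), and are paired with \((\lambda_i)_{i\in-\N}\) of sum \(\sigma\); this is exactly the hypothesis of Lemma~\ref{infneglem}, which supplies the required diagonal-to-diagonal statement. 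Lemma~\ref{subdiag} then delivers \(\boldsymbol d\in\mathcal D(E)\).

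The main obstacle is the subcase when \(I_1 = \{j_1,\ldots,j_k\}\) is finite. Here I will first perform a preliminary diagonal-to-diagonal move that redistributes a small fraction of the excess from a single position in \(I_1\) over an infinite subset of \(I_2\), restoring infinitely many strict inequalities and reducing to the easy subcase. Choose \(\sigma'\in(0,\lambda_{j_1}-d_{j_1})\), an infinite set \(J\subset I_2\) with \(d_j<\lambda_{j_1}-\sigma'\) for every \(j\in J\) (possible since \(d_j\to 0\)), and positive weights \((\varepsilon_j)_{j\in J}\) with \(\sum_{j\in J}\varepsilon_j = \sigma'\), arranged so that \((d_j+\varepsilon_j)_{j\in J}\), listed in decreasing order of \(d_j\), is strictly decreasing and bounded above by \(\lambda_{j_1}-\sigma'\). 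Define \(\mu_{j_1}=\lambda_{j_1}-\sigma'\), \(\mu_j=d_j+\varepsilon_j\) for \(j\in J\), and \(\mu_i=\lambda_i\) for all remaining \(i\). By construction \(\boldsymbol\mu\in c_0^+\), \(\mu_i\ge d_i\) for every \(i\), \(\sum(\mu_i-d_i)=\sigma\), and \(\{i:\mu_i>d_i\}=I_1\cup J\) is infinite. Applying Proposition~\ref{posSH} to the decreasing rearrangements of \((\lambda_i)_{i\in\{j_1\}\cup J}\) and \((\mu_i)_{i\in\{j_1\}\cup J}\) shows the latter is a diagonal of every operator with the former as diagonal, and Lemma~\ref{subdiag} propagates this to the operator \(E\) itself, so that \(\boldsymbol\mu\oplus(-\lambda_i)_{i\in-\N}\) is a diagonal of \(E\). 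Applying the easy subcase above to the pair \((\boldsymbol\mu,\boldsymbol d)\) now finishes the argument.

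The technical crux is the Proposition~\ref{posSH} verification for the preliminary redistribution. With the chosen orderings the \(m\)-th partial-sum difference on \(\{j_1\}\cup J\) collapses to \(\sigma' - \sum_{l=1}^{m}\varepsilon_{j'_l}\), which is nonnegative and tends to \(0\) precisely because \(\sum_{J}\varepsilon_j=\sigma'\). The only delicate point is combinatorial: choosing \(J\) far enough in \(I_2\) and the \((\varepsilon_j)\) decreasing rapidly enough so that \(\lambda_{j_1}\) (respectively \(\lambda_{j_1}-\sigma'\)) is the maximum of the source (respectively target) on \(\{j_1\}\cup J\) and the two sequences remain validly decreasing throughout.
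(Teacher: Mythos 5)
Your proposal is correct and follows essentially the same route as the paper: both reduce to Lemma~\ref{infneglem} via Lemma~\ref{subdiag}, after first using a Proposition~\ref{posSH}-type redistribution that shaves a small amount of excess off one index with strict dominance and spreads it over an infinite set of indices where equality held, so that the dominant majorization $\lambda_i\geq d_i$ becomes strict on an infinite set. The only cosmetic differences are that the paper first invokes Lemma~\ref{dra} to sort both sequences and then performs one geometric perturbation along the whole tail (handling all cases uniformly), whereas you split on whether $\{i:\lambda_i>d_i\}$ is finite and only perturb in the finite case; the underlying lemmas and mechanism are the same.
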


\begin{proof}
By Lemma \ref{dra} we can assume $(\lambda_{i})_{i\in\N}$ and $\boldsymbol d$ are in nonincreasing order.
Let $n_{0}$ be the smallest number such that
\[\lambda_{n_{0}}>d_{n_{0}} \quad\text{and}\quad \lambda_{n_{0}}>\lambda_{n_{0}+1}.\]
Indeed, if $\lambda_{n}>d_{n}$ and $n_{0}$ is the largest $n$ such that $\lambda_{n_{0}}=\lambda_{n}$, then $\lambda_{n_{0}}>\lambda_{n_{0}+1}$ and $\lambda_{n_{0}} = \lambda_{n}>d_{n}\geq d_{n_{0}}$. Fix $\eps = \min\{\lambda_{n_{0}}-d_{n_{0}},\lambda_{n_{0}}-\lambda_{n_{0}+1}\}$. Define the sequence $(\tilde{\lambda}_{i})_{i\in \N\cup-\N}$ by
\[\tilde{\lambda}_{i} = \begin{cases}
\lambda_{i} & i\leq n_{0}-1,\\
\lambda_{n_{0}}-\frac{\eps}{2} & i=n_{0},\\
\lambda_{i}+ 2^{n_{0}-i-3}\eps & i\geq n_{0}+1.\end{cases}\]
By the choice of $\eps$ the sequence $(\tilde{\lambda}_{i})$ is nonincreasing and $\tilde{\lambda}_{i}\geq d_{i}$ for all $i\in\N$, and $\tilde{\lambda}_{i}>\lambda_{i}\geq d_{i}$ for all $i\geq n_{0}+1$. Hence $\tilde{\lambda}_{i}>d_{i}$ for all but finitely many indices $i$.

Next, we compute
\[\sum_{i=1}^{k}(\lambda_{i} - \tilde{\lambda}_{i}) = \begin{cases} 0 & k\leq n_{0}-1,\\ 2^{n_{0}-k-1}\eps & k\geq n_{0}. \end{cases}\]
By Proposition \ref{posSH}, if $E_{1}$ is any operator with diagonal $(\lambda_{i})_{i\in\N}$, then $(\tilde{\lambda}_{i})_{i\in\N}$ is also a diagonal of $E_{1}$. Since $(\lambda_{i})_{i\in\N}\oplus(-\lambda_{i})_{i\in-\N}$ is a diagonal of $E$, by Lemma \ref{subdiag} $(\tilde{\lambda}_{i})_{i\in\N}\oplus(-\tilde{\lambda}_{i})_{i\in-\N}$ is also a diagonal of $E$.

Set $I_{1} := \{i\in\N : d_{i} > \tilde{\lambda}_{i}\}\cup -\N$ and $I_{2} = J_{2} = \{i\in \N : d_{i}=\tilde{\lambda}_{i}\}$. Since $I_{2}$ is finte, the set $J_{1}:=I_{1}\cap\N$ is infinite. By Lemma \ref{infneglem} if $E_{1}$ is any operator with diagonal $(\tilde{\lambda}_{i})_{i\in I_{1}}$, then $(d_{i})_{i\in J_{1}}$ is also a diagonal of $E_{1}$. Hence, by Lemma \ref{subdiag} $(d_{i})_{i\in J_{1}}\oplus(d_{i})_{i\in J_{2}} = \boldsymbol d$ is also a diagonal of $E$. 
\end{proof}

The main result of this section is Theorem \ref{NoPosDiag}, which generalizes both Theorem \ref{1neg} and Theorem \ref{infneg} by replacing the assumption of dominant majorization with usual majorization, and allows for an arbitrary number of negative terms in \(\boldsymbol\lambda\). The following lemma about sequences, which is a variant \cite[Theorem 3.4]{kw1}, allows us to reduce the proof of Theorem \ref{NoPosDiag} to the case where the inequalities in dominant majorization are eventually strict.

\begin{lem}\label{midseq} Let $(\lambda_{i})_{i\in\N}$ and $(d_{i})_{i\in\N}$ be nonincreasing sequences in $c_{0}$ such that
\[\delta_{k}: = \sum_{i=1}^{k}(\lambda_{i} - d_{i})\geq 0\quad\text{for all }k\in\N,\]
and
\[\sigma :=\liminf_{k\to\infty}\delta_{k}\in(0,\infty).\]
Then, there is a nonincreasing sequence $(\tilde{\lambda}_{i})_{i\in\N}$ such that the following four conclusions hold:
\begin{align}
\label{midseq-2}
\tilde{\lambda}_{i} \geq d_{i} & \qquad\text{for all } i\in\N,
\\
\label{midseq-3}
\tilde{\delta}_{k}: = \sum_{i=1}^{k}(\lambda_{i} - \tilde{\lambda}_{i})\geq 0&\qquad\text{for all }k\in\N,
\\
\label{midseq0}
\liminf_{k\to\infty}\tilde{\delta}_{k} &= 0,
\\
\label{midseq-1}
\sum_{i=1}^{\infty}(\tilde{\lambda}_{i} - d_{i}) &= \sigma,
\end{align}
and
\begin{equation}\label{midseqa}
\tilde{\lambda}_{i} > d_{i}  \qquad\text{for all but finitely many } i\in\N.
\end{equation}
\end{lem}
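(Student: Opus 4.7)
I would write $\tilde{\lambda}_i = d_i + \beta_i$ and reformulate the conclusions as conditions on $(\beta_i)$: namely $\beta_i \geq 0$ with $\beta_i > 0$ for all but finitely many $i$; the sequence $i \mapsto d_i + \beta_i$ is nonincreasing; the partial sums obey $\sum_{i=1}^k \beta_i \leq \delta_k$ for every $k$; the total satisfies $\sum_i \beta_i = \sigma$; and $\liminf_k(\delta_k - \sum_{i=1}^k \beta_i) = 0$. Under this reduction, conclusion \eqref{midseq-2} and \eqref{midseqa} become $\beta_i \geq 0$ and eventual strict positivity; \eqref{midseq-3} becomes the partial-sum bound; \eqref{midseq-1} becomes the total-mass identity; and \eqref{midseq0} follows automatically from the latter two once $\sum_{i=1}^k \beta_i$ approaches $\sigma$ along a subsequence on which $\delta_k \to \sigma$.

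\textbf{Extracting a subsequence.} Since $\liminf_k \delta_k = \sigma \in (0,\infty)$, I would extract a strictly increasing sequence $(n_k)_{k\in\N}$ with $\delta_{n_k} \leq \sigma + 2^{-k}$ and with the block sizes $n_{k+1} - n_k$ taken as large as needed for the distribution step below. Set $n_0 = 0$ and $B_k = (n_{k-1}, n_k]$. Along $(n_k)$, I will force $\sum_{i \leq n_k}\beta_i \to \sigma$, which together with $\delta_{n_k} \to \sigma$ gives \eqref{midseq0}.

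\textbf{Block-by-block construction of $\beta$.} Choose a strictly increasing target sequence $S_k \nearrow \sigma$ with $S_k \leq \delta_{n_k}$ (for example $S_k = \sigma - 2^{-k}$, replaced by $\min(S_k,\delta_{n_k})$ if needed). In block $B_k$, place total mass $\mu_k := S_k - S_{k-1} > 0$, distributed as a \emph{nondecreasing} finite sequence $(\beta_i)_{i \in B_k}$ (so that $d_i + \beta_i$ remains nonincreasing given that $d$ is nonincreasing), and placed toward the tail of the block so that, at every intermediate index $j \in B_k$, the running total $\sum_{i=1}^j \beta_i$ never exceeds $\delta_j$. If at some index $j$ the bound $\delta_j$ is nearly tight and does not admit the currently scheduled mass, defer the excess into block $B_{k+1}$; the enforced choice $S_k \leq \delta_{n_k}$ together with $\delta_k \geq 0$ everywhere guarantees that these deferrals are consistent and the final partial sums $\sum_{i \leq n_k} \beta_i$ still converge to $\sigma$. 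Strict positivity of each $\mu_k$ spreads strictly positive $\beta_i$ across every block, giving \eqref{midseqa}.

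\textbf{Main obstacle.} The delicate step is the block-by-block distribution under both the pointwise bound $\sum_{i \leq k}\beta_i \leq \delta_k$ and the monotonicity requirement on $d_i + \beta_i$. The sequence $\delta_k$ is controlled from below only by $0$ and at the chosen subsequence by $\sigma + 2^{-k}$; at intermediate indices $\delta_k$ can dip well below $\sigma$, leaving no room to place the scheduled mass, so the argument must defer mass forward. This deferral procedure, together with the nondecreasing-within-block structure that preserves monotonicity of $\tilde{\lambda}$, is the technical heart of the proof and is precisely the variant of \cite[Theorem 3.4]{kw1} referenced in the statement. Once $(\beta_i)$ is constructed satisfying all five conditions, $\tilde\lambda_i = d_i + \beta_i$ is nonincreasing (since both $d$ and the blockwise nondecreasing $\beta$ combine to be nonincreasing), lies strictly above $d_i$ eventually, is majorized by $\lambda$ by \eqref{midseq-3}, has liminf excess zero by \eqref{midseq0}, and carries total excess exactly $\sigma$ by \eqref{midseq-1}.
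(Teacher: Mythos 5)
There is a genuine gap in your proposal, and it occurs precisely at the step you flag as the technical heart: the monotonicity of $\tilde\lambda_i = d_i + \beta_i$. You assert that choosing $(\beta_i)_{i\in B_k}$ \emph{nondecreasing} within each block makes $d_i+\beta_i$ nonincreasing because $d$ is nonincreasing, but that implication is false: $d$ nonincreasing and $\beta$ nondecreasing say nothing about $d+\beta$ (e.g. $d$ decreasing slowly while $\beta$ increases quickly). For the monotonicity of $d+\beta$ to follow automatically you want $\beta$ \emph{nonincreasing}, which is exactly what the paper arranges. But nonincreasing $\beta$ loads mass at the \emph{front} of the block, which conflicts with your strategy of loading mass at the tail to keep $\sum_{i\le j}\beta_i\le\delta_j$. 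This tension is the whole difficulty, and you have not resolved it — the ``defer the excess forward'' device is left entirely informal, and the claimed consistency of the deferrals and convergence $\sum_{i\le n_k}\beta_i\to\sigma$ is asserted, not proved.

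The paper dissolves this tension rather than confronting it. In the harder case it selects the subsequence $(m_j)$ to be the sites of the running minima of $(\delta_k)$, so that $\delta_k\ge\delta_{m_j}$ for every $k>m_{j-1}$; this makes the pointwise bound $\sum_{i\le k}\beta_i\le\delta_k$ hold automatically for a uniform (constant) choice of $\beta$ on each block, no deferrals needed. Simultaneously it stretches the block lengths $N_j-N_{j-1}$ so that the per-term increments $c_j=(\delta_{m_j}-\delta_{m_{j-1}})/(N_j-N_{j-1})$ are strictly decreasing, which gives $\beta$ nonincreasing and hence $\tilde\lambda$ nonincreasing. A separate, easier case (when $\{k:\delta_k<\sigma\}$ is finite) is handled by a single uniform bump. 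Finally, \eqref{midseqa} does not come for free from $\mu_k>0$, contrary to your last sentence: mass ``placed toward the tail'' can leave $\beta_i=0$ for the early indices of a block, and indeed the paper's Case 1 construction has $\tilde\lambda_i=d_i$ on an infinite set, so the paper appends an explicit post-processing step to secure \eqref{midseqa}. You would need to add an analogous fix.
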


\begin{proof} \noindent\textbf{Case 1.} $\#|\{k\in\N : \delta_{k}<\sigma\}| <\infty$. Let 
\[M = \max\{k\in\N : \delta_{k}<\sigma\}.\]
Since $\sigma>0$ the set $\{k\in\N : \delta_{k} = 0\}$ is finite. Set
\[Z = \max\big(\{k\in\N : \delta_{k} = 0\}\cup \{0\}\big).\]
(Note, if $Z=0$ then we take $\lambda_{0} = d_{0} = \lambda_{1}+1$.)
Since $\delta_{Z+1}>0$, we have $\lambda_{Z+1}>d_{Z+1}$. Since $0=\delta_{Z} = \delta_{Z-1} + (\lambda_{Z} - d_{Z})\geq \lambda_{Z} - d_{Z}$ we have $d_{Z}\geq \lambda_{Z}$. Putting these together, we have
\[d_{Z}\geq \lambda_{Z}\geq \lambda_{Z+1}>d_{Z+1}.\]
Set
\[\alpha = d_{Z} - d_{Z+1}.\]
Set
\[\beta = \inf\{\delta_{k} : k\geq Z+1\}.\]
Note that $\beta>0$ since $\sigma > 0$.

Choose an integer $N> M$ large enough that both $N\alpha>\sigma$ and $N\beta>M\sigma$. For each $i\geq Z$ set
\begin{equation}\label{midseq5}
\tilde{\lambda}_{i} = \begin{cases} d_{i} & i\leq Z\text{ or }i\geq Z+N+1,\\ d_{i} + \frac{\sigma}{N} & i=Z+1,\ldots,Z+N. \end{cases}
\end{equation}
It is clear that $\tilde{\lambda}_{i}\geq \tilde{\lambda}_{i+1}$ for $i\neq Z$. By the choice of $N$ we have
\[\tilde{\lambda}_{Z} = d_{Z} = d_{Z+1}+\alpha>d_{Z+1} + \frac{\sigma}{N} = \tilde{\lambda}_{Z+1}.\]
Hence, $(\tilde{\lambda}_{i})$ is nonincreasing. By \eqref{midseq5} we quickly deduce \eqref{midseq-2} and \eqref{midseq-1}. To prove \eqref{midseq-3}, we compute
\[\tilde{\delta}_{k} = \begin{cases} \delta_{k} & k\leq Z,\\ \delta_{k} - (k-Z)\frac{\sigma}{N} & k=Z+1,\ldots,Z+N,\\ \delta_{k} - \sigma & k\geq Z+N+1.\end{cases}\]
By the choice of $N$, for $k\in\{Z+1,\ldots,Z+M\}$ we have
\[\tilde{\delta}_{k} = \delta_{k} - (k-Z)\frac{\sigma}{N}\geq \delta_{k} - M\frac{\sigma}{N}\geq \delta_{k} - \beta\geq 0,\]
and for $k\in\{Z+M+1,\ldots,Z+N\}$, since $k\geq M+1$ we have we have
\[\tilde{\delta}_{k} = \delta_{k} - (k-Z)\frac{\sigma}{N}\geq \delta_{k} - \sigma\geq 0.\]
Finally, for $k\geq Z+N+1$, since $k\geq M+1$ we have $\tilde{\delta}_{k} = \delta_{k} - \sigma\geq 0$. This implies that \eqref{midseq0} holds and completes the proof of Case 1.

\noindent\textbf{Case 2.} $\#|\{k\in\N : \delta_{k}<\sigma\}|=\infty$. Set $m_{0} = 0$ and for each $j\in\N$ set
\[m_{j} = \max\big\{m>m_{j-1} : \delta_{m} = \min\{\delta_{k} : k>m_{j-1}\}\big\}.\]
The definition of $m_{j}$ shows that $(\delta_{m_{j}})_{j=1}^{\infty}$ is a strictly increasing sequence with limit $\sigma$. Now, set $N_{0} = 0$, $N_{1}=m_{1}$, and for each $j\geq 2$ choose some $N_{j}\geq \max\{N_{j-1}+1,m_{j}\}$ such that
\[\frac{\delta_{m_{j}} - \delta_{m_{j-1}}}{N_{j} - N_{j-1}}<\frac{\delta_{m_{j-1}} - \delta_{m_{j-2}}}{N_{j-1} - N_{j-2}}\]

 For each $j\in\N$ set
\[\tilde{\lambda}_{i} = d_{i} + \frac{\delta_{m_{j}} - \delta_{m_{j-1}}}{N_{j} - N_{j-1}}\quad\text{for }i\in\{N_{j-1}+1,\ldots,N_{j}\}.\]
Note that for each $j\in\N$ we have
\[\tilde{\lambda}_{N_{j}} = d_{N_{j}} + \frac{\delta_{m_{j}} - \delta_{m_{j-1}}}{N_{j} - N_{j-1}} \leq d_{N_{j}+1} + \frac{\delta_{m_{j}} - \delta_{m_{j-1}}}{N_{j} - N_{j-1}}<d_{N_{j}+1} + \frac{\delta_{m_{j+1}} - \delta_{m_{j}}}{N_{j+1} - N_{j}} = \tilde{\lambda}_{N_{j}+1}.\]
From this and the assumption that $(d_{i})$ is nonincreasing we see that $(\tilde{\lambda}_{i})$ is nonincreasing. It is clear from the definition that $\tilde{\lambda}_{i}\geq d_{i}$ for all $i\in\N$. Finally, for $j\in\N$ and $k\in\{N_{j-1}+1,\ldots,N_{j}\}$ we have
\begin{align*}
\tilde{\delta}_{k} & = \delta_{k} - \sum_{n=1}^{j-1}\sum_{\ell=N_{n-1}+1}^{N_{n}}\frac{\delta_{m_{n}} - \delta_{m_{n-1}}}{N_{n} - N_{n-1}} - \sum_{\ell=N_{j-1}+1}^{k}\frac{\delta_{m_{j}} - \delta_{m_{j-1}}}{N_{j} - N_{j-1}} \\
 & = \delta_{k} - \sum_{n=1}^{j-1}(\delta_{m_{n}} - \delta_{m_{n-1}}) - \left(\frac{\delta_{m_{j}} - \delta_{m_{j-1}}}{N_{j} - N_{j-1}}\right)(k-N_{j-1})\\
 & = \delta_{k} - \delta_{m_{j-1}} - \left(\frac{\delta_{m_{j}} - \delta_{m_{j-1}}}{N_{j} - N_{j-1}}\right)(k-N_{j-1}).
\end{align*}
Since $N_{j-1}+1\leq k\leq N_{j}$, by taking $k=N_{j-1}$ and $k=N_{j}$ in the above equation yields
\[\delta_{k} - \delta_{m_{j-1}}\geq \tilde{\delta}_{k}\geq \delta_{k} - \delta_{m_{j}}.\]
Since $k>N_{j-1}\geq m_{j-1}$, and $\delta_{m_{j}} = \min\{\delta_{k} : k>m_{j-1}\}$, we see that $\delta_{k}\geq\delta_{m_{j}}$, and hence $\tilde{\delta}_{k}\geq 0$. 

For each $k\in\N$ let $j(k)\in\N$ be the unique number so that $k\in\{N_{j(k)-1}+1,\ldots,N_{j(k)}\}$, then we have
\[\delta_{k} - \delta_{m_{j(k)-1}}\geq \tilde{\delta}_{k}\geq \delta_{k} - \delta_{m_{j(k)}}.\]
Since $\sigma<\infty$, taking the $\liminf$ as $k\to\infty$ yields \eqref{midseq0}.

 From \eqref{midseq0} we see that there is a subsequence $(\tilde{\delta}_{k_{j}})_{j=1}^{\infty}$ that converges to $0$. Since
\[\sum_{i=1}^{k_{j}}(\tilde{\lambda}_{i} - d_{i}) + \sum_{i=1}^{k_{j}}(\lambda_{i} - \tilde{\lambda}_{i})= \sum_{i=1}^{k_{j}}(\lambda_{i} - d_{i}),\]
and both sums on the left converge (possibly to $\infty$) as $j\to\infty$, the sum on the right also converges, and hence
\begin{equation}\label{midseq1}\sum_{i=1}^{\infty}(\tilde{\lambda}_{i} - d_{i}) = \lim_{j\to\infty}\delta_{k_{j}} = \liminf_{j\to\infty}\delta_{k_{j}}\geq \liminf_{k\to\infty}\delta_{k} = \sigma.\end{equation}
On the other hand, there is a subsequence $(\delta_{m_{j}})_{j=1}^{\infty}$ so that $\delta_{m_{j}}\to\sigma$ as $j\to\infty$. Since
\[\sum_{i=1}^{m_{j}}(\lambda_{i} - d_{i}) - \sum_{i=1}^{m_{j}}(\tilde{\lambda}_{i} - d_{i}) = \sum_{i=1}^{m_{j}}(\lambda_{i} - \tilde{\lambda}_{i}),\]
and both sums on the left converge as $j\to\infty$, the sum on the right converges as $j\to\infty$, and hence
\begin{equation}\label{midseq2}\sigma - \sum_{i=1}^{\infty}(\tilde{\lambda}_{i} - d_{i}) = \lim_{j\to\infty}\tilde{\delta}_{m_{j}} = \liminf_{j\to\infty}\tilde{\delta}_{m_{j}}\geq \liminf_{k\to\infty}\tilde{\delta}_{k} = 0.\end{equation}
Putting together \eqref{midseq1} and \eqref{midseq2} yields \eqref{midseq-1} and completes the proof of Case 2.

This proves the existence of a nonincreasing sequence $(\tilde \lambda_i)$ fulfilling \eqref{midseq-2}--\eqref{midseq-1}.
Finally, we shall show how to modify it to satisfy the additional conclusion \eqref{midseqa}. Since $\sigma>0$, there exists $i_0\in\N$ such that $\tilde \lambda_{i_0}> d_{i_0}$ and $\tilde\lambda_{i_0} > \tilde \lambda_{i_0+1}$. Let 
\[
\eps= \min(\tilde \lambda_{i_0}- d_{i_0}, (\tilde\lambda_{i_0} - \tilde \lambda_{i_0+1})/2)>0.
\]
Define the sequence $(\check \lambda_i)$ by
\[
\check \lambda_i =
\begin{cases} \tilde \lambda_i &  i\le i_0-1,
\\
\tilde \lambda_i-\eps & i=i_0,
\\
\tilde \lambda_i+ 2^{i_0-i} \eps
& i \ge i_0+1.
\end{cases}
\]
By the choice of $\eps$, we have 
\[
\check\lambda_{i_0} = \tilde \lambda_{i_0} - \eps \ge 
\tilde \lambda_{i_0+1} + \eps >
\check \lambda_{i_0+1}.
\]
Hence, $( \check \lambda_i)$ is nonincreasing sequence. Moreover, $\check \lambda_i \ge d_i$ for all $i\in \N$, and $\check \lambda_i > \tilde \lambda_i \ge d_i$ for all $i\ge i_0+1$. By a direct calculation
\[
\check \delta_k: = \sum_{i=1}^k (\lambda_i - \check \lambda_i) = 
\begin{cases}
\tilde \delta_k & k \le i_0-1,
\\
\tilde \delta_k + 2^{i_0-i}\eps & k \ge i_0.
\end{cases}
\]
Thus, $\check \delta_k \ge 0$ for all $k\in\N$ and 
\[
\liminf_{k\to\infty} \check \delta_k =
\liminf_{k\to\infty} \tilde \delta_k = 0.
\]
Finally, by \eqref{midseq-1}
\[
\sum_{i=1}^{\infty}(\check{\lambda}_{i} - d_{i}) =
\sum_{i=1}^{\infty}(\tilde{\lambda}_{i} - d_{i}) + \sum_{i=1}^{\infty}(\check \lambda_i-\tilde{\lambda}_{i} ) = \sigma.\]
Hence, putting $(\check \lambda_i)$ in place of $(\tilde \lambda_i)$ yields all the conclusions \eqref{midseq-2}--\eqref{midseqa}.
\end{proof}

\begin{thm}\label{NoPosDiag} Let $(\lambda_{i})_{i\in\N}$ and $\boldsymbol d=(d_{i})_{i\in\N}$ be positive nonincreasing sequences in $c_{0}$ such that
\[\sum_{i=1}^{k}(\lambda_{i} - d_{i})\geq 0\quad\text{for all }k\in\N\]
and
\[\sigma := \liminf_{k\to\infty}\sum_{i=1}^{k}(\lambda_{i}-d_{i})\in(0,\infty).\]
Let $J\subset -\N$, and let $(\lambda_{i})_{i\in J}$ be a positive sequence such that
\[\sum_{i\in J}\lambda_{i} = \sigma.\]
If there is a self-adjoint operator $E$ with diagonal $\boldsymbol\lambda: = (\sgn(i)\lambda_{i})_{i\in\N\cup J}$, then $\boldsymbol d$ is also a diagonal of $E$.
\end{thm}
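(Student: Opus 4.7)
The plan is to reduce the hypothesis $\liminf_{k}\sum_{i=1}^k(\lambda_i-d_i)=\sigma$ to the dominant majorization situation $\tilde\lambda_i\ge d_i$ with $\sum(\tilde\lambda_i-d_i)=\sigma$, and then dispatch the negative eigenvalues using Theorem \ref{infneg} when $J$ is infinite, or by induction on $|J|$ based on Theorem \ref{1neg} when $J$ is finite.

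\textbf{Step 1 (reduction to dominant majorization).} First I would invoke Lemma \ref{midseq} to produce a nonincreasing sequence $(\tilde\lambda_i)_{i\in\N}$ satisfying $\tilde\lambda_i\ge d_i$ for every $i\in\N$ (strict for all but finitely many $i$), with $\sum_{i=1}^k(\lambda_i-\tilde\lambda_i)\ge 0$ for every $k$, with $\liminf_k\sum_{i=1}^k(\lambda_i-\tilde\lambda_i)=0$, and with $\sum_{i\in\N}(\tilde\lambda_i-d_i)=\sigma$. By Proposition \ref{posSH}, $(\tilde\lambda_i)_{i\in\N}$ is a diagonal of every self-adjoint operator with diagonal $(\lambda_i)_{i\in\N}$. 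Splitting the original source index set as $\N\sqcup J$ and applying Lemma \ref{subdiag} (with the negative block $J$ carried over identically), I conclude that $(\tilde\lambda_i)_{i\in\N}\oplus(-\lambda_i)_{i\in J}$ is itself a diagonal of $E$. Thus it suffices to show that $\boldsymbol d$ is a diagonal of any self-adjoint operator with this new source, where dominant majorization $\tilde\lambda_i\ge d_i$ holds.

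\textbf{Step 2 (handling $J$).} If $J$ is infinite, I would reindex $J$ bijectively as $-\N$ and apply Theorem \ref{infneg} directly with $(\tilde\lambda_i)_{i\in\N}$ in place of $(\lambda_i)_{i\in\N}$; all of its hypotheses (dominant majorization, $\sigma\in(0,\infty)$, positive sequence on $-\N$ summing to $\sigma$) are now in force. If $J=\{-1,\ldots,-n\}$ is finite, I would induct on $n$, taking $n=1$ as Theorem \ref{1neg}. The inductive step rests on producing a partition $\N = K\sqcup L$ with both sets infinite, $\sum_{i\in K}(\tilde\lambda_i-d_i)=\lambda_{-n}$, and $\sum_{i\in L}(\tilde\lambda_i-d_i)=\sum_{j=1}^{n-1}\lambda_{-j}$. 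Given such a partition, Theorem \ref{1neg} applied to the block $(-\lambda_{-n},(\tilde\lambda_i)_{i\in K})$ delivers $(d_i)_{i\in K}$ as a diagonal; the inductive hypothesis applied to $(\tilde\lambda_i)_{i\in L}\oplus(-\lambda_{-j})_{j=1}^{n-1}$ delivers $(d_i)_{i\in L}$; and Lemma \ref{subdiag} reassembles these into $\boldsymbol d$ as a diagonal of $E$.

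\textbf{Main obstacle.} The delicate step is producing a partition whose partial sums equal $\lambda_{-n}$ and $\sigma-\lambda_{-n}$ \emph{exactly}, since greedy truncation will generically overshoot. My plan is to mimic the mass-transfer construction in the inductive step of Lemma \ref{infneglem}: truncate an initial segment of $K$ until the sum first exceeds $\lambda_{-n}$, measure the excess $\Delta$, and redistribute $\Delta$ from the offending term to a strictly smaller term placed in the complementary block $L$ via Lemma \ref{1convmove}. The resulting replacement $\hat\lambda$ majorizes $(\tilde\lambda_i)$ in the relevant partial-sum sense, hence remains a diagonal of $E$ by Proposition \ref{posSH}, and now realizes the exact target sums. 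Because $\{i:\tilde\lambda_i>d_i\}$ is cofinite in $\N$ (the final clause of Lemma \ref{midseq}), both $K$ and $L$ can be chosen infinite and each containing an index with strict inequality $\tilde\lambda_i>d_i$, ensuring that the excess fed into Theorem \ref{1neg} and into the inductive hypothesis is strictly positive and finite.
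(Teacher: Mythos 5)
Your overall architecture matches the paper: Step 1 (Lemma \ref{midseq} to pass to dominant majorization, then Proposition \ref{posSH} and Lemma \ref{subdiag} to replace the source diagonal by $(\tilde\lambda_i)_{i\in\N}\oplus(-\lambda_i)_{i\in J}$) and the infinite-$J$ branch (Theorem \ref{infneg}) are precisely the paper's arguments. Where you diverge is the inductive step for finite $J$. You propose to partition $\N=K\sqcup L$ with exactly specified block sums, run Theorem \ref{1neg} on the $K$-block against the single eigenvalue $-\lambda_{-n}$, run the inductive hypothesis on the $L$-block against the remaining negative eigenvalues, and reassemble via Lemma \ref{subdiag}; and you correctly identify that the exact-sum constraint forces a mass-transfer construction in the spirit of Lemma \ref{infneglem}. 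The paper instead never partitions $\N$: it picks the first index $N$ where $\sum_{i=1}^{N}(\tilde\lambda_i-d_i)>\lambda_{-M}$, defines a single intermediate sequence $\mu$ with $\mu_i=d_i$ for $i<N$, $\mu_i=\tilde\lambda_i$ for $i>N$, and $\mu_N$ chosen so that $\sum_{i=1}^{\infty}(\tilde\lambda_i-\mu_i)=\lambda_{-M}$ exactly; one application of Theorem \ref{1neg} converts $(-\lambda_{-M},\tilde\lambda_1,\tilde\lambda_2,\ldots)$ into $(\mu_i)_{i\in\N}$, and then the inductive hypothesis applied to $(\mu_i)_{i\in\N}\oplus(-\lambda_j)_{j=-1}^{-(M-1)}$ finishes. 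This buys you: no choice of an infinite partition, no mass-transfer (the exact sum is achieved by solving linearly for the one value $\mu_N$ at the pivot index), and no need to verify that both blocks independently retain dominant majorization and positive excess. Your route should also go through, but it re-proves a special case of the machinery of Lemma \ref{infneglem} where a single pivot term suffices, and it incurs the extra bookkeeping of rearranging the $L$-block back into nonincreasing order (justified by Lemma \ref{dra}) before the inductive hypothesis can be invoked.
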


\begin{proof}

By Lemma \ref{midseq} there is a nonincreasing sequence $(\tilde{\lambda}_{i})_{i\in\N}$ such that $\tilde{\lambda}_{i}\geq d_{i}$ for all $i\in\N$,
\[\tilde{\delta}_{k}: = \sum_{i=1}^{k}(\lambda_{i} - \tilde{\lambda}_{i})\geq 0\quad\text{for all }k\in\N,\]
\begin{equation}\label{NoPosDiag1}\liminf_{k\to\infty}\tilde{\delta}_{k} = 0,\end{equation}
and
\begin{equation}\label{NoPosDiag4}\sum_{i=1}^{\infty}(\tilde{\lambda}_{i} - d_{i}) = \sigma = \sum_{i\in J}\lambda_{i}.\end{equation}

By assumption there is a self-adjoint operator $E$ with diagonal $(\sgn(i)\lambda_{i})_{i\in\N\cup J}$. Set $I_{1} = J_{1} = \N$ and $I_{2} = J_{2} = J$. If $E_{1}$ is any self-adjoint operator with diagonal $(\sgn(i)\lambda_{i})_{i\in I_{1}} = (\lambda_{i})_{i\in\N}$, then by Proposition \ref{posSH} the sequence $(\tilde{\lambda}_{i})_{i\in\N}$ is also a diagonal of $E_{1}$. Hence, by Lemma \ref{subdiag} $(\tilde{\lambda}_{i})_{i\in\N}\oplus(-\lambda_{i})_{i\in J}$ is also a diagonal of $E$.

\noindent\textbf{Case 1.} Assume  that $\#|J|=\infty$. Without loss of generality we may assume $J=-\N$. By Theorem \ref{infneg} the sequence $\boldsymbol d$ is a diagonal of $E$.

\noindent\textbf{Case 2.} Assume that $\#|J|=M$. Without loss of generality we may assume that $J=\{-1,\ldots,-M\}$. If $M=1$, then by Theorem \ref{1neg} the sequence $\boldsymbol d$ is also a diagonal of $E$. So, assume that the Theorem is true for $\#|J|=k$ for $k=1,2,\ldots,M-1$. From \eqref{NoPosDiag4} we see that there is a number $N\in\N$ so that
\[\lambda_{-M}<\sum_{i=1}^{N}(\tilde{\lambda}_{i} - d_{i}).\]
Let $N$ be the smallest such number. Define the sequence $(\mu_{i})_{i\in\N}$ as follows
\[\mu_{i} = \begin{cases}d_{i} & i=1,\ldots,N-1,\\ \tilde{\lambda}_{N} - \left(\lambda_{-M} - \sum_{i=1}^{N-1}(\tilde{\lambda}_{i} - d_{i})\right) & i=N,\\ \tilde{\lambda}_{i} & i\geq N+1.\end{cases}\]
By the choice of $N$ we have $\mu_{N}> d_{N}$, and hence $\mu_{i}\geq d_{i}$ for all $i\in\N$. By the minimality of $N$ we have $\tilde{\lambda}_{N}\geq \mu_{N}$, and hence $\tilde{\lambda}_{i}\geq \mu_{i}$ for all $i\in\N$. 

Set $I_{1} = \N\cup\{-M\}$, $J_{2} = \N$, and $I_{2} = J_{2} = \{-1,\ldots,-(M-1)\}$. Observe that
\[\sum_{i=1}^{k}(\tilde{\lambda}_{i} - \mu_{i}) = \begin{cases} \sum_{i=1}^{k}(\tilde{\lambda}_{i} - d_{i}) & k\leq N-1,\\ \lambda_{-M} & i\geq N\end{cases}\]
By Theorem \ref{1neg}, if $E_{1}$ is any operator with diagonal $(\sgn(i)\tilde{\lambda}_{i})_{i\in I_{1}} = (-\lambda_{-M},\lambda_{1},\lambda_{2},\lambda_{3},\ldots)$, then $(\mu_{i})_{i\in J_{2}} = (\mu_{i})_{i\in\N}$ is also a diagonal of $E_{1}$. From Lemma \ref{subdiag} we deduce that 
\[(\mu_{i})_{i\in J_{1}} \oplus (-\lambda_{i})_{i\in J_{2}} = (\mu_{i})_{i\in\N} \oplus (-\lambda_{i})_{i=-1}^{-(M-1)}\]
is a diagonal of $E$. Noting that 
\[\sum_{i=1}^{\infty}(\mu_{i} - d_{i}) = \sum_{i=1}^{\infty}(\tilde{\lambda}_{i} - d_{i}) - \lambda_{-M} = \sum_{i=-1}^{-(M-1)}\lambda_{i},\]
the inductive assumption that Theorem \ref{NoPosDiag} is true when $\#|J|=M-1$ now implies that $\boldsymbol d$ is a diagonal of $E$.

\end{proof}

\section{Equal excess result with positive target diagonal}

The goal of this section is to prove the following diagonal-to-diagonal theorem. Theorem \ref{posdiag} is a culmination of results in Sections \ref{S4} and \ref{S5}, where the target diagonal $\boldsymbol d$ is positive. 
Importantly, in contrast to Theorem \ref{NoPosDiag}, it includes the case where $\boldsymbol \lambda$ has only a finite number of positive terms.

\begin{thm}\label{posdiag} Let $\boldsymbol\lambda = (\lambda_{i})_{i\in\N}$ and $\boldsymbol d = (d_{i})_{i\in\N}$ be sequences in $c_{0}$. Assume $\lambda_{i}\neq 0$ and $d_{i}>0$ for all $i\in\N$. If there is a self-adjoint operator $E$ with diagonal $\boldsymbol\lambda$, 
\[\delta(\alpha,\boldsymbol\lambda, \boldsymbol d)\geq 0\quad\text{for all }\alpha\neq 0,\]
and
\begin{equation}\label{posdiag1}\liminf_{\alpha\searrow 0}\delta(\alpha,\boldsymbol\lambda, \boldsymbol d) = \liminf_{\alpha\nearrow 0}\delta(\alpha,\boldsymbol\lambda, \boldsymbol d)<\infty,\end{equation}
then $\boldsymbol d$ is also a diagonal of $E$.
\end{thm}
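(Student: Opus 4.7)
I split the argument on whether $\boldsymbol\lambda$ has infinitely many positive terms. \emph{Case 1: $\#\{i : \lambda_i > 0\} = \infty$.} After passing to the decreasing rearrangements of $\boldsymbol\lambda_+$ and $\boldsymbol d$, Proposition \ref{LR} identifies the Lebesgue majorization on $\alpha>0$ with Riemann majorization $\boldsymbol d^\downarrow\prec\boldsymbol\lambda_+^{\downarrow}$ and identifies $\sigma_+$ with $\liminf_k\sum_{i=1}^k(\lambda_i^{+\downarrow}-d_i^\downarrow)$; since $\boldsymbol d>0$ the negative excess reduces to $\sigma_-=\sum_{\lambda_i<0}|\lambda_i|$, which by \eqref{posdiag1} matches $\sigma_+$. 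These are exactly the hypotheses of Theorem \ref{NoPosDiag} with $J=\{i:\lambda_i<0\}$, and (combined with Lemma \ref{subdiag} to handle the rearrangement) it yields that $\boldsymbol d$ is a diagonal of $E$.

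\emph{Case 2: $\#\{i : \lambda_i > 0\}=N<\infty$.} Writing $S=\sum_{\lambda_i>0}\lambda_i$, the finiteness of $\sigma_+=S-\sum d_i$ forces $\boldsymbol d\in\ell^1$ with $\sum d_i=S-\sigma$, where $\sigma$ is the common value in \eqref{posdiag1}. Since $\boldsymbol\lambda\in c_0$ has no zero entries, $\{i:\lambda_i<0\}$ is infinite and the equal-excess condition gives $\sum_{\lambda_i<0}|\lambda_i|=\sigma$. This falls outside the hypotheses of Theorem \ref{NoPosDiag}, so I reduce to the sub-case $N=1$ by partitioning: choose disjoint $A_1,\ldots,A_N\subset\{i:\lambda_i<0\}$ and $D_1,\ldots,D_N\subset\N$ covering these index sets with $\sum_{k\in A_j}|\lambda_k|<\lambda_j$ and $\sum_{i\in D_j}d_i=\lambda_j-\sum_{k\in A_j}|\lambda_k|$ for each $j$. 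Such a partition exists because $\sum_j(\lambda_j-\sum_{A_j}|\lambda_k|)=S-\sigma=\sum d_i>0$ permits a greedy distribution of the negatives and of the $d_i$'s. Lemma \ref{subdiag} then reduces the problem to the $N=1$ sub-case on each block.

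\emph{Sub-case $N=1$.} Now $\boldsymbol\lambda=(\lambda,-a_1,-a_2,\ldots)$ with $\sum a_j=\sigma<\lambda$, and $\boldsymbol d>0$ is summable with $\sum d_i=\lambda-\sigma$. Let $f_0$ be the $\lambda$-eigenvector and $f_j$ the $(-a_j)$-eigenvector of $E$. After rearranging $\boldsymbol d$ nonincreasingly, I construct the target basis inductively via Lemma \ref{offdiag}: set $\tilde e_0=f_0$, and at step $j$ rotate $(\tilde e_{j-1},f_j)$ into $(e_j,\tilde e_j)$ so that $\langle Ee_j,e_j\rangle=d_j$ and $\langle E\tilde e_j,\tilde e_j\rangle=R_j:=\lambda-\sum_{k=1}^j(a_k+d_k)>0$; admissibility $d_j\leq R_{j-1}$ follows from $\sum_{k=1}^{j-1}(a_k+d_k)+d_j<\lambda=\sum_{k\geq 1}(a_k+d_k)$. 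The main obstacle is guaranteeing that $(e_j)_{j\in\N}$ is an \emph{orthonormal basis} of $\overline{\lspan}\{f_j:j\geq 0\}$, not merely an orthonormal set; because the $f_j$ are mutually orthogonal the rotation parameter $\alpha_j=(R_{j-1}-d_j)/(R_{j-1}+a_j)$ is forced, and Lemma \ref{loss}'s criterion $\prod(1-\alpha_j)=0$ is not automatic. As in the proof of Theorem \ref{1neg} via Lemma \ref{tbound}, I expect to first replace $\boldsymbol d$ by an auxiliary sequence $\tilde{\boldsymbol d}$ through an intermediate diagonal-to-diagonal step in the spirit of Lemma \ref{midseq} and the geometric $\gamma_i$-construction, producing a dual growth condition such as $a_n\leq cR_{n+1}$ that forces the product to vanish; Proposition \ref{posSH} then transports the conclusion back from $\tilde{\boldsymbol d}$ to $\boldsymbol d$.
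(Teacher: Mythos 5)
Your Case~1 is correct and follows the paper's proof exactly: after rearranging, Proposition~\ref{LR} translates the Lebesgue hypotheses to the Riemann form, and Theorem~\ref{NoPosDiag} finishes.

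Your Case~2 diverges from the paper's route (which goes through Theorem~\ref{posdiagv2}), and it contains a genuine gap. The partitioning step does not exist in general: you want disjoint sets $A_1,\dots,A_N\subset\{i:\lambda_i<0\}$ covering the negative support with $\sum_{k\in A_j}|\lambda_k|<\lambda_j$, but a single negative eigenvalue $|\lambda_{k_0}|$ may exceed $\max_j\lambda_j$ while still having $\sum|\lambda_k|=\sigma<S$. For instance $N=2$, $\lambda_1=\lambda_2=1$, $|\lambda_{k_0}|=1.5$, and the remaining negatives summing to $0.4$, so $\sigma=1.9<2=S$; this configuration is consistent with $\delta(\alpha,\boldsymbol\lambda,\boldsymbol d)\geq 0$ (for $\alpha<0$ the inequality is automatic since $\boldsymbol d>0$), yet $\lambda_{k_0}$ cannot be placed in any $A_j$. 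The hypotheses of the theorem place no upper bound on individual $|\lambda_{k}|$ relative to the $\lambda_j$, so a greedy distribution is not available. Your sub-case $N=1$ sketch has a second, acknowledged gap — that $\prod(1-\alpha_j)=0$ is not automatic — but the proposed fix is not obviously viable either: replacing $\boldsymbol d$ by a sequence $\tilde{\boldsymbol d}$ that strongly majorizes $\boldsymbol d$ (as Proposition~\ref{posSH} would require) concentrates mass toward the front and shrinks $\tilde R_{n+1}$, which works against the desired dual growth condition $a_n\leq c\tilde R_{n+1}$, not toward it.

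The paper avoids both difficulties by a different decomposition. In Theorem~\ref{posdiagv2} it isolates a single index $K$ with $\lambda_K>d_K$ and $\lambda_K>\lambda_{K+1}$, truncates all but finitely many of the negative eigenvalues, and applies Lemma~\ref{infmove} to the pair consisting of $\lambda_K$ and that small negative tail. Crucially, Lemma~\ref{infmove} is stated with a flexible target: it produces \emph{some} positive nonincreasing sequence $\tilde{\boldsymbol\mu}$ (not a prescribed one) with $\tilde\mu_1$ controlled within $\eps$, and the geometric tail $\tilde\mu_i=2^{-i+1}\eps$ is precisely what makes the Lemma~\ref{noloss} criterion hold. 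Re-inserting the remaining $\lambda_j$'s and finitely many negatives then puts the data back in the regime where $\boldsymbol\lambda$ has infinitely many positive terms, so Theorem~\ref{NoPosDiag} applies. This converts the hard "finitely many positives with a prescribed target" problem into the already-solved "infinitely many positives" one, rather than reducing all the way to $N=1$ where the rotation parameters are forced and the convergence criterion may fail.
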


The following lemma is a counterpart of Lemma \ref{loss}. Like the proof of Lemma \ref{loss} in \cite{unbound}, this requires a careful application of an infinite sequence of convex moves, also known as $T$-transforms \cite{kw}, to the original basis $(f_{i})_{i\in\N}$. Unlike Lemma \ref{loss}, the limiting orthonormal sequence in Lemma \ref{noloss} has codimension $1$.

\begin{lem}\label{noloss} Let $(f_{i})_{i\in\N}$ be an orthonormal set, and let $(\alpha_{i})_{i\in\N}$ be a sequence in $[0,1]$. Set $\tilde{e}_{1}=f_{1}$ and inductively define for $i\in\N$,
\begin{equation}\label{noloss0}
e_{i} = \sqrt{\alpha_{i}}\,\tilde{e}_{i} + \sqrt{1-\alpha_{i}}\,f_{i+1}  \qquad\text{and}\qquad \tilde{e}_{i+1} = \sqrt{1-\alpha_{i}}\,\tilde{e}_{i} - \sqrt{\alpha_{i}}\,f_{i+1}.
\end{equation}
If
\begin{equation}\label{noloss1}
\lim_{n\to\infty}\prod_{i=n}^{\infty}(1-\alpha_{i})=1,\end{equation}
then there is a vector $e_{\infty}$ such that
\[\lim_{n\to\infty}\tilde{e}_{n} = e_{\infty}\]
$(e_{i})_{i\in\N\cup\{\infty\}}$ is an orthonormal basis for $\overline{\lspan}\{f_{i}: i\in\N\}$. Moreover, if $\alpha_{i}<1$ for all $i$, and $\sum_{i=1}^{\infty}\frac{\alpha_{i}}{1-\alpha_{i}}<\infty$, then \eqref{noloss1} holds.
\end{lem}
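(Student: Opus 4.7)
The plan is to analyze the convergence of $\tilde{e}_n$ via an explicit telescoping of the recursion~\eqref{noloss0}, then verify that the limit vector $e_\infty$ is exactly the missing piece making $(e_i)_{i\in\N\cup\{\infty\}}$ an orthonormal basis of $\overline{\lspan}\{f_i:i\in\N\}$. First I would iterate \eqref{noloss0} to obtain, for any $N>n$,
\[
\tilde e_N = \sqrt{\prod_{j=n}^{N-1}(1-\alpha_j)}\,\tilde e_n \;-\; \sum_{k=n}^{N-1}\sqrt{\alpha_k \prod_{\ell=k+1}^{N-1}(1-\alpha_\ell)}\,f_{k+1}.
\]
Since $\tilde e_n\in\lspan\{f_1,\dots,f_n\}$ and the correction sum lies in $\lspan\{f_{n+1},\dots,f_N\}$, these are orthogonal, which together with $\|\tilde e_k\|=1$ yields
\[
\langle \tilde e_N, \tilde e_n\rangle = \sqrt{\prod_{j=n}^{N-1}(1-\alpha_j)}, \qquad \|\tilde e_N-\tilde e_n\|^2 = 2-2\sqrt{\prod_{j=n}^{N-1}(1-\alpha_j)}.
\]
The hypothesis~\eqref{noloss1} makes the right-hand side uniformly small for $n$ large, so $(\tilde e_n)$ is Cauchy and converges to a unit vector $e_\infty$.

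Next I would check orthonormality of $(e_i)_{i\in\N\cup\{\infty\}}$. The relations $\lspan\{e_i,\tilde e_{i+1}\}=\lspan\{\tilde e_i,f_{i+1}\}$ together with $\langle e_i,\tilde e_{i+1}\rangle=0$ are immediate from the $2\times 2$ rotation structure of~\eqref{noloss0}, and they combine with the inclusion $e_i\in\lspan\{f_1,\dots,f_{i+1}\}$ to give $\langle e_i,e_j\rangle=0$ for $i\ne j$. For the orthogonality against $e_\infty$, I would apply the same iteration to $\tilde e_N$ starting at index $i+1$: for every $N\ge i+1$,
\[
\langle e_i,\tilde e_N\rangle = \sqrt{\prod_{j=i+1}^{N-1}(1-\alpha_j)}\,\langle e_i,\tilde e_{i+1}\rangle = 0,
\]
because the residual correction lies in $\lspan\{f_{i+2},\dots,f_N\}\perp e_i$. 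Passing to $N\to\infty$ yields $\langle e_i,e_\infty\rangle=0$.

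To establish the basis property, the inductive identity $\lspan\{e_1,\dots,e_n,\tilde e_{n+1}\}=\lspan\{f_1,\dots,f_{n+1}\}$ shows that each $f_i$ lies in $\lspan\{e_1,\dots,e_n,\tilde e_{n+1}\}$ once $n\ge i-1$, and since $\tilde e_{n+1}\to e_\infty$ we conclude $f_i\in\overline{\lspan}\{e_j:j\in\N\cup\{\infty\}\}$. Hence $(e_j)_{j\in\N\cup\{\infty\}}$ is a complete orthonormal set in $\overline{\lspan}\{f_i:i\in\N\}$. For the final "moreover" clause, the elementary inequality $-\log(1-x)\le x/(1-x)$ on $[0,1)$ shows that $\sum_i \alpha_i/(1-\alpha_i)<\infty$ forces $\sum_i -\log(1-\alpha_i)<\infty$, hence $\prod_{i=1}^{\infty}(1-\alpha_i)$ converges to a strictly positive value, which is equivalent to~\eqref{noloss1}. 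The only real obstacle is arranging the telescoping cleanly; once the explicit block decomposition of $\tilde e_N-\sqrt{\prod(1-\alpha_j)}\,\tilde e_n$ is in hand, every claim reduces to a one-line computation.
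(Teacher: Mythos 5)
Your proposal is correct, and it delivers the same key norm identity
\[
\|\tilde e_N - \tilde e_n\|^2 = 2 - 2\sqrt{\prod_{j=n}^{N-1}(1-\alpha_j)}
\]
that the paper uses, but by a different route: you unroll the recursion into an explicit telescoping formula expressing $\tilde e_N$ as a scalar multiple of $\tilde e_n$ plus an orthogonal correction supported on $\lspan\{f_{n+1},\dots,f_N\}$, from which the inner-product and norm identities fall out at once; the paper instead proves the identity for $\|\tilde e_i - \tilde e_{i+n}\|^2$ directly by induction on $n$. Your telescoping formula has the extra payoff of making the orthogonality $\langle e_i, \tilde e_N\rangle = 0$ (and hence $\langle e_i, e_\infty\rangle = 0$) an immediate one-liner, whereas the paper handles this implicitly. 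You also differ on the completeness step: you show $f_i\in\overline{\lspan}\{e_j : j\in\N\cup\{\infty\}\}$ by expanding $f_i$ in the finite orthonormal basis $\{e_1,\dots,e_n,\tilde e_{n+1}\}$ and letting $n\to\infty$, while the paper verifies the Parseval identity $|\langle f_j, e_\infty\rangle|^2 + \sum_i |\langle f_j, e_i\rangle|^2 = 1$; both are standard and valid, though your version deserves a sentence noting that the coefficients $\langle f_i, e_k\rangle$ are independent of $n$ and $\langle f_i,\tilde e_{n+1}\rangle\tilde e_{n+1}\to\langle f_i,e_\infty\rangle e_\infty$, so the passage to the limit genuinely produces a convergent expansion in the $e_j$. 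For the ``moreover'' clause, your estimate $-\log(1-x)\le x/(1-x)$ is equivalent to the paper's bound $\prod (1-\alpha_j)^{-1}\le\exp(\sum \alpha_j/(1-\alpha_j))$. In short: same skeleton, but your explicit decomposition streamlines the orthogonality and Cauchy arguments, at the cost of slightly more bookkeeping in deriving the telescoping formula itself.
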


\begin{proof} First, we will show that $(\tilde{e}_{i})_{i=1}^{\infty}$ is a Cauchy sequence. We claim that for $i,n\in\N$ we have
\begin{equation}\label{noloss2}\|\tilde{e}_{i} - \tilde{e}_{i+n}\|^{2} = 2\left(1-\left(\prod_{j=i}^{i+n-1}(1-\alpha_{j})\right)^{\frac{1}{2}}\right).\end{equation}
First, note that
\begin{align*}
\|\tilde{e}_{i} - \tilde{e}_{i+1}\|^{2} & = \|\tilde{e}_{i} - \big(\sqrt{1-\alpha_{i}}\tilde{e}_{i} - \sqrt{\alpha_{i}}f_{i+1}\big)\|^{2} = (1-\sqrt{1-\alpha_{i}})^{2} + (\sqrt{\alpha_{i}})^{2}\\
 & = 2(1-\sqrt{1-\alpha_{i}}).
\end{align*}

Now, assume that \eqref{noloss2} is true for $n=k\in\N$, then we have
\[2\left(1-\left(\prod_{j=i}^{i+k-1}(1-\alpha_{j})\right)^{\frac{1}{2}}\right) = \|\tilde{e}_{i} - \tilde{e}_{i+k}\|^{2} = 2 - 2\langle \tilde{e}_{i},\tilde{e}_{i+k}\rangle.\]
In particular
\[\langle \tilde{e}_{i},\tilde{e}_{i+k}\rangle =\left(\prod_{j=i}^{i+k-1}(1-\alpha_{j})\right)^{\frac{1}{2}}.\]
Using this we have
\begin{align*}
\|\tilde{e}_{i} - \sqrt{1-\alpha_{i+k}}\tilde{e}_{i+k}\|^{2} & = 2-\alpha_{i+k}-2\sqrt{1-\alpha_{i+k}}\langle \tilde{e}_{i},\tilde{e}_{i+k}\rangle\\
 & = 2-\alpha_{i+k}-2\left(\prod_{j=i}^{i+k}(1-\alpha_{j})\right)^{\frac{1}{2}}
\end{align*}
To complete the induction we calculate
\begin{align*}
\|\tilde{e}_{i} - \tilde{e}_{i+k+1}\|^{2} & = \|\tilde{e}_{i} - \big(\sqrt{1-\alpha_{i+k}}\,\tilde{e}_{i+k} + \sqrt{\alpha_{i+k}}\,f_{i+k+1}\big)\|^{2}\\
 & = \|\tilde{e}_{i} - \sqrt{1-\alpha_{i+k}}\,\tilde{e}_{i+k}\|^{2} + \|\sqrt{\alpha_{i+k}}\,f_{i+k+1}\|^{2}\\
 & = \|\tilde{e}_{i} - \sqrt{1-\alpha_{i+k}}\,\tilde{e}_{i+k}\|^{2} + \alpha_{i+k} = 2\left(1-\left(\prod_{j=i}^{i+k}(1-\alpha_{j})\right)^{\frac{1}{2}}\right).
\end{align*}

Now, since $\alpha_{j}\in[0,1]$ for all $j\in\N$, we have
\[\prod_{j=i+1}^{i+n}(1-\alpha_{j}) \geq \prod_{j=i+1}^{\infty}(1-\alpha_{j}).\]
Hence \eqref{noloss1} and \eqref{noloss2} show that $(\tilde{e}_{i})_{i\in\N}$ is Cauchy, and thus the limit $e_{\infty}$ exists.

To complete the proof we must show that for each $j\in\N$ the vector $f_{j}$ is in $\overline{\lspan}\{e_{i}: i\in\N\cup\{\infty\}\}$. Note that for each $j\in\N$ we have
\[\lspan\{e_{1},\ldots,e_{j},\tilde{e}_{j+1}\} = \lspan\{f_{1},\ldots,f_{j+1}\}.\]
Hence, for $k\geq j-1$ we have
\[f_{j}\in\lspan\{e_{1},\ldots,e_{k},\tilde{e}_{k+1}\} .\]
Thus, for fixed $j,k\in\N$ with $k\geq j-1$ we have
\begin{align*}
|\langle f_{j},e_{\infty}\rangle|^{2} + \sum_{i=1}^{\infty}|\langle f_{j},e_{i}\rangle|^{2} & = |\langle f_{j},e_{\infty}\rangle|^{2} - |\langle f_{j},\tilde{e}_{k+1}\rangle|^{2} + \left(|\langle f_{j},\tilde{e}_{k+1}\rangle|^{2} + \sum_{i=1}^{k}|\langle f_{j},e_{i}\rangle|^{2}\right)\\
 & = |\langle f_{j},e_{\infty}\rangle|^{2} - |\langle f_{j},\tilde{e}_{k+1}\rangle|^{2} + 1 = \langle f_{j},e_{\infty}+\tilde{e}_{k}\rangle\langle f_{j},e_{\infty}-\tilde{e}_{k}\rangle +1.
\end{align*}
Since $\tilde{e}_{k}\to e_{\infty}$ as $k\to\infty$, letting $k\to\infty$ we have
\[|\langle f_{j},e_{\infty}\rangle|^{2} + \sum_{i=1}^{\infty}|\langle f_{j},e_{i}\rangle|^{2} = 1.\]

For the moreover part, note that
\[\frac{1}{\prod_{j=n}^{n+k}(1-\alpha_{j})} = \prod_{j=n}^{n+k}\left(1+\frac{\alpha_{j}}{1-\alpha_{j}}\right) \leq \exp\left(\sum_{j=n}^{n+k}\frac{\alpha_{j}}{1-\alpha_{j}} \right) .\]
Letting $k\to\infty$ we see that \eqref{noloss1} holds.
\end{proof}

The following lemma is a starting point for showing Theorem \ref{posdiag} in the case when $\boldsymbol \lambda$ has only one positive term. Notice that we cannot prescribe the target diagonal $\boldsymbol{\tilde{\lambda}}$ exactly, but Lemma \ref{infmove} gives some minimal amount of control on $\boldsymbol{\tilde{\lambda}}$, which will be sufficient for our purposes.

\begin{lem}\label{infmove} Let $\lambda_{1}>0$ and let $(\lambda_{i})_{i\in -\N}$ be a nonnegative sequence such that
\[s:=\sum_{i\in-\N}\lambda_{i} < \lambda_{1}.\]
If there is a self-adjoint operator $E$ with diagonal $\boldsymbol\lambda: = (\lambda_{1})\oplus(-\lambda_{i})_{i\in-\N}$, then for every $\eps>0$ there is a  nonincreasing positive sequence $\boldsymbol{\tilde{\lambda}} = (\tilde{\lambda}_{i})_{i\in\N}$ such that
\[\sum_{i=1}^{\infty}\tilde{\lambda}_{i} = \lambda_{1} - \sum_{i=1}^{\infty}\lambda_{-i} \quad\text{and}\quad \quad\tilde{\lambda}_{1}\in[\lambda_{1} - s - \eps,\lambda_{1}-s).\]
and $\boldsymbol{\tilde{\lambda}}$ is a diagonal of $E$.
\end{lem}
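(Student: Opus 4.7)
The plan is to iteratively apply the two-dimensional convex move of Lemma~\ref{offdiag} along the scheme of Lemma~\ref{noloss}. At each step I extract a small positive diagonal value $\beta_i$ onto a new basis vector $e_i$, while a residual vector $\tilde e_i$ retains most of the original positive mass. The residual sequence $(\tilde e_i)$ will converge, via Lemma~\ref{noloss}, to a limit $e_\infty$ carrying the target large diagonal $\tilde\lambda_1\approx \lambda_1-s$; the $\beta_i$'s will form the summable tail of $\boldsymbol{\tilde\lambda}$ beyond its first term.

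Concretely, I would relabel the given basis as $(f_n)_{n\in\N}$ with $\langle Ef_1, f_1\rangle=\lambda_1$ and $\langle Ef_{i+1}, f_{i+1}\rangle = -\mu_i$, where $\mu_i := \lambda_{-i}\ge 0$ and $\sum_i \mu_i = s$. Fix a parameter $c\in(0,\min(\eps,(\lambda_1-s)/3)]$ and set $\beta_i := c\,2^{-i}$, so that $(\beta_i)$ is a positive nonincreasing sequence with $\sum_i\beta_i = c\le\eps$. Defining $a_1 := \lambda_1$ and $a_{i+1} := a_i-\mu_i-\beta_i$ produces a decreasing positive sequence with $a_\infty := \lim a_i = \lambda_1-s-c\ge 2c$. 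Starting from $\tilde e_1 := f_1$, at step $i$ I apply Lemma~\ref{offdiag} to the orthonormal pair $\{f_{i+1},\tilde e_i\}$ (with diagonals $-\mu_i<a_i$) and target pair $(\beta_i, a_{i+1})$; both targets lie in $[-\mu_i, a_i]$ since $\beta_i\le c\le a_\infty\le a_i$, and their sum is $-\mu_i+a_i$, so the hypotheses hold. After absorbing the resulting phase $e^{i\theta_{i+1}}$ into $f_{i+1}$ (as in the proof of Lemma~\ref{tbound}), this produces $e_i$ and $\tilde e_{i+1}$ of the form~\eqref{noloss0}, with $\langle Ee_i,e_i\rangle = \beta_i$ and $\langle E\tilde e_{i+1},\tilde e_{i+1}\rangle = a_{i+1}$, and the lower bound on $\alpha$ from Lemma~\ref{offdiag} translates to
\[
\alpha_i \;\leq\; \frac{\mu_i+\beta_i}{a_i+\mu_i} \;\leq\; \frac{\mu_i+\beta_i}{a_\infty}.
\]

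Since $\sum_i(\mu_i+\beta_i) = s+c<\infty$, we get $\sum_i\alpha_i<\infty$, hence $\alpha_i\to 0$ and $\sum_i\alpha_i/(1-\alpha_i)<\infty$. Lemma~\ref{noloss} then yields a limit $\tilde e_n\to e_\infty$ and shows that $(e_i)_{i\in\N\cup\{\infty\}}$ is an orthonormal basis of the ambient Hilbert space $\overline{\lspan}\{f_n:n\in\N\}$. Since we are in the compact-operator setting of the paper, $E$ is bounded, so by continuity $\langle Ee_\infty,e_\infty\rangle = \lim_i a_i = a_\infty$. Setting $\tilde\lambda_1 := a_\infty$ and $\tilde\lambda_{i+1} := \beta_i$ then gives a positive nonincreasing sequence (since $\tilde\lambda_1 = \lambda_1-s-c\ge 2c\ge c/2 = \beta_1 = \tilde\lambda_2$ and $(\beta_i)$ is itself nonincreasing) with total sum $\lambda_1-s$ and $\tilde\lambda_1 = \lambda_1-s-c\in[\lambda_1-s-\eps,\lambda_1-s)$, and this sequence is a diagonal of $E$ by construction.

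The main technical point is calibrating $c$ so that three conditions hold simultaneously: (i) $a_\infty$ is bounded below by a positive constant (so that the $\alpha_i$-bound is summable and Lemma~\ref{noloss} applies), (ii) each target pair $(\beta_i,a_{i+1})$ lies in the admissible interval $[-\mu_i,a_i]$ demanded by Lemma~\ref{offdiag}, and (iii) the monotonicity $\tilde\lambda_1\ge\beta_1$ of the resulting diagonal holds. All three are secured by any $c\le\min(\eps,(\lambda_1-s)/3)$. This construction is the codimension-one companion (via Lemma~\ref{noloss}) of the estimate $\lambda_n\le ct_{n+1}$ that powered Lemma~\ref{tbound} (via Lemma~\ref{loss}), with the roles of convergence and dispersion reversed: the residual vectors now converge to an extra basis vector that carries the large diagonal entry, rather than exhausting the full basis.
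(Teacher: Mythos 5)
Your proof is correct, and it follows the same basic template as the paper's: iterate the two-dimensional convex move of Lemma~\ref{offdiag} and invoke Lemma~\ref{noloss} to produce a codimension-one limiting basis vector carrying the large diagonal entry $\tilde\lambda_1$. The difference is in the organization. The paper's proof splits into three cases (finite support or strictly positive; $Z=\{i:\lambda_i=0\}$ infinite; $Z$ finite nonempty), the latter two of which reduce to Case~1 by auxiliary applications of the Schur--Horn theorem, apparently to permit a nonincreasing rearrangement of $(\lambda_i)_{i\in-\N}$. Your argument handles all cases in a single pass by simply using the given order $\mu_i:=\lambda_{-i}$ and the fixed tail $\beta_i:=c\,2^{-i}$ with $c\le\min(\eps,(\lambda_1-s)/3)$. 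The summability estimate
\[
\alpha_i \;\le\; \frac{\mu_i+\beta_i}{a_i+\mu_i} \;\le\; \frac{\mu_i+\beta_i}{a_\infty}
\]
never uses the ordering of the $\mu_i$, and because $a_\infty=\lambda_1-s-c\ge 2c>0$ is bounded away from zero the case $\mu_i=0$ causes no trouble: each target $\beta_i$ still lies strictly in $(-\mu_i,a_i)$ and $\alpha_i<1$. So the case split in the paper's proof is not actually needed, and your version is both shorter and self-contained. One small point you handle cleanly that the paper leaves implicit: the identification $\langle Ee_\infty,e_\infty\rangle=\lim_i a_i$ requires the boundedness (here, compactness) of $E$ — you say this explicitly, the paper passes over it.
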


\begin{proof} \textbf{Case 1.} Assume $(\lambda_{i})_{i\in-\N}$ either has finite support, or is strictly positive. In either case, we may rearrange the sequence in order to assume
\[\lambda_{-1}\geq\lambda_{-2}\geq\lambda_{-3}\geq\cdots.\]
We may assume without loss of generality that
\[\eps<\frac{2}{3}\left(\lambda_{1} - s\right).\]
It follows that
\begin{equation}\label{infmove1}\tilde{\lambda}_{1}:=\lambda_{1} - s - \eps>\frac{\eps}{2}.\end{equation}
For $i\geq 2$ set
\begin{equation}\label{infmove3}\tilde{\lambda}_{i} = 2^{-i+1}\eps.\end{equation}
From \eqref{infmove1} we see that $(\tilde{\lambda}_{i})_{i\in\N}$ is nonincreasing.
For each $n\in\N$ define
\[\lambda_{1}^{(n)} = \lambda_{1} - \sum_{i=1}^{n-1}\left(\frac{\eps}{2^{i}} +\lambda_{-i}\right).\]
Observe that $\lambda_{1}^{(n)}\searrow\tilde{\lambda}_{1}$ as $n\to\infty$, and $\lambda_{1}^{(n)}> \tilde{\lambda}_{1}$ for all $n\in\N$.

The idea of the proof is  to construct  an orthonormal set $\{\tilde{e}_{k+1},e_{1},e_{2},\ldots,e_{k}\}$ for each $k\in\N$ with respect to which $E$ has diagonal $(\lambda_{1}^{(k+1)},\tilde{\lambda}_{2},\ldots,\tilde{\lambda}_{k+1})$. Using Lemma \ref{noloss} we will show that $(\tilde{e}_{k})_{i\in\N}$ converges to some vector $e_{\infty}$, and the set $(e_{i})_{i\in\N\cup\{\infty\}}$ is a basis.

For each $n\in\N$ set
\[\tilde{\beta}_{n} := \frac{\lambda_{1}^{(n)} - \tilde{\lambda}_{n+1}}{\lambda_{1}^{(n)}+\lambda_{-n}}.\]
Since
\begin{equation}\label{infmove2}-\lambda_{-n}\leq 0<\tilde{\lambda}_{n+1}<\tilde{\lambda}_{1}<\lambda_{1}^{(n+1)}<\lambda_{1}^{(n)}\quad\text{for all }n\in\N,\end{equation}
we see that $\tilde{\beta}_{n}>0$ for all $n\in\N$. 

Let $(f_{i})_{i\in\N}$ be an orthonormal basis such that
\[\langle Ef_{i},f_{i}\rangle = \begin{cases} \lambda_{1} & i=1,\\ -\lambda_{-i+1} & i\geq 2.\end{cases}\]
In order to apply Lemma \ref{offdiag} we set
\[\tilde{d}_{1} = -\lambda_{-1},\quad d_{1} = \tilde{\lambda}_{2},\quad d_{2} = \lambda_{1}^{(2)},\quad\text{and}\quad \tilde{d}_{2} = \lambda_{1}^{(1)}=\lambda_{1}.\]
Hence, we have
\[\tilde{\beta}_{1} = \frac{\tilde{d}_{2} - d_{1}}{\tilde{d}_{2} - \tilde{d}_{1}}.\]
Setting $g_{1} = f_{2}$ and $g_{2} = f_{1}$ we see that $\langle Eg_{i},g_{i}\rangle = \tilde{d}_{i}$ for $i=1,2$. Hence, by Lemma \ref{offdiag} there exists $\theta_{1}\in[0,2\pi)$ and $\beta_{1}\in[\tilde{\beta}_{1},1)$ so that the vectors
\[e_{1}=\sqrt{\beta_{1}}g_{1} + \sqrt{1-\beta_{1}}e^{i\theta_{1}}g_{2} = \sqrt{1-\beta_{1}}\,e^{i\theta_{1}} f_{1} + \sqrt{\beta_{1}}\,f_{2}\]
and
\[\tilde{e}_{2}=\sqrt{1-\beta_{1}}g_{1}-\sqrt{\beta_{1}}e^{i\theta_{1}}g_{2} = -\sqrt{\beta_{1}}\,e^{i\theta_{1}}f_{1} + \sqrt{1-\beta_{1}}\,f_{2}\]
form an orthonormal basis for $\lspan\{g_{1},g_{2}\} = \lspan\{f_{1},f_{2}\}$, $\langle E\tilde{e}_{2},\tilde{e}_{2}\rangle = \lambda_{1}^{(2)}$ and $\langle Ee_{1},e_{1}\rangle = \tilde{\lambda}_{2}$.

Next, we will show that for each $k\in\N$ we have an orthonormal basis $\{e_{1},e_{2},\ldots,e_{k},\tilde{e}_{k+1}\}$ for $\lspan\{f_{1},\ldots,f_{k+1}\}$ such that
\[\langle E\tilde{e}_{k+1},\tilde{e}_{k+1}\rangle = \lambda_{1}^{(k+1)}\quad\text{and}\quad \langle Ee_{j},e_{j}\rangle = \tilde{\lambda}_{j+1}\quad\text{for }j=1,2,\ldots,k.\]
Assume we have such an orthonormal basis for some $k\in\N$. As in the base case, in order to apply Lemma \ref{offdiag} set
\[\tilde{d}_{1} = -\lambda_{-(k+1)},\quad d_{1} = \tilde{\lambda}_{k+2} ,\quad d_{2} = \lambda_{1}^{(k+2)},\quad\text{and}\quad \tilde{d}_{2} = \lambda_{1}^{(k+1)}.\]
By assumption, if we set $g_{1} = f_{k+2}$ and $g_{2} = \tilde{e}_{k+1}$, then we have $\langle Eg_{i},g_{i}\rangle = \tilde{d}_{i}$ for $i=1,2$. By Lemma \ref{offdiag} there exists $\theta_{k+1}\in[0,2\pi)$ and $\beta_{k+1}\in[\tilde{\beta}_{k+1},1)$ so that the vectors
\[e_{k+1}=\sqrt{\beta_{k+1}}g_{1} + \sqrt{1-\beta_{k+1}}e^{i\theta_{k+1}}g_{2} = \sqrt{1-\beta_{k+1}}\,e^{i\theta_{k+1}} \tilde{e}_{k+1} + \sqrt{\beta_{k+1}}\,f_{k+2}\]
and
\[\tilde{e}_{k+2}=\sqrt{1-\beta_{k+1}}g_{1}-\sqrt{\beta_{k+1}}e^{i\theta_{k+1}}g_{2} = -\sqrt{\beta_{k+1}}\,e^{i\theta_{k+1}}\tilde{e}_{k+1} + \sqrt{1-\beta_{k+1}}\,f_{k+2}\]
form an orthonormal basis for $\lspan\{g_{1},g_{2}\} = \lspan\{\tilde{e}_{k+1},f_{k+2}\}$ and 
\begin{equation}\label{infmove5}
\langle E\tilde{e}_{k+2},\tilde{e}_{k+2}\rangle = \lambda_{1}^{(k+2)}
\qquad\text{and }\qquad
\langle Ee_{k+1},e_{k+1}\rangle = \tilde{\lambda}_{k+2}.
\end{equation}

The sequences $(-e^{-i\theta_{n}}\tilde{e}_{n+1})_{n\in\N}$ and $(e^{-i\theta_{n}}e_{n})_{n\in\N}$ are given by Lemma \ref{noloss} applied to $(e^{i\theta_{n}}f_{n})_{n\in\N}$ and $(\alpha_{n})_{n\in\N}$ where $\alpha_{n} = 1-\beta_{n}$ for each $n\in\N$. Since $1-\alpha_{n} = \beta_{n}\geq \tilde{\beta}_{n}$, we have
\[\frac{\alpha_{n}}{1-\alpha_{n}} \leq \frac{1-\tilde{\beta}_{n}}{\tilde{\beta}_{n}} = \frac{\lambda_{-n} + \tilde{\lambda}_{n+1}}{\lambda_{1}^{(n)} - \tilde{\lambda}_{n+1}}\leq \frac{\lambda_{-n} + \tilde{\lambda}_{n+1}}{\lambda_{1} - \tilde{\lambda}_{1}}.\]
Since $(\lambda_{n})_{n\in-\N}$ and $(\tilde{\lambda}_{n+1})_{n\in\N}$ are both summable, we see that $\sum_{i=1}^{\infty}\frac{\alpha_{n}}{1-\alpha_{n}}<\infty$. By Lemma \ref{noloss} the sequence $(-e^{-i\theta_{n}}\tilde{e}_{n+1})_{n\in\N}$ has a limit $e_{\infty}$ and $(e_{i})_{i\in\N\cup\{\infty\}}$ is an orthonormal basis. Hence, by \eqref{infmove5} the sequence $(\tilde{\lambda}_{i})_{i\in\N}$ is a diagonal of $E$.\bigskip

\noindent\textbf{Case 2.} Assume $Z=\{i\in-\N:\lambda_{i}=0\}$ is infinite. If $-\N\setminus Z$ is finite, then we are in Case 1. Thus, we may assume $-\N\setminus Z$ is infinite.

Let $(\mu_{i})_{i\in-\N}$ be the sequence consisting of all of the positive terms of $(\lambda_{i})_{i\in-\N}$ arranged such that $\mu_{-1}\geq\mu_{-2}\geq \cdots$. Let $E_{1}$ be a self-adjoint operator with diagonal $(\lambda_{1})\oplus(-\mu_{i})_{i\in-\N}$. By Case 1 there is a positive, nonincreasing sequence $(\tilde{\mu}_{i})_{i\in\N}$ such that
\[\sum_{i=1}^{\infty}\tilde{\mu}_{i} = \lambda_{1} - \sum_{i=1}^{\infty}\mu_{-i} = \lambda_{1} - s\quad\text{and}\quad \tilde{\mu}_{1}\in[\lambda_{1} - s - \frac{\eps}{2},\lambda_{1} - s)\]
and $(\tilde{\mu}_{i})_{i\in\N}$ is a diagonal of $E_{1}$. By choosing a possibly smaller $\eps$ we may assume $\tilde{\mu}_{1}>\tilde{\mu}_{2}+\eps$, see \eqref{infmove3}. By Lemma \ref{subdiag} the sequence $(\tilde{\mu}_{i})_{i\in\N}\oplus(0)_{i\in Z}$ is a diagonal of $E$.

Let $E_{2}$ be a self-adjoint operator with diagonal $(\tilde{\mu}_{1})\oplus(\zeta_i)_{i\in Z}$, where $\zeta_i=0$. By Case 1 there is a positive nonincreasing sequence $(\tilde{\zeta}_{i})_{i\in\N}$ so that
\[\sum_{i=1}^{\infty}\tilde{\zeta}_{i} = \tilde{\mu}_{1}\quad\text{and}\quad \tilde{\zeta}_{1}\in[\tilde{\mu}_{1}-\frac{\eps}{2},\tilde{\mu}_{1})\]
and $(\tilde{\zeta})_{i\in\N}$ is a diagonal of $E_{2}$. To complete this case we note that by Lemma \ref{subdiag} the strictly positive sequence $(\tilde{\zeta}_{i})_{i\in\N}\oplus(\tilde{\mu}_{i})_{i=2}^{\infty}$ is a diagonal of $E$. Let $(\tilde{\lambda}_{i})_{i\in\N}$ be the decreasing rearrangement of $(\tilde{\zeta}_{i})_{i\in\N}\oplus(\tilde{\mu}_{i})_{i=2}^{\infty}$. By construction we have $\tilde{\lambda}_{1} = \tilde{\zeta}_{1}$, and hence
\[\tilde{\lambda}_{1}\in[\lambda_{1} - s - \eps,\lambda_{1}-s).\]
This completes the proof of Case 2.\bigskip

\noindent\textbf{Case 3.} Assume $Z$ is finite. Fix $i_{0}\in(-\N )\setminus Z$. Let $E_{1}$ be any self-adjoint operator with diagonal $(-\lambda_{i})_{i\in\{i_{0}\}}\oplus(\zeta_{i})_{i\in Z}$ where $\zeta_{i} = 0$. Set $\tilde{\zeta}_{i} = \lambda_{i_{0}}/(\#|Z|+1)$. Then $(-\tilde{\zeta}_{i})_{i\in\{i_{0}\}\cup Z}\preccurlyeq(-\lambda_{i})_{i\in\{i_{0}\}}\oplus(\zeta_{i})_{i\in Z}$. By the Schur-Horn theorem, the strictly negative sequence $(-\tilde{\zeta}_{i})_{i\in\{i_{0}\}\cup Z}$ is a diagonal of $E_{1}$. Set $\tilde{\zeta}_{i} = \lambda_{i}$ for $i\in\N\setminus(\{i_{0}\}\cup Z)$. By Lemma \ref{subdiag} the sequence $(\lambda_{1})\oplus(-\tilde{\zeta}_{i})_{i\in-\N}$ is a diagonal of $E$. Finally, apply Case 1.
\end{proof}

Theorem \ref{posdiagv2} shows the special case of Theorem \ref{posdiag} in the case when $\boldsymbol \lambda$ has a finite number of positive terms.

\begin{thm}\label{posdiagv2} Let $(\lambda_{i})_{i=1}^{N}$ and $\boldsymbol d=(d_{i})_{i\in\N}\in\ell^1$ be positive nonincreasing sequences such that
\[\sum_{i=1}^{k}(\lambda_{i} - d_{i})\geq 0\quad\text{for all }k\leq N\]
and
\[\sigma := \sum_{i=1}^{N}\lambda_{i}-\sum_{i=1}^{\infty}d_{i}>0.\]
Let $(\lambda_{i})_{i\in -\N}$ be a positive sequence such that
\[\sum_{i\in-\N}\lambda_{i} = \sigma.\]
If there is a self-adjoint operator $E$ with diagonal $\boldsymbol\lambda: = (\lambda_{i})_{i=1}^{N}\oplus(-\lambda_{i})_{i\in-\N}$, then $\boldsymbol d$ is also a diagonal of $E$.

\end{thm}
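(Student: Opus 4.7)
The plan is to reduce Theorem \ref{posdiagv2} to Theorem \ref{NoPosDiag} by using Lemma \ref{infmove} to trade the smallest positive eigenvalue $\lambda_N$ together with a cofinite subset of the negative eigenvalues for an infinite positive nonincreasing sequence, leaving only finitely many negatives. This converts the finite positive part of $\boldsymbol\lambda$ into an infinite one, which is what Theorem \ref{NoPosDiag} demands.

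First I would choose a nonempty finite subset $F\subset -\N$ with $\sum_{i\in F}\lambda_{i}>\max(0,\sigma-\lambda_N)$; such $F$ exists since the partial sums of $(\lambda_{i})_{i\in-\N}$ approach $\sigma$. Setting $T:=-\N\setminus F$ and $s':=\sum_{i\in T}\lambda_{i}=\sigma-\sum_{i\in F}\lambda_{i}$, we obtain $0<s'<\min(\lambda_N,\sigma)$ and $T$ infinite. Fix also $\eps\in(0,\sigma-s')$. Next, apply Lemma \ref{infmove} with $\lambda_N$ in place of $\lambda_1$ and the infinite positive sequence $(\lambda_{i})_{i\in T}$ (of sum $s'<\lambda_N$) in place of $(\lambda_i)_{i\in-\N}$. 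This produces a nonincreasing positive sequence $\tilde{\boldsymbol\mu}=(\tilde\mu_j)_{j\in\N}$ with $\sum_j\tilde\mu_j=\lambda_N-s'$ and $\tilde\mu_1\in[\lambda_N-s'-\eps,\lambda_N-s')$, which is a diagonal of any self-adjoint operator having diagonal $(\lambda_N)\oplus(-\lambda_i)_{i\in T}$. Lemma \ref{subdiag}, applied with $I_1=\{N\}\cup T$, $I_2=\{1,\ldots,N-1\}\cup F$, $J_1=\N$, $J_2=\{1,\ldots,N-1\}\cup F$ (taking the identity on the second block), then yields that $E$ admits the diagonal
\[
\boldsymbol\lambda':=(\lambda_1,\ldots,\lambda_{N-1})\oplus\tilde{\boldsymbol\mu}\oplus(-\lambda_i)_{i\in F}.
\]

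Since $\tilde\mu_1<\lambda_N\le\lambda_{N-1}$ (the second comparison being vacuous when $N=1$), the positive part of $\boldsymbol\lambda'$ is already in nonincreasing order, namely $(\lambda_1,\ldots,\lambda_{N-1},\tilde\mu_1,\tilde\mu_2,\ldots)\in c_0^+$, and the negative part $(-\lambda_i)_{i\in F}$ is a finite sequence with absolute sum $\sigma-s'$. Finally I would apply Theorem \ref{NoPosDiag} with this rearranged positive sequence in place of $(\lambda_i)_{i\in\N}$ and $J=F$. The partial sum majorization for $k\le N-1$ is the given hypothesis, and for $k\ge N$ it follows from
\[
\sum_{i=1}^{N-1}\lambda_i+\sum_{j=1}^{k-N+1}\tilde\mu_j\ge\sum_{i=1}^{N-1}\lambda_i+\tilde\mu_1\ge\sum_{i=1}^{N}\lambda_i-s'-\eps=\sum_{i=1}^{\infty}d_i+(\sigma-s'-\eps)>\sum_{i=1}^{\infty}d_i\ge\sum_{i=1}^k d_i,
\]
where the equality uses $\sigma=\sum_{i=1}^N\lambda_i-\sum_i d_i$ and the strict inequality uses $\eps<\sigma-s'$. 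Since all relevant series converge, the excess equals $\sigma-s'=\sum_{i\in F}\lambda_i$, verifying the remaining hypothesis of Theorem \ref{NoPosDiag}. The theorem then yields that $\boldsymbol d$ is a diagonal of $E$.

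The main obstacle is balancing two competing requirements on $T$: it must be infinite so that Lemma \ref{infmove} produces an infinite sequence $\tilde{\boldsymbol\mu}$ (which combined with $(\lambda_1,\ldots,\lambda_{N-1})$ yields the infinitely many positive terms required by Theorem \ref{NoPosDiag}), yet $\sum_T\lambda_i$ must be strictly less than both $\lambda_N$ (for Lemma \ref{infmove} to apply) and $\sigma$ (so the residual sum $\sigma-s'>0$ matches the required excess). These constraints are compatible precisely because the partial sums of $(\lambda_{i})_{i\in-\N}$ approach $\sigma$, allowing the complementary finite set $F$ to absorb $\max(0,\sigma-\lambda_N)<\sigma$ of the mass.
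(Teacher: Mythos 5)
Your proof is correct and follows essentially the same route as the paper: absorb one positive eigenvalue together with a cofinite tail of the negative ones into an infinite positive sequence via Lemma \ref{infmove}, reassemble with Lemma \ref{subdiag}, and finish with Theorem \ref{NoPosDiag}. Your choice of the smallest positive term $\lambda_N$ rather than the paper's $K=\max\{k\le N:\lambda_k>d_k\}$ is a small simplification that makes the nonincreasing check and the majorization at indices $k\ge N$ more direct.
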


\begin{proof} Since
\[\sum_{i=1}^{N}(\lambda_{i} - d_{i})>\sigma>0,\]
there is some $k\leq N$ such that $\lambda_{k}>d_{k}$. Set $K=\max\{k\leq N : \lambda_{k}>d_{k}\}$. Since it could be the case that $K=N$ we will set $\lambda_{N+1} = \frac{1}{2}\lambda_{N}$. Now, if $K<N$, it must be the case that $\lambda_{K+1}<\lambda_{K}$. Otherwise $\lambda_{K+1}=\lambda_{K}>d_{K}\geq d_{K+1}$, contradicting the maximality of $K$. Hence, in any case we have $\lambda_{K}>\lambda_{K+1}$.

Fix $\eps>0$ such that
\[\eps<\min\{\lambda_{K}-\lambda_{K+1},\lambda_{N},\sigma\}.\]
Choose $M\in\N$ such that
\[s:=\sum_{i=-(M+1)}^{-\infty}\lambda_{i}<\frac{\eps}{2}.\]

Set $I_{1} = \{K,-(M+1),-(M+2),\ldots\}$ and $J_{1} = \{K,N+1,N+2,\ldots\}$. Assume $E_{1}$ is a self-adjoint operator with diagonal $(\lambda_{i})_{i\in I_{1}}$. Define the sequence $\boldsymbol\mu = (\mu_{i})_{i\in\{1\}\cup-\N}$ by
\[\mu_{i} = \begin{cases} \lambda_{K} & i=1,\\ \lambda_{-M+i} & i\in-\N.\end{cases}\]
It is clear that $\boldsymbol\mu$ is a diagonal of $E_{1}$. Note that
\[\sum_{i\in-\N}\mu_{i} = \sum_{i=-(M+1)}^{-\infty}\lambda_{i}<\eps<\lambda_{K} - \lambda_{K-1}<\lambda_{K} = \mu_{1}.\]
Lemma \ref{infmove} there is a positive nonincreasing sequence $\boldsymbol{\tilde{\mu}} = (\tilde{\mu}_{i})_{i\in\N}$ such that
\[\sum_{i=1}^{\infty}\tilde{\mu}_{i} = \mu_{1} - \sum_{i=-1}^{-\infty}\mu_{i}\quad\text{and}\quad \tilde\mu_{1}\in(\mu_{1} -s -\tfrac{\eps}{2},\mu_{1}-s],\]
and $\boldsymbol{\tilde{\mu}}$ is a diagonal of $E_{1}$. Finally, define the sequence $\boldsymbol{\tilde{\lambda}} = (\tilde{\lambda}_{i})_{i\in J_{1}}$ by
\[\tilde{\lambda}_{i} = \begin{cases} \tilde{\mu}_{1} & i=K,\\ \tilde{\mu}_{i-N+1} & i\geq N+1.\end{cases}\]
The sequence $\boldsymbol{\tilde{\lambda}}$ is just a reindexing of $\boldsymbol{\tilde{\mu}}$ and thus $\boldsymbol{\tilde{\lambda}}$ is a diagonal of $E_{1}$. Hence, given any self-adjoint operator $E_{1}$ with diagonal $(\lambda_{i})_{i\in I_{1}}$, there is a sequence $(\tilde{\lambda}_{i})_{i\in J_{1}}$ such that
\[\tilde{\lambda}_{K} + \sum_{i=N+1}^{\infty}\tilde{\lambda}_{i} = \lambda_{K} - \sum_{i=-(M+1)}^{-\infty}\lambda_{i}\quad\text{and}\quad \tilde{\lambda}_{K}\in(\lambda_{K} - s - \tfrac{\eps}{2},\lambda_{K}-s]\]
and $(\tilde{\lambda}_{i})_{i\in J_{1}}$ is a diagonal of $E_{1}$.

Set $I_{2} = J_{2} = \{1,\ldots,K-1,K+1,\ldots,N\}\cup\{-1,-2,\ldots,-M\}$, and set $\tilde{\lambda}_{i} = \lambda_{i}$ for all $i\in I_{2}$. By Lemma \ref{subdiag} the sequence $(\tilde{\lambda}_{i})_{i\in\N\cup\{-1,\ldots,-M\}}$ is a diagonal of $E$. 
If $K<N$, then 
\[\tilde{\lambda}_{K}>\lambda_{K} - s - \frac{\eps}{2}>\lambda_{K} - \eps > \lambda_{K+1} = \tilde{\lambda}_{K+1}.\]
If $K\geq 2$, then
\[\tilde{\lambda}_{K-1} = \lambda_{K-1}\geq \lambda_{K}\geq \tilde{\lambda}_{K}.\]
From these two inequalities we see that $(\tilde{\lambda}_{i})_{i=1}^{N}$ is nonincreasing. By Lemma \ref{infmove}, the sequence $(\tilde{\lambda}_{i})_{i=N+1}^{\infty}$ is nonincreasing. For $i\geq N+1$ we have
\[\tilde{\lambda}_{N+1} < \sum_{i=N+1}^{\infty}\tilde{\lambda}_{i} = \lambda_{K} - \tilde{\lambda}_{K} - \sum_{i=-(M+1)}^{-\infty}\lambda_{i} < \eps - \sum_{i=-(M+1)}^{-\infty}\lambda_{i} < \eps<\lambda_{N} = \tilde{\lambda}_{N}.\]
Putting this all together we see that $(\tilde{\lambda}_{i})_{i\in\N}$ is in nonincreasing order. For $k\leq K-1$ we have
\[\sum_{i=1}^{k}(\tilde{\lambda}_{i} - d_{i}) = \sum_{i=1}^{k}(\lambda_{i} - d_{i}) \geq 0.\]
For $k$ such that $K\leq k\leq N$ we have
\begin{align*}
\sum_{i=1}^{k}(\tilde{\lambda}_{i} - d_{i}) & = \sum_{i=1}^{K-1}(\lambda_{i} - d_{i}) + (\tilde{\lambda}_{K} - d_{K}) + \sum_{i=K+1}^{k}(\lambda_{i} - d_{i}) \geq \sum_{i=1}^{k}(\lambda_{i} - d_{i}) - \eps\\
 & \geq \sum_{i=1}^{N}(\lambda_{i} - d_{i}) - \eps \geq \sigma-\eps>0
\end{align*}
For $k>N$ we have
\begin{align*}
\sum_{i=1}^{k}(\tilde{\lambda}_{i} - d_{i}) & \geq \sum_{i=1}^{N}(\lambda_{i} - d_{i}) - \eps + \sum_{i=N+1}^{k}\tilde{\lambda}_{i} - \sum_{i=N+1}^{k}d_{i}\\
 & \geq \sum_{i=1}^{N}(\lambda_{i} - d_{i}) - \eps - \sum_{i=N+1}^{\infty}d_{i} = \sigma - \eps > 0.
\end{align*}
To complete the proof we note that
\[\sum_{i=1}^{\infty}(\tilde{\lambda}_{i} - d_{i}) = \sum_{i=1}^{N}(\lambda_{i} - d_{i}) - \sum_{i=N+1}^{\infty}d_{i} - \sum_{i=-(M+1)}^{-\infty}\lambda_{i} = \sigma - \sum_{i=-(M+1)}^{-\infty}\lambda_{i} = \sum_{i=-1}^{-M}\lambda_{i}.\]
Finally, Theorem \ref{NoPosDiag} implies that $\boldsymbol d$ is a diagonal of $E$.
\end{proof}

\begin{proof}[Proof of Theorem \ref{posdiag}] Since $\boldsymbol d$ is a positive sequence, without loss of generality we may assume $\boldsymbol d$ is nonincreasing. Moreover, for $\alpha<0$ we have
\[\delta(\alpha,\boldsymbol\lambda,\boldsymbol d) = \sum_{\lambda_{i}\leq\alpha}(\alpha-\lambda_{i}).\]
By Proposition \ref{dclem} we have
\[\sum_{\lambda_{i}<0}\lambda_{i} = \lim_{\alpha\nearrow 0}\delta(\alpha,\boldsymbol\lambda,\boldsymbol d)<\infty.\]

Next, note that for any $\alpha>0$ we have
\[\delta(\alpha,\boldsymbol\lambda,\boldsymbol d) = \sum_{\lambda_{i}\geq\alpha}(\lambda_{i}-\alpha)-\sum_{d_{i}\geq\alpha}(d_{i}-\alpha) \geq 0.\]
From the positivity of $\boldsymbol d$ we see that $\{i\in\N : \lambda_{i}>0\}\neq\varnothing$. \bigskip

\noindent\textbf{Case 1.} Assume $\#|\{i : \lambda_{i}>0\}| = \infty$. If $\#|\{i : \lambda_{i}<0\}| = N\in\N\cup\{\infty\}$, then we set $J=\{i\in-\N : |i|<N+1\}$. There is a bijection $\pi:\N\cup J \to\N$ so that $(\lambda_{\pi(i)})_{i\in\N}$ is a nonincreasing positive sequence and $(\lambda_{\pi(i)})_{i\in J}$ is a negative sequence. For clarity, we define the sequence $\boldsymbol\mu = (\mu_{i})_{i\in\N\cup J}$ by setting $\mu_{i} = \lambda_{\pi(i)}$ for all $i\in\N\cup J$. By Proposition \ref{LR} we have
\[\sum_{i=1}^{k}(\mu_{i} - d_{i})\geq 0\quad\text{for all }k\in\N\]
and
\[\liminf_{\alpha\searrow 0}\delta(\alpha,\boldsymbol\mu,\boldsymbol d) = \liminf_{k\to\infty}\sum_{i=1}^{k}(\mu_{i} - d_{i}).\]

The sequence $\boldsymbol\mu$ contains exactly the same terms as $\boldsymbol\lambda$, and hence $\boldsymbol\mu$ is a diagonal of $E$, and
\[\liminf_{k\to\infty}\sum_{i=1}^{k}(\mu_{i} - d_{i}) = \liminf_{\alpha\searrow 0}\delta(\alpha,\boldsymbol\la,\boldsymbol d) = \lim_{\alpha\nearrow 0}\delta(\alpha,\boldsymbol\lambda,\boldsymbol d) = \sum_{\lambda_{i}<0}\lambda_{i} = \sum_{i\in J}\mu_{i}.\]
By Theorem \ref{NoPosDiag} the sequence $\boldsymbol d$ is a diagonal of $E$. This completes the first case.\bigskip

\noindent\textbf{Case 2.} Assume $\#|\{i : \lambda_{i}>0\}| = N\in\N$. This implies $\#|\{i : \lambda_{i}<0\}|=\infty$. There is a bijection $\pi:-\N\cup\{1,\ldots,N\}\to \N$ so that $(\lambda_{\pi(i)})_{i=1}^{N}$ is a positive, nonincreasing sequence, and $(\lambda_{\pi(i)})_{i\in-\N}$ is a negative sequence. As in the previous case, define the sequence $\boldsymbol\mu = (\mu_{i})_{i\in -\N \cup\{1,\ldots,N\}}$ by setting $\mu_{i} = \lambda_{\pi(i)}$. Define the sequence $\boldsymbol\gamma = (\gamma_{i})_{i\in\N}$ by
\[\gamma_{i} = \begin{cases} \mu_{i} = \lambda_{\pi(i)} & i\leq N,\\ 0 & i\geq N+1.\end{cases}\]
For $\alpha>0$ we have
\[\delta(\alpha,\boldsymbol\gamma,\boldsymbol d) = \delta(\alpha,\boldsymbol\mu,\boldsymbol d)\geq 0.\]
Proposition \ref{LR} implies
\[\sum_{i=1}^{k}(\mu_{i} - d_{i})\geq 0\quad\text{for all }k\leq N.\]
For $\alpha\in(0,\lambda_{N})$ we see that
\[\delta(\alpha,\boldsymbol\lambda,\boldsymbol d) = \sum_{i=1}^{N}\mu_{i} - \sum_{d_{i}\geq\alpha}(d_{i} - \alpha).\]
From Proposition \ref{dclem} we see that
\[\liminf_{\alpha\searrow0}\delta(\alpha,\boldsymbol\lambda,\boldsymbol d) = \sum_{i=1}^{N}\mu_{i} - \sum_{i=1}^{\infty}d_{i}.\]
Using \eqref{posdiag1} we deduce
\[\sum_{i=1}^{N}\mu_{i} - \sum_{i=1}^{\infty}d_{i} = \sum_{\lambda_{i}<0}\lambda_{i} = \sum_{i\in-\N}\mu_{i}.\]
Since $\boldsymbol\mu$ is a diagonal of $E$, by Theorem \ref{posdiagv2} the sequence $\boldsymbol d$ is also a diagonal of $E$. This completes the proof of the second case and the theorem.\end{proof}

\section{Equal excess diagonal-to-diagonal result}\label{S7}

The goal of this section is to show the main diagonal-to-diagonal result for equal positive and negative excesses. To achieve this we shall extend Theorem \ref{posdiag} by relaxing the inessential assumptions about the initial diagonal $\boldsymbol \lambda$ and the target diagonal $\boldsymbol d$.

\begin{thm}\label{step5} Let $\boldsymbol\lambda = (\lambda_{i})_{i\in\N}$ and $\boldsymbol d = (d_{i})_{i\in\N}$ be sequences in $c_{0}$.  If there is a self-adjoint operator $E$ with diagonal $\boldsymbol\lambda$, 
\begin{equation}\label{pos5.0}\delta(\alpha,\boldsymbol\lambda, \boldsymbol d)\geq 0\quad\text{for all }\alpha\neq 0,\end{equation}
and
\begin{equation}\label{pos5}
\liminf_{\alpha\searrow 0}\delta(\alpha,\boldsymbol\lambda, \boldsymbol d) = \liminf_{\alpha\nearrow 0}\delta(\alpha,\boldsymbol\lambda, \boldsymbol d)\in (0,\infty),\end{equation}
then $\boldsymbol d$ is also a diagonal of $E$.
\end{thm}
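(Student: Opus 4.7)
The plan is to reduce Theorem \ref{step5} to Theorem \ref{posdiag} and its mirror image (obtained by applying Theorem \ref{posdiag} to $-E$ with target diagonal $-\boldsymbol d$), treating zero entries of $\boldsymbol d$ via a separate annihilation argument. The main tool for combining the pieces is the partition result Lemma \ref{subdiag}.

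First, I partition the index set of $\boldsymbol d$ by sign: $I_+ = \{i : d_i > 0\}$, $I_- = \{i : d_i < 0\}$, and $I_0 = \{i : d_i = 0\}$. I then construct a matching partition $J_+ \sqcup J_- \sqcup J_0$ of the index set of $\boldsymbol \lambda$ with $\#J_\star = \#I_\star$, so that each sub-pair $((\lambda_j)_{j \in J_\star}, (d_i)_{i \in I_\star})$ is handled by an appropriate diagonal-to-diagonal result: Theorem \ref{posdiag} for the positive-target block, its mirror image for the negative-target block, and a separate zero-target argument for $J_0$. Writing $\rho_\star \geq 0$ for the common positive and negative excess of sub-problem $\star$, the excess budget must satisfy $\rho_+ + \rho_- + \rho_0 = \sigma$, which gives flexibility in choosing the $\rho_\star$. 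A convenient choice is $\rho_+ = \rho_- = 0$ and $\rho_0 = \sigma$, so that $J_+$ contains only positive entries of $\boldsymbol \lambda$ whose partial sums closely match those of $\boldsymbol d^+$ (giving zero excess), $J_-$ is set up symmetrically, and $J_0$ absorbs all of the annihilation mass.

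Zero entries of $\boldsymbol \lambda$ are first matched one-to-one with zero entries of $\boldsymbol d$ by placing them in $J_0$. If $\boldsymbol \lambda$ has more zero entries than $\boldsymbol d$, the surplus can be absorbed into $J_+$ or $J_-$ via a 2D Schur-Horn move (Lemma \ref{offdiag}) converting $\{0, \lambda\}$ into $\{a, \lambda - a\}$ for suitable $a \in [0, \lambda]$. For the zero-target sub-problem, once $J_0$ contains a piece of $\boldsymbol \lambda$ with equal positive and negative masses both equal to $\sigma$, I decompose $J_0$ further into positive-negative pairs and apply Lemma \ref{offdiag} to each pair $\{\lambda^+, \lambda^-\}$, producing a zero diagonal entry together with the residual $\lambda^+ + \lambda^-$; the residuals can be iteratively re-paired and annihilated until the target zeros are all produced. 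Equivalently, this sub-problem can be reduced to Theorem \ref{posdiag} by a controlled perturbation that replaces the zero target by a strictly positive one converging to it.

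The main obstacle will be the combinatorial case analysis over cardinalities and summability of the positive, negative, and zero parts of both $\boldsymbol \lambda$ and $\boldsymbol d$. In the non-trace-class regime, where $\sum \lambda_i^+$ or $\sum |\lambda_i^-|$ is infinite, excesses must be tracked via $\liminf$ (Proposition \ref{LR}), and the partition must be built so that the majorization inequality \eqref{pos5.0} holds within each block; this requires choosing the split of $\boldsymbol \lambda^+$ and $\boldsymbol \lambda^-$ carefully, not just by summed masses. Edge cases where some $I_\star$ is empty, or where $\boldsymbol \lambda$ has only finitely many positive (or negative) terms, require separate treatment mirroring the two cases in the proof of Theorem \ref{posdiag}. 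Once a valid partition is constructed, Lemma \ref{subdiag} assembles the three sub-diagonals into the diagonal $\boldsymbol d$ of $E$.
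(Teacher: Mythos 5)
Your plan hinges on finding a partition $J_+ \sqcup J_- \sqcup J_0$ of the index set of $\boldsymbol\lambda$ so that each block can be fed to Theorem \ref{posdiag}, its mirror, or an annihilation argument. This is where the argument breaks down: majorization between $\boldsymbol\lambda_+$ and $\boldsymbol d_+$ does not imply the existence of a \emph{subsequence} of positive $\lambda$'s that strongly majorizes $\boldsymbol d_+$ (zero-excess block $J_+$), nor a subsequence with any prescribed excess $\rho_+$. For instance, with positive parts $\boldsymbol\lambda_+ = (3,\tfrac12,\tfrac14,\ldots)$ and $\boldsymbol d_+ = (1,\tfrac12,\tfrac14,\ldots)$, every candidate subsequence must include the unique term $\geq 1$, namely $3$, forcing excess $\geq 1$ before any flexibility appears, and excess strictly greater than $1$ if the subsequence includes anything else. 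Moreover, when $\boldsymbol d$ has no zero terms, $J_0$ is empty and cannot absorb the positive excess $\sigma > 0$; you acknowledge there is ``flexibility'' in the budget $\rho_\star$, but the alternative budgets then require $J_\pm$ to contain $\lambda$'s of both signs with \emph{matched} positive and negative excess within each block, which reintroduces the very pairing problem you were trying to factor out. The "controlled perturbation replacing the zero target by a strictly positive one converging to it" would need a convergence argument for the resulting sequence of orthonormal bases that is not provided and is not obviously available. Finally, the iterative re-pairing of $\{\lambda^+, \lambda^-\}$ via Lemma \ref{offdiag} is not shown to terminate or converge when $J_0$ contains infinitely many terms of each sign.

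The paper avoids the subsequence-selection obstruction by \emph{modifying} $\boldsymbol\lambda$, not merely reindexing it. The key missing ingredients in your sketch are Lemma \ref{midseq} (which turns plain majorization into dominant majorization $\tilde\lambda_i \geq d_i$ with the same excess via Schur--Horn/convex moves justified by Proposition \ref{posSH}), Lemma \ref{red} (a controlled annihilation of zero entries of $\boldsymbol\lambda$ that reduces both excesses by the same amount), and especially Lemma \ref{exequal}/Theorem \ref{exel}, which implement the annihilation of equal excesses by \emph{interleaving} the positive and negative sides into finite pieces $(I_k)$, $(J_k)$ that are equalized one step at a time. This interleaving is structurally different from — and more subtle than — splitting $\boldsymbol\lambda$ once into three independent blocks. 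Lemmas \ref{fis} and \ref{fiz} then handle the finite initial segments and the zero-count bookkeeping that your outline gestures at but does not carry out. Your intuition about reducing to Theorem \ref{posdiag} and gluing with Lemma \ref{subdiag} is in the right spirit — the paper also uses both — but the reduction requires the modification lemmas above rather than a partition of indices, and the four-case analysis in the paper's proof is organized around the cardinalities of $I_\pm$, $I_0$ precisely because a single partition scheme cannot cover all of them.
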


The following result enables us to deal with zero terms in the initial diagonal $\boldsymbol \lambda$ by reducing the excess on both negative and positive sides by the same amount. Note that Lemma \ref{red} does not require excesses to be the same and hence it can be applied in a later section dealing with a non-equal excess.

\begin{lem}\label{red} Let $\boldsymbol\lambda = (\lambda_{i})_{i\in\N}$ and $\boldsymbol d= (d_{i})_{i\in\N}$ be sequences in $c_{0}$  such that
\begin{equation}\label{red0}\delta(\alpha,\boldsymbol\lambda,\boldsymbol d)\geq0 \quad\text{for all }\alpha\neq 0,\end{equation}
\begin{equation}\label{red1}\sigma_+:=\liminf_{\alpha\searrow 0}\delta(\alpha,\boldsymbol\lambda, \boldsymbol d)>0\quad\text{and}\quad\sigma_-:= \liminf_{\alpha\nearrow 0}\delta(\alpha,\boldsymbol\lambda, \boldsymbol d)>0.\end{equation}
Let $E$ be a self-adjoint operator with diagonal $\boldsymbol\lambda$.

There exists a sequence $\tilde{\boldsymbol\lambda} = (\tilde{\lambda}_{i})_{i\in\N}$ in $c_0$ and a number $s\ge 0$ such that $\tilde{\lambda}_{i}\neq 0$ for all $i\in\N$, both $\tilde{\boldsymbol\lambda}_{+}$ and $\tilde{\boldsymbol\lambda}_{-}$ have infinite support, $\tilde{\boldsymbol\lambda}$ is a diagonal of $E$,
\begin{equation}\label{red2}
\delta(\alpha,\tilde{\boldsymbol\lambda},\boldsymbol d)\geq0 \quad\text{for all }\alpha\neq 0,
\end{equation}
\begin{equation}\label{red3}
\liminf_{\alpha\searrow 0}\delta(\alpha,\tilde{\boldsymbol\lambda}, \boldsymbol d) = \sigma_{+} - s>0\quad\text{and}\quad
\quad\liminf_{\alpha\nearrow 0}\delta(\alpha,\tilde{\boldsymbol\lambda}, \boldsymbol d) = \sigma_{-} - s>0.
\end{equation}
\end{lem}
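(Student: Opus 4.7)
Split the proof into cases based on the cardinalities of $P = \{i : \lambda_i > 0\}$, $N = \{i : \lambda_i < 0\}$, and $Z = \{i : \lambda_i = 0\}$. If $|P| = |N| = \infty$ and $Z = \varnothing$, then $\tilde{\boldsymbol\lambda} = \boldsymbol\lambda$ and $s = 0$ satisfy all conclusions trivially. In the remaining cases, construct $\tilde{\boldsymbol\lambda}$ as the outcome of a (possibly infinite) sequence of off-diagonal moves applied via Lemma~\ref{offdiag} to the orthonormal basis $(f_i)_{i\in\N}$ realizing $\boldsymbol\lambda$ as a diagonal of $E$; the limit remains a basis by Lemma~\ref{noloss}, hence $\tilde{\boldsymbol\lambda}$ is still a diagonal of $E$.

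Two balanced move types will suffice. The \emph{zero-absorbing} move replaces a pair $(\lambda_j,0)$ with $\lambda_j>0$ by $(\lambda_j-\eps,\eps)$, or with $\lambda_j<0$ by $(\lambda_j+\eps,-\eps)$. Since both new values share the sign of $\lambda_j$, neither the positive nor the negative mass of $\boldsymbol\lambda$ changes, so $\sigma_+$ and $\sigma_-$ are preserved. The \emph{sign-conversion} move takes a mixed pair $(\lambda_j,\lambda_k)$ with $\lambda_j>0>\lambda_k$ and $\lambda_j+\lambda_k>0$, and produces $(\lambda_j+\lambda_k-\eps,\eps)$, both positive. This reduces the positive mass by $|\lambda_k|$ and the negative mass by the same $|\lambda_k|$, hence decreases both $\sigma_+$ and $\sigma_-$ by exactly $|\lambda_k|$. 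A symmetric variant handles the case $\lambda_j+\lambda_k<0$ by producing two negatives.

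When $|P|=|N|=\infty$ with $Z\neq\varnothing$, eliminate each zero by a zero-absorbing move paired with a small positive $\lambda_{j_k}\in P$. Using $\sigma_+>0$, there exist $\alpha_0>0$ and $c>0$ with $\delta(\alpha,\boldsymbol\lambda,\boldsymbol d)>c$ for $\alpha\in(0,\alpha_0)$; choosing $\lambda_{j_k}<\alpha_0$ and $\eps_k$ summable with $\sum_k\eps_k<c$ ensures $\delta\geq 0$ persists, and the excesses are unchanged, so $s=0$. When $|P|<\infty$ (which forces $|N|=\infty$ because $\sigma_->0$), first generate infinite positive support by iterating sign-conversion moves on small negatives $\lambda_{k_m}$ with $\sum_m|\lambda_{k_m}|=s$ for some $s\in(0,\min(\sigma_+,\sigma_-))$; each move adds one positive term and balancedly drops $\sigma_\pm$ by $|\lambda_{k_m}|$, yielding $|P|=\infty$ at symmetric total cost $s$. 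The case $|N|<\infty$ is symmetric, and both can be combined when $|P|,|N|<\infty$; any residual zeros are cleaned up afterward by zero-absorbing moves.

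The main obstacle is ensuring $\delta(\alpha,\tilde{\boldsymbol\lambda},\boldsymbol d)\geq 0$ for \emph{every} $\alpha\neq 0$ after the infinite sequence of moves, not merely in the $\liminf$. Each off-diagonal move on a pair with values bounded by some $B>0$ perturbs $\delta(\alpha,\cdot,\boldsymbol d)$ on $\alpha\in(-B,B)\setminus\{0\}$ by at most the size of the move parameter. By localizing all moves to involve values bounded by a small threshold, and by choosing their parameters summable with sum less than the pointwise slack of $\delta(\cdot,\boldsymbol\lambda,\boldsymbol d)$ (which is strictly positive on a one-sided neighborhood of $0$ since $\sigma_\pm>0$, combined with continuity from Lemma~\ref{dclem}), the cumulative perturbation stays within the available slack.
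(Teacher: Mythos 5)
Your proposal is conceptually in the same family as the paper's argument: both build $\tilde{\boldsymbol\lambda}$ by an infinite chain of $T$-transforms realized via Lemma~\ref{offdiag}, with orthonormality of the limit ensured by Lemma~\ref{noloss} (or by Lemma~\ref{subdiag} when the moves act on disjoint pairs). However the paper organizes the chain through Lemma~\ref{infmove} and then bootstraps by the negation trick (prove first under ``$\{i:\lambda_i\le0\}$ infinite,'' then apply the result to $(-\tilde{\boldsymbol\lambda},-\boldsymbol d)$), whereas you partition by the cardinalities of $P$, $N$, $Z$ and re-derive the chain through ad hoc ``zero-absorbing'' and ``sign-conversion'' moves. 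Two concrete gaps prevent your version from closing.

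\textbf{Gap~1 (false cardinality claim).} The assertion that ``$|P|<\infty$ forces $|N|=\infty$ because $\sigma_->0$'' is not true. Take $\boldsymbol\lambda=(10,-\tfrac12,0,0,\ldots)$ and $\boldsymbol d=(5,4,-\tfrac1{10},0,0,\ldots)$. Then $|P|=|N|=1$, $|Z|=\infty$, yet $\delta(\alpha,\boldsymbol\lambda,\boldsymbol d)\ge0$ for all $\alpha\ne0$, $\sigma_+=1>0$ and $\sigma_-=\tfrac25>0$. Hypothesis~\eqref{red1} only forces $N\ne\varnothing$, not $|N|=\infty$. Your case analysis therefore omits the configuration $|P|,|N|<\infty$ with $|Z|=\infty$; the remedy you sketch (``combine the symmetric cases'') presupposes a supply of small negatives (respectively, small positives) to iterate the sign-conversion move, and that supply need not exist.

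\textbf{Gap~2 (unverified $\delta\ge0$ after the chain).} Your closing paragraph claims the cumulative perturbation stays within the ``pointwise slack of $\delta(\cdot,\boldsymbol\lambda,\boldsymbol d)$, which is strictly positive on a one-sided neighborhood of $0$.'' That is precisely where the argument must be made, and the sketch does not cover the hard case. When $|P|<\infty$, every sign-conversion (or initial zero-absorbing) move must touch one of the finitely many, possibly large, positives $\lambda_j$, and the perturbation of $\delta(\cdot,\boldsymbol\lambda,\boldsymbol d)$ then lands near $\alpha=\lambda_j$, \emph{not} near $\alpha=0$. But near the top positive eigenvalue the slack can be arbitrarily small: in the example above $\delta(\alpha,\boldsymbol\lambda,\boldsymbol d)=10-\alpha\to0$ as $\alpha\nearrow10$. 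So ``localize to values bounded by a small threshold'' is unavailable exactly where you need it, and the crude bound $\delta_{\mathrm{new}}\ge\delta_{\mathrm{old}}-\sum\eps_k$ can go negative. The paper handles this by fixing $\eps\le\min(\lambda_{i_0}/4,\sigma/4)$ first, applying Lemma~\ref{infmove}, and then estimating $\delta(\alpha,\tilde{\boldsymbol\lambda},\boldsymbol d)$ interval by interval in $\alpha$; your proposal asserts the same conclusion without any matching interval-by-interval control, so the step does not go through.

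Finally, a smaller point: chaining moves on a \emph{common} index (as in spawning infinitely many positives from one) requires the Lemma~\ref{noloss} codimension-one limit construction, while moves on \emph{disjoint} pairs require only Lemma~\ref{subdiag}; the write-up does not distinguish which mechanism is being invoked in each case, and the two need different verifications.
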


Note that the positive excess $\sigma_+$ and negative excess $\sigma_-$ defined by \eqref{red1} could be equal to $\infty$. If this is the case, say $\sigma_+=\infty$, then we use the convention that $\sigma_+-s=\infty$ in \eqref{red3}. 

\begin{proof} First, we will prove the lemma under the additional assumption that the set $\{i\in\N:\lambda_{i}\leq 0\}$ is infinite albeit without the conclusion that the support of $\tilde{\boldsymbol\lambda}_{-}$ is infinite. 

Set $N=\{i\in\N : \lambda_{i}<0\}$, $Z:=\{i\in\N : \lambda_{i} = 0\}$, and $\sigma = \min(\sigma_{+},\sigma_{-},1)$. Since $\sigma_{+}>0$ we see that $\boldsymbol\lambda$ has at least one strictly positive term. We may choose $i_{0}\in\N$ such that $\lambda_{i_{0}}>0$ and 
\begin{equation}\label{red5}
\delta(\alpha,\boldsymbol\lambda,\boldsymbol d)\geq \sigma/2
\qquad\text{for all }\alpha\in(0,\lambda_{i_{0}}].
\end{equation}

Let $(i_{k})_{k\in\N}$ be a sequence of distinct indices in $N\cup Z$ so that $Z\subset\{i_{k} : k\in\N\}$ and
\[s:=\sum_{k=1}^{\infty}|\lambda_{i_{k}}|<\eps:=\min(\lambda_{i_{0}}/4,\sigma/4).\]
If $N$ is infinite, then we additionally assume that $N \setminus \{ i_k: k\in \N\}$ is infinite. This is to guarantee that the support of $\tilde {\boldsymbol\lambda}_-$ is infinite if the support of $\boldsymbol\lambda_-$ is infinite.

Fix $\alpha_{0}<0$ such that 
\[\delta(\alpha,\boldsymbol\lambda,\boldsymbol d)\geq \sigma/2\quad\text{for all }\alpha\in[\alpha_{0},0).\]

If $E_1$ is any self-adjoint operator with diagonal $(\lambda_{i_k})_{k\in\N \cup\{0\}}$, then by Lemma \ref{infmove} with \(\eps=\min(\lambda_{i_{0}}/4,\sigma/4)\), as above, there exists a  positive sequence $ (\tilde \lambda_{i_k})_{k\in\N \cup\{0\}}$ such that 
\begin{equation}\label{red6}
\sum_{k=0}^{\infty} \tilde \lambda_{i_{k}} = \lambda_{i_0} - \sum_{k=1}^\infty |\lambda_{i_k}| = \lambda_{i_0}-s \qquad\text{and}\qquad
\tilde \lambda_{i_0} \in [\lambda_{i_0}-s-\eps,\lambda_{i_0}-s)
\end{equation}
and $(\tilde \lambda_{i_k})_{k\in \N \cup\{0\}}$ is a diagonal of $E_1$. Set $\tilde \lambda_i =\lambda_i$ for $i \in \N \setminus \{i_0,i_1,\ldots\}$. By Lemma \ref{subdiag}, the sequence $\tilde {\boldsymbol \lambda}=(\tilde \lambda_i)_{i\in \N}$ is a diagonal of $E$.

Note that
\[\sum_{k=1}^{\infty}\tilde{\lambda}_{i_{k}} \leq \lambda_{{i_0}} - \tilde{\lambda}_{{i_0}}\leq s+\eps<\frac{\lambda_{{i_0}}}{2},\]
and hence $\tilde{\lambda}_{i_{k}}<\lambda_{{i_0}}/2< \tilde \lambda_{i_0}$ for all $k\in\N$. Additionally, $\tilde{\lambda}_{{i_0}}<\lambda_{{i_0}}$, and hence
\[\delta(\alpha,\tilde{\boldsymbol{\lambda}},\boldsymbol{d}) = \delta(\alpha,\boldsymbol{\lambda},\boldsymbol{d})\geq 0\quad\text{for }\alpha\geq \lambda_{{i_0}}.\]
By \eqref{red5} and \eqref{red6}, for $\alpha\in[\tilde{\lambda}_{{i_0}},\lambda_{{i_0}})$ we have
\[\delta(\alpha,\tilde{\boldsymbol{\lambda}},\boldsymbol{d}) = \delta(\alpha,\boldsymbol{\lambda},\boldsymbol{d}) - (\lambda_{{i_0}} - \alpha) \geq \sigma/2 - (\lambda_{{i_0}} - \tilde{\lambda}_{{i_0}})> \sigma/2 - s - \eps>0.\]
For $\alpha\in(0,\tilde{\lambda}_{{i_0}}]$ we have
\begin{align*}
\delta(\alpha,\tilde{\boldsymbol{\lambda}},\boldsymbol{d}) & = \sum_{\tilde{\lambda}_{i}\geq \alpha}(\tilde{\lambda}_{i} - \alpha) - \sum_{d_{i}\geq \alpha}(d_{i} - \alpha)\\
 & = \sum_{\lambda_{i}\geq \alpha}(\lambda_{i} - \alpha) - \sum_{d_{i}\geq \alpha}(d_{i} - \alpha) + \sum_{k\in\N\cup\{0\},\  \tilde{\lambda}_{i_{k}}\geq \alpha}(\tilde{\lambda}_{i_{k}} - \alpha) -(\lambda_{{i_0}} - \alpha)\\
 & = \delta(\alpha,\boldsymbol{\lambda},\boldsymbol{d}) +\sum_{k\in \N,\  \tilde{\lambda}_{i_{k}}\geq \alpha}(\tilde{\lambda}_{i_{k}}-\alpha) - (\lambda_{{i_0}} - \tilde{\lambda}_{{i_0}})>0.
\end{align*}
From this and \eqref{red6} we deduce that \[\liminf_{\alpha\searrow 0}\delta(\alpha,\tilde{\boldsymbol{\lambda}},\boldsymbol{d}) = \sigma_{+} + \sum_{k=1}^{\infty}\tilde{\lambda}_{i_{k}} - (\lambda_{{i_0}} - \tilde{\lambda}_{{i_0}})=\sigma_{+}-s.\]

For $\alpha\leq \alpha_{0}$ we have
\[\delta(\alpha,\tilde{\boldsymbol{\lambda}},\boldsymbol{d}) = \delta(\alpha,\boldsymbol{\lambda},\boldsymbol{d})\geq 0,\]
and for $\alpha\in(\alpha_{0},0)$
\[\delta(\alpha,\tilde{\boldsymbol{\lambda}},\boldsymbol{d}) = \delta(\alpha,\boldsymbol{\lambda},\boldsymbol{d}) - \sum_{k\in\N : \lambda_{i_{k}}\leq\alpha}(\alpha-\lambda_{i_{k}})\geq \frac{\sigma}{2} - \sum_{k=1}^{\infty}|\lambda_{i_{k}}| = \frac{\sigma}{2} - s>0.\]
Moreover,
\[\liminf_{\alpha\nearrow0}\delta(\alpha,\tilde{\boldsymbol{\lambda}},\boldsymbol{d}) = \sigma_{-}-s.\]
This completes the proof of the lemma under the additional assumption that the set $\{i\in\N:\lambda_{i}\leq 0\}$ is infinite; however, we have not concluded yet that the support of $\tilde{\boldsymbol\lambda}_{-}$ is infinite in the case that the support of ${\boldsymbol\lambda}_{-}$ is finite.


To obtain the missing conclusion in the lemma, we set $\boldsymbol \lambda'=-\tilde{\boldsymbol \lambda}$ and $\boldsymbol d'=-\boldsymbol d$. Note that the set $\{i\in \N: \lambda_i' \le 0\}$ is infnite. Hence, we can apply the already shown variant of Lemma \ref{red} to the pair $(\boldsymbol \lambda', \boldsymbol d')$ to obtain $\overline{\boldsymbol \lambda}$. By construction $\overline{\boldsymbol \lambda}_+$ has infinite support. Since $({\boldsymbol \lambda}')_-$ has infinite support $\overline{\boldsymbol \lambda}_-$ also has infinite support. Hence, $-\overline{\boldsymbol \lambda}$ is the desired sequence fulfilling all conclusions of Lemma \ref{red}.
Finally, by replacing $(\boldsymbol \lambda, \boldsymbol d)$ by $(-\boldsymbol \lambda, -\boldsymbol d)$ we can easily deal with the symmetric case when $\{i\in\N:\lambda_{i}\geq 0\}$ is infinite.
\end{proof}

It takes considerably more effort to remove the positivity assumption on the target diagonal $\boldsymbol d$ in Theorem \ref{posdiag} since $\boldsymbol d$ might have both positive and negative parts and possibly infinite number of zero terms. We shall employ a technique of annihilation of excesses, which relies on the following purely sequential lemma.

\begin{lem}\label{exequal} Let $(\lambda_{i})_{i\in\N}$, $(\lambda_{i})_{i\in-\N}$, $(d_{i})_{i\in\N}$, and $(d_{i})_{i\in-\N}$ be nonnegative sequences in $c_{0}$.
Suppose that
\begin{align}
\label{exequal0}
\lambda_{i} & \ge d_{i} \qquad\text{for all }i\in\N \cup - \N,
\\
\label{exequal1}
\lambda_{i} & > d_{i} \qquad \text{for infinitely many }i\in\N\text{ and infinitely many } i\in - \N,
\\\label{exequal3}
\sigma : &= \sum_{i\in\N}(\lambda_{i} - d_{i}) = \sum_{i\in-\N}(\lambda_{i} - d_{i}).
\end{align}
Then, there exist a positive sequence $(\tilde{\lambda}_{i})_{i\in\N}$ in $c_{0}$, a partition $(I_k)_{k\in \N}$ of $\N$ into finite sets, and two increasing sequences of natural numbers $(m_{k})_{k\in\N}$ and $(n_{k})_{k\in\N}$ such that the following three conclusions hold:
\begin{align}\label{exequal2}
(\tilde \lambda_i)_{i\in I_k} \preccurlyeq ( \lambda_i)_{i\in I_k}
&\qquad\text{for all }k\in\N,\\[2ex]
\label{exequal4}
\tilde{\lambda}_{i}\geq d_{i} &\qquad\text{for all }i\in\N,
\\
\label{exequal5}
\sum_{i=1}^{n_{k}}(\tilde{\lambda}_{i} - d_{i}) = \sum_{i=-1}^{-m_{k}}({\lambda}_{i} - d_{i})&\qquad \text{for all }k\in\N.
\end{align}
\end{lem}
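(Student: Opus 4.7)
The plan is an inductive construction of $n_k$, $m_k$, the block $I_k$, and the values $(\tilde{\lambda}_i)_{i\in I_k}$. Define the partial excess functions $E_+(n) := \sum_{i=1}^{n}(\lambda_i - d_i)$ and $E_-(m) := \sum_{i=1}^{m}(\lambda_{-i}-d_{-i})$; both are nondecreasing with common limit $\sigma \in (0,\infty]$ and each has infinitely many strict increments by \eqref{exequal1}. Beginning with $n_0 = m_0 = 0$, I will maintain, at the end of stage $k$, the invariants: pairwise disjoint finite sets $I_1, \ldots, I_k \subset \N$ with $\{1,\ldots,n_k\}\subset \bigcup_{j\leq k} I_j$; strictly positive values $\tilde{\lambda}_i$ on $\bigcup_{j\leq k}I_j$ satisfying $\tilde{\lambda}_i \geq d_i$ and $(\tilde{\lambda}_i)_{i\in I_j}\preccurlyeq (\lambda_i)_{i\in I_j}$ for each $j \leq k$; and the matching identity $\sum_{i=1}^{n_k}(\tilde{\lambda}_i - d_i) = E_-(m_k)$.

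For the inductive step at stage $k$, I pick $m_k > m_{k-1}$ with strict target increment $\Delta_k := E_-(m_k) - E_-(m_{k-1}) > 0$, then pick $n_k$ larger than $n_{k-1}$ and than every index used in prior blocks so that the fresh set $A_k := \{n_{k-1}+1,\ldots,n_k\}\setminus\bigcup_{j<k}I_j$ contains an index $j_k$ with $\lambda_{j_k} > d_{j_k}$ and that the accumulated positive excess on $(n_{k-1},n_k]$ first surpasses $\Delta_k$ at $i = j_k$; this is possible because $\sigma - E_+(n_{k-1}) \geq \sigma - E_-(m_{k-1}) \geq \Delta_k$ and $\lambda_i > d_i$ infinitely often in $\N$. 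Next I choose a fresh buffer $N_k > n_k$ with $\lambda_{N_k}$ so small that a single pair shift, described below, remains within the strong-majorization constraints (possible since $\boldsymbol\lambda \in c_0$). I form $I_k := A_k \cup \{N_k\}$.

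I define $\tilde{\lambda}_i := \lambda_i$ for $i \in I_k \setminus \{j_k, N_k\}$ and use the pair shift $\tilde{\lambda}_{j_k} := \lambda_{j_k} - \epsilon_k$ and $\tilde{\lambda}_{N_k} := \lambda_{N_k} + \epsilon_k$, where $\epsilon_k \in [0,\lambda_{j_k}-d_{j_k}]$ is determined by enforcing the matching identity. A direct computation using the induction hypothesis gives
\[
\epsilon_k \;=\; \sum_{i \in (n_{k-1},n_k]}(\lambda_i - d_i) \;+\; \sum_{j < k,\; N_j \in (n_{k-1},n_k]}\epsilon_j \;-\; \Delta_k,
\]
which lies in the feasible range by the minimality of $n_k$. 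Equation \eqref{exequal4} holds by construction, equation \eqref{exequal5} by the choice of $\epsilon_k$, and \eqref{exequal2} follows from $\sum_{I_k}\tilde{\lambda}_i = \sum_{I_k}\lambda_i$ plus the fact that the pair shift is a $T$-transform (ensured by making $\lambda_{N_k}$ small enough compared to $\lambda_{j_k}-\epsilon_k$), so the decreasing-rearrangement partial sums of $(\tilde{\lambda}_i)_{i\in I_k}$ are bounded by those of $(\lambda_i)_{i\in I_k}$.

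To finish, I verify that $\bigsqcup_k I_k = \N$ (since $n_k \to \infty$, every index is placed eventually) and that $\tilde{\lambda}\in c_0$ (because $\max_{I_k}\tilde{\lambda}_i \leq \max_{I_k}\lambda_i \to 0$). The main obstacle lies in two places: first, arranging strict positivity $\tilde{\lambda}_i > 0$ at indices with $\lambda_i = 0$, handled by including each such zero-index in a block that also contains the positive $\lambda_{j_k}$ and absorbing a small additional perturbation into $\tilde{\lambda}_{j_k}$ and $\tilde{\lambda}_{N_k}$; second, bookkeeping the contribution of previously placed buffers $N_j \in (n_{k-1},n_k]$, whose $\tilde{\lambda}_{N_j}$-values are already fixed and enter the formula for $\epsilon_k$ as above. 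Both are manageable because the cumulative prior perturbations satisfy $\epsilon_j \leq \lambda_{j_j} - d_{j_j}$, keeping the required $\epsilon_k$ bounded within the feasible interval.
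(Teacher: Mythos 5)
Your proposal follows the same high-level strategy as the paper's proof: build the blocks $I_{k}$ inductively, use a mass-preserving convex move inside each block to decrement one index and increment a buffer, and match the positive-side partial sums against the negative-side partial sums via the identity $\sum_{i=1}^{n_{k}}(\tilde\lambda_{i}-d_{i}) = \sum_{i=-1}^{-m_{k}}(\lambda_{i}-d_{i})$. The crucial structural difference is that the paper takes the blocks to be \emph{contiguous} intervals $I_{k}=\{M+1,\ldots,r+N-1\}$ whose union at each stage is a full initial segment of $\N$. Since each $(\tilde\lambda_{i})_{i\in I_{j}}\preccurlyeq(\lambda_{i})_{i\in I_{j}}$ preserves block sums, one gets $\sum_{i=1}^{M}(\tilde\lambda_{i}-d_{i})=\delta_{M}$ for the raw partial sum $\delta_{M}=\sum_{i=1}^{M}(\lambda_{i}-d_{i})$, so the matching identity reduces cleanly to a condition on $\delta_{n}$; the minimality of $n_{k}$ then gives $\eta<\lambda_{n_{k}}-d_{n_{k}}$ immediately. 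Your scattered blocks $I_{k}=A_{k}\cup\{N_{k}\}$ with $N_{k}>n_{k}$ destroy this alignment: already-assigned buffer positions $N_{j}$ (with $j<k$) that fall in $(n_{k-1},n_{k}]$ inject the extra mass $\epsilon_{j}$ into the partial sum, which is exactly what your $\epsilon_{k}$ formula has to compensate for.

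The feasibility argument for $\epsilon_{k}$ does not hold as written, and there is a genuine gap. First, the claimed chain $\sigma-E_{+}(n_{k-1})\geq\sigma-E_{-}(m_{k-1})\geq\Delta_{k}$ has the first inequality reversed: all decrements $j_{j}$ lie in $\{1,\ldots,n_{k-1}\}$ while some increments $N_{j}$ may lie beyond $n_{k-1}$, so $\sum_{i\leq n_{k-1}}(\tilde\lambda_{i}-d_{i})\leq E_{+}(n_{k-1})$, i.e.\ $E_{-}(m_{k-1})\leq E_{+}(n_{k-1})$. Second and more seriously, the required upper bound $\epsilon_{k}\leq\lambda_{j_{k}}-d_{j_{k}}$ can fail: a prior buffer $N_{j}\in(n_{k-1},n_{k}]$ contributes the increment $\lambda_{N_{j}}+\epsilon_{j}-d_{N_{j}}$ to the running total, and if this already pushes the running total past $\Delta_{k}$ before any fresh index with $\lambda_{i}>d_{i}$ is reached, then for the first such fresh index $j_{k}$ the running total just before $j_{k}$ is already $\geq\Delta_{k}$, forcing $\epsilon_{k}>\lambda_{j_{k}}-d_{j_{k}}$. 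The closing remark that this is ``manageable because the cumulative prior perturbations satisfy $\epsilon_{j}\leq\lambda_{j_{j}}-d_{j_{j}}$'' does not address this overshoot, since the bound on $\epsilon_{j}$ says nothing about its size relative to $\Delta_{k}$. The gap can be patched (e.g.\ choose $m_{k}$ large enough that $\Delta_{k}$ exceeds the running total at the largest prior buffer, so the crossing happens past all of them), but that is not in the proposal, and the stated justification is wrong. A final minor point: ``positive'' in the lemma's conclusion means nonnegative, matching the $c_{0}^{+}$ convention; the paper's own $\tilde{\boldsymbol\lambda}$ agrees with $\boldsymbol\lambda$ off the modified positions and can vanish, and Theorem \ref{exel} does not need strict positivity, so the extra perturbation you introduce to force $\tilde\lambda_{i}>0$ is unnecessary.
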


By \eqref{exequal0} the sums in \eqref{exequal3} are well-defined and they possibly take the value $\sigma=\infty$.

\begin{proof} For each $k\in\N\cup-\N$, set
\[
\delta_{k} = 
\begin{cases}\sum_{i=1}^{k}({\lambda}_{i} - d_{i}) & k\in\N,\\ \sum_{i=-1}^{k}(\lambda_{i} - d_{i}) & k\in-\N.\end{cases}
\]
By \eqref{exequal0}, \eqref{exequal1}, and \eqref{exequal3}
\begin{equation}\label{exequal7}
\delta_k < \sigma \qquad\text{for all }k \in \N \cup -\N,
\end{equation}
and
\begin{equation}\label{exequal8}
\lim_{k\to \infty} \delta_k = \lim_{k\to \infty} \delta_{-k} = \sigma.
\end{equation}
Suppose that for some $k\in \N$ we have defined disjoint sets $I_1,\ldots,I_{k-1}$ such that 
\[
I_1 \cup \ldots \cup I_{k-1} = \{1,\ldots,M\},
\]
where $M\in \N$, a sequence $(\tilde{\lambda}_{i})_{i=1}^{M}$, and sequences $n_1,\ldots,n_{k-1}$ and $m_1,\ldots,m_{k-1}$ satisfying
\begin{equation*}
\begin{aligned}
(\tilde \lambda_i)_{i\in I_j} \preccurlyeq (\lambda_{i})_{i\in I_j}
&\qquad\text{for }j=1,\ldots,k-1,
\\[2ex]
\tilde{\lambda}_{i}\geq d_{i} & \qquad\text{for }i=1,\ldots, M,
\\
\sum_{i=1}^{n_{j}}(\tilde{\lambda}_{i} - d_{i}) = \sum_{i=-1}^{-m_{j}}({\lambda}_{i} - d_{i})
&\qquad \text{for }j=1,\ldots,k-1.
\end{aligned}
\end{equation*}
In the base case of $k=1$ we let $M=0$ and $m_0=0$.
Let 
\[
m_{k} =\min\{i\ge m_{k-1}+1: \delta_{-i} > \delta_{M+1}\},
\qquad
n_{k} = \min\{i\ge M+1: \delta_i \ge \delta_{-m_{k}}\}.
\]
These numbers are well-defined by \eqref{exequal7} and \eqref{exequal8}.

Let $\eta=\delta_{n_{k}}-\delta_{-m_{k}}$. Choose $r>n_{k}$ and $N$ such that
$\lambda_{r}<d_{n_{k}}$ and $N(d_{n_{k}}-\lambda_{r})>\eta$. Define $I_{k}=\{M+1,\ldots,r+N-1\}$ and $(\tilde{\lambda}_{i})_{i\in I_{k}}$, where
\[
\tilde \lambda_i = \begin{cases} \lambda_{n_{k}}-\eta & i=n_{k}, \\
\lambda_i + \eta/N & i=r,\ldots,r+N-1, \\
\lambda_i & \text{otherwise.}
\end{cases}
\]
By the above definition we have $(\tilde \lambda_i)_{i\in I_{k}} \preccurlyeq (\lambda_i)_{i\in I_{k}}$. 
By the minimality of $n_{k}$ we have $\delta_{n_{k}-1} <\delta_{-m_{k}} \le \delta_{n_{k}}$, which implies that $\eta<\lambda_{n_{k}}-d_{n_{k}}$. Hence, $\tilde \lambda_{n_{k}} \ge d_{n_{k}}$. Finally,
\[
\sum_{i=1}^{n_{k}}(\tilde{\lambda}_{i} - d_{i}) = \delta_{n_{k}}- \eta = \delta_{-m_{k}}=\sum_{i=-1}^{-m_{k}}({\lambda}_{i} - d_{i}).
\]
By induction this yields \eqref{exequal2}, \eqref{exequal4}, and \eqref{exequal5}.
\end{proof}

Using Lemma \ref{exequal} we can prove an initial form of Theorem \ref{step5}  for initial and target diagonal sequences satisfying dominant majorization.

\begin{thm}\label{exel} Let $(\lambda_{i})_{i\in\N}$, $(\lambda_{i})_{i\in-\N}$, $(d_{i})_{i\in\N}$, and $(d_{i})_{i\in-\N}$ be nonnegative sequences in $c_{0}$ satisfying \eqref{exequal0}, \eqref{exequal1}, and \eqref{exequal3}.
Set $\boldsymbol\lambda = (\sgn(i)\lambda_{i})_{i\in\N\cup -\N}$ and $\boldsymbol d = (\sgn(i)d_{i})_{i\in\N\cup -\N}$. If 
 $E$ is a self-adjoint operator with diagonal $\boldsymbol\lambda$, then $\boldsymbol d$ is also a diagonal of $E$.
\end{thm}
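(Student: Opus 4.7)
The plan is to apply Lemma \ref{exequal} to obtain a positive intermediate sequence $\tilde{\boldsymbol{\lambda}}=(\tilde{\lambda}_i)_{i\in\N}$, a partition $(I_k)_{k\in\N}$ of $\N$ into finite sets, and strictly increasing sequences $(n_k),(m_k)$ balancing the positive and negative partial sums, and then to carry out two rounds of blockwise finite-dimensional Schur--Horn manipulation, combined via Lemma \ref{subdiag}.

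In the first round I would exploit the strong majorization $(\tilde{\lambda}_i)_{i\in I_k}\preccurlyeq(\lambda_i)_{i\in I_k}$ on each finite block $I_k$, which in finite dimensions is just classical Hardy--Littlewood--P\'olya majorization with equal trace. By the classical Schur--Horn theorem, $(\tilde{\lambda}_i)_{i\in I_k}$ is then a diagonal of every Hermitian matrix with diagonal $(\lambda_i)_{i\in I_k}$, so Lemma \ref{subdiag} (with partition $(I_k)_{k\in\N}$ of the positive side and the negative side left intact) shows that $\tilde{\boldsymbol{\lambda}}\oplus(-\lambda_i)_{i\in-\N}$ is itself a diagonal of $E$.

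For the second round, setting $n_0=m_0=0$, I would regroup the indices into combined finite blocks $C_k=A_k\cup B_k$, where $A_k=\{n_{k-1}+1,\ldots,n_k\}\subset\N$ and $B_k=\{-m_{k-1}-1,\ldots,-m_k\}\subset-\N$; since $(n_k),(m_k)\to\infty$, these partition $\N\cup-\N$. On each $C_k$ the compression of $E$ is a finite-dimensional Hermitian operator with diagonal $\mathbf{s}_k:=(\tilde{\lambda}_i)_{i\in A_k}\cup(-\lambda_i)_{i\in B_k}$, and I want the target $\mathbf{t}_k:=(d_i)_{i\in A_k}\cup(-d_i)_{i\in B_k}$ to be another diagonal. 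Using Schur's theorem and transitivity of classical majorization, it suffices to verify $\mathbf{t}_k\prec\mathbf{s}_k$ in the finite-dimensional signed sense. Equal traces come directly from \eqref{exequal5}, which gives $\sigma_k:=\sum_{A_k}(\tilde{\lambda}_i-d_i)=\sum_{B_k}(\lambda_i-d_i)$. For the $\delta$-inequalities, at $\alpha>0$ only the positive entries contribute and the dominance $\tilde{\lambda}_i\geq d_i$ on $A_k$ yields $\delta(\alpha,\mathbf{s}_k,\mathbf{t}_k)=\delta(\alpha,(\tilde{\lambda}_i)_{A_k},(d_i)_{A_k})\geq 0$; at $\alpha<0$, substituting $\beta=-\alpha$ gives the identity
\[
\delta(\alpha,\mathbf{s}_k,\mathbf{t}_k)=\sigma_k-\sum_{i\in B_k}\bigl[(\beta-d_i)_+-(\beta-\lambda_i)_+\bigr],
\]
and the bound $\sum_{i\in B_k}[(\beta-d_i)_+-(\beta-\lambda_i)_+]\leq\sum_{i\in B_k}(\lambda_i-d_i)=\sigma_k$ follows by a termwise case split using $\lambda_i\geq d_i\geq 0$ on $B_k$.

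A final application of Lemma \ref{subdiag} with the partition $(C_k)_{k\in\N}$ will then assemble the blockwise Schur--Horn diagonals into the full diagonal $\boldsymbol d$ of $E$. The main obstacle is the signed majorization verification on the blocks $C_k$: that the combined positive-plus-negative dominance collapses into a single classical finite-dimensional majorization relies crucially on the balanced excess $\sigma_k$ produced by the pairing construction of Lemma \ref{exequal}. Without this balance, the dominance inequalities on the positive and negative sides would not combine, and the reduction from the infinite-dimensional problem to a countable collection of classical finite-dimensional Schur--Horn problems would break down.
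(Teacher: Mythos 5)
Your proposal is correct and is essentially the paper's own proof: both apply Lemma \ref{exequal}, then use finite Schur--Horn and Lemma \ref{subdiag} on the blocks $(I_k)$ to pass from $\boldsymbol\lambda$ to $\tilde{\boldsymbol\lambda}\oplus(-\lambda_i)_{i\in-\N}$, and finally apply Schur--Horn and Lemma \ref{subdiag} again on the combined blocks $J_k=\{-m_k,\ldots,-m_{k-1}-1\}\cup\{n_{k-1}+1,\ldots,n_k\}$ (your $C_k$). The only difference is cosmetic: the paper simply asserts the block majorization $(\sgn(i)d_i)_{i\in J_k}\preccurlyeq(\sgn(i)\tilde\lambda_i)_{i\in J_k}$ as a consequence of \eqref{exequal4}--\eqref{exequal5}, whereas you carry out the explicit $\delta$-computation verifying it, which is a sound and welcome piece of detail.
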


\begin{proof} By Lemma \ref{exequal} there exist a positive sequence $(\tilde{\lambda}_{i})_{i\in\N}$ in $c_{0}$, a partition $(I_{k})_{k\in\N}$ of $\N$ into finite sets, and two increasing sequences $(m_{k})_{k\in\N}$ and $(n_{k})_{k\in\N}$ such that \eqref{exequal2}, \eqref{exequal4}, and \eqref{exequal5} hold. Set $\tilde{\lambda}_{i}=\lambda_{i}$ for $i\in-\N$.

By \eqref{exequal2} and the Schur-Horn theorem, if $E_{k}$ is any self-adjoint operator with diagonal $(\lambda_{i})_{i\in I_{k}}$, then $(\tilde{\lambda}_{i})_{i\in I_{k}}$ is also a diagonal of $E_{k}$. Hence, by Lemma \ref{subdiag} the sequence $(\sgn(i)\tilde{\lambda}_{i})_{i\in\N\cup -\N}$ is a diagonal of $E$.

Set $m_{0} = n_{0} = 0$. For each $k\in\N$ set
\[J_{k} = \{-m_{k},\ldots,-m_{k-1}-1\}\cup\{n_{k-1}+1,\ldots,n_{k}\}.\]
From \eqref{exequal4} and \eqref{exequal5} we can deduce that
\[(\sgn(i)d_{i})_{i\in J_{k}}\preccurlyeq (\sgn(i)\tilde{\lambda}_{i})_{i\in J_{k}}\quad\text{for all }k\in\N.\]
Hence, by the Schur-Horn theorem and Lemma \ref{subdiag}, the sequence $\boldsymbol d$ is a diagonal of $E$.
\end{proof}

In the proof of Theorem \ref{step5} we will also employ two elementary lemmas.

\begin{lem}\label{fis} Let $(\lambda_{i})_{i\in\N}$ be a positive nonincreasing sequence in $c_{0}$. Let $(d_{i})_{i=1}^N$ be a positive nonincreasing sequence of length $N\in \N$. Suppose that
\begin{equation}\label{fis0}
\sum_{i=1}^{k}(\lambda_{i} - d_{i})\geq 0\quad\text{for all }k=1,\ldots,N.
\end{equation}

Then, there exists a positive sequence $(\tilde \lambda_{i})_{i\in\N}$ in $c_0$ and $M\in \N$ such that:
\begin{enumerate}
\item $\tilde \lambda_i = d_i$ for $i=1,\ldots,N$,
\item $\tilde \lambda_i = \lambda_i$ for $i\ge M+1$, and
\item $(\tilde \lambda_i)_{i=1}^M \preccurlyeq (\lambda_{i})_{i=1}^M$.
\end{enumerate}
\end{lem}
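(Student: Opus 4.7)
The plan is to construct $\tilde{\boldsymbol\lambda}$ explicitly by keeping the first $N$ entries equal to $(d_i)$, inserting a flat block of identical values $v$ on positions $N+1,\ldots,M$, and leaving $\lambda_i$ untouched for $i \ge M+1$. The integer $M$ will be taken large, and $v$ will then be determined by requiring equality of total sums on $\{1,\ldots,M\}$.

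Let $\sigma_N = \sum_{i=1}^N(\lambda_i - d_i) \ge 0$. To achieve $\sum_{i=1}^M \tilde\lambda_i = \sum_{i=1}^M \lambda_i$ one is forced to set
\[
v = \frac{\sigma_N + \sum_{i=N+1}^M \lambda_i}{M-N}.
\]
Since $\boldsymbol\lambda \in c_0$, the Ces\`aro average $\frac{1}{M-N}\sum_{i=N+1}^M\lambda_i$ tends to $0$ as $M \to \infty$, and so does $\sigma_N/(M-N)$; combined with the assumption $d_N > 0$ this lets me pick $M$ large enough that $0 < v < d_N$. Setting $\tilde\lambda_i = v$ for $N+1 \le i \le M$ then immediately gives conclusions (i) and (ii), and makes $(\tilde\lambda_i)_{i=1}^M = (d_1,\ldots,d_N,v,\ldots,v)$ nonincreasing, so the decreasing rearrangement of this finite block is the block itself.

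For (iii) the verification is $\sum_{i=1}^k \tilde\lambda_i \le \sum_{i=1}^k \lambda_i$ for every $k = 1,\ldots,M$, with equality at $k=M$. For $k \le N$ this is exactly the hypothesis of the lemma. For $N < k \le M$, after cancellation the inequality reduces to $(k-N)v \le \sigma_N + \sum_{i=N+1}^k\lambda_i$, and the definition of $v$ splits this into the trivial bound $(k-N)\sigma_N/(M-N) \le \sigma_N$ together with
\[
\frac{k-N}{M-N}\sum_{i=N+1}^M\lambda_i \le \sum_{i=N+1}^k\lambda_i,
\]
which says that the average of $\lambda$ over $\{N+1,\ldots,M\}$ is at most its average over the shorter initial segment $\{N+1,\ldots,k\}$. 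This follows directly from $(\lambda_i)$ being nonincreasing. Equality at $k=M$ is built into the choice of $v$, yielding $(\tilde\lambda_i)_{i=1}^M \preccurlyeq (\lambda_i)_{i=1}^M$.

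No step is really difficult; the only point needing care is ensuring that $v < d_N$, which uses both $d_N > 0$ and $\boldsymbol\lambda \in c_0$ in an essential way. Everything else is a direct calculation, and positivity $\tilde\lambda_i > 0$ is automatic because $v$ is a positive weighted average of positive quantities and the $d_i$ are positive by hypothesis.
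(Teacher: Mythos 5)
Your proof is correct. It takes a constructive route that is similar in spirit to the paper's but differs in the details of the fill pattern. The paper chooses $M$ to be the \emph{smallest} integer at which $\sum_{i=1}^M \lambda_i - \sum_{i=1}^N d_i - (M-N)d_N \le 0$, fills positions $N+1,\ldots,M-1$ with the constant $d_N$, and places the residual amount at position $M$; the majorization inequalities for $k<M$ are then immediate from the minimality of $M$, and equality at $k=M$ is forced by the definition of $\tilde\lambda_M$. You instead take \emph{any} sufficiently large $M$ (large enough that the Ces\`aro average is below $d_N$) and spread the excess uniformly over positions $N+1,\ldots,M$ as a single value $v$; the cost of this freedom is that you must separately verify the intermediate partial-sum inequalities, which you do correctly via the monotonicity of running averages of a nonincreasing sequence. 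Both constructions yield a nonincreasing block, positivity, and equality at $k=M$. The paper's version is slightly more economical in the verification step (almost nothing to check once $M$ is chosen minimally), while yours is slightly more transparent about why a valid $M$ exists and requires no case distinction for $M=N$, since you always take $M>N$.
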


\begin{proof}
Let $M\in \N$ be the smallest number such that
\[
\sum_{i=1}^M \lambda_i - \sum_{i=1}^N d_i - (M-N) d_N \le 0.
\]
The existence of such $M$ follows from the assumption that $\lambda_i \to 0$ as $i\to \infty$. By \eqref{fis0} we have that $M\ge N$. If $M=N$, then we are done. Hence, we can assume that $M>N$. Define $(\tilde \lambda_{i})_{i\in\N}$ so that (i) and (ii) hold and
\[
\tilde \lambda_i = 
\begin{cases} d_N & \text{for } i=N+1,\ldots,M-1,\\
\sum_{i=1}^M \lambda_i - \sum_{i=1}^N d_i - (M-N-1) d_N & \text{for }i=M.
\end{cases}
\]
By the minimality of $M$ we have $(\tilde \lambda_i)_{i=1}^{M-1} \prec (\lambda_{i})_{i=1}^{M-1}$. Moreover, by the definition of $\tilde \lambda_M$ we have
\[
\sum_{i=1}^M \tilde \lambda_i = \sum_{i=1}^N d_i + (M-N-1) d_N + \tilde \lambda_{M} = \sum_{i=1}^M \lambda_i.
\]
Hence, (iii) holds and the proof is complete.
\end{proof}

\begin{lem}\label{fiz}
Suppose that $(\lambda_{i})_{i\in \N}$ is a sequence in $c_0$ of nonzero numbers that contains infinitely many positive terms and infinitely many negative terms. Then, for any $M\in \N$ there exists a sequence $(\tilde{\lambda}_{i})_{i\in \N}$ and a finite subset $J\subset \N$ such that
\begin{enumerate}
\item $(\tilde \lambda_i)_{i\in J} \preccurlyeq (\lambda_{i})_{i\in J}$,
\item $\tilde \lambda_i = \lambda_i$ for $i\in \N \setminus J$, and
\item $(\tilde \lambda_i)_{i\in \N}$ contains exactly $M$ zero terms. 
\end{enumerate}
\end{lem}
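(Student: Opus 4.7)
The plan is to produce exactly $M$ zeros by applying $M$ disjoint two-term convex moves, each of which takes one positive and one negative term of $\boldsymbol\lambda$ and replaces them by a pair containing exactly one zero.

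First, since $(\lambda_i)$ contains infinitely many positive terms, pick $M$ distinct indices $p_1,\ldots,p_M\in\N$ with $\lambda_{p_k}>0$ for all $k$. Set $\mu:=\min\{\lambda_{p_k}:k=1,\ldots,M\}>0$. Because $(\lambda_i)\in c_0$ has infinitely many strictly negative terms, we can choose $M$ additional distinct indices $q_1,\ldots,q_M\in\N\setminus\{p_1,\ldots,p_M\}$ such that $\lambda_{q_k}<0$ and $|\lambda_{q_k}|<\mu$ for every $k=1,\ldots,M$. Let $J=\{p_1,\ldots,p_M,q_1,\ldots,q_M\}$ and define
\[
\tilde\lambda_i=\begin{cases} 0 & i=p_k\text{ for some }k,\\ \lambda_{p_k}+\lambda_{q_k} & i=q_k\text{ for some }k,\\ \lambda_i & i\in\N\setminus J,\end{cases}
\]
which immediately gives (ii). Since $\lambda_{p_k}\geq \mu>|\lambda_{q_k}|$, we have $\tilde\lambda_{q_k}=\lambda_{p_k}+\lambda_{q_k}>0$, so the only zero terms of $(\tilde\lambda_i)_{i\in\N}$ are precisely the $M$ entries $\tilde\lambda_{p_1},\ldots,\tilde\lambda_{p_M}$, yielding (iii).

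It remains to verify (i). For each $k$, set $s_k=\lambda_{p_k}+\lambda_{q_k}>0$ and consider the two-element finite sequences $(0,s_k)$ and $(\lambda_{p_k},\lambda_{q_k})$. Their common sum is $s_k$, and since $s_k\leq\lambda_{p_k}$ (as $\lambda_{q_k}<0$), we have the Hardy--Littlewood--P\'olya majorization
\[
(s_k,0)\preccurlyeq(\lambda_{p_k},\lambda_{q_k})
\]
when the entries are arranged in nonincreasing order. Since finite majorizations concatenate (if $\boldsymbol a_k\preccurlyeq\boldsymbol b_k$ for $k=1,\ldots,M$, then $\boldsymbol a_1\oplus\cdots\oplus\boldsymbol a_M\preccurlyeq\boldsymbol b_1\oplus\cdots\oplus\boldsymbol b_M$, via block-diagonal doubly stochastic matrices), concatenating the $M$ pair-majorizations gives
\[
(\tilde\lambda_i)_{i\in J}\preccurlyeq(\lambda_i)_{i\in J},
\]
which is (i).

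The only delicate point is to guarantee that the convex moves produce exactly $M$ zeros rather than more (this would happen if some pair gave $\lambda_{p_k}+\lambda_{q_k}=0$). This is handled by choosing the negative indices so that $|\lambda_{q_k}|<\mu\leq\lambda_{p_k}$, which is possible because $(\lambda_i)\in c_0$ and there are infinitely many negative terms. Once this choice is made, the remainder of the argument is a direct verification.
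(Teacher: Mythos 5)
Your proof is correct, but it takes a different route from the paper's. The paper's construction is more economical: it chooses a set $J$ of cardinality $M+1$ containing exactly one positive index $i_0$ and $M$ negative indices whose absolute values sum to less than $\lambda_{i_0}$ (possible because the negative terms tend to zero), then zeros out all $M$ negatives and sets $\tilde\lambda_{i_0}=\sum_{i\in J}\lambda_i>0$, achieving a single $(M+1)$-term strong majorization in one step. You instead use $|J|=2M$, pairing each of $M$ positive indices $p_k$ with a distinct negative index $q_k$ chosen so that $|\lambda_{q_k}|<\lambda_{p_k}$, zeroing each $\lambda_{p_k}$ and transferring the mass to $\lambda_{q_k}$; then you invoke the $2\times 2$ Hardy--Littlewood--P\'olya majorization $(s_k,0)\preccurlyeq(\lambda_{p_k},\lambda_{q_k})$ for each pair and concatenate the $M$ block majorizations via block-diagonal doubly stochastic matrices. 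Both arguments are elementary, and your pairwise-convex-move phrasing aligns naturally with the $T$-transform viewpoint used elsewhere in the paper (Lemma \ref{offdiag}, Lemma \ref{loss}), whereas the paper's version is shorter because it uses a single aggregate move. Your care in ensuring that no $\lambda_{p_k}+\lambda_{q_k}$ vanishes (by taking $|\lambda_{q_k}|<\mu\leq\lambda_{p_k}$) is exactly the same kind of safeguard the paper enforces by demanding $\sum_{i\in J}\lambda_i>0$.
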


The assumption that $(\lambda_{i})_{i\in \N}$ belongs to $c_0$ is not essential; it is only made for the simplicity of the proof. 

\begin{proof}
Choose a finite subset $J\subset \N$ of size $M+1$ such that $\lambda_i>0$ for exactly one $i\in J$ and $\sum_{i \in J} \lambda_i >0$. Let $i_0\in J$ be the unique element such that $\lambda_{i_0}>0$. Define $(\tilde \lambda_{i})_{i\in\N}$ such that (ii) holds and
\[
\tilde \lambda_i = 
\begin{cases} 0 & \text{for } i\in J \setminus \{i_0\},\\
\lambda_{i_0} -\sum_{i\in J \setminus \{i_0\}} \lambda_i.
\end{cases}
\]
Then, (i) and (iii) hold and the proof is complete.
\end{proof}

We are now ready to give the proof of the main result of this section.

\begin{proof}[Proof of Theorem \ref{step5}]
Let \(\sigma\) be the excess given in \eqref{pos5}. By Lemma \ref{red} there exists a diagonal $\tilde{\boldsymbol \lambda}$ of $E$ such that both $\tilde{\boldsymbol \lambda} _+$ and $\tilde{\boldsymbol \lambda}_-$ have infinite supports, $\tilde \lambda_i\ne 0$ for all $i\in\N$, and the conditions \eqref{pos5.0} and \eqref{pos5} hold with $\boldsymbol \lambda$ replaced by $\tilde{\boldsymbol \lambda}$ and $\sigma$ replaced by $\sigma-s>0$, $s\ge 0$. Hence, without loss of generality we can assume that the sequence $\boldsymbol \lambda$ consists only of nonzero terms and the supports of ${\boldsymbol \lambda} _+$ and ${\boldsymbol \lambda}_-$ are both infinite. 

Note that the assumptions and the conclusion of Theorem \ref{step5} do not depend on order of the terms in sequences $\boldsymbol \lambda$ and $\boldsymbol d$. Hence, we can freely rearrange terms of $(\lambda_{i})_{i\in \N}$ to obtain a sequence $(\lambda_{\pi(i)})_{i\in \N}$ by employing a bijection $\pi:\N \to \N$. The same can be done to $(d_{\pi'(i)})_{i\in \N}$ for another bijection $\pi':\N \to \N$.
Let
\[
I_-=\{i\in \N:  d_i < 0\}, \qquad I_0=\{i\in \N: d_i=0\},
\quad\text{and}\quad I_+=\{i\in \N:  d_i > 0\}.
\]
There are four cases to consider based on the form of the sequence $\boldsymbol d$.

\medskip

\noindent {\bf Case 1.} The sets $I_-$ and $I_+$ are both infinite. 
Let $\eta=\min(1,\sigma/2)$. Choose $\alpha_0>0$ such that
\begin{equation}\label{s5.2}
\delta(\alpha,\boldsymbol \lambda, \boldsymbol d) > \eta\qquad\text{for } 0<\alpha<\alpha_0.
\end{equation}
Since $\boldsymbol \lambda \in c_0$ and $\boldsymbol \lambda_+$ has infinite support, we can rearrange the terms of $\boldsymbol \lambda$ and $\boldsymbol d$ so that:
\begin{enumerate}[(a)]
\item $
0<\lambda_i <\alpha_0 $ for all $i\in I_0,$
\item $\sum_{i\in I_0} \lambda_i < \eta,$
\item $\lambda_i>0$ if and only if $i \in I_+ \cup I_0$,
\item $\lambda_i<0$ if and only if $i \in I_-$,
\item both $(\lambda_{i})_{i\in I_+}$ and $(|\lambda_i|)_{i\in I_-}$ are nonincreasing sequences,
\item both $(d_{i})_{i\in I_+}$ and $(|d_i|)_{i\in I_-}$ are nonincreasing sequences.
\end{enumerate}

Consider a truncated sequence $\boldsymbol \lambda'=(\lambda_{i})_{i\in \N \setminus I_0}$. Note that
\[
\delta(\alpha, \boldsymbol \lambda, \boldsymbol d) - 
\delta(\alpha, \boldsymbol \lambda', \boldsymbol d) 
= \sum_{i\in I_0, \ \lambda_i \ge \alpha} (\lambda_i - \alpha) 
\le \sum_{i\in I_0} \lambda_i< \eta.
\]
Moreover, $\delta(\alpha, \boldsymbol \lambda', \boldsymbol d) = 
\delta(\alpha, \boldsymbol \lambda, \boldsymbol d) \ge 0$
for $\alpha \ge \alpha_0$. Therefore, by \eqref{s5.2} we have
\[
\delta(\alpha, (\lambda_{i})_{i\in I_+},(d_{i})_{i\in I_+})=
\delta(\alpha, \boldsymbol \lambda', \boldsymbol d) \ge 0 \qquad\text{for all }\alpha>0.
\]

Let $\pi:\N \to I_+$ be the order preserving bijection.
Applying Proposition \ref{LR} to positive nonincreasing sequences $(\lambda_{\pi(i)})_{i\in \N}$ and $(d_{\pi(i)})_{i\in \N}$ we deduce that
\[
\delta_k := \sum_{i=1}^k (\lambda_{\pi(i)} -  d_{\pi(i)}) \ge 0 \qquad\text{for }k\in \N\]
and
\[
\liminf_{k\to \infty} \delta_k = \liminf_{\alpha \searrow 0} \delta(\alpha, \boldsymbol \lambda', \boldsymbol d)
= \sigma - \sum_{i\in I_0} \lambda_i>0.
 \]
 By Lemma \ref{midseq}, there exists a nonincreasing positive sequence $(\tilde \lambda_i)_{i\in I_+}$ such  that
 \begin{equation}\label{s5.5}
 \tilde \lambda_{\pi(i)} \ge d_{\pi(i)} \qquad\text{for all }i\in \N,
 \end{equation}
 with strict inequality for all but finitely many $i\in \N$,
 \begin{equation}\label{s5.6}
 \tilde \delta_k := \sum_{i=1}^k (\lambda_{\pi(i)} - \tilde \lambda_{\pi(i)}) \ge 0 \qquad\text{for all }k\in \N,
 \end{equation}
 \begin{equation}\label{s5.7}
 \liminf_{k\to \infty} \tilde \delta_k=0,
 \end{equation}
 \begin{equation}\label{s5.8}
 \sum_{i=1}^\infty (\tilde \lambda_{\pi(i)} - d_{\pi(i)})= \sigma - \sum_{i\in I_0} \lambda_i.
 \end{equation}
 

Let $\pi':\N \to I_-$ be the order preserving bijection. Applying the above procedure to sequences $(|\lambda_i|)_{i\in I_-}$ and $(|d_i|)_{i\in I_-}$ with the help of Proposition \ref{LR} and Lemma \ref{midseq} we deduce the existence of a nondecreasing negative sequence $(\tilde \lambda_i)_{i\in I-}$ 
such  that
 \begin{equation}\label{s5.10}
 |\tilde \lambda_{\pi'(i)}| \ge |d_{\pi'(i)}| \qquad\text{for all }i\in \N,
 \end{equation}
 with strict inequality for all but finitely many $i\in \N$,
 \begin{equation}\label{s5.11}
 \tilde \delta'_k := \sum_{i=1}^k (|\lambda_{\pi'(i)}| - |\tilde \lambda_{\pi'(i)}|) \ge 0 \qquad\text{for all }k\in \N,
 \end{equation}
 \begin{equation}\label{s5.12}
 \liminf_{k\to \infty} \tilde \delta'_k=0,
 \end{equation}
 \begin{equation}\label{s5.13}
 \sum_{i=1}^\infty (|\tilde \lambda_{\pi'(i)}| - |d_{\pi'(i)}|)= \sigma.
 \end{equation}
Finally, we set $\tilde \lambda_i= \lambda_i$ for $i\in I_0$.
Applying Lemma \ref{subdiag} and Proposition \ref{posSH} for the pair $((\lambda_{i})_{i\in I_+}, (\tilde \lambda_i)_{i\in I_+})$ and then for the pair $((\lambda_{i})_{i\in I_-}, (\tilde \lambda_i)_{i\in I_-})$, 
shows that $\tilde{\boldsymbol \lambda}=(\tilde{\lambda}_{i})_{i\in \N}$ is a diagonal of $E$. 

Observe that $(\tilde \lambda_{i})_{i\in I_0 \cup I_+}$, $(|\tilde \lambda_{i}|)_{i\in I_-}$, $(d_{i})_{i\in I_0 \cup I_+}$, and $(|d_{i}|)_{i\in I_-}$ are nonnegative sequences in $c_{0}$ satisfying
\begin{align}
\label{s5.20}
|\tilde \lambda_{i}| \ge |d_{i}| \qquad&\text{for all }i\in\N,
\\
\label{s5.22}
|\tilde \lambda_{i}| > |d_{i}| \qquad&\text{for all but finitely many }i\in\N,
\\
\sum_{i\in I_0 \cup I_+ }(\tilde \lambda_{i} - d_{i}) &=
\sum_{i\in I_+ }(\tilde \lambda_{i} - d_{i})
+\sum_{i\in I_0 }(\lambda_{i} - 0)
=\sigma
 = \sum_{i\in I_-}(|\tilde \lambda_{i}| - |d_{i}|).
\end{align}
Therefore, by Theorem \ref{exel}, $\boldsymbol d$ is a diagonal of $E$.

\medskip

\noindent {\bf Case 2.} The sets $I_-$ and $I_+$ are both finite. This necessarily implies that the set $I_0$ is infinite. Hence, we can split $I_0$ into two infinite sets $I_0^-$ and $I_0^+$. By rearranging the terms of $\boldsymbol d$ and $\boldsymbol \lambda$, we can assume without loss of generality that:
\begin{enumerate}[(a)]
\item $I_-=\{1,\ldots,N\}$ and $I_+=\{N+1,\ldots,N'\}$ for some $N,N'\in \N$,
\item
 both $(d_{i})_{i\in I_+}$ and $(|d_i|)_{i\in I_-}$ are nonincreasing sequences,
\item $\lambda_i>0$ if and only if $i\in I_+ \cup I_0^+$, 
\item $\lambda_i<0$ if and only if $i\in I_- \cup I_0^-$,
\item both sequences $(\lambda_{i})_{i \in I_+ \cup I_0^+}$ and $(|\lambda_i|)_{i \in I_- \cup I_0^-}$ are nonincreasing.
\end{enumerate}
By Proposition \ref{LR}, the assumptions for $(\lambda_{i})_{i\in I_+ \cup I_0^+}$ and $(d_{i})_{i\in I_+}$ in Lemma \ref{fis} are met. Hence, there exists a positive sequence $(\tilde \lambda_i)_{i\in I_+ \cup I_0^+}$ and finite set $I_+'$ such that:
\begin{enumerate}[(i)]
\item $I_+ \subset I_+' \subset I_+ \cup I_0^+$,
\item $(\tilde \lambda_i)_{i\in I_+'} \preccurlyeq (\lambda_{i})_{i\in I_+'}$,
\item $\tilde \lambda_i = \lambda_i$ for $i\in I_0^+ \setminus I_+'$, and
\item $\tilde \lambda_i = d_i$ for $i\in I_+$. 
\end{enumerate}
Likewise, Proposition \ref{LR} and Lemma \ref{fis} applied for $(|\lambda_i|)_{i\in I_- \cup I_0^-}$ and $(|d_i|)_{i\in I_-}$ implies that there exists a negative sequence $(\tilde \lambda_i)_{i\in I_- \cup I_0^-}$ and finite set $I_-'$ such that:
\begin{enumerate}[(i')]
\item $I_- \subset I_-' \subset I_- \cup I_0^-$,
\item $(\tilde \lambda_i)_{i\in I_-'} \preccurlyeq (\lambda_{i})_{i\in I_-'}$,
\item $\tilde \lambda_i = \lambda_i$ for $i\in I_0^- \setminus I_+'$, and
\item $\tilde \lambda_i = d_i$ for $i\in I_-$. 
\end{enumerate}

By Lemma \ref{subdiag} and the Schur-Horn theorem we deduce that $(\tilde \lambda_i)_{i\in \N}$ is a diagonal of $E$. Since $\tilde \lambda_i = d_i$ for all $i\in \N \setminus I_0$, by Lemma \ref{subdiag} it suffices to show that whenever $(\tilde{\lambda}_{i})_{i\in I_0}$ is a diagonal of some self-adjoint operator $E_0$, then $(d_{i})_{i\in I_0}=(0)_{i\in I_0}$ is also a diagonal of $E_0$. This is an easy consequence of Theorem \ref{exel} applied to sequences
$(\tilde \lambda_{i})_{i\in I_0^+}$, $(\tilde \lambda_{i})_{i\in I_0^-}$, $(d_{i})_{i\in I_0^+}$, and $(d_{i})_{i\in I_0^-}$ since $\tilde \lambda_i> d_i=0$ for all $i\in I_0$, and
\[
\sum_{i\in I_0^+} \tilde \lambda_i =
\sum_{i\in I_+ \cup I_0^+} (\lambda_i -d_i) = \sigma
=
\sum_{i\in I_- \cup I_0^-} (|\lambda_i| -|d_i|)
= \sum_{i\in I_0^-} |\tilde \lambda_i|.
\]

\medskip

\noindent {\bf Case 3.} Exactly one of the sets $I_-$ and $I_+$ is infinite and $I_0$ is infinite. By symmetry, we can assume that $I_+$ is infinite and $I_-$ is finite. By rearranging the terms of $\boldsymbol d$ and $\boldsymbol \lambda$, we can assume without loss of generality that: 
\begin{enumerate}[(a)]
\item $I_-=\{1,\ldots,N\}$ for some $N\in \N$,
\item $\lambda_i>0$ if and only if $i\in I_+$, 
\item $\lambda_i<0$ if and only if $i\in I_- \cup I_0$,
\item sequences $(\lambda_{i})_{i \in I_+}$, $(d_{i})_{i\in I_+}$, $(|\lambda_i|)_{i \in I_- \cup I_0}$, and $(|d_i|)_{i \in I_- \cup I_0}$ are nonincreasing.
\end{enumerate}
Applying Proposition \ref{LR} and then Lemma \ref{midseq} for  $(\lambda_{i})_{i \in I_+}$ and $(d_{i})_{i\in I_+}$, we deduce the existence a nonincreasing positive sequence $(\tilde \lambda_i)_{i\in I_+}$ such that \eqref{s5.5}-\eqref{s5.7} hold and
 \begin{equation}\label{s5.9}
 \sum_{i=1}^\infty (\tilde \lambda_{\pi(i)} - d_{\pi(i)})= \sigma.
 \end{equation}
Likewise, applying Proposition \ref{LR} and then Lemma \ref{midseq} for $(|\lambda_i|)_{i \in I_- \cup I_0}$, and $(|d_i|)_{i \in I_- \cup I_0}$, yields a nondecreasing negative sequence $(\tilde \lambda_i)_{i\in I_- \cup I_0}$ such that \eqref{s5.10}-\eqref{s5.13} hold, where $\pi': \N \to I_- \cup I_0$ is the order preserving bijection. 
Applying Lemma \ref{subdiag} and Proposition \ref{posSH} twice
shows that $\tilde{\boldsymbol \lambda}=(\tilde{\lambda}_{i})_{i\in \N}$ is a diagonal of $E$.

Observe that $(\tilde \lambda_{i})_{i\in I_+}$, $(|\tilde \lambda_{i}|)_{i\in I_-\cup I_0}$, $(d_{i})_{i\in I_+}$, and $(|d_{i}|)_{i\in I_- \cup I_0}$ are nonnegative sequences in $c_{0}$ satisfying \eqref{s5.20} and \eqref{s5.22}. Moreover, by \eqref{s5.13} and \eqref{s5.9} we have
\[
\sum_{i\in I_- \cup I_0 }(|\tilde \lambda_{i}| - |d_{i}|) 
=\sigma =
\sum_{i\in I_+ }(\tilde \lambda_{i} - d_{i}).
\]
Therefore, by Theorem \ref{exel}, $\boldsymbol d$ is a diagonal of $E$.

\noindent {\bf Case 4.} Exactly one of the sets $I_-$ and $I_+$ is infinite and $I_0$ is finite. By symmetry, we can assume that $I_-$ is infinite and $I_+$ is finite. We partition $I_-$ into two infinite sets $I_1$ and $I_2$. 
By rearranging the terms of $\boldsymbol d$ and $\boldsymbol \lambda$, we can assume without loss of generality that: 
\begin{enumerate}[(a)]
\item $I_+=\{1,\ldots,N\}$, $I_0=\{N+1,\ldots,N'\}$, $I_-=\{N'+1,N'+2,\ldots\}$ for some $N,N'\in \N$,
\item $\lambda_i>0$ if and only if $i\in I_+\cup I_0 \cup I_1$, 
\item $\lambda_i<0$ if and only if $i\in I_2$,
\item sequences $(\lambda_{i})_{i \in I_+ \cup I_0 \cup I_1}$, $(|\lambda_i|)_{i \in I_2}$, $(d_{i})_{i\in I_+}$, and $(|d_i|)_{i \in I_-}$ are nonincreasing.
\end{enumerate}

By Lemma \ref{fis} applied to sequences $(\lambda_{i})_{i \in I_+ \cup I_0 \cup I_1}$ and $(d_{i})_{i\in I_+}$, we can find a positive sequence $(\tilde \lambda_i)_{i \in I_+ \cup I_0 \cup I_1}$ and a finite subset $J \subset  I_+ \cup I_0 \cup I_1$ such that: 
\begin{enumerate}[(i)]
\item $(\tilde \lambda_i)_{i\in J} \preccurlyeq (\lambda_{i})_{i \in J}$,
\item $\tilde \lambda_i = \lambda_i$ for $i\in (I_+ \cup I_0 \cup I_1) \setminus J$, and
\item $\tilde \lambda_i = d_i$ for $i\in I_+$. 
\end{enumerate}
Set $\tilde \lambda_i = \lambda_i$ for $i\in I_2$. Then, by Lemma \ref{subdiag} and the Schur-Horn theorem, $\tilde{\boldsymbol \lambda}= (\tilde \lambda_i)_{i\in \N}$ is a diagonal of $E$. By (i)--(iii), one can easily show that sequences $\tilde{\boldsymbol \lambda}$ and $\boldsymbol d$ satisfy the assumptions \eqref{pos5.0} and \eqref{pos5} with the same $\sigma$. Hence, by replacing $\boldsymbol \lambda$ by $\tilde{\boldsymbol \lambda}$ we can now assume that $\lambda_i = d_i$ for $i\in I_+$.

Let $\eta=\min(1,\sigma/2)$. Choose $\alpha_0>0$ such that
\begin{equation}\label{s5.21}
\delta(\alpha,\boldsymbol \lambda, \boldsymbol d) > \eta\qquad\text{for } 0<|\alpha|<\alpha_0.
\end{equation}
Then, choose a subset $J_\infty \subset \{N+1,\ldots\}$ so that a subsequence $(\lambda_{i})_{i\in J_\infty}$ contains infinitely many positive and infinitely many negative terms,  and
\begin{equation}\label{s5.23}
|\lambda_i|<\alpha_0 \quad\text{for }i\in J_\infty \quad\text{and}\quad
\sum_{i\in J_\infty} |\lambda_i| < \eta.
\end{equation}

Next, we apply Lemma \ref{fiz} to a sequence $(\lambda_{i})_{i \in J_\infty}$ to find a finite set $J\subset J_\infty$ and a sequence $(\tilde \lambda_i)_{i\in J_\infty}$ such that: 
\begin{enumerate}[(i')]
\item $(\tilde \lambda_i)_{i\in J} \preccurlyeq (\lambda_{i})_{i \in J}$,
\item $\tilde \lambda_i = \lambda_i$ for $i\in J_\infty \setminus J$, and
\item $(\tilde \lambda_i)_{i\in J_\infty}$ has exactly $N'-N=\# |I_0|$ zeros. 
\end{enumerate}

Set $\tilde \lambda_i = \lambda_i$ for $i\in \N \setminus J_\infty$. Then, by Lemma \ref{subdiag} and the Schur-Horn theorem, $\tilde{\boldsymbol \lambda}= (\tilde \lambda_i)_{i\in \N}$ is a diagonal of $E$. By \eqref{s5.21} and \eqref{s5.23}, one can show that sequences $\tilde{\boldsymbol \lambda}$ and $\boldsymbol d$ satisfy \eqref{pos5.0}. Moreover, by (i')--(iii'), one can show that sequences $\tilde{\boldsymbol \lambda}$ and $\boldsymbol d$ satisfy \eqref{pos5}, possibly with a different value for the excesses. Moreover, by rearranging terms $(\tilde \lambda_i)_{i\ge N+1}$, we can assume that $\tilde \lambda_i =0$ if and only if $i\in I_0$ in addition to already proven property that $\tilde \lambda_i = d_i$ for $i\in I_+$. Finally, it remains to apply Theorem \ref{posdiag} to sequences $(\tilde \lambda_i)_{i\in I_-}$ and $(\tilde{d}_{i})_{i\in I_-}$ and Lemma \ref{subdiag} to deduce that $\boldsymbol d$ is a diagonal of $E$.
\end{proof}

\section{Unequal excesses diagonal-to-diagonal result}

The goal of this section is the following diagonal-to-diagonal result in one-sided non-summable case when excesses are not equal. 

\begin{thm}\label{step6} Let $\boldsymbol\lambda = (\lambda_{i})_{i\in\N}$ and $\boldsymbol d = (d_{i})_{i\in\N}$ be sequences in $c_{0}$ such that
\begin{equation}\label{s6.0}
\sum_{\lambda_i<0} |\lambda_i| = \sum_{d_i<0} |d_i| = \infty \quad\text{and}\quad \sum_{\lambda_i>0} \lambda_i <\infty. 
\end{equation}
If there is a self-adjoint operator $E$ with diagonal $\boldsymbol\lambda$, 
\begin{equation}\label{s6.1}\delta(\alpha,\boldsymbol\lambda, \boldsymbol d)\geq 0\quad\text{for all }\alpha\neq 0,\end{equation}
and
\begin{equation}\label{s6.2}
\liminf_{\alpha\nearrow 0}\delta(\alpha,\boldsymbol\lambda, \boldsymbol d) > \liminf_{\alpha\searrow 0}\delta(\alpha,\boldsymbol\lambda, \boldsymbol d)=\sum_{\lambda_i>0} \lambda_i - \sum_{d_i>0} d_i > 0,\end{equation}
then $\boldsymbol d$ is also a diagonal of $E$.
\end{thm}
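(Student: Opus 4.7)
The plan is to combine the equal-excess diagonal-to-diagonal Theorem \ref{step5} with the forthcoming diagonal-to-diagonal Kaftal--Weiss extension Proposition \ref{kwnonneg} to reduce the unequal-excess case. Since $\sum_{\lambda_i>0}\lambda_i<\infty$ and the positive excess $\sigma_+$ is finite, the majorization condition \eqref{s6.1} forces $\boldsymbol d_+\in\ell^1$ as well, while on the negative side both $\boldsymbol\lambda_-$ and $\boldsymbol d_-$ are non-summable. I would therefore aim to first produce an intermediate diagonal $\boldsymbol\mu$ of $E$ whose positive part coincides exactly with $\boldsymbol d_+$, and then pass from $\boldsymbol\mu$ to $\boldsymbol d$ using the purely-negative machinery.

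For the intermediate sequence, set $\boldsymbol\mu_+:=\boldsymbol d_+$, assign zero entries to match those of $\boldsymbol d$, and construct $\boldsymbol\mu_-$ from $\boldsymbol\lambda_-$ by stripping off exactly $\sigma_+$ units of mass in a controlled manner. Concretely, choose a finite set $F$ of indices with $\lambda_i<0$ whose total absolute mass exceeds $\sigma_+$, and redistribute these values into a finite family of negative numbers summing to $\sum_{i\in F}|\lambda_i|-\sigma_+$ in such a way that $|\boldsymbol\mu_-|\prec|\boldsymbol\lambda_-|$ in the sense of Definition \ref{rmajdef} (this uses convex moves analogous to Lemma \ref{1convmove}, combined with the freedom afforded by $\boldsymbol\lambda_-\in c_0$). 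Because $\sigma_->\sigma_+$, the pair $(\boldsymbol\lambda_-,\boldsymbol d_-)$ has strictly positive residual slack in the Lebesgue majorization, and the redistribution can be arranged so that $|\boldsymbol d_-|\prec|\boldsymbol\mu_-|$ is retained. With these choices the pair $(\boldsymbol\lambda,\boldsymbol\mu)$ satisfies the hypotheses of Theorem \ref{step5}: positively, $\boldsymbol\mu_+=\boldsymbol d_+$ gives $\delta(\alpha,\boldsymbol\lambda,\boldsymbol\mu)=\delta(\alpha,\boldsymbol\lambda,\boldsymbol d)\geq 0$ for $\alpha>0$ with liminf $\sigma_+$; negatively, the construction gives Lebesgue majorization with $\liminf_{\alpha\nearrow 0}\delta(\alpha,\boldsymbol\lambda,\boldsymbol\mu)=\sigma_+$ as well. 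Hence $\boldsymbol\mu\in\mathcal D(E)$.

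In the second step, $\boldsymbol\mu$ and $\boldsymbol d$ have identical positive parts (and compatible zero parts). Partition the indexing sets into a common positive/zero block and a negative block; by Lemma \ref{subdiag} the problem reduces to showing that whenever $E_-$ is a self-adjoint operator with diagonal $\boldsymbol\mu_-$, then $\boldsymbol d_-$ is also a diagonal of $E_-$. Negating, $-E_-$ is a positive compact operator with eigenvalues $|\boldsymbol\mu_-|$, and the task becomes: show $|\boldsymbol d_-|\in\mathcal D(-E_-)$. Both sequences are non-summable positive sequences in $c_0$, and the majorization $|\boldsymbol d_-|\prec|\boldsymbol\mu_-|$ is ensured by the construction of $\boldsymbol\mu$. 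This is precisely the setting of the diagonal-to-diagonal Kaftal--Weiss extension Proposition \ref{kwnonneg}, whose application completes the proof.

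The principal obstacle lies in the construction of $\boldsymbol\mu_-$ in the first step: one must simultaneously enforce that the negative excess of $(\boldsymbol\lambda,\boldsymbol\mu)$ equals $\sigma_+$ exactly, that $\delta(\alpha,\boldsymbol\lambda,\boldsymbol\mu)\geq 0$ for every $\alpha<0$, and that the residual majorization $|\boldsymbol d_-|\prec|\boldsymbol\mu_-|$ survives after removing $\sigma_+$ mass. Managing the zero entries of $\boldsymbol d$ requires additional bookkeeping via Lemma \ref{subdiag}, particularly when the cardinalities of the zero sets of $\boldsymbol\lambda$, $\boldsymbol\mu$, and $\boldsymbol d$ differ; this can be done by absorbing zeros into the infinite negative block, where the non-summability of $\boldsymbol\lambda_-$ provides enough room.
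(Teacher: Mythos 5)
Your overall strategy — combine Theorem \ref{step5} for the equal-excess portion with the Kaftal--Weiss diagonal-to-diagonal machinery (Proposition \ref{kwd2d}/\ref{kwnonneg}) for the remaining strictly negative block — is the same combination of ingredients the paper uses, but you orchestrate them sequentially ($\boldsymbol\lambda \to \boldsymbol\mu \to \boldsymbol d$) rather than in parallel. The paper instead partitions $\N$ into two blocks $J_1\cup J_2$ for $\boldsymbol\lambda$ and $I_1\cup I_2$ for $\boldsymbol d$, chosen so that $(\boldsymbol\lambda|_{J_1},\boldsymbol d|_{I_1})$ already has \emph{equal} excess $\sigma_+$ and $(\boldsymbol\lambda|_{J_2},\boldsymbol d|_{I_2})$ are strictly negative non-summable sequences with majorization; Theorem \ref{step5} and Proposition \ref{kwd2d} are then applied simultaneously via Lemma \ref{subdiag}. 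This avoids any need to manufacture an intermediate diagonal at all.

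The genuine gap in your proposal is the construction of $\boldsymbol\mu_-$ in the first step, which you flag as the ``principal obstacle'' but do not resolve. You need a sequence $\boldsymbol\mu_-$ with $|\boldsymbol d_-|\prec|\boldsymbol\mu_-|\prec|\boldsymbol\lambda_-|$ and with the excess of $(\boldsymbol\lambda_-,\boldsymbol\mu_-)$ \emph{exactly} $\sigma_+$. Your appeal to ``convex moves analogous to Lemma \ref{1convmove}'' is misleading, because convex moves preserve total mass whereas your redistribution must strip off $\sigma_+$ units; these are not the same operation, and the majorization bookkeeping is genuinely different. Moreover, reducing the first few terms need not keep $|\boldsymbol d_-|\prec|\boldsymbol\mu_-|$: for instance $\lambda_1^{-\downarrow}$ could equal $d_1^{-\downarrow}$, leaving no room at the top. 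The construction is in fact doable — for $\sigma_-<\infty$ one can take a convex combination $\boldsymbol\mu_-^\downarrow = t\,\boldsymbol d_-^\downarrow + (1-t)\,\tilde{\boldsymbol\lambda}_-$ with $t=\sigma_+/\sigma_-$, where $\tilde{\boldsymbol\lambda}_-$ is the output of Lemma \ref{midseq} applied to $(\boldsymbol\lambda_-^\downarrow,\boldsymbol d_-^\downarrow)$; the case $\sigma_-=\infty$ requires a separate truncation argument since Lemma \ref{midseq} presupposes finite excess. But as written, your proof leaves precisely the step that carries the unequal-excess difficulty, and the cited lemma does not do the job claimed. A minor additional inaccuracy: since $\boldsymbol\mu_-$ and $\boldsymbol d_-$ are strictly negative by your construction, Proposition \ref{kwd2d} (applied to their negatives) is the relevant result; Proposition \ref{kwnonneg} is for sequences with zeros and is overkill here.
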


Observe that \eqref{s6.1} implies that
\[
\sum_{d_i>0} d_i \le \sum_{\lambda_i>0} \lambda_i  <\infty.
\]
Hence, the right hand side of \eqref{s6.2} is well-defined.

The following is a convenient reformulation of a lemma by Kaftal and Weiss \cite[Lemma 5.2]{kw}.

\begin{lem}\label{kwlem} Let \(\boldsymbol\lambda=(\lambda_{i})_{i\in J}\) and \(\boldsymbol d = (d_{i})_{i\in I}\) be positive sequences in \(c_{0}\setminus\ell^{1}\) such that \(\boldsymbol d\prec\boldsymbol\lambda\) and \(\boldsymbol d\not\preccurlyeq\boldsymbol\lambda\). Let \(i_{1}\in I\) and \(j_{1}\in J\) such that \(d_{i_{1}} = \max\boldsymbol d\) and \(\lambda_{j_{1}} = \max\boldsymbol\lambda\). Then, there are partitions into infinite sets \(I=I_{1}\cup I_{2}\) and \(J=J_{1}\cup J_{2}\) with \( i_{1}\in I_{1}\) and \(j_{1}\in J_{1}\) such that \(\boldsymbol d|_{I_{1}}\in \ell^{1}\), \(\boldsymbol d|_{I_{1}}\preccurlyeq \boldsymbol\lambda|_{J_{1}}\), \(\boldsymbol d|_{I_{2}}\prec \boldsymbol\lambda|_{J_{2}}\), and \(\boldsymbol d|_{I_{2}}\not\preccurlyeq \boldsymbol\lambda|_{J_{2}}.\)
\end{lem}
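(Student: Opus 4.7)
The lemma is essentially a reindexing of \cite[Lemma 5.2]{kw}, with the added bookkeeping condition that the specified maximum indices $i_1$ and $j_1$ end up in $I_1$ and $J_1$. The plan is to reduce to the nonincreasing-sequence version via rearrangement, invoke the Kaftal--Weiss construction, and then pull back the partition.

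\textbf{Step 1: Reduction to nonincreasing sequences indexed by $\N$.} Since $d_{i_1} = \max\boldsymbol d$ and $\lambda_{j_1} = \max\boldsymbol\lambda$, I can choose bijections $\tau_I \colon \N \to I$ and $\tau_J \colon \N \to J$ such that $(d_{\tau_I(n)})_{n\in\N}$ and $(\lambda_{\tau_J(n)})_{n\in\N}$ are nonincreasing and $\tau_I(1)=i_1$, $\tau_J(1)=j_1$. Set $d'_n = d_{\tau_I(n)}$ and $\lambda'_n = \lambda_{\tau_J(n)}$. The hypotheses $\boldsymbol d \prec \boldsymbol\lambda$ and $\boldsymbol d \not\preccurlyeq \boldsymbol\lambda$ translate to the numerical conditions
\[
\sum_{i=1}^{n} d'_i \le \sum_{i=1}^{n} \lambda'_i \quad\text{for all }n,\qquad \liminf_{n\to\infty}\sum_{i=1}^{n}(\lambda'_i - d'_i) > 0,
\]
with $(d'_n),(\lambda'_n)\in c_0^+ \setminus\ell^1$.

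\textbf{Step 2: Apply the Kaftal--Weiss splitting.} Invoke \cite[Lemma 5.2]{kw} on the pair $((d'_n),(\lambda'_n))$ to obtain partitions $\N = \tilde I_1 \sqcup \tilde I_2$ and $\N = \tilde J_1 \sqcup \tilde J_2$ into infinite sets such that $(d'_n)_{\tilde I_1}\in\ell^1$, $(d'_n)_{\tilde I_1}\preccurlyeq(\lambda'_n)_{\tilde J_1}$, $(d'_n)_{\tilde I_2}\prec(\lambda'_n)_{\tilde J_2}$, and $(d'_n)_{\tilde I_2}\not\preccurlyeq(\lambda'_n)_{\tilde J_2}$. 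Since the two sequences are nonincreasing with $d'_1\le\lambda'_1$, the Kaftal--Weiss construction naturally places $n=1$ into $\tilde I_1\cap\tilde J_1$ (the initial ``excess block'' that carries the summable piece). Then set $I_k = \tau_I(\tilde I_k)$ and $J_k = \tau_J(\tilde J_k)$ for $k=1,2$. Since majorization, strong majorization, and summability depend only on the multiset of values, the four conditions are preserved under the bijections, giving the conclusion with $i_1\in I_1$ and $j_1\in J_1$.

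\textbf{Step 3: Handling the indexing $i_1,j_1$.} The only subtlety is guaranteeing that $1\in\tilde I_1$ and $1\in\tilde J_1$. If the Kaftal--Weiss construction does not yield this automatically, I would perform a swap: in $\tilde J_2$ pick any index $m$ with $\lambda'_m\le\lambda'_1$ (which exists for infinitely many $m$), swap $1\leftrightarrow m$ between $\tilde J_1$ and $\tilde J_2$. The modification only changes partial sums of $\lambda'$ by a bounded amount on $\tilde J_2$, preserving $\prec$ and the failure of $\preccurlyeq$; on $\tilde J_1$ it strengthens the inequality, and $\preccurlyeq$ is maintained because $\boldsymbol d|_{\tilde I_1}\in\ell^1$ absorbs the change. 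The analogous swap applies for $\tilde I_1$.

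\textbf{Main obstacle.} The chief technical point is making sure the Kaftal--Weiss splitting can be arranged (either directly or after a swap) so that the largest terms sit in the summable piece without breaking any of the four required conditions. Since all four conditions are robust under finite perturbations of the partition, and the hypothesis $\boldsymbol d \prec \boldsymbol\lambda$ gives $d_{i_1}\le\lambda_{j_1}$, this rearrangement is carried out by a finite combinatorial adjustment rather than any new analytic input.
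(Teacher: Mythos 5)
The paper does not actually prove this lemma; it records it as a reformulation of Kaftal--Weiss \cite[Lemma 5.2]{kw}, which is exactly your Steps 1--2 (pass to nonincreasing rearrangements, invoke the original lemma, pull the partition back through the bijections). That part of the proposal is sound and is the intended route.

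The problem is your fallback in Step 3. If $1\in\tilde J_2$ and you swap it with some $m\in\tilde J_1$ with $\lambda'_m\le\lambda'_1$, you strictly increase $\sum_{j\in\tilde J_1}\lambda'_j$. But $\boldsymbol d|_{\tilde I_1}\preccurlyeq\boldsymbol\lambda|_{\tilde J_1}$ together with $\boldsymbol d|_{\tilde I_1}\in\ell^1$ forces $\boldsymbol\lambda|_{\tilde J_1}\in\ell^1$ and $\sum_{\tilde J_1}\lambda' = \sum_{\tilde I_1}d'$ (otherwise the liminf in Definition \ref{rmajdef} is strictly positive, even $+\infty$ if $\lambda|_{J_1}\notin\ell^1$). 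So the swap destroys strong majorization of the first block; ``$\boldsymbol d|_{\tilde I_1}\in\ell^1$ absorbs the change'' is precisely backwards --- the summability of $\boldsymbol d|_{\tilde I_1}$ is what makes the total-sum equality rigid. The analogous swap on the $I$-side is worse: it raises $\sum_{\tilde I_1}d'$ above $\sum_{\tilde J_1}\lambda'$ and breaks even ordinary majorization. What you should do instead is inspect the Kaftal--Weiss construction itself: their partition is built from the top of the nonincreasing sequences (the summable, strongly-majorized block is carved out starting from the largest terms), so position $1$ lands in $\tilde I_1\cap\tilde J_1$ automatically and no repair is needed. The lemma as reformulated here is then just the pullback under $\tau_I,\tau_J$; the swap should be dropped entirely.
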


We need the following diagonal-to-diagonal generalization of the Kaftal-Weiss theorem \cite[Corollary 6.1]{kw}.

\begin{prop}\label{kwd2d}
Let $\boldsymbol\lambda=(\lambda_{i})_{i=1}^{\infty}$ and $\boldsymbol d = (d_{i})_{i=1}^{\infty}$ be positive sequences in \(c_{0}\) such that \(\boldsymbol d\prec\boldsymbol\lambda\) and 
\[
\sum_{i=1}^\infty \lambda_i = \sum_{i=1}^\infty d_i.
\]
If $\boldsymbol\lambda$ is a diagonal of a self-adjoint operator $E$,
then $\boldsymbol d$ is also a diagonal of $E$.
\end{prop}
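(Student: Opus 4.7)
The plan is to reduce the proposition to Proposition \ref{posSH} via a case analysis, using Lemma \ref{kwlem} to handle the delicate non-summable, non-strongly-majorized case. Since being a diagonal of $E$ is invariant under permutations of the basis, I first reindex both sequences to be nonincreasing and indexed by $\N$.

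If $\sum \lambda_i < \infty$, the hypothesis $\sum \lambda_i = \sum d_i$ together with $\boldsymbol d \prec \boldsymbol \lambda$ gives $\delta_n := \sum_{i=1}^n (\lambda_i - d_i) \ge 0$ and $\liminf \delta_n = \sum\lambda_i-\sum d_i = 0$, so Proposition \ref{posSH} delivers the conclusion immediately. The same direct application of Proposition \ref{posSH} handles $\sum \lambda_i = \infty$ whenever $\boldsymbol d \preccurlyeq \boldsymbol \lambda$ (here $\liminf\delta_n=0$ by definition of strong majorization).

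In the remaining case $\sum \lambda_i = \infty$ and $\boldsymbol d \not\preccurlyeq \boldsymbol \lambda$, the plan is to iterate Lemma \ref{kwlem} to produce partitions $\N = \bigsqcup_{k=1}^\infty I^{(k)}$ and $\N = \bigsqcup_{k=1}^\infty J^{(k)}$ into infinite sets such that $\boldsymbol d|_{I^{(k)}} \preccurlyeq \boldsymbol \lambda|_{J^{(k)}}$ and $\boldsymbol d|_{I^{(k)}} \in \ell^1$ for every $k$. Starting with $R^0_I = \N = R^0_J$, at stage $k \ge 1$ I apply Lemma \ref{kwlem} to the pair $\boldsymbol d|_{R^{k-1}_I} \prec \boldsymbol \lambda|_{R^{k-1}_J}$ with distinguished indices $i_1 = \min R^{k-1}_I$ and $j_1 = \min R^{k-1}_J$ (at which the restricted maxima are attained, since the sequences are nonincreasing), producing splits $R^{k-1}_I = I^{(k)} \sqcup R^k_I$ and $R^{k-1}_J = J^{(k)} \sqcup R^k_J$ with $\min R^{k-1}_I \in I^{(k)}$ and $\min R^{k-1}_J \in J^{(k)}$. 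Non-summability of both $\boldsymbol d|_{R^k_I}$ and $\boldsymbol \lambda|_{R^k_J}$ is preserved along the iteration: by Proposition \ref{LRS} the strong majorization $\boldsymbol d|_{I^{(k)}} \preccurlyeq \boldsymbol \lambda|_{J^{(k)}}$ combined with $\boldsymbol d|_{I^{(k)}} \in \ell^1$ forces $\boldsymbol \lambda|_{J^{(k)}}\in\ell^1$ with equal sum, so the remainders inherit infinite sums from the non-summability of $\boldsymbol d$ and $\boldsymbol \lambda$. The clause $\boldsymbol d|_{R^k_I} \not\preccurlyeq \boldsymbol \lambda|_{R^k_J}$ guaranteed by Lemma \ref{kwlem} allows the iteration to continue indefinitely.

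Exhaustion $\bigsqcup_k I^{(k)} = \N$ follows from the min-index choice: the integers $\min R^k_I$ are strictly increasing in $k$ and hence diverge, so no fixed $i \in \N$ can lie in every $R^k_I$; the argument for $J$ is identical. On each piece $(J^{(k)},I^{(k)})$, the strong majorization combined with summability and equal sums reduces to the trace-class situation already handled, so Proposition \ref{posSH} (after relabeling by the order-preserving bijections with $\N$) gives the piecewise diagonal-to-diagonal passage: if $(\lambda_i)_{i\in J^{(k)}}$ is a diagonal of any self-adjoint operator, then so is $(d_i)_{i\in I^{(k)}}$. Lemma \ref{subdiag} then assembles these into the global conclusion that $\boldsymbol d$ is a diagonal of $E$. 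The main technical obstacle is ensuring that the iteration produces genuine partitions of $\N$ on both sides, which is exactly what the min-index selection guarantees while preserving the hypotheses of Lemma \ref{kwlem} at every stage.
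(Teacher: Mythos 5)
Your proof is correct and takes essentially the same route as the paper's: iterate Lemma \ref{kwlem} to build partitions into pieces on which strong majorization holds, then assemble via Proposition \ref{posSH} and Lemma \ref{subdiag}, with the exhaustion argument resting on the fact that a maximal index is removed at each stage. The one slip is citing Proposition \ref{LRS} for ``$\boldsymbol{d}|_{I^{(k)}}$ summable plus strong majorization forces $\boldsymbol{\lambda}|_{J^{(k)}}$ summable with equal sum''---Proposition \ref{LRS} runs in the opposite direction---but this fact follows directly from the definition of strong majorization, so the non-summability bookkeeping you make explicit (which the paper leaves implicit) is sound.
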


\begin{proof}
If \(\boldsymbol d\preccurlyeq\boldsymbol\lambda\), then we can apply Proposition \ref{posSH} to obtain the desired conclusion. Thus we may assume \(\boldsymbol d\not \preccurlyeq\boldsymbol\lambda.\)

Applying Lemma \ref{kwlem} to the sequences \(\boldsymbol\lambda\) and \(\boldsymbol d\) we obtain two partitions into infinite sets \(\N=I_{1}^{1}\cup I_{2}^{1}\) and \(\N=J_{1}^{1}\cup J_{2}^{1}\).  Since \(\boldsymbol d|_{I_{2}^{1}}\prec \boldsymbol\lambda|_{J_{2}^{1}}\), and \(\boldsymbol d|_{I_{2}^{1}}\not\preccurlyeq \boldsymbol\lambda|_{J_{2}^{1}}\), we may apply Lemma \ref{kwlem} to the sequences \(\boldsymbol\lambda|_{J_{2}^{1}}\) and \(\boldsymbol d|_{I_{2}^{1}}\) to obtain partitions \(I_{2}^{1} = I_{1}^{2}\cup I_{2}^{2}\) and \(J_{2}^{1} = J_{1}^{2}\cup J_{2}^{2}\). Carrying on in this way we obtain sequences of disjoint sets \((I_{1}^{j})_{j=1}^{\infty}\) and \((J_{1}^{j})_{j=1}^{\infty}\). By Lemma \ref{kwlem}, the set \(I_{1}^{j}\) contains the index of the largest term of \(\boldsymbol d|_{I_{2}^{j-1}}\), and similarly for \(J_{1}^{j}\). Since \(\boldsymbol d\) and \(\boldsymbol\lambda\) are positive an in \(c_{0}\) this implies that
\[\bigcap_{j=1}^{\infty}I_{2}^{j} = \bigcap_{j=1}^{\infty}J_{2}^{j} = \varnothing.\]
Therefore, \((I_{1}^{j})_{j=1}^{\infty}\) and \((J_{1}^{j})_{j=1}^{\infty}\) are partitions of \(\N\). Again, by Lemma \ref{kwlem}, for each \(j\in\N\) we have \(\boldsymbol d|_{I_{1}^{j}}\preccurlyeq\boldsymbol\lambda|_{I_{1}^{j}}\).
Finally, by Proposition \ref{posSH} and Lemma \ref{subdiag} we can conclude that \(\boldsymbol d\) is a diagonal of \(E\).
 \end{proof}

In addition, we will need the following diagonal-to-diagonal result for non-negative sequences.

\begin{prop}\label{kwnonneg}
Let $\boldsymbol\lambda=(\lambda_{i})_{i=1}^{\infty}$ and $\boldsymbol d = (d_{i})_{i=1}^{\infty}$ be non-negative sequences in $c_0$ such that $\boldsymbol d \prec\boldsymbol\lambda$, $\boldsymbol d \not\preccurlyeq \boldsymbol\lambda$, and
\[
\sum_{i=1}^\infty \lambda_i = \sum_{i=1}^\infty d_i = \infty.
\]
Moreover, assume 
\begin{equation}\label{kwnonneg0}\#|\{i : \lambda_{i}=0\}|\geq \#|\{i : d_{i} = 0\}|.\end{equation}
If $\boldsymbol\lambda$ is a diagonal of a self-adjoint operator $E$,
then $\boldsymbol d$ is also a diagonal of $E$.
\end{prop}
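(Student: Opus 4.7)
The plan is to reduce Proposition \ref{kwnonneg} to Proposition \ref{kwd2d} through two applications of Lemma \ref{subdiag}: one to peel off the zero entries of $\boldsymbol d$ (matched against a same-cardinality subset of zero entries of $\boldsymbol \lambda$), and a second (needed only when $\boldsymbol \lambda$ has strictly more zeros than $\boldsymbol d$) to absorb the leftover zeros of $\boldsymbol \lambda$ into the Kaftal--Weiss partition blocks produced by the proof of Proposition \ref{kwd2d}.

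First I would set $P_\lambda = \{i : \lambda_i > 0\}$, $Z_\lambda = \{i : \lambda_i = 0\}$, and analogously $P_d, Z_d$. Since $\sum \lambda_i = \sum d_i = \infty$ while $\boldsymbol\lambda, \boldsymbol d \in c_0$, necessarily $|P_\lambda| = |P_d| = \infty$, so the decreasing rearrangements are unchanged by removing zeros: $\boldsymbol\lambda^\downarrow = (\boldsymbol \lambda|_{P_\lambda})^\downarrow$ and $\boldsymbol d^\downarrow = (\boldsymbol d|_{P_d})^\downarrow$. Using the hypothesis $|Z_\lambda| \ge |Z_d|$, I would choose a subset $I_2 \subset Z_\lambda$ with $|I_2| = |Z_d|$ and set $J_2 = Z_d$, $I_1 = \N \setminus I_2 = P_\lambda \cup W$ where $W := Z_\lambda \setminus I_2$, and $J_1 = P_d$. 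Then I apply Lemma \ref{subdiag} with $K = \{1, 2\}$: the $k = 2$ sub-problem asks whether $(0)_{J_2}$ is a diagonal of any self-adjoint operator with diagonal $(0)_{I_2}$, which is trivial by relabeling the basis (since $|I_2| = |J_2|$). It remains to verify that $\boldsymbol d|_{P_d}$ is a diagonal of any self-adjoint operator $E_1$ having diagonal $\boldsymbol \lambda|_{I_1}$.

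When $|Z_\lambda| = |Z_d|$ I would take $I_2 = Z_\lambda$, so $W = \emptyset$ and $\boldsymbol \lambda|_{I_1} = \boldsymbol \lambda|_{P_\lambda}$ is strictly positive; Proposition \ref{kwd2d} then applies directly to the pair $(\boldsymbol d|_{P_d}, \boldsymbol \lambda|_{P_\lambda})$. When $|Z_\lambda| > |Z_d|$ and $W \neq \emptyset$, I reproduce the iterative argument in the proof of Proposition \ref{kwd2d}: repeatedly apply Lemma \ref{kwlem} to the strictly positive pair $(\boldsymbol d|_{P_d}, \boldsymbol \lambda|_{P_\lambda})$ to obtain a countable partition $P_d = \bigcup_k I^{(k)}$ and $P_\lambda = \bigcup_k J^{(k)}$ with $\boldsymbol d|_{I^{(k)}} \preccurlyeq \boldsymbol \lambda|_{J^{(k)}}$ (and equal finite sums) on each block. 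Then I distribute $W$ across the $J^{(k)}$'s: if $W$ is finite, adjoin all of $W$ to $J^{(1)}$; if $W$ is infinite, enumerate $W = \{w_k\}_{k\in\N}$ and attach $w_k$ to $J^{(k)}$. Because adjoining zero indices to a block neither alters its total sum nor any partial sum of its nonincreasing rearrangement, the strong majorization $\boldsymbol d|_{I^{(k)}} \preccurlyeq \boldsymbol \lambda|_{J^{(k)} \cup W^{(k)}}$ is preserved. Proposition \ref{posSH} applied on each block (after nonincreasing sorting) and then combined with Lemma \ref{subdiag} delivers $\boldsymbol d|_{P_d}$ as a diagonal of $E_1$.

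The main obstacle will be the bookkeeping of zeros; the two observations that carry the argument are (i) a pair of all-zero diagonals of matching cardinality are trivially interchangeable as diagonals by relabeling the basis, and (ii) strong majorization between summable sequences of equal sum is unaffected by appending zero indices on the majorizing side, so the leftover zeros of $\boldsymbol \lambda$ can be painlessly absorbed into the Kaftal--Weiss blocks without disturbing the $\preccurlyeq$ property needed to invoke Proposition \ref{posSH}.
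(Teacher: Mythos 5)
Your argument is correct, but it takes a genuinely different route from the paper at the crux of the proof. Both proofs open the same way: use \eqref{kwnonneg0} and Lemma~\ref{subdiag} to match the zero entries of $\boldsymbol d$ against a same-cardinality subset $I_2\subset\{i:\lambda_i=0\}$ and dispose of them trivially, leaving a residual problem in which $\boldsymbol d$ is strictly positive but $\boldsymbol\lambda|_{I_1}$ may retain a nonempty (possibly infinite) set $W$ of zero entries. The paper handles $W$ by \emph{perturbing $\boldsymbol\lambda$}: it picks a small term $\lambda_{i_1}>0$ and redistributes it geometrically over the positions of $W$ to produce a strictly positive diagonal $\tilde{\boldsymbol\lambda}$ of $E$ (via Proposition~\ref{posSH}, or the finite Schur--Horn theorem when $W$ is finite), then checks $\boldsymbol d\prec\tilde{\boldsymbol\lambda}$ using the Lebesgue form $\delta(\alpha,\cdot,\cdot)$, after which Proposition~\ref{kwd2d} applies as a black box. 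You instead keep $\boldsymbol\lambda$ as is and \emph{re-open the proof of Proposition~\ref{kwd2d}}: you run the Kaftal--Weiss iteration (Lemma~\ref{kwlem}) on the strictly positive pair $(\boldsymbol d|_{P_d},\boldsymbol\lambda|_{P_\lambda})$ to get a partition into blocks satisfying $\preccurlyeq$ with equal finite sums, and then absorb $W$ into the blocks on the $\boldsymbol\lambda$-side, observing that appending zeros to a nonnegative sequence leaves its decreasing rearrangement — and hence strong majorization and Proposition~\ref{posSH} — unchanged. Both arguments are sound. The paper's version keeps Proposition~\ref{kwd2d} a black box at the cost of a small computation with $\delta(\alpha,\tilde{\boldsymbol\lambda},\boldsymbol d)$; yours avoids that computation but needs to reproduce the Kaftal--Weiss decomposition rather than merely cite its conclusion. (Minor remark: your case split on $W$ finite vs.\ infinite is unnecessary — attaching all of $W$ to a single block $J^{(1)}$ works in both cases, since only the decreasing rearrangement matters.)
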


\begin{proof} By Lemma \ref{subdiag} and \eqref{kwnonneg0} we may reduce to the case that \(\boldsymbol d\) is has no terms equal to zero. 

First, consider the case that \(J = \{i : \lambda_{i}=0\}\) is an infinite set. 
By the assumption that $\boldsymbol d_- \not\preccurlyeq \boldsymbol\lambda_-$ and Proposition \ref{LR} we have $\liminf_{\alpha\searrow 0}\delta(\alpha,\boldsymbol\lambda,\boldsymbol d)>0$. Choose $\sigma$ such that
\[0<\sigma<\liminf_{\alpha\searrow 0}\delta(\alpha,\boldsymbol\lambda,\boldsymbol d).\]
Let \(\alpha_{0}>0\) such that \(\delta(\alpha,\boldsymbol\lambda,\boldsymbol d)>\frac{\sigma}{2}\) for all \(\alpha\in(0,\alpha_{0}]\). Fix \(i_{1}\in\N\) such that \(0<\lambda_{i_{1}}<\min\{\alpha_{0},\frac{\sigma}{2}\}.\)
Let \(J = \{i_{2},i_{3},\ldots\}\). Define the sequence \(\tilde{\boldsymbol\lambda} = (\tilde{\lambda}_{i})_{i=1}^{\infty}\) by
\[\tilde{\lambda}_{i} = \begin{cases}  2^{-j}\lambda_{i_{1}} & i=i_{j},\ j\in\N,\\ \lambda_{i} & \text{otherwise}.\end{cases}\]

The sequences \( (\lambda_{i_j})_{j\in \N} \) and \((\tilde \lambda_{i_j})_{j\in \N}\) satisfy the assumptions of Proposition \ref{posSH}. Thus, if \(E_{1}\) is any operator with diagonal \( (\lambda_{i_j})_{j\in \N} \), then \((\tilde \lambda_{i_j})_{j\in \N}\) is also a diagonal of \(E_{1}\). Hence, by Lemma \ref{subdiag} the sequence \(\tilde{\boldsymbol\lambda}\) is a diagonal of \(E\).

If \(\alpha>\alpha_{0}\) then \(\delta(\alpha,\tilde{\boldsymbol\lambda},\boldsymbol d) = \delta(\alpha,\boldsymbol\lambda,\boldsymbol d)\geq 0\). On the other hand, if \(\alpha\in(0,\alpha_{0}]\), then
\begin{align*}
\delta(\alpha,\tilde{\boldsymbol\lambda},\boldsymbol d) & = \sum_{i:\tilde{\lambda}_{i}\geq\alpha} (\tilde{\lambda}_{i} - \alpha) - \sum_{i:d_{i}\geq\alpha}(d_{i}-\alpha) \geq \sum_{\genfrac{}{}{0pt}{2}{i:\tilde{\lambda}_{i}\geq\alpha}{i\notin\{i_{1},i_{2},\ldots\}}} (\tilde{\lambda}_{i} - \alpha) - \sum_{i:d_{i}\geq\alpha}(d_{i}-\alpha)\\
 & = \sum_{i:\lambda_{i}\geq\alpha} (\lambda_{i} - \alpha) - \sum_{i:d_{i}\geq\alpha}(d_{i}-\alpha) - (\lambda_{i_{1}} - \alpha) = \delta(\alpha,\boldsymbol\lambda,\boldsymbol d)- (\lambda_{i_{1}} - \alpha)\\
 & \geq \frac{\sigma}{2} - \lambda_{i_{1}} > 0.
\end{align*}
Thus, for these positive sequences we have \(\boldsymbol d\prec \tilde{\boldsymbol\lambda}\). By Proposition \ref{kwd2d} we conclude that \(\boldsymbol d\) is a diagonal of \(E\). 

Now, if the set \(J\) is finite, then the argument is similar. The difference being that in this case we will define a sequence \(\tilde{\boldsymbol\lambda}\) by modifying only a finite number of terms of \(\boldsymbol\lambda\). Thus, we will need to use the finite Schur-Horn theorem 
in the place of Proposition \ref{posSH} above.
\end{proof}

\begin{proof}[Proof of Theorem \ref{step6}]
Observe that by Proposition \ref{LR}
\[\sigma_{-} = \liminf_{\alpha\nearrow 0}\delta(\alpha,\boldsymbol\lambda,\boldsymbol d)\quad\text{and}\quad \sigma_{+} = \liminf_{\alpha\searrow 0}\delta(\alpha,\boldsymbol\lambda,\boldsymbol d) = \sum_{\lambda_{i}>0}\lambda_{i} - \sum_{d_{i}>0}d_{i}.\]
By the assumption \eqref{s6.2} we have \(\sigma_{-}>\sigma_{+}>0\). If $\sigma_-=\infty$, then choose $\sigma_-'$ such that $\infty >\sigma_-'>\sigma_+$. Otherwise, we let $\sigma'_-=\sigma_-$. Fix \(\alpha_{0}<0\) such that \
\[\delta(\alpha,\boldsymbol\lambda,\boldsymbol d)>\frac{\sigma'_{-}+\sigma_{+}}{2}\quad\text{for all }\alpha\in[\alpha_{0},0).\]
Fix \(i_{1}\in\N\) such that \(\lambda_{i_{1}}<0\) and
\[|\lambda_{i_{1}}|<\min\left\{\frac{\sigma'_{-}-\sigma_{+}}{2},|\alpha_{0}|,\frac{\sigma_{+}}{2}\right\}\]
Set \(I = \{i\in\N : \lambda_{i_{1}}<d_{i}<0\}\). Let \(I_{0}\subset I\) be an infinite set such that
\begin{equation}\label{step6.1}\sum_{i\in I_{0}}|d_{i}|<|\lambda_{i_{1}}|.\end{equation}
Set \(J = \{i\in\N\setminus\{i_{1}\} : \lambda_{i_{1}}<\lambda_{i}<0\}.\) Note that \((|\lambda_{i}|)_{i\in J}\) is a nonsummable sequence in \(c_{0}\). Hence, for each \(x\in(0,\infty)\) there is an infinite subset \(J_{0}\subset J\) such that
\[\sum_{i\in J_{0}}|\lambda_{i}| = x.\]
Hence, we can let \(J_{0}\subset J\) be an infinite subset such that
\begin{equation}\label{step6.2}\sum_{i\in J_{0}}|\lambda_{i}|=\sigma_{+}- |\lambda_{i_{1}}|+\sum_{i\in I_{0}}|d_{i}|.\end{equation}

Set \(J_{1} = \{i_{1}\}\cup J_{0}\cup \{i : \lambda_{i}\geq 0\}\) and \(J_{2} = \N\setminus J_{1}\). Set \(I_{1} = I_{0}\cup \{i : d_{i}\geq 0\}\) and \(I_{2} = \N\setminus I_{1}\). Moreover, define the sequences \(\boldsymbol\lambda_{1} = (\lambda_{i})_{i\in J_{1}}\), \(\boldsymbol\lambda_{2} = (\lambda_{i})_{i\in J_{2}}\), \(\boldsymbol d_{1} = (d_{i})_{i\in I_{1}}\), and \(\boldsymbol d_{2} = (d_{i})_{i\in I_{2}}\).

Define the sequence \(\tilde{\boldsymbol\lambda}_{1} = (\lambda_{i_{1}},0,0,\ldots)\) and \(\boldsymbol{\tilde{d}}_{1} = (d_{i})_{i\in I_{0}}\). For \(\alpha<0\), using Proposition \ref{LR} and \eqref{step6.1} we see that 
\[ \delta(\alpha,\boldsymbol\lambda_{1},\boldsymbol d_{1})\geq \delta(\alpha,\tilde{\boldsymbol\lambda}_{1},\boldsymbol{\tilde{d}}_{1})\geq 0.\]
For \(\alpha>0\) we have
\[\delta(\alpha,\boldsymbol\lambda_{1},\boldsymbol d_{1}) = \delta(\alpha,\boldsymbol\lambda,\boldsymbol d) \geq 0.\]
By Proposition \ref{LR} and \eqref{step6.2} we have
\[\liminf_{\alpha\nearrow 0}\delta(\alpha,\boldsymbol\lambda_{1},\boldsymbol d_{1}) = \sum_{\genfrac{}{}{0pt}{1}{\lambda_{i}<0}{i\in J_{1}}}\lambda_{i} - \sum_{\genfrac{}{}{0pt}{1}{d_{i}<0}{i\in I_{1}}}d_{i} = \sigma_{+} = \sum_{\genfrac{}{}{0pt}{1}{\lambda_{i}\geq 0}{i\in J_{1}}}\lambda_{i} - \sum_{\genfrac{}{}{0pt}{1}{d_{i}\geq 0}{i\in I_{1}}}d_{i} = \liminf_{\alpha\searrow 0}\delta(\alpha,\boldsymbol\lambda_{1},\boldsymbol d_{1})\]
Hence, by Theorem \ref{step5}, if \(E_{1}\) is any self-adjoint operator with diagonal \(\boldsymbol\lambda_{1}\), then \(\boldsymbol d_{1}\) is also a diagonal of \(E_{1}\).

For \(\alpha\leq \alpha_{0}\) we have
\[\delta(\alpha,\boldsymbol\lambda_{2},\boldsymbol d_{2}) =\delta(\alpha,\boldsymbol\lambda,\boldsymbol d)\geq 0.\]
For \(\alpha\in(\alpha_{0},0)\) we have
\begin{align*}
\delta(\alpha,\boldsymbol\lambda_{2},\boldsymbol d_{2}) & \geq \delta(\alpha,\boldsymbol\lambda,\boldsymbol d) - \sum_{\genfrac{}{}{0pt}{1}{\lambda_{i}\leq\alpha}{i\in J_{0}\cup\{i_{1}\}}}(\alpha-\lambda_{i}) \geq \delta(\alpha,\boldsymbol\lambda,\boldsymbol d) -\left( \sum_{i\in J_{0}}|\lambda_{i}| + |\lambda_{i_{1}}| \right) \\
 & > \frac{\sigma'_{-}+\sigma_{+}}{2} - \left(\sigma_{+}+\sum_{i\in I_{0}}|d_{i}|\right)=  \frac{\sigma'_{-}-\sigma_{+}}{2} - \sum_{i\in I_{0}}|d_{i}|>0.
 \end{align*}
 
Note that \(-\boldsymbol\lambda_{2}\) and \(-\boldsymbol d_{2}\) are positive sequences. Proposition \ref{LR} yields \(-\boldsymbol d_{2}\prec -\boldsymbol\lambda_{2}\). Hence, if \(E_{2}\) is any self-adjoint operator with diagonal \(\boldsymbol\lambda_{2}\), then by Proposition \ref{kwd2d} the sequence \(-\boldsymbol d_{2}\) is a diagonal of \(-E_{2}\), and hence \(\boldsymbol d_{2}\) is a diagonal of \(E_{2}\). Finally, by Lemma \ref{subdiag} the sequence \(\boldsymbol d\) is a diagonal of \(E\).
\end{proof}

\section{Two sided non-summable case}

The goal of this section to show the diagonal-to-diagonal result in a two-sided non-summable case.

\begin{thm}\label{step7} Let $\boldsymbol\lambda = (\lambda_{i})_{i\in\N}$ and $\boldsymbol d = (d_{i})_{i\in\N}$ be sequences in $c_{0}$ such that
\begin{equation}\label{s7.0}
\sum_{\lambda_i<0} |\lambda_i| =  \sum_{\lambda_i>0} \lambda_i =\infty
\end{equation}
and
\begin{equation}\label{s7.1}\delta(\alpha,\boldsymbol\lambda, \boldsymbol d)\geq 0\quad\text{for all }\alpha\neq 0.\end{equation}
Let
\[
\sigma_-=\liminf_{\alpha\nearrow 0}\delta(\alpha,\boldsymbol\lambda, \boldsymbol d), \qquad \sigma_+= \liminf_{\alpha\searrow 0}\delta(\alpha,\boldsymbol\lambda, \boldsymbol d).
\]
Suppose that the following four conditions hold:
\begin{equation}\label{s7.2}
\sigma_- + \sigma_+>0,
\end{equation}
\begin{equation}\label{s7.5}
\sigma_- =0 \text{ or } \sigma_+=0 \implies \#|\{i : \lambda_{i}=0\}|\geq \#|\{i : d_{i} = 0\}|,
\end{equation}
\begin{equation}\label{s7.3}
\sum_{d_i>0} d_i =\infty \qquad\text{or}\qquad \sigma_- =\infty,\end{equation}
and
\begin{equation}\label{s7.4}
\sum_{d_i<0} |d_i| =\infty \qquad\text{or}\qquad  \sigma_+ =\infty.\end{equation}
If there is a self-adjoint operator $E$ with diagonal $\boldsymbol\lambda$, 
then $\boldsymbol d$ is also a diagonal of $E$.
\end{thm}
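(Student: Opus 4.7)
The plan is to reduce Theorem \ref{step7} to the earlier diagonal-to-diagonal results by partitioning the index sets of $\boldsymbol{\lambda}$ and $\boldsymbol{d}$ and then invoking Lemma \ref{subdiag}. The three targeted results are Theorem \ref{step5} (equal finite excess), Theorem \ref{step6} together with its symmetric analogue obtained by negating both $\boldsymbol\lambda$ and $\boldsymbol d$ (one-sided non-summable, unequal excess), and Proposition \ref{kwnonneg} (non-negative sequences with equal trace and cardinality condition on zero sets).

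I would organize the proof by cases on the pair $(\sigma_-,\sigma_+)$. First, if $\sigma_+=\sigma_-\in(0,\infty)$, Theorem \ref{step5} applies directly. Second, if $0<\sigma_-\ne\sigma_+<\infty$, say $\sigma_+>\sigma_-$, I would carve $\N$ into $J_1\sqcup J_2$ for $\boldsymbol\lambda$ and $I_1\sqcup I_2$ for $\boldsymbol d$ so that on $(J_1,I_1)$ the restricted sequences have equal excess $\sigma_-$ on both sides (satisfying the hypotheses of Theorem \ref{step5}), while on $(J_2,I_2)$ the positive part of $\boldsymbol\lambda|_{J_2}$ is summable with positive excess $\sigma_+-\sigma_-$ and the negative part of $\boldsymbol\lambda|_{J_2}$ is non-summable with zero negative excess, matching the symmetric (negated) version of Theorem \ref{step6}. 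Third, if exactly one excess vanishes, say $\sigma_-=0$ and $\sigma_+>0$, then by \eqref{s7.5} the kernel cardinality inequality $\#|\{\lambda_i=0\}|\geq\#|\{d_i=0\}|$ holds, so Proposition \ref{kwnonneg} applied to the negative parts (together with enough zeros of $\boldsymbol\lambda$) produces a diagonal matching $\boldsymbol d_-$, while the remaining portion is handled by Theorem \ref{step5} or Theorem \ref{step6} after a similar partition. Fourth, if at least one excess is infinite, the extra slack gives enough flexibility to peel off, via the finite Schur-Horn theorem and Proposition \ref{posSH}, enough positive or negative terms of $\boldsymbol\lambda$ to reduce to one of the previous cases.

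The main obstacle is the combinatorial construction of the partitions in the second and third cases. One must simultaneously preserve Lebesgue majorization on both halves of the partition, produce the exact excess values required on each half, and track the cardinalities of zero terms (which matters for Proposition \ref{kwnonneg}). Since $\boldsymbol\lambda\in c_0$ has infinitely many positive and negative terms of arbitrarily small magnitude, excess-matching can be achieved by selecting an infinite subset $K\subset\{i:\lambda_i>0\}$ with $\sum_{i\in K}\lambda_i=\sigma_+-\sigma_-$ and placing $K$ into $J_2$, with analogous choices on the negative side and on the index set of $\boldsymbol d$. The conditions \eqref{s7.3}--\eqref{s7.4} guarantee that $\boldsymbol d_\pm$ are non-summable whenever the corresponding excess is finite, which prevents degenerate configurations from arising when forming the partition.
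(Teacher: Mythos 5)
Your overall organizational idea (partition the index sets, invoke Lemma~\ref{subdiag}, and feed the pieces into earlier diagonal-to-diagonal results) is the right one, but several of the specific reductions you describe are misdirected, and the most difficult case is effectively unaddressed.

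First, in your second case ($0<\sigma_-\neq\sigma_+<\infty$), you want a piece $(J_2,I_2)$ with summable $\boldsymbol\lambda_+|_{J_2}$, nonsummable $\boldsymbol\lambda_-|_{J_2}$, and \emph{zero} negative excess, and to feed this into the negated Theorem~\ref{step6}. But the hypothesis \eqref{s6.2} of Theorem~\ref{step6} (or its negation) forces \emph{both} excesses to be strictly positive; a piece with vanishing negative excess does not fit. Moreover, you never need such a decomposition here: once $\sigma_+<\infty$ and $\sigma_-<\infty$, conditions \eqref{s7.3}--\eqref{s7.4} force $\boldsymbol d_+\notin\ell^1$ and $\boldsymbol d_-\notin\ell^1$, so (after a preliminary reduction to $\#|\{i:\lambda_i=0\}|\geq\#|\{i:d_i=0\}|$, achieved via Theorem~\ref{step5} by converting small positive and negative $\lambda_i$'s into zeros) one simply splits along the sign of $\lambda_i$ and $d_i$, applies Proposition~\ref{kwnonneg} to $(\lambda_i)_{\lambda_i\ge 0}$ and $(d_i)_{d_i\ge 0}$, applies Proposition~\ref{kwd2d} to $(\lambda_i)_{\lambda_i<0}$ and $(d_i)_{d_i<0}$, and invokes Lemma~\ref{subdiag}.

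Second, in your third case ($\sigma_-=0$, $\sigma_+>0$) you propose applying Proposition~\ref{kwnonneg} to the negative parts. But Proposition~\ref{kwnonneg} requires $\boldsymbol d\not\preccurlyeq\boldsymbol\lambda$, i.e.\ a \emph{strictly positive} excess, precisely what you don't have on the negative side when $\sigma_-=0$. The correct move is the same as above: apply Proposition~\ref{kwnonneg} on the side where the excess is positive (here, the nonnegative parts, since $\sigma_+>0$, carrying the zeros), and Proposition~\ref{kwd2d} on the other side where the excess vanishes.

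Third, and most importantly, your plan for the cases where $\boldsymbol d_+$ or $\boldsymbol d_-$ is summable (which by \eqref{s7.3}--\eqref{s7.4} force $\sigma_-=\infty$ or $\sigma_+=\infty$) is "peel off finitely many terms via the finite Schur--Horn theorem and Proposition~\ref{posSH} to reduce to one of the previous cases." That cannot work: the excess to be absorbed is infinite, so no finite sequence of convex moves closes the gap, and the summable side of $\boldsymbol d$ cannot absorb an infinite trace. These two cases are the genuinely hard part of Theorem~\ref{step7}. The $\boldsymbol d\in\ell^1$ case requires building an intermediate sequence $\tilde{\boldsymbol\lambda}=\boldsymbol d\oplus(\lambda_i)_{|\lambda_i|<\alpha_0}$ that is verified to be a diagonal of $E$ via Proposition~\ref{kwd2d}, then a second intermediate sequence $\tilde{\boldsymbol d}=\boldsymbol d\oplus(\lambda_i)_{i\in K\cup L}$ handled via Theorem~\ref{exel}, and only then can Theorem~\ref{step5} be applied. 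The mixed case (one of $\boldsymbol d_\pm\in\ell^1$) requires an infinite nested construction of index sets $J_0\supset J_1\supset\cdots$ that peels off \emph{infinitely} many infinite blocks of positive $\lambda_i$'s, each of total mass one, to isolate the summable part. Your sketch omits any mechanism of this kind and would not succeed.

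Finally, your "excess-matching" step — choosing $K\subset\{i:\lambda_i>0\}$ with $\sum_{i\in K}\lambda_i=\sigma_+-\sigma_-$ — produces a piece of $\boldsymbol\lambda$ with the right total, but it does not by itself ensure that the majorization inequalities \eqref{s7.1} survive on each block $(J_k,I_k)$; this requires the kind of interleaving control that Lemma~\ref{kwlem} and Theorem~\ref{exel} provide and that a free-hand selection of $K$ does not.
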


Conditions \eqref{s7.2}--\eqref{s7.4} can be conveniently reformulated depending on the summability of negative and positive parts of $\boldsymbol d$ in the following way.

\begin{thm}\label{step7a} Let $\boldsymbol\lambda = (\lambda_{i})_{i\in\N}$ and $\boldsymbol d = (d_{i})_{i\in\N}$ be sequences in $c_{0}$. Let
\[
\sigma_-=\liminf_{\alpha\nearrow 0}\delta(\alpha,\boldsymbol\lambda, \boldsymbol d), \qquad \sigma_+= \liminf_{\alpha\searrow 0}\delta(\alpha,\boldsymbol\lambda, \boldsymbol d).
\]
Suppose that
\begin{equation}\label{s7.0a}
\sum_{\lambda_i<0} |\lambda_i| =  \sum_{\lambda_i>0} \lambda_i =\infty,
\end{equation}
\begin{equation}\label{s7.1a}\delta(\alpha,\boldsymbol\lambda, \boldsymbol d)\geq 0\quad\text{for all }\alpha\neq 0,\end{equation}
\begin{equation}\label{s7.e}
\sigma_- =0 \text{ or } \sigma_+=0 \implies \#|\{i : \lambda_{i}=0\}|\geq \#|\{i : d_{i} = 0\}|.
\end{equation}
In addition, suppose that one the following four conditions hold:
\begin{enumerate}[(i)]
\item
$\sum_{d_i>0} d_i = \sum_{d_i<0} |d_i|=\infty$ and $\sigma_- + \sigma_+ >0$,
\item
$\sum_{d_i<0} |d_i| <\infty $, $\sum_{d_i>0} d_i =\infty$, and $ \sigma_+ =\infty$,
\item
$\sum_{d_i>0} d_i <\infty$, $\sum_{d_i<0} |d_i| =\infty $, and $ \sigma_- =\infty$,
\item
$\sum_{i\in \N} |d_i| <\infty$.
\end{enumerate}
If there is a self-adjoint operator $E$ with diagonal $\boldsymbol\lambda$, 
then $\boldsymbol d$ is also a diagonal of $E$.
\end{thm}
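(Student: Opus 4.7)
The plan is to reduce Theorem \ref{step7a} directly to Theorem \ref{step7} by verifying that each of the four cases (i)--(iv) implies the three conditions \eqref{s7.2}, \eqref{s7.3}, and \eqref{s7.4}. The remaining hypotheses \eqref{s7.0a}, \eqref{s7.1a}, and \eqref{s7.e} coincide verbatim with \eqref{s7.0}, \eqref{s7.1}, and \eqref{s7.5} in Theorem \ref{step7}, so no additional work is needed for those.

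Cases (i)--(iii) will be routine inspections. In Case (i), $\sum_{d_i>0}d_i=\infty$ and $\sum_{d_i<0}|d_i|=\infty$ immediately give \eqref{s7.3} and \eqref{s7.4}, while $\sigma_-+\sigma_+>0$ is exactly \eqref{s7.2}. In Case (ii), $\sum_{d_i>0}d_i=\infty$ supplies \eqref{s7.3}, $\sigma_+=\infty$ supplies \eqref{s7.4}, and $\sigma_-+\sigma_+\ge\sigma_+=\infty>0$ supplies \eqref{s7.2}. Case (iii) is symmetric via the roles of the positive and negative parts.

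The only case requiring a small observation is Case (iv), where $\boldsymbol d\in\ell^1$ is assumed without any explicit excess hypothesis. Here I would invoke Lemma \ref{dclem} twice. Since $\sum_{\lambda_i>0}\lambda_i=\infty$ by \eqref{s7.0a}, the lemma gives
\[
\lim_{\alpha\searrow 0}\sum_{\lambda_i\ge\alpha}(\lambda_i-\alpha)=\infty,
\]
while $\boldsymbol d_+\in\ell^1$ yields $\lim_{\alpha\searrow 0}\sum_{d_i\ge\alpha}(d_i-\alpha)=\sum_{d_i>0}d_i<\infty$. Subtracting shows $\sigma_+=\infty$, and the analogous calculation with $\alpha\nearrow 0$, using $\sum_{\lambda_i<0}|\lambda_i|=\infty$ and $\boldsymbol d_-\in\ell^1$, gives $\sigma_-=\infty$. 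Thus \eqref{s7.2}--\eqref{s7.4} all hold automatically.

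With all four cases reduced to the hypotheses of Theorem \ref{step7}, that theorem is invoked to conclude that $\boldsymbol d$ is a diagonal of $E$. There is no substantive obstacle: Theorem \ref{step7a} is essentially a convenient repackaging of Theorem \ref{step7} tailored for the applications in the next section, with Case (iv) requiring only the elementary observation from Lemma \ref{dclem} that the combination of nonsummability of $\boldsymbol\lambda_\pm$ and summability of $\boldsymbol d$ forces both excesses to equal $\infty$.
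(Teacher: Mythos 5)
Your reduction of cases (i)--(iv) to conditions \eqref{s7.2}--\eqref{s7.4} is entirely correct as a matter of bookkeeping, including the small observation in case (iv) that \eqref{s7.0a} together with $\boldsymbol d\in\ell^1$ forces $\sigma_+=\sigma_-=\infty$ via Lemma \ref{dclem}. However, the reduction is circular in the context of the paper: Theorem \ref{step7} is \emph{stated but never proved}. The paper's remark before Theorem \ref{step7a} (``Conditions \eqref{s7.2}--\eqref{s7.4} can be conveniently reformulated\ldots'') merely records the equivalence between the two statements; the direction of dependence is that Theorem \ref{step7} follows from Theorem \ref{step7a}, not the other way around. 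The only place Theorem \ref{step7} could be proven in the paper is via exactly the translation you carried out, applied to the proof of Theorem \ref{step7a}.

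The substantive content that your proposal omits is the actual proof of Theorem \ref{step7a}, which the paper carries out directly. That proof first reduces to the case $\#|\{i:\lambda_i=0\}|\geq\#|\{i:d_i=0\}|$ by absorbing zeros of $\boldsymbol\lambda$ with small nonzero terms (using Theorem \ref{step5} and Lemma \ref{subdiag}), and then handles the four cases separately: case (i) splits into positive and negative parts and applies Proposition \ref{kwnonneg} and Proposition \ref{kwd2d}; case (iv) builds an intermediate sequence $\tilde{\boldsymbol\lambda}=\boldsymbol d\oplus(\lambda_i)_{i\in J}$ and chains Proposition \ref{kwd2d}, Theorem \ref{exel}, and Theorem \ref{step5}; cases (ii) and (iii) carve off a summable piece of $\boldsymbol\lambda$ and reduce to case (iv) combined with Proposition \ref{kwd2d}. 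None of this is routine, and it is the part that actually establishes the diagonal-to-diagonal result. To make your proposal into a valid proof you would have to supply an independent proof of Theorem \ref{step7} --- but that is exactly the same task in slightly different notation, so the reduction buys nothing.
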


\begin{proof}
We claim that, without loss of generality, we may assume 
\begin{equation}\label{s7.f}\#|\{i : \lambda_{i}=0\}|\geq \#|\{i : d_{i} = 0\}|.\end{equation}
Indeed, if \(\sigma_{+}=0\), or \(\sigma_{-}=0\), then this follows by our assumption \eqref{s7.e}. Now, assume \(\sigma_{+}> 0\) and \(\sigma_{-}> 0\). Let \(0<s<\min\{\sigma_{+},\sigma_{-}\}\). Fix \(\alpha_{0}>0\) such that
\[\delta(\alpha,\boldsymbol\lambda,\boldsymbol d) >s\quad\text{for }0<|\alpha|<\alpha_{0}.\]
Let \(I\subset \{i : -\alpha_{0}<\lambda_{i}<0\}\) and \(J\subset\{i : 0<\lambda_{i}<\alpha_{0}\}\) be infinite sets such that
\[-\sum_{i\in I}\lambda_{i} = \sum_{i\in J}\lambda_{i} = s.\]
By Theorem \ref{step5}, if \(E_{1}\) is any self-adjoint operator with diagonal \((\lambda_{i})_{i\in I\cup J}\), then a sequence consisting of a countably infinite number of zeros is also a diagonal if \(E_{1}\). Hence, by Lemma \ref{subdiag}, the sequence \(\tilde{\boldsymbol\lambda} = (\tilde{\lambda}_{i})_{i\in\N}\) given by
\[\tilde{\lambda}_{i} = \begin{cases} 0 & i\in I\cup J,\\ \lambda_{i} & i\in\N\setminus(I\cup J),\end{cases}\]
is a diagonal of \(E\).

For \(|\alpha|\geq\alpha_{0}\) we have
\[\delta(\alpha,\tilde{\boldsymbol\lambda},\boldsymbol d) = \delta(\alpha,\boldsymbol\lambda,\boldsymbol d)\geq 0.\]
For \(0<\alpha<\alpha_{0}\) we have
\[\delta(\alpha,\tilde{\boldsymbol\lambda},\boldsymbol d) = \delta(\alpha,\boldsymbol\lambda,\boldsymbol d) - \sum_{\genfrac{}{}{0pt}{1}{\lambda_{i}\geq\alpha}{i\in J}}(\lambda_{i} - \alpha) \geq \delta(\alpha,\boldsymbol\lambda,\boldsymbol d) - \sum_{i\in J}\lambda_{i}>0.\]
A similar calculation shows that \(\delta(\alpha,\tilde{\boldsymbol\lambda},\boldsymbol d)\geq 0\) for \(-\alpha_{0}<\alpha<0\). Moreover, we see that
\[\liminf_{\alpha\searrow0}\delta(\alpha,\tilde{\boldsymbol\lambda},\boldsymbol d)=\sigma_{+}-s>0\quad\text{and}\quad \liminf_{\alpha\nearrow0}\delta(\alpha,\tilde{\boldsymbol\lambda},\boldsymbol d)=\sigma_{-}-s>0.\]
Hence, by replacing \(\boldsymbol\lambda\) with \(\tilde{\boldsymbol\lambda}\) we see that in addition to the assumptions of the theorem, \eqref{s7.f} also holds.

\noindent\textbf{Case 1.} Assume (i) holds. Suppose \(\sigma_{+}>0\). By Proposition \ref{kwnonneg}, if \(E_{1}\) is any self-adjoint operator with diagonal \((\lambda_{i})_{\lambda_{i}\geq 0}\), then \((d_{i})_{d_{i}\geq 0}\) is also a diagonal of \(E_{1}\). By Proposition \ref{kwd2d}, if \(E_{2}\) is any self-adjoint operator with diagonal \((\lambda_{i})_{\lambda_{i}<0}\), then \((d_{i})_{d_{i}<0}\) is also a diagonal of \(E_{2}\). From Lemma \ref{subdiag} we conclude that \(\boldsymbol d\) is a diagonal of \(E\). If $\sigma_->0$ is handled by a symmetric argument.

\noindent\textbf{Case 2.} Assume (iv) holds. Since we are in the case that \(\sigma_{+}>0\) and \(\sigma_{-}>0\), we may assume by the argument at the beginning of the proof that \(\#|\{i : \lambda_{i} = 0\}| = \infty\). Choose \(\alpha_{0}>0\) such that
\[\delta(\alpha_{0},\boldsymbol\lambda,\boldsymbol d),\delta(-\alpha_{0},\boldsymbol\lambda,\boldsymbol d)> \max\left\{\sum_{d_{i}>0}d_{i},\sum_{d_{i}<0}|d_{i}|\right\}.\]
 Let \(\tilde{\boldsymbol\lambda} = \boldsymbol d \oplus (\lambda_{i})_{i\in J}\), where $J=\{i:|\lambda_{i}|<\alpha_{0}\}$. That is, \(\tilde{\boldsymbol\lambda}\) is the sequence consisting of all the terms of \(\boldsymbol d\) together with all terms of \(\boldsymbol\lambda\) in the interval \((-\alpha_{0},\alpha_{0})\).

For \(|\alpha|\geq \alpha_{0}\) we have \(\delta(\alpha,\boldsymbol\lambda,\tilde{\boldsymbol\lambda}) = \delta(\alpha,\boldsymbol\lambda,\boldsymbol d)\geq 0\). Meanwhile, for \(0<\alpha<\alpha_{0}\) we have
\[\delta(\alpha,\boldsymbol\lambda,\tilde{\boldsymbol\lambda}) = \sum_{\lambda_{i}\geq\alpha_{0}}(\lambda_{i} - \alpha) - \sum_{d_{i}\geq \alpha}(d_{i} - \alpha) = \delta(\alpha_{0},\boldsymbol\lambda,\boldsymbol d) - \sum_{d_i\in[\alpha,\alpha_{0})}d_i\geq \delta(\alpha_{0},\boldsymbol\lambda,\boldsymbol d) - \sum_{d_{i}>0}d_{i}>0. \]
On the other side, if \(-\alpha_{0}<\alpha<0\), then
\[\delta(\alpha,\boldsymbol\lambda,\boldsymbol d)\geq \delta(\alpha_{0},\boldsymbol\lambda,\boldsymbol d) - \sum_{d_{i}>0}|d_{i}|>0.\]
We conclude that \(\tilde{\boldsymbol\lambda}_{+}\prec\boldsymbol\lambda_{+}\) and \(\tilde{\boldsymbol\lambda}_{-}\prec\boldsymbol\lambda_{-}\). Note that each of these sequences is positive. However, both \(\boldsymbol\lambda\) and \(\tilde{\boldsymbol\lambda}\) contain a countably infinite number of zeros. Hence, combining Proposition \ref{kwd2d} and Lemma \ref{subdiag} applied to the sequences \(\boldsymbol\lambda_{+},\boldsymbol\lambda_{-},\boldsymbol 0\), where \(\boldsymbol 0\) is a sequence of a countably infinite number of zeros, we see that \(\tilde{\boldsymbol\lambda}\) is a diagonal of \(E\).

Let \(K\) and \(L\) be infinite sets such that 
\[\{i:\lambda_{i}=0\}\subset K\subset \{i : 0\leq\lambda_{i}<\alpha_{0}\},\qquad L\subset \{i : -\alpha_{0}<\lambda_{i}<0\}\] 
and
\begin{equation}\label{s7.h}
\sum_{i\in K}\lambda_{i} = \sum_{i\in L}|\lambda_{i}|<\infty.
\end{equation}

Let \(\tilde{\boldsymbol d} = \boldsymbol d\oplus (\lambda_{i})_{i\in K\cup L}\). Note that \(\tilde{\boldsymbol d}\) is a subsequence of \(\tilde{\boldsymbol\lambda}\), hence we will have
\begin{equation}\label{s7.g}\tilde{\lambda}_{i}^{+\downarrow}\geq \tilde{d}_{i}^{+\downarrow}\quad\text{and}\quad \tilde{\lambda}_{i}^{-\downarrow}\geq \tilde{d}_{i}^{-\downarrow}\quad\text{for all }i\in\N.\end{equation}
Since \(\tilde{\boldsymbol\lambda}_{+},\tilde{\boldsymbol\lambda}_{-}\notin\ell^{1}\) and \(\tilde{\boldsymbol d}_{+},\tilde{\boldsymbol d}_{-}\in\ell^{1}\), we must have strict inequalities in \eqref{s7.g} for infinitely many \(i\in\N\). By Theorem \ref{exel}, if \(E_{1}\) is any operator with diagonal \((\tilde{\lambda}_{i}^{+\downarrow})_{i\in\N}\oplus(-\tilde{\lambda}_{i}^{-\downarrow})_{i\in\N}\), then \((\tilde{d}_{i}^{+\downarrow})_{i\in\N}\oplus(-\tilde{d}_{i}^{-\downarrow})_{i\in\N}\) is also a diagonal of \(E_{1}\). By the choice of \(K\) we see that \(\tilde{\boldsymbol d}\) contains infinitely many zeros. Hence, by Lemma \ref{subdiag} applied to the sets \(I_{1} = \{i : \tilde{\lambda}_{i}\neq 0\}\), \(J_{1} = \{i : \tilde{d}_{i}\neq 0\}\), \(I_{2} = \{i : \tilde{\lambda}_{i}=0\}\), and \(J_{2} = \{i : \tilde{d}_{i}=0\}\) the sequence \(\tilde{\boldsymbol d}\) is a diagonal of \(E\).
By \eqref{s7.h} we have 
\[
\liminf_{\alpha\searrow 0}\delta(\alpha,\tilde {\boldsymbol d}, \boldsymbol d) 
=
\sum_{i\in K}\lambda_{i} = \sum_{i\in L}|\lambda_{i}|
= \liminf_{\alpha\nearrow 0}\delta(\alpha,\tilde {\boldsymbol d}, \boldsymbol d) \in (0,\infty). 
\]
Since $\tilde{\boldsymbol d}$ is a subsequence of $\boldsymbol d$, majorization holds, and by Theorem \ref{step5}, $\boldsymbol d$ is a diagonal of $E$.

\noindent\textbf{Case 3.} Assume (ii) holds. Fix \(\alpha_{0}>0\) such that
\[\delta(\alpha,\boldsymbol\lambda,\boldsymbol d)\geq 2\quad\text{for all }\alpha\in (0,\alpha_{0}).\]
Let \(I_{0}\subset\{i : \lambda_{i}\in (0,\alpha_{0})\}\) such that
\[\sum_{i\in I_{0}} \lambda_{i} = 1.\]
Set \(J_{0} = \N\setminus I_{0}\). Note that for \(\alpha\geq \alpha_{0}\) we have
\[\delta(\alpha,\boldsymbol\lambda|_{J_{0}},\boldsymbol d) = \delta(\alpha,\boldsymbol\lambda,\boldsymbol d)\geq 0.\]
For \(0<\alpha<\alpha_{0}\) we have
\[\delta(\alpha,\boldsymbol\lambda|_{J_{0}},\boldsymbol d) = \delta(\alpha,\boldsymbol\lambda,\boldsymbol d) - \sum_{\genfrac{}{}{0pt}{1}{\lambda_{i}\geq\alpha}{i\in I_{0}}}(\lambda_{i} - \alpha)\geq 2-\sum_{i\in I_{0}}\lambda_{i} = 1.\]

Fix \(\alpha_{1}\in(0,\frac{1}{2}\alpha_{0})\) such that
\[\delta(\alpha,\boldsymbol\lambda|_{J_{0}},\boldsymbol d)\geq 2\quad\text{for all }\alpha\in (0,\alpha_{1}).\]
Let \(I_{1}\subset \{i\in J_{0}:\lambda_{i}\in (0,\alpha_{1})\}\) such that
\[\sum_{i\in I_{k}}\lambda_{i} = 1.\]
Set \(J_{1} = J_{0}\setminus I_{1}\). By a similar argument as above, we have \(\delta(\alpha,\boldsymbol\lambda|_{J_{1}},\boldsymbol d)\geq 0\) for all \(\alpha>0\). 

Carrying on in this manner, we obtain a positive decreasing sequence \((\alpha_{i})_{i=0}^{\infty}\), such that \(\alpha_{k}<\frac{1}{2}\alpha_{k-1}\), and nested sets \(J_{0}\supset J_{1}\supset\cdots\). Set \(J = \bigcap_{k=0}^{\infty}J_{k}\) and \(I=\N\setminus J\). We see that
\[\delta(\alpha,\boldsymbol\lambda|_{J},\boldsymbol d)\geq 0\quad\text{for all }\alpha>0.\]
Since \(\boldsymbol d_{+}\notin\ell^1\), we conclude that \(\boldsymbol\lambda|_{J}\notin\ell^1\). Moreover,
\[\sum_{i\in I}\lambda_{i} = \sum_{k=0}^{\infty}\sum_{i\in I_{k}}d_{i} = \sum_{i=0}^{\infty}1=\infty.\]
Let \(i_{0}\in I\) such that \(\lambda_{i_{0}} = \max(\lambda_{i})_{i\in I}\). Let \(K\subset \{i : d_{i}>0\}\) be an infinite subset such that
\[\sum_{i\in K}d_{i}<\lambda_{i_{0}}.\]
By the choice of \(K\) we see that \(\delta(\alpha,\boldsymbol\lambda|_{I},\boldsymbol d|_{K})\geq 0\) for all \(\alpha>0\). Setting \(L = K\cup \{i : d_{i}\leq 0\}\) we see that
\[\delta(\alpha,\boldsymbol\lambda|_{I},\boldsymbol d|_{L})\geq 0\quad \text{for all }\alpha\neq 0.\]
Since \(\boldsymbol d|_{L}\in\ell^1\), by Case 2, if \(E_{1}\) is any self-adjoint operator with diagonal \(\boldsymbol\lambda|_{I}\), then \(\boldsymbol d|_{L}\) is also a diagonal of \(E_{1}\).

Set \(M = \{i : d_{i}>0\}\setminus K\). By Proposition \ref{kwd2d}, if \(E_{2}\) is any operator with diagonal \(\boldsymbol\lambda|_{J}\), then \(\boldsymbol d|_{M}\) is also a diagonal of \(E_{2}\). Finally, by Lemma \ref{subdiag} \(\boldsymbol d\) is a diagonal of \(E\).

\noindent\textbf{Case 4.} Assume (iii) holds. This case follow by considering the operator \(-E\), and noting that \(-\boldsymbol\lambda\) and \(-\boldsymbol d\) satisfy the assumptions of Case 3.
\end{proof}

\section{Main result for compact operators}

In this section we prove our main result, Theorem \ref{intropv2}, on diagonals of compact self-adjoint operators by combining necessity and sufficiency results from earlier sections. The precise description of the set of diagonals splits into two cases. Theorem \ref{intropv3} characterizes eigenvalue and diagonal sequences for which positive and negative excesses are not both equal to zero. In the case when the excesses \(\sigma_{+}\) and \(\sigma_{-}\) are both zero, Theorem \ref{intropv4} reduces characterization of diagonals to the problem of characterizing diagonals of compact positive operators. This is discussed in the next section.


\begin{thm}\label{intropv3}
Let $E$ be a compact operator on $\mathcal H$ with the eigenvalue list $\boldsymbol\lambda \in c_0$. Let 
$\boldsymbol d \in c_0$. Define
\[\sigma_{+} = \liminf_{n\to\infty} \sum_{i=1}^n (\lambda_i^{+ \downarrow} - d_i^{+ \downarrow})\quad\text{and}\quad \sigma_{-} = \liminf_{n\to\infty} \sum_{i=1}^n (\lambda_i^{- \downarrow} - d_i^{- \downarrow}).\]
\noindent {\rm (Necessity)} If $\boldsymbol d$ is a diagonal of $E$, then
\begin{equation}\label{p5v3}
\sigma_{+}=0 \implies \#|\{i:\lambda_{i}=0\}|\geq \#|\{i:d_{i}=0\}|
\text{\, and \,}
\#|\{i:\lambda_{i} \ge 0\}|\geq \#|\{i:d_{i} \ge0\}|,
\end{equation}
\begin{equation}\label{p6v3}
\sigma_{-}=0 \implies \#|\{i:\lambda_{i}=0\}|\geq \#|\{i:d_{i}=0\}|
\text{\, and \,}
\#|\{i:\lambda_{i} \le 0\}|\geq \#|\{i:d_{i} \le0\}|,
\end{equation}
\begin{equation}\label{p1v3}
\sum_{i=1}^n \lambda_i^{+ \downarrow} \ge 
\sum_{i=1}^n d_i^{+ \downarrow}
\qquad\text{for all }n\in \N,
\end{equation}
\begin{equation}\label{p2v3}
\sum_{i=1}^n \lambda_i^{- \downarrow} \ge 
\sum_{i=1}^n d_i^{- \downarrow}
\qquad\text{for all }n\in \N,
\end{equation}
\begin{equation}\label{p3v3}
\boldsymbol d_+ \in \ell^1
\quad \implies\quad
\sigma_{-}\geq \sigma_{+},
\end{equation}
\begin{equation}\label{p4v3}
\boldsymbol d_- \in \ell^1 
\quad \implies\quad 
\sigma_{+}\geq \sigma_{-}.
\end{equation}
\noindent {\rm (Sufficiency)} Conversely, if \eqref{p5v3}--\eqref{p4v3} hold and
\begin{equation}\label{p0v3}\sigma_{+}+\sigma_{-}>0,\end{equation}
then $\boldsymbol d$ is a diagonal of $E$. 
\end{thm}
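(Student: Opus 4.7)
\emph{Necessity.} Conditions \eqref{p1v3}--\eqref{p4v3} and \eqref{p5v3}--\eqref{p6v3} are a synthesis of earlier results. Corollary \ref{cptschur} gives $\delta(\alpha, \boldsymbol\lambda, \boldsymbol d) \geq 0$ for all $\alpha \neq 0$, which via Proposition \ref{LR}, applied separately to the pairs $(\boldsymbol\lambda_+, \boldsymbol d_+)$ and $(\boldsymbol\lambda_-, \boldsymbol d_-)$, translates into the Riemann-form majorization inequalities \eqref{p1v3}, \eqref{p2v3}. Corollary \ref{cpttracec} applied to $E$ and to $-E$ yields the excess inequalities \eqref{p3v3} and \eqref{p4v3}. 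For \eqref{p5v3} and \eqref{p6v3}, the hypothesis $\sigma_+ = 0$ (resp.\ $\sigma_- = 0$) triggers Proposition \ref{p211}, decoupling $E$ at $0$, and then Lemma \ref{card} supplies the claimed cardinality bounds.

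\emph{Sufficiency, Case 1: $\sigma_+ > 0$ and $\sigma_- > 0$.} I would subdivide by the summability of $\boldsymbol\lambda_\pm$. When both $\boldsymbol\lambda_+$ and $\boldsymbol\lambda_-$ are non-summable, Theorem \ref{step7a} applies: its four subcases cover all summability possibilities for $\boldsymbol d_\pm$, and \eqref{p3v3}, \eqref{p4v3} force the relevant excess to equal $\infty$ whenever the corresponding part of $\boldsymbol d$ is summable. When $\boldsymbol\lambda_+ \in \ell^1$ (the case $\boldsymbol\lambda_- \in \ell^1$ is symmetric), \eqref{p1v3} forces $\boldsymbol d_+ \in \ell^1$ and \eqref{p3v3} gives $\sigma_- \geq \sigma_+$: if $\sigma_- = \sigma_+$ apply Theorem \ref{step5}, and if $\sigma_- > \sigma_+$ apply Theorem \ref{step6} (noting that \eqref{p4v3} forces $\boldsymbol d_- \notin \ell^1$). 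When both $\boldsymbol\lambda_\pm \in \ell^1$, then $\boldsymbol d \in \ell^1$ by \eqref{p1v3}, \eqref{p2v3}, so \eqref{p3v3} and \eqref{p4v3} together yield $\sigma_+ = \sigma_- \in (0, \infty)$, and Theorem \ref{step5} closes the argument.

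\emph{Sufficiency, Case 2: $\sigma_+ = 0 < \sigma_-$} (the opposite case is symmetric). The plan is to decouple $E$ directly. Pick an orthonormal eigenbasis $(f_j)_{j\in\N}$ of $E$ and partition $\N = J^+ \sqcup J^-$ with $|J^+| = \#|\{i : d_i \geq 0\}|$, $|J^-| = \#|\{i : d_i < 0\}|$, $\{j : \lambda_j > 0\} \subseteq J^+$, and $\{j : \lambda_j < 0\} \subseteq J^-$. Setting $\mathcal H^\pm = \overline{\lspan}\{f_j : j \in J^\pm\}$ and $E^\pm = E|_{\mathcal H^\pm}$ gives $E = E^+ \oplus E^-$ with $E^+ \geq 0$ and $E^- \leq 0$. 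For the positive block, $\sigma_+ = 0$ combined with \eqref{p1v3} yields strong majorization via Proposition \ref{LR}, and Proposition \ref{posSH} then produces $(d_i)_{d_i \geq 0}$ as a diagonal of $E^+$. For the negative block, \eqref{p4v3} forces $\boldsymbol d_- \notin \ell^1$ and then \eqref{p2v3} forces $\boldsymbol\lambda_- \notin \ell^1$; Proposition \ref{kwnonneg} applied to the positive operator $-E^-$ with strictly positive target $(-d_i)_{d_i < 0}$ (making its kernel cardinality condition vacuous) produces $(d_i)_{d_i < 0}$ as a diagonal of $E^-$. Reassembling yields $\boldsymbol d$ as a diagonal of $E$.

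The main obstacle is the decoupling step in Case 2: the partition requires $\#|\{j : \lambda_j > 0\}| \leq \#|\{i : d_i \geq 0\}|$, which is not explicit in \eqref{p5v3}. The fix is a short cardinality argument extracted from $\sigma_+ = 0$ and \eqref{p1v3}: if $\#|\{j : \lambda_j > 0\}|$ strictly exceeded $\#|\{i : d_i > 0\}|$ (the stronger bound one actually proves), then the partial sums of $\boldsymbol d^{+\downarrow}$ would stabilize at $\sum_{d_i > 0} d_i$ strictly below $\sum_{\lambda_j > 0} \lambda_j$, contradicting $\sigma_+ = 0$. A secondary challenge is the careful bookkeeping in Case 1 to ensure that the hypotheses of Theorems \ref{step5}, \ref{step6}, and \ref{step7a} are each met in their respective regimes.
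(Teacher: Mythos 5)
Your proof proposal is correct and follows essentially the same route as the paper's argument. The necessity proof matches the paper line by line (Lemma \ref{card} via Proposition \ref{p211} for \eqref{p5v3}--\eqref{p6v3}, Corollary \ref{cptschur} plus Proposition \ref{LR} for \eqref{p1v3}--\eqref{p2v3}, Corollary \ref{cpttracec} for \eqref{p3v3}--\eqref{p4v3}), and the sufficiency proof relies on the same toolbox: Theorems \ref{step5}, \ref{step6}, \ref{step7a}, Propositions \ref{kwd2d}/\ref{kwnonneg}, and Lemma \ref{subdiag}.

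The one organizational difference is that you partition by excess positivity ($\sigma_\pm>0$ versus one excess vanishing) first and by summability second, whereas the paper partitions first by the summability of $\boldsymbol d_\pm$ and only then subdivides on the sign of $\sigma_+$ inside the mixed cases. The two organizations cover the same regimes and lead to the same lemma applications, so this is cosmetic. Your ``main obstacle'' in Case 2 — that decoupling requires $\#|\{j : \lambda_j>0\}| \le \#|\{i : d_i>0\}|$ — is exactly the paper's observation that \eqref{p1v3} together with the trace equality $\sum_{\lambda_i>0}\lambda_i = \sum_{d_i>0}d_i$ (forced by $\sigma_+=0$ and Proposition \ref{LR}) yields $\#|I_1|\le\#|J_1|$, and your argument for it is correct. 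One small spot where you are too cavalier: you apply Proposition \ref{posSH} to the positive block without remarking that when $\#|\{i:d_i\ge 0\}|$ is finite the block is finite-dimensional, in which case Proposition \ref{posSH} does not literally apply and one falls back on the classical Schur--Horn theorem (or the finite-rank version); the paper splits this subcase explicitly (``if $I_1$ is finite \ldots''). The paper also takes extra care, when $J_1$ is infinite but $I_1$ is finite, to choose the zero-padding $I_1'$ so that the remaining zero-count condition needed for Proposition \ref{kwnonneg} holds — your claim that the kernel condition is ``vacuous'' because the target is strictly negative is fine for the negative block itself, but one should verify the zeros that got shuttled into $J^+$ can actually be accommodated there, which is exactly what the cardinality bookkeeping controls. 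These are fixable details rather than a wrong approach.
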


\begin{proof}
(Necessity) 
Suppose that $E$ is a compact self-adjoint operator with eigenvalue list $\boldsymbol \lambda$ and diagonal $\boldsymbol d$ such that \eqref{p0v3} holds. Our goal is to show that 
conditions \eqref{p5v3}--\eqref{p4v3} hold. Conclusions \eqref{p5v3} and \eqref{p6v3} follow by Lemma \ref{card}. By Corollary \ref{cptschur} we have
\[
\delta(\alpha,\boldsymbol \lambda,\boldsymbol d) \ge 0 \qquad\text{for all }\alpha \ne 0.
\]
By Proposition \ref{LR} applied to sequences $(\boldsymbol \lambda_+)^\downarrow$ and $(\boldsymbol d_+)^\downarrow$ we have $(\boldsymbol d_+)^\downarrow \prec (\boldsymbol \lambda_+)^\downarrow$. Hence, \eqref{p1v3} holds. The conclusion \eqref{p2v3} follows by symmetry. 

Suppose that $\boldsymbol d_-\in \ell^1$. By Corollary \ref{cpttracec} we have
\begin{equation}\label{ee}
\liminf_{\alpha\searrow 0}\delta(\alpha,\boldsymbol\lambda,\boldsymbol{d})\geq \sum_{\lambda_{i}<0}|\lambda_{i}|-\sum_{d_{i}<0}|d_{i}|.
\end{equation}
By Proposition \ref{LR} applied to sequences $(\boldsymbol \lambda_+)^\downarrow$ and $(\boldsymbol d_+)^\downarrow$, the left-hand side of \eqref{ee} equals $\sigma_+$. If $\boldsymbol \lambda_- \in \ell^1$, then the right-hand side of \eqref{ee} equals $\sigma_-$ by Propositions \ref{LR} and \ref{LRS} applied to sequences $(\boldsymbol \lambda_-)^\downarrow$ and $(\boldsymbol d_-)^\downarrow$. If $\boldsymbol \lambda_-$ is not summable, then the right-hand side of \eqref{ee} is infinity. By \eqref{ee}, $\sigma_+=\infty$. Either way, \eqref{p4v3} holds. The conclusion \eqref{p3v3} follows by symmetry.

(Sufficiency) The sufficiency part of Theorem \ref{intropv3} is an immediate consequence of the following diagonal-to-diagonal result.
\end{proof}

\begin{thm}
Let $\boldsymbol\lambda = (\lambda_{i})_{i\in\N}$ and $\boldsymbol d = (d_{i})_{i\in\N}$ be sequences in $c_{0}$.
Suppose that \eqref{p5v3}--\eqref{p0v3} hold. If there is a self-adjoint operator $E$ with diagonal $\boldsymbol\lambda$, 
then $\boldsymbol d$ is also a diagonal of $E$.
\end{thm}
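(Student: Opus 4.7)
The plan is to reduce the claim to the diagonal-to-diagonal Theorems \ref{step5}, \ref{step6}, \ref{step7} together with the Kaftal-Weiss-type Propositions \ref{kwd2d} and \ref{kwnonneg}, via a case analysis on the excesses $\sigma_\pm$ and the summability profile of $\boldsymbol\lambda_\pm$. By the spectral theorem, it suffices to work with $E = \diag(\boldsymbol\lambda)$.

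\textbf{Case A ($\sigma_+ > 0$ and $\sigma_- > 0$).} If both $\boldsymbol\lambda_+, \boldsymbol\lambda_- \in \ell^1$, then Proposition \ref{LRS} forces $\boldsymbol d_\pm \in \ell^1$ with $\sigma_\pm$ finite, and \eqref{p3v3} combined with \eqref{p4v3} gives $\sigma_+ = \sigma_-$, so Theorem \ref{step5} applies. If exactly one of $\boldsymbol\lambda_\pm$ is summable (say $\boldsymbol\lambda_+$, the other handled by replacing $E$ with $-E$), then $\boldsymbol d_+ \in \ell^1$ and $\sigma_+$ is finite; by \eqref{p3v3}, $\sigma_- \ge \sigma_+$, and I would use Theorem \ref{step5} for equality or Theorem \ref{step6} for strict inequality (the hypothesis $\boldsymbol d_- \notin \ell^1$ of the latter is forced by \eqref{p4v3}, since otherwise $\sigma_+ \ge \sigma_- = \infty$). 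If neither $\boldsymbol\lambda_\pm$ is summable, Theorem \ref{step7} applies; its conditions \eqref{s7.3}, \eqref{s7.4} are verified by observing that $\boldsymbol d_+ \in \ell^1$ with $\boldsymbol\lambda_+ \notin \ell^1$ yields $\sigma_+ = \infty$, hence $\sigma_- = \infty$ by \eqref{p3v3}, and symmetrically for $\boldsymbol d_-$.

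\textbf{Case B ($\sigma_+ = 0$, the case $\sigma_- = 0$ following by replacing $E$ with $-E$).} By \eqref{p0v3}, $\sigma_- > 0$, and \eqref{p5v3} supplies the cardinality inequalities $|I_0| \ge |J_0|$ and $|I_+ \cup I_0| \ge |J_+ \cup J_0|$, where $I_+, I_-, I_0$ and $J_+, J_-, J_0$ are the sign-classes of $\boldsymbol\lambda$ and $\boldsymbol d$. A short majorization argument using $\sigma_+ = 0$ additionally yields $|J_+| \ge |I_+|$: if instead $|J_+| < |I_+|$ with $|J_+|$ finite, then the majorization bound on the first $|J_+|$ terms combined with $\sum d_+ = \sum\lambda_+$ (in the summable case) or with $\sum \lambda_i^{+\downarrow} \to \infty$ (in the non-summable case) produces a contradiction with $\sigma_+ = 0$. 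If $\boldsymbol\lambda_+ \notin \ell^1$, Theorem \ref{step7} applies: $\sum d_+ = \infty$ by strong majorization on the positive side, and $\sum|d_-| = \infty$ since otherwise \eqref{p4v3} would give $0 = \sigma_+ \ge \sigma_- > 0$.

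If instead $\boldsymbol\lambda_+ \in \ell^1$, then $\boldsymbol\lambda_- \notin \ell^1$ (otherwise $E$ is trace class with $\sigma_+ = \sigma_- = 0$, violating \eqref{p0v3}), $\boldsymbol d_+ \in \ell^1$ with $\sum\lambda_+ = \sum d_+$, and $\boldsymbol d_- \notin \ell^1$. I would decouple $E$ spatially: choose a partition $I = I^+ \sqcup I^-$ with $I_+ \subset I^+$, $I_- \subset I^-$, distributing the zeros in $I_0$ so that $|I^+| = |J_+| + |J_0|$ and $|I^-| = |J_-|$, which is feasible from the cardinality conditions together with $|J_+| \ge |I_+|$. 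Inside $I^+$, pair $|J_0|$ of the zeros with $J_0$ trivially using Lemma \ref{subdiag}, reducing the positive block to a sub-block whose source rearranges to $(\boldsymbol\lambda_+)^\downarrow$ followed by any extra zeros and whose target is $\boldsymbol d_+$; since both have equal finite sum, strong majorization is automatic and Proposition \ref{posSH} (invoked through Proposition \ref{kwd2d}, or finite Schur-Horn when $|J_+| < \infty$) produces the positive-block basis. On $I^-$, the non-summability of $|\boldsymbol\lambda_-|$ and $|\boldsymbol d_-|$, the strict gap $\sigma_- > 0$, the majorization \eqref{p2v3}, and the trivial zero-cardinality condition (the target has no zeros) place us in the hypotheses of Proposition \ref{kwnonneg} applied to $|\boldsymbol\lambda_-|$ concatenated with the remaining zeros and to $|\boldsymbol d_-|$, yielding the negative-block basis. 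A final application of Lemma \ref{subdiag} assembles the two block constructions into a diagonal of $E$ equal to $\boldsymbol d$.

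The main obstacle will be the sub-case $\boldsymbol\lambda_+ \in \ell^1$ of Case B, where a careless decoupling could run into the unresolved kernel problem for positive compact operators with finite-dimensional nontrivial kernel. The key observation is that the equality $\sum\lambda_+ = \sum d_+$ (finite) forces strong majorization on the positive sub-block regardless of how many extra zero eigenvalues it inherits, so Proposition \ref{posSH} applies whether that sub-block has trivial kernel, infinite-dimensional kernel, or (in the harmless finite-dimensional case) reduces to the finite Schur-Horn theorem; the cardinality conditions \eqref{p5v3} together with $|J_+| \ge |I_+|$ guarantee that the only admissible sub-block sizes belong to these three regimes.
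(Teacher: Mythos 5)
Your proof is correct, and it assembles the same toolkit — Theorems~\ref{step5}, \ref{step6}, \ref{step7}/\ref{step7a}, Propositions~\ref{kwd2d}, \ref{kwnonneg}, \ref{posSH}, \ref{LR}, \ref{LRS}, and Lemma~\ref{subdiag} — that the paper uses, but with a genuinely different organization of the case analysis.  You branch first on the sign of the excesses ($\sigma_\pm$ both positive versus exactly one zero), then on the summability of $\boldsymbol\lambda_\pm$; the paper branches first on the summability of $\boldsymbol d_\pm$ (four cases), then on the excesses.  Both routes cover the same configurations and invoke the same diagonal-to-diagonal results at each leaf, so this is a reorganization rather than a new method, but yours is a cleaner starting point because the zero-excess case is isolated immediately.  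In that delicate case ($\sigma_+=0$, $\boldsymbol\lambda_+\in\ell^1$) your spatial decoupling by sign, with Proposition~\ref{posSH} on the nonnegative block (trace equality $\sum\lambda_+=\sum d_+$ forces strong majorization, which is precisely what lets you bypass the kernel problem) and Proposition~\ref{kwnonneg} on the nonpositive block, is exactly the paper's device (Case~2, sub-case $\sigma_+=0$); your verification that $\#|J_+|\geq\#|I_+|$ via the trace equality is the same observation the paper makes from \eqref{p1v3} and \eqref{trc}.  The cardinality bookkeeping you sketch ("distributing the zeros in $I_0$ so that $|I^+|=|J_+|+|J_0|$") is correct in outcome, but in the infinite-cardinal regimes you cannot literally subtract — the safe statement is simply that there exists a partition $I_0 = A_0 \sqcup A_1 \sqcup A_2$ with $|A_0| = |J_0|$, $|I_+|+|A_1|=|J_+|$, and the remainder $A_2$ sent to $I^-$; this is feasible from \eqref{p5v3}, $\#|J_+|\geq\#|I_+|$, and the already-established $\#|I_-|=\#|J_-|=\aleph_0$.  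One small slip: the opening remark "by the spectral theorem, it suffices to work with $E=\diag(\boldsymbol\lambda)$" is not a valid reduction — the assertion is a diagonal-to-diagonal statement for an arbitrary self-adjoint $E$ with diagonal $\boldsymbol\lambda$, and such $E$ need not be unitarily conjugate to $\diag(\boldsymbol\lambda)$ — but you never actually use that premise, since every cited intermediate result is itself a diagonal-to-diagonal theorem that accepts an arbitrary $E$.
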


\begin{proof}
Let $\boldsymbol \lambda, \boldsymbol d\in c_0$ be such that all of the conditions \eqref{p5v3}--\eqref{p0v3} hold. Suppose that a self-adjoint operator $E$ has diagonal $\boldsymbol\lambda$.
Our goal is to show that $\boldsymbol d$ is also a diagonal of $E$.

\noindent\textbf{Case 1.} Assume that $\boldsymbol d_+, \boldsymbol d_- \in \ell^1$. By \eqref{p3v3}, \eqref{p4v3}, and \eqref{p0v3} we have
\[
\sigma_+=\sigma_->0.
\]
If $\sigma_+<\infty$, then Theorem \ref{step5} yields the conclusion. If $\sigma_+=\infty$, then we apply Theorem \ref{step7a}(iv) to reach the conclusion.

\noindent\textbf{Case 2.} Assume that $\boldsymbol d_+ \in \ell^1$ and $\boldsymbol d_- \not \in \ell^1$. The majorization \eqref{p2v3} implies that $\boldsymbol \lambda_- \not \in \ell^1$, whereas \eqref{p3v3} implies that
$
\sigma_- \ge \sigma_+ .$

Assume first that $\sigma_+>0$. If $\boldsymbol \lambda_+ \in \ell^1$, then by Proposition \ref{LR} we have
\[
\sigma_+=\liminf_{k\to\infty} \sum_{i=1}^k (\lambda_i^{+\downarrow} - d_i^{+\downarrow})= \sum_{i=1}^\infty \lambda_i^{+\downarrow} - \sum_{i=1}^\infty d_i^{+\downarrow} <\infty.
\]
Hence, Theorem \ref{step5} and Theorem \ref{step6} yield the required conclusion when $\sigma_-=\sigma_+$ and $\sigma_->\sigma_+$, respectively. 
If $\boldsymbol \lambda_+ \not\in \ell^1$, then by Proposition \ref{LR} we have
\[
\sigma_+=\liminf_{k\to\infty} \sum_{i=1}^k (\lambda_i^{+\downarrow} - d_i^{+\downarrow}) =\infty.
\]
Thus, $\sigma_-=\infty$ and we apply Theorem \ref{step7a}(iii) to reach the conclusion.

Assume next that $\sigma_+=0$. By Proposition \ref{LR} we have
\begin{equation}\label{trc}
\sum_{\lambda_i>0} \lambda_i = \sum_{d_i>0} d_i <\infty.
\end{equation}
By \eqref{p0v3} we necessarily have $\sigma_->0$. By the cardinality condition \eqref{p5v3} we have
\begin{equation}\label{trc1}
\#|\{i:\lambda_{i}=0\}|\geq \#|\{i:d_{i}=0\}|
\quad\text{and}\quad
\#|\{i:\lambda_{i} \ge 0\}|\geq \#|\{i:d_{i} \ge0\}|.
\end{equation}
Let $I_1=\{i: \lambda_i>0\}$ and $J_1=\{i:d_{i} >0\}$. Observe that \eqref{p1v3} and \eqref{trc} imply that $\#|I_1| \le \#|J_1|$. In particular, if $I_1$ is infinite, then so is $J_1$. Hence, by Proposition \ref{kwd2d}, if $E_1$ is a self-adjoint operator with diagonal $\boldsymbol \lambda|_{I_1}$, then $\boldsymbol d|_{J_1}$ is also a diagonal of $E_1$.
Note that $\lambda_i \le 0$ for all $i\in I_2:=\N \setminus I_1$ and $d_i \le 0$ for all $i \in J_2:=\N \setminus J_1$. Since $\sigma_->0$ we can apply Proposition \ref{kwnonneg} to negatives of sequences $\boldsymbol \lambda|_{I_2}$ and $\boldsymbol d|_{J_2}$ to deduce that whenever $E_2$ is a self-adjoint operator with diagonal $\boldsymbol \lambda|_{I_2}$, then $\boldsymbol d|_{J_2}$ is also a diagonal of $E_2$. By Lemma \ref{subdiag}, $\boldsymbol d$ is a diagonal of $E$.

On the other hand, if $I_1$ is finite, then by \eqref{trc1} we can find an index set $I_1'$ such that
\[
I_1 \subset I'_1 \subset \{i: \lambda_i \ge 0\}
\quad\text{and}\quad
\#|I'_1| = \#|J_1|.
\]
In addition, $I_1'$ can be chosen so that 
\[
\#|\{i \in I_2 : \lambda_{i}=0\}|\geq \#|\{i\in J_2 : d_{i} = 0\}|,
\qquad\text{where } I_2=\N \setminus I'_1, J_2=\N \setminus J_1.
\]
In fact, the above holds automatically unless $J_1$ is infinite, where extra care needs to be taken.
The Schur-Horn theorem (if $J_1$ is finite) or the finite rank Schur-Horn theorem (if $J_1$ is infinite) yields that whenever $E_1$ is a self-adjoint operator with diagonal $\boldsymbol \lambda|_{I'_1}$, then $\boldsymbol d|_{J_1}$ is also a diagonal of $E_1$. Note that $\lambda_i \le 0$ for all $i\in I_2$ and $d_i \le 0$ for all $i \in J_2$. Since $\sigma_->0$, Proposition \ref{kwnonneg} yields that whenever $E_2$ is a self-adjoint operator with diagonal $\boldsymbol \lambda|_{I_2}$, then $\boldsymbol d|_{J_2}$ is also a diagonal of $E_2$. By Lemma \ref{subdiag}, $\boldsymbol d$ is a diagonal of $E$.

\noindent\textbf{Case 3.} Assume that $\boldsymbol d_- \in \ell^1$ and $\boldsymbol d_+ \not \in \ell^1$. This is deduced from Case 2 by the symmetric argument.

\noindent\textbf{Case 4.} Assume that $\boldsymbol d_+, \boldsymbol d_- \not\in \ell^1$. The majorization conditions \eqref{p1v3} and \eqref{p2v3} imply that $\boldsymbol \lambda_+, \boldsymbol \lambda_- \not \in \ell^1$. Hence, Theorem \ref{step7a}(i) yields the required conclusion.
\end{proof}

Theorem \ref{intropv2} is now a straightforward consequence of Theorem \ref{intropv3}

\begin{proof}[Proof of Theorem \ref{intropv2}]
The equivalence of (i) and (ii) is a known consequence of the Weyl-von Neumann-Berg theorem, see \cite[Proposition 39.10]{conway}. Suppose that (ii) holds. Then, (iii) is a consequence of the necessity part of Theorem \ref{intropv3}. 

Now suppose that (iii) holds. If $\sigma_++\sigma_->0$, then the sufficiency part of Theorem \ref{intropv3} implies (ii). Indeed, adding extra zero terms to the eigenvalue sequence $\boldsymbol \lambda$ guarantees that \eqref{p5v3} and \eqref{p6v3} also hold. Hence, there exists a self-adjoint operator $T'$ with the same nonzero eigenvalues (with multiplicities) as $T$, but possibly with higher-dimensional kernel than $T$, and with diagonal $\boldsymbol d$. 

Suppose next that $\sigma_+=\sigma_-=0$. We split the sequence $\boldsymbol \lambda$ into two subsequences $\boldsymbol \lambda_0$ and $\boldsymbol \lambda_1$ consisting of negative and positive terms of $\boldsymbol \lambda$, respectively. In this splitting we disregard zero terms of $\boldsymbol \lambda$. Likewise, we split the sequence $\boldsymbol d$ into two subsequences $\boldsymbol d_0$ and $\boldsymbol d_1$ consisting of negative and positive terms of $\boldsymbol d$, respectively. Let $r$ be the number of zero terms of $\boldsymbol d$, which were disregarded in this splitting. 
By the majorization \eqref{p1v2} and $\sigma_+=0$, we see that $\boldsymbol \lambda_1$ strongly majorizes $\boldsymbol d_1$, see Definition \ref{rmajdef}. Hence, the length of $\boldsymbol \lambda_1$ is at most that of $\boldsymbol d_1$. We can also guarantee that $\boldsymbol d_1$ and $\boldsymbol \lambda_1$ have the same length by adding extra zero terms to $\boldsymbol \lambda_1$, if necessary. Therefore, there exists a compact operator $T_1$ with eigenvalue list $\boldsymbol \lambda_1$ and diagonal $\boldsymbol d_1$. This is a consequence of one of the folowing:
\begin{itemize}
\item the Schur-Horn theorem when $\boldsymbol d_1$ is finite, 
\item the finite rank Schur-Horn theorem when $\boldsymbol \lambda_1$ has only finitely many nonzero terms and $\boldsymbol d_1$ is infinite, or 
\item the Kaftal-Weiss theorem \cite[Corollary 6.1]{kw} if $\boldsymbol \lambda_1$ has  infinitely many nonzero terms and $\boldsymbol d_1$ are infinite. 
\end{itemize}
Indeed, in the final case, all of the terms of $\boldsymbol \lambda_1$ and $\boldsymbol d_1$ are positive, and hence Proposition \ref{kwd2d} applies. Likewise, there exists a compact operator $T_0$ with diagonal $\boldsymbol d_0$ and with eigenvalue list $\boldsymbol \lambda_0$, possibly extended by some zero terms. Define the operator $T'=T_0 \oplus \mathbf 0_r \oplus T_1$. Then, $T'$ has diagonal $\boldsymbol d$. Moreover, $T'$ has the same nonzero eigenvalues (with multiplicities) as $T$, but possibly with a different kernel than $T$. This proves (ii).
\end{proof}

\section{Algorithm for determining diagonals of compact operators}

In this section we present an algorithm for determining whether a numerical sequence is a diagonal of a given compact self-adjoint operator, or not. In some extreme situations the algorithm is inconclusive due to the kernel problem for positive compact operators. The best known is result about diagonals of compact positive operators is due to Loreaux and Weiss \cite{lw}. The following theorem is a convenient reformulation of their two results. The sufficiency part of Theorem \ref{kp} is \cite[Theorem 2.4]{lw}, whereas the necessity is \cite[Theorem 3.4]{lw}.

\begin{thm}\label{kp}
Let $E$ be a compact operator on $\mathcal H$ with the eigenvalue list $\boldsymbol\lambda \in c^+_0$. 
Let 
$\boldsymbol d \in c^+_0$. If  $\#|\{i:d_{i}=0\}|<\infty$, then we set
$z=   \#|\{ i : \lambda_i =0 \}| - \#|\{i:d_{i}=0\}|$; otherwise set $z=0$.

\noindent {\rm (Necessity)} If $\boldsymbol d$ is a diagonal of $E$, then
\begin{align}\label{kp1}
 \#|\{i:\lambda_{i}=0\}| &\geq \#|\{i:d_{i}=0\}|,
\\
\label{kp2}
\sum_{i=1}^n \lambda_i^{+ \downarrow} &\ge 
\sum_{i=1}^n d_i^{+ \downarrow}
\qquad\text{for all }n\in \N,
\\
\label{kp3}
\sum_{i=1}^\infty \lambda_i &= 
\sum_{i=1}^\infty d_i,
\end{align}
and for any $p\in \N$, $p\le z$, and for every $\epsilon>0$, there exists $N=N_{p,\epsilon}>0$, such that
\begin{equation}\label{kp4a}
\sum_{i=1}^n \lambda_i^{+ \downarrow} +  \epsilon \lambda_{n+1}^{+ \downarrow}
\ge 
\sum_{i=1}^{n+p} d_i^{+ \downarrow} 
\qquad\text{for all }n\ge N.
\end{equation}

\noindent {\rm (Sufficiency)} Conversely, if \eqref{kp1}--\eqref{kp3} hold and
for any $p\in \N$, $p\le z$, there exists $N=N_p$, such that
\begin{equation}\label{kp4b}
\sum_{i=1}^n \lambda_i^{+ \downarrow} 
\ge 
\sum_{i=1}^{n+p} d_i^{+ \downarrow} 
\qquad\text{for all }n\ge N,
\end{equation}
then $\boldsymbol d$ is a diagonal of $E$. 
\end{thm}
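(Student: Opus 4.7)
The plan is to deduce Theorem \ref{kp} as a direct reformulation of \cite[Theorems 2.4 and 3.4]{lw}, handling necessity and sufficiency separately since each corresponds to a different Loreaux-Weiss result. The gap between the quantifier in \eqref{kp4a} (``for every $\epsilon>0$ there exists $N_{p,\epsilon}$'') and that in \eqref{kp4b} (``there exists $N_p$'') is exactly the content of the open kernel problem, so the translation must be done carefully but involves no new substantive arguments.

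For necessity, I would first dispatch \eqref{kp1}--\eqref{kp3} by appealing to earlier material in the paper. Condition \eqref{kp1} is Lemma \ref{card} applied to the compact positive operator $E$, which is automatically decoupled at $0$ with no work. Condition \eqref{kp2} is Corollary \ref{cptschur} combined with Proposition \ref{LR} applied to the decreasing rearrangements $(\lambda_{i}^{+\downarrow})$ and $(d_{i}^{+\downarrow})$. Condition \eqref{kp3} either reads $\infty=\infty$ (non-trace-class case, forced by the majorization in \eqref{kp2}) or is the standard trace equality obtained from Proposition \ref{LRS}. The substantive condition \eqref{kp4a} I would extract from \cite[Theorem 3.4]{lw}: the parameter $p$ ranges over $\{1,\ldots,z\}$ where $z$ records the excess kernel dimensions of $E$ beyond the zero diagonal entries, and each of the Loreaux-Weiss asymptotic inequalities unpacks (after a routine algebraic manipulation separating the $n$ and $n+p$ partial sums) into \eqref{kp4a}.

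For sufficiency I would verify that \eqref{kp1}--\eqref{kp3} together with \eqref{kp4b} translate into the hypotheses of \cite[Theorem 2.4]{lw} and then invoke that result to produce an orthonormal basis realizing $\boldsymbol d$ as a diagonal of $E$. I would also double-check the two boundary regimes where \eqref{kp4a}--\eqref{kp4b} are vacuous: when $z=0$ (either because $\#|\{i:d_{i}=0\}|=\infty$, or because the kernel is fully matched by zero diagonal entries) and when $z=\infty$ (infinite-dimensional kernel but only finitely many zero diagonal entries). In both cases the theorem collapses to the earlier Kaftal-Weiss characterization (trivial kernel applied to the restriction to $\overline{\operatorname{ran}}(E)$, or infinite-dimensional kernel, respectively), providing an internal consistency check on the reformulation.

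The main obstacle is the finite-positive-$z$ regime, where \eqref{kp4a} and \eqref{kp4b} are genuinely distinct and the characterization is not sharp. The work here is not a new proof but careful bookkeeping: one must confirm that the specific perturbation term $\epsilon\lambda_{n+1}^{+\downarrow}$ in \eqref{kp4a} matches the Loreaux-Weiss necessary condition, and that \eqref{kp4b} (its $\epsilon=0$ analogue) matches their sufficient condition. Isolating this irreducible gap cleanly in the present reformulation is exactly what allows the kernel problem to be quarantined as a single black box in the algorithm of the following section.
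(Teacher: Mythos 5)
Your overall strategy matches the paper's exactly: the paper itself provides no proof of Theorem \ref{kp}, but merely states it is a reformulation of Loreaux and Weiss, with the sufficiency direction being \cite[Theorem~2.4]{lw} and the necessity being \cite[Theorem~3.4]{lw}. Citing those two results and verifying the translation is precisely what is intended, and your breakdown of where each of \eqref{kp1}--\eqref{kp3} comes from internally (Lemma~\ref{card}, Corollary~\ref{cptschur} with Proposition~\ref{LR}) is consistent with the paper's toolkit, even if you leave the ``routine algebraic manipulation'' converting the Loreaux--Weiss asymptotic conditions to \eqref{kp4a}--\eqref{kp4b} unexecuted.

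Two points of confusion should be corrected, though. First, you assert that in the non-trace-class case \eqref{kp3} reads $\infty=\infty$ and is ``forced by the majorization in \eqref{kp2}.'' It is not: majorization only gives $\sum d_i \le \sum \lambda_i$ and by itself allows a summable diagonal for a non-trace-class $\boldsymbol\lambda$. What forces $\sum d_i = \infty$ is the basis-invariance of the (possibly infinite) trace of the positive operator $E$, namely $\sum d_i = \tr E = \sum \lambda_i$, which also handles the trace-class case at a stroke without invoking Proposition~\ref{LRS}. Second, you claim \eqref{kp4a}--\eqref{kp4b} are vacuous both when $z=0$ and when $z=\infty$. They are vacuous only when $z=0$. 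When $z=\infty$ the quantifier ranges over every $p\in\N$, so the conditions are far from vacuous; what happens instead is that \eqref{kp4a} and \eqref{kp4b} become \emph{equivalent} --- applying \eqref{kp4a} with $p+1$ and $\epsilon=1$ at index $n-1$ gives $\sum_{i=1}^{n}\lambda_i^{+\downarrow}\ge\sum_{i=1}^{n+p}d_i^{+\downarrow}$, which is \eqref{kp4b} --- and that coincidence is exactly why the kernel problem vanishes in the infinite-dimensional-kernel case. The paper's discussion after the theorem statement records this distinction. Neither slip invalidates the citation-based proof, but the second in particular undercuts your stated plan to use the boundary regimes as a consistency check on the reformulation.
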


In the  case $z=0$, conditions \eqref{kp4a} and \eqref{kp4b} are vacuous, and Theorem \ref{kp} recovers a characterization of diagonals of positive compact operators with trivial kernel due to Kaftal and Weiss \cite{kw}. In the case $z=\infty$, conditions \eqref{kp4a} and \eqref{kp4b} coincide, which yields a characterization of diagonals of positive compact operators with infinite-dimensional kernel \cite[Corollary 3.5]{lw}. In the case $0<z<\infty$, the necessary condition \eqref{kp4a} is strictly weaker than the sufficient condition \eqref{kp4b}. This gap between the known necessary and sufficient conditions is referred to as the {\it kernel problem} for compact positive operators (with nontrivial finite-dimensional kernel). 

To describe our algorithm, it is convenient to introduce the concepts of decoupling and splitting of an operator. The difference between these two concepts is quite subtle, but we need to distinguish between them. Note that the concept of decoupling is a decomposition of an operator with respect to a diagonal given by a specific orthonormal basis. In contrast, splitting is just a particular direct sum decomposition of an operator.

\begin{defn}\label{decouple}
Let $E$ be a self-adjoint operator on a Hilbert space $\mathcal H$. Let $(d_{i})$ be a diagonal of $E$ with respect to an orthonormal basis $(e_{i})$ of $\mathcal H$.
We say that the operator $E$ {\it decouples at $\alpha\in \R$} with respect to $(d_{i})$ if 
\[
\mathcal H_0=\overline{\lspan }\{ e_i: d_i < \alpha \}\qquad\text{and}\qquad \mathcal H_1=\overline{\lspan} \{ e_i: d_i \ge \alpha \},
\]
are invariant subspaces of $E$ and
\[
\sigma(E|_{\mathcal H_0}) \subset (-\infty,\alpha]\qquad\text{and}\qquad\sigma(E|_{\mathcal H_1}) \subset [\alpha,\infty).
\]
\end{defn}

\begin{defn}\label{splitting} Let $E$ be a self-adjoint operator. Let $\alpha \in \R$. We say that a pair of operators $E_1$ and $E_2$ is a {\it splitting} of $E$  at $\alpha$ if there exist $z_0,z_1,z_2 \in \N \cup \{0,\infty\}$ such that:
\begin{enumerate}
\item $E_1$ is a self-adjoint operator such that $\sigma(E_1) \subset (-\infty,\alpha]$ and $z_1=\dim \ker (E_1-\alpha \bf I)$,
\item $E_2$ is a self-adjoint operator such that $\sigma(E_2) \subset [\alpha,\infty)$ and $z_2=\dim \ker (E_2-\alpha \bf I)$,
 and
\item $E$ is unitarily equivalent to $\alpha \mathbf I_{z_0} \oplus E_1 \oplus E_2$, and hence $z_0+z_1+z_2= \dim \ker (E-\alpha \bf I)$.
\end{enumerate}
\end{defn}

The following elementary fact bridges previous two concepts.

\begin{thm}\label{isplit}
Let $E$ be a self-adjoint operator.
Let $\boldsymbol d$ be a bounded sequence. 
Suppose that $\boldsymbol d$ is a diagonal of $E$ and the operator $E$ decouples at $\alpha \in \R$. Then, there exists a splitting $E_1$ and $E_2$ of $E$ at $\alpha$ such that:
\begin{enumerate}[(i)]
\item the sequence $(d_{i})_{d_i<\alpha}$ is a diagonal of $E_1$,
\item the sequence $(d_{i})_{d_i>\alpha}$ is a diagonal of $E_2$, and
\item the number of zeros in $\boldsymbol d$ satisfies
\begin{equation}\label{z102}
\dim \ker (\alpha \mathbf I - E) =  \dim \ker (\alpha \mathbf I -E_1) + \dim \ker (\alpha \mathbf I - E_2) + \#|\{i: d_i=\alpha\}|.
\end{equation}
\end{enumerate}
Conversely, if there exists a splitting of $E$ such that (i)--(iii) hold, then $\boldsymbol d$ is a diagonal of $E$ and the operator $E$ decouples at $\alpha$.
\end{thm}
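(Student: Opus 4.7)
The plan is a relatively short unwinding of the definitions of decoupling and splitting, driven by one nontrivial spectral observation.

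For the forward direction, the decoupling hypothesis gives the orthogonal decomposition $E = E|_{\mathcal H_0} \oplus E|_{\mathcal H_1}$ with $\sigma(E|_{\mathcal H_0}) \subset (-\infty,\alpha]$ and $\sigma(E|_{\mathcal H_1}) \subset [\alpha,\infty)$. The key observation is that any basis vector $e_i$ with $d_i=\alpha$ lies in $\ker(E|_{\mathcal H_1} - \alpha \mathbf I)$: the operator $E|_{\mathcal H_1} - \alpha \mathbf I$ is positive semi-definite and $\langle (E|_{\mathcal H_1}-\alpha\mathbf I)e_i,e_i\rangle = d_i - \alpha = 0$ forces $(E|_{\mathcal H_1}-\alpha\mathbf I)e_i=0$. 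Hence, letting $\mathcal K = \overline{\lspan}\{e_i : d_i = \alpha\}$, the subspace $\mathcal K$ reduces $E|_{\mathcal H_1}$ and $E|_{\mathcal K} = \alpha \mathbf I_{\mathcal K}$. I would then set $E_1 = E|_{\mathcal H_0}$, $E_2 = E|_{\mathcal H_1 \ominus \mathcal K}$, and $z_0 = \dim \mathcal K = \#|\{i: d_i = \alpha\}|$. The spectrum conditions in Definition \ref{splitting} are immediate, and $E$ is unitarily equivalent to $\alpha \mathbf I_{z_0} \oplus E_1 \oplus E_2$. Conclusion (i) is read off from the basis $\{e_i : d_i<\alpha\}$ of $\mathcal H_0$; conclusion (ii) follows from the identity $\mathcal H_1 \ominus \mathcal K = \overline{\lspan}\{e_i : d_i > \alpha\}$; and (iii) reduces to taking dimensions in $\ker(\alpha \mathbf I - E) = \mathcal K \oplus \ker(\alpha \mathbf I - E_1) \oplus \ker(\alpha \mathbf I - E_2)$, which comes directly from the triple direct sum structure.

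For the converse, the unitary equivalence $E \cong \alpha \mathbf I_{z_0} \oplus E_1 \oplus E_2$ provided by the splitting, together with Definition \ref{splitting}(iii) and hypothesis (iii), pins down $z_0 = \#|\{i: d_i = \alpha\}|$. I would then concatenate any orthonormal basis of the $\alpha \mathbf I_{z_0}$-summand with the bases witnessing (i) and (ii), pull these back through the intertwining unitary, and reindex by the obvious bijection onto the index set of $\boldsymbol d$ to obtain an orthonormal basis of $\mathcal H$ with respect to which $E$ has diagonal exactly $\boldsymbol d$. With this basis, the subspaces $\mathcal H_0 = \overline{\lspan}\{e_i : d_i<\alpha\}$ and $\mathcal H_1 = \overline{\lspan}\{e_i : d_i \ge \alpha\}$ correspond respectively to the $E_1$-summand and to the $\alpha \mathbf I_{z_0} \oplus E_2$-summand, which gives both the invariance and the spectral conditions $\sigma(E|_{\mathcal H_0}) \subset (-\infty,\alpha]$ and $\sigma(E|_{\mathcal H_1}) \subset [\alpha,\infty)$ required by Definition \ref{decouple}.

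The only nontrivial step is the spectral observation in the forward direction that basis vectors with $d_i=\alpha$ are genuine $\alpha$-eigenvectors of $E|_{\mathcal H_1}$; the rest amounts to routine bookkeeping with orthogonal direct sums and the matching of indices between the two equivalent direct sum decompositions of $E$.
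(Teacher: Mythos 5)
Your proposal is correct and follows essentially the same route as the paper: the forward direction rests on the same key spectral observation that each basis vector $e_i$ with $d_i=\alpha$ is an $\alpha$-eigenvector of $E|_{\mathcal H_1}$ (since $E|_{\mathcal H_1}-\alpha\mathbf I\geq 0$ and has vanishing quadratic form there), after which $\mathcal K=\overline{\lspan}\{e_i:d_i=\alpha\}$ is peeled off and $E_1=E|_{\mathcal H_0}$, $E_2=E|_{\mathcal H_1\ominus\mathcal K}=E|_{\overline{\lspan}\{e_i:d_i>\alpha\}}$ give the required splitting, exactly as in the paper. The converse is the same direct unwinding of Definitions~\ref{decouple} and~\ref{splitting}, with the paper leaving the concatenation of bases implicit where you spell it out.
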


\begin{proof}
Suppose that $\boldsymbol d$ is a diagonal of $E$ and the operator $E$ decouples at $\alpha \in \R$. 
Take $i\in I$ such that $d_i=\alpha$. Since $E_2 \ge \alpha \mathbf I$, the vector $e_i$ is an eigenvector of $E_2$ with eigenvalue $\alpha$. Hence, 
\[
 \overline{\lspan} \{ e_i: d_i = \alpha \} \subset \ker (\alpha \mathbf I - E_2) \subset  \ker (\alpha \mathbf I - E).
 \]
Define $\mathcal H_1'=\overline{\lspan} \{ e_i: d_i > \alpha \}$. Since $\mathcal H_1$ is an invariant subspace of $E$, so is $\mathcal H_1'$. It follows that the pair of operators $E_1= E|_{\mathcal H_0}$ and $E_2=E|_{\mathcal H_1'}$ is a splitting of $E$, and (i)---(iii) hold. Indeed, \eqref{z102} follows from the fact that $E$ is unitarily equivalent with $\alpha \mathbf I_{z_0} \oplus E_1 \oplus E_2$, where $z_0=\#|\{i: d_i =\alpha\}|$. The converse direction follows immediately from the definitions of splitting and decoupling.
\end{proof}

The following theorem describes the scenario when we may encounter the kernel problem for positive compact operators.

\begin{thm}\label{intropv4}
Let $\boldsymbol\lambda, \boldsymbol d \in c_0$. Let
\[\sigma_{+} = \liminf_{n\to\infty} \sum_{i=1}^n (\lambda_i^{+ \downarrow} - d_i^{+ \downarrow})\quad\text{and}\quad \sigma_{-} = \liminf_{n\to\infty} \sum_{i=1}^n (\lambda_i^{- \downarrow} - d_i^{- \downarrow})\]
and assume that
\begin{equation}\label{p0v4}\sigma_{+}=0 \quad\text{or}\quad \sigma_{-}=0.\end{equation}
Let $E$ be a compact self-adjoint operator with eigenvalue list $\boldsymbol\lambda$.
Then $\boldsymbol d$ is a diagonal of $E$ if and only if the following conditions hold:
\begin{enumerate}[(i)]
\item there exists a self-adjoint operator $E_1$ with eigenvalue list $(\lambda_{i})_{\lambda_i<0} \oplus 0_{z_1}$ and diagonal $(d_{i})_{d_i<0}$, where \(0_{z_{1}}\) is the sequence consisting of \(z_{1}\) terms equal to zero,
\item there exists a self-adjoint operator $E_2$ with eigenvalue list $(\lambda_{i})_{\lambda_i>0} \oplus 0_{z_2}$ and diagonal $(d_{i})_{d_i>0}$,
\item there exists $z_1,z_2 \in \N \cup \{0,\infty\}$ such that 
\begin{equation}\label{z12}
\#|\{i:\lambda_i=0\}| = z_1+z_2+ \#|\{i: d_i=0\}|.
\end{equation}

\end{enumerate}
\end{thm}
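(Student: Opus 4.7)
The plan is to identify condition \eqref{p0v4} with the vanishing-excess hypothesis of Proposition \ref{p211}, invoke decoupling of $E$ at $0$, and then translate between decoupling and splitting via Theorem \ref{isplit}. This reduces the characterization to the existence of a splitting $E_1, E_2$ at $0$ whose pieces have the required eigenvalue lists and diagonals, which are precisely conditions (i)--(iii).

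For the necessity direction, I would first use that $E$ is compact: Corollary \ref{cptschur} gives $\delta(\alpha,\boldsymbol\lambda,\boldsymbol d)\geq 0$ for all $\alpha\ne 0$. Since $\delta(\alpha,\boldsymbol\lambda,\boldsymbol d)=\delta(\alpha,\boldsymbol\lambda_+^\downarrow,\boldsymbol d_+^\downarrow)$ for $\alpha>0$, Proposition \ref{LR} yields
\[
\liminf_{\alpha\searrow 0}\delta(\alpha,\boldsymbol\lambda,\boldsymbol d)=\sigma_+,\qquad \liminf_{\alpha\nearrow 0}\delta(\alpha,\boldsymbol\lambda,\boldsymbol d)=\sigma_-.
\]
Compactness of $E$ also makes both $E_+$ and $E_-$ compact, so \eqref{p0v4} puts us into the hypotheses of Proposition \ref{p211} and forces $E$ to decouple at $0$ with respect to the basis realizing $\boldsymbol d$. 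Applying Theorem \ref{isplit} at $\alpha=0$ produces a splitting $E_1,E_2$ of $E$ such that $(d_i)_{d_i<0}$ is a diagonal of $E_1$, $(d_i)_{d_i>0}$ is a diagonal of $E_2$, and \eqref{z102} holds. Setting $z_j:=\dim\ker E_j$ for $j=1,2$ gives (iii) after identifying $\dim\ker E=\#|\{i:\lambda_i=0\}|$. The spectral constraints $\sigma(E_1)\subset(-\infty,0]$ and $\sigma(E_2)\subset[0,\infty)$ combined with $E\cong \mathbf 0_{z_0}\oplus E_1\oplus E_2$ force every negative eigenvalue of $E$ (with multiplicity) to appear in $E_1$ and every positive eigenvalue in $E_2$, so the eigenvalue list of $E_1$ is $(\lambda_i)_{\lambda_i<0}\oplus 0_{z_1}$, proving (i); (ii) is symmetric.

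For the sufficiency direction, given (i)--(iii), set $z_0:=\#|\{i:d_i=0\}|$, so by \eqref{z12} we have $\dim\ker E=z_0+z_1+z_2$. Form $E':=E_1\oplus\mathbf 0_{z_0}\oplus E_2$; reading off its eigenvalues using (i), (ii), and (iii) shows that $E'$ has the same full eigenvalue list (with multiplicity, including kernel dimension) as $E$, so $E'$ is unitarily equivalent to $E$. The diagonals guaranteed by (i) and (ii), together with the trivial zero diagonal of $\mathbf 0_{z_0}$, concatenate to a diagonal of $E'$ which is a permutation of $\boldsymbol d$; since $\mathcal D(E)$ is invariant under basis permutations, $\boldsymbol d\in\mathcal D(E)$.

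The main obstacle is the bookkeeping of eigenvalue lists in the necessity direction: Theorem \ref{isplit} only produces operators $E_1,E_2$ with the correct spectral containment and diagonals, and one must separately argue that \emph{all} negative (resp.\ positive) eigenvalues of $E$ migrate to $E_1$ (resp.\ $E_2$) while zero eigenvalues partition into $\ker E_1$, $\ker E_2$, and the $z_0$-dimensional summand. This is the step where we convert the unitary-equivalence statement $E\cong \mathbf 0_{z_0}\oplus E_1\oplus E_2$ into a concrete identification of eigenvalue lists; once done, both directions reduce to routine applications of Proposition \ref{p211} and Theorem \ref{isplit}.
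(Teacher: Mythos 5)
Your proposal is correct and takes essentially the same route as the paper: establish $\delta(\alpha,\boldsymbol\lambda,\boldsymbol d)\ge 0$ (you do this via Corollary \ref{cptschur} directly, the paper via the necessity half of Theorem \ref{intropv3}, but these amount to the same thing), invoke Proposition \ref{LR} to identify $\sigma_\pm$ with the one-sided $\liminf$s of $\delta$, apply Proposition \ref{p211} to decouple $E$ at $0$, and translate between decoupling and the splitting conditions (i)--(iii) via Theorem \ref{isplit}. Your explicit treatment of the eigenvalue-list bookkeeping (negative eigenvalues forced into $E_1$, positive into $E_2$, zeros partitioned among the three summands) fills in a detail the paper leaves implicit; it is correct and does not change the structure of the argument.
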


In other words, $\boldsymbol d$ is a diagonal of $E$ if and only if there exists a splitting of $E$ at $0$ into  $E_1$ and $E_2$ with $z_0= \#|\{i: d_i=0\}|$ such that $E_1$ has diagonal $(d_{i})_{d_i<0}$ and $E_2$ has diagonal $(d_{i})_{d_i>0}$. The requirement that $z_0=\#|\{i: d_i=0\}|$ is due to the fact that any basis vector corresponding to zero on diagonal $d_i=0$ belongs to the kernel of $E$.

\begin{proof}
Suppose that $E$ is a compact self-adjoint operator with eigenvalue list $\boldsymbol\lambda$ and diagonal $\boldsymbol d$. By Theorem \ref{intropv3} we have \eqref{p1v3} and \eqref{p2v3}. Hence, by Proposition \ref{LR} and \eqref{p0v4} we can apply Proposition \ref{p211} to deduce that $E$ decouples at $0$. Therefore, Theorem \ref{isplit} yields a splitting of $E$ at $0$ into  $E_1$ and $E_2$ with $z_0= \#|\{i: d_i=0\}|$ such that $E_1$ has diagonal $(d_{i})_{d_i<0}$ and $E_2$ has diagonal $(d_{i})_{d_i>0}$. That shows that (i)--(iii) hold with \eqref{z12} being a consequence of \eqref{z102}. Coversely, suppose that (i)--(iii) hold. This implies that the pair $E_1$ and $E_2$ is a splitting of $E$ at $0$, and conditions (i)--(iii) in Theorem \ref{isplit} hold. Hence, $\boldsymbol d$ is a diagonal of $E$.
\end{proof}

\begin{figure}
\resizebox{5in}{!}{
\begin{tikzpicture}[node distance=2cm]
\node (input) [io] {Input: self-adjoint operator $E$ such that $\#|\sigma_{ess}(E)|=1$ and a sequence $(d_{i})$};

\node (pro2) [process, below of=input] {Normalize $E$ to be compact};

\node (pro2a) [decision, below of=pro2, yshift=-1.5cm] {(Necessity) Theorem \ref{intropv3}};

\node (pro2b) [no, right of=pro2a, xshift=4cm] {$(d_{i})$ is not a \\ diagonal of $E$};

\node (dec2) [decision, below of=pro2a, yshift=-2.5cm] {\qquad $\sigma_+ + \sigma_->0$\qquad };

\node (pro3) [process, below of=dec2, yshift=-1.5cm] {Apply Theorem \ref{intropv4} to split $E$ into two compact operators};

\node (pro4) [stop, right of=dec2, xshift=4cm] {$(d_{i})$ is a \\ diagonal of $E$};

\node (pro5) [kernel, right of=pro3, xshift=4cm, yshift=0.5cm] {Compact kernel problem};

\node (pro6) [kernel, right of=pro3, xshift=4cm, yshift=-0.5cm] {Compact kernel problem};

\draw [arrow] (input) -- (pro2);

\draw [arrow] (pro2) -- (pro2a);
\draw [arrow] (pro2a) --  node [xshift=0.45cm] {yes} (dec2);
\draw [arrow] (pro2a) -- node [yshift=0.25cm] {no} (pro2b);

\draw [arrow] (dec2) -- node [yshift=0.25cm] {yes} (pro4);
\draw [arrow] (dec2) -- node [xshift=0.45cm] {no} (pro3);

\draw [arrow] (pro3) -- (pro5);
\draw [arrow] (pro3) -- (pro6);

\end{tikzpicture}
}
\caption{Algorithm for operators with \(1\)-point essential spectrum}
\label{diagram1}
\end{figure}
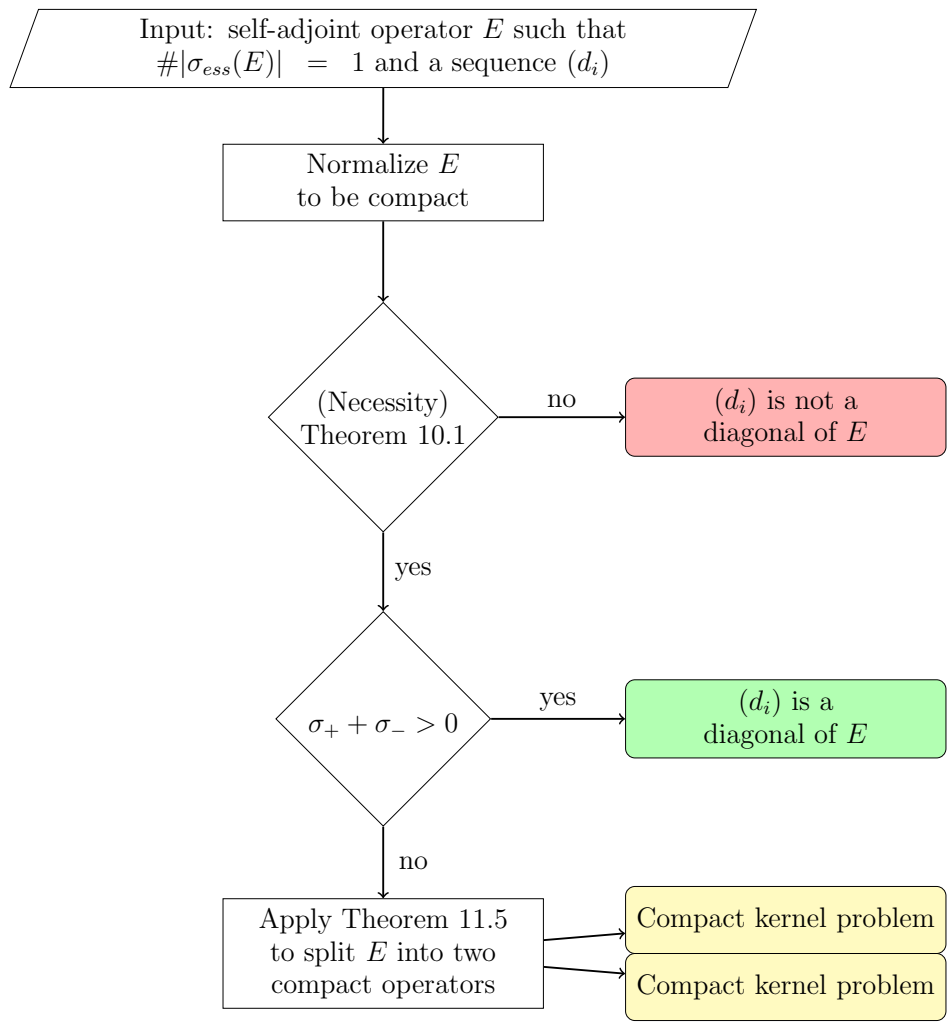

The algorithm for determining whether a sequence $\boldsymbol d$ is a diagonal of a compact operator $E$ is represented by Figure \ref{diagram1}. This procedure actually works for any self-adjoint operator with one point essential spectrum. First we check whether all necessary conditions \eqref{p5v3}--\eqref{p4v3} in Theorem \ref{intropv3} are satisfied. If not, then $\boldsymbol d$ is not a diagonal of $E$. Otherwise, $\boldsymbol d$ is a diagonal of $E$ provided $\sigma_+ + \sigma_- >0$. In the case $\sigma_+=\sigma_-=0$, Theorem \ref{intropv4} applies. In particular, we look for all possible splittings of $E$ into $E_1$ and $E_2$ at $0$ with $z_0=\#|\{i: d_i=0\}|$ and apply Theorem \ref{kp} to test whether both   
$(d_{i})_{d_i<0}$ is a diagonal of $E_1$ and  $(d_{i})_{d_i>0}$ is a diagonal of $E_2$. 
If the necessary conditions in Theorem \ref{kp} fail for all possible splittings, then $\boldsymbol d$ is not a diagonal of $E$. On the other hand, if the sufficient conditions in Theorem \ref{kp} hold for some splitting, then $\boldsymbol d$ is a diagonal of $E$. This leaves out the possibility that some splittings satisfy the necessary conditions, but all fail sufficient conditions. This is exactly the kernel problem, where the algorithm is inconclusive. 

Note that if $\dim \ker E=\infty$, then we will not encounter the kernel problem since it suffices to consider only: (a) the splitting with $z_1=z_2=0$ when $\#|\{i: d_i=0\}|=\infty$, or (b) two splittings with $z_1=0$, $z_2=\infty$, or vice versa, when $\#|\{i: d_i=0\}|<\infty$. In both scenarios Theorem \ref{kp} is conclusive. So we may encounter the kernel problem only when $\dim \ker E<\infty$, which requires testing only a finite number of possible splittings corresponding to: 
\[
z_0=\#|\{i: d_i=0\}|, \qquad z_1=0,\ldots, n, \qquad z_2=n-z_1 \qquad\text{where } n=\dim \ker E - z_0. 
\]
Hence, the algorithm requires analyzing only a finite number of splittings of $E$, if any.

\end{document}